\documentclass[11pt,reqno]{amsart}
\usepackage{enumitem} 
\usepackage[margin=1.2in,includeheadfoot]{geometry}

\usepackage{amssymb}
\usepackage{amsaddr}
\usepackage{latexsym}
\usepackage{amsmath}
\usepackage{mathrsfs}

\usepackage{upgreek}
\usepackage{mathabx}
\usepackage{esint}
\usepackage{xcolor}
\usepackage{bbm}
\definecolor{citeblue}{RGB}{0,0,150}
\definecolor{citegreen}{RGB}{0,110,0}
\usepackage[colorlinks = true,
            linkcolor = citeblue,
            urlcolor  =  citeblue,
            citecolor =  citegreen,
        ]{hyperref}   
\usepackage{empheq}
\usepackage{cleveref}
\usepackage{mathtools}
\usepackage{amsthm}
\usepackage[title]{appendix}
\usepackage{amsfonts}
\usepackage{relsize}
\usepackage{esvect}

\makeatletter
\def\l@subsection{\@tocline{2}{0pt}{2pc}{0pc}{}}
\makeatletter
\usepackage{cases}
\makeatletter

\newcommand{\Rmn}[1]{\expandafter\@slowromancap\romannumeral #1@}
\makeatother

\usepackage{graphicx}
\usepackage[left,modulo]{lineno}
\newtheorem{thm}{Theorem}[section]

  \newenvironment{hproof5}{%
  \proof}{\endproof}
 \newtheorem{Th}[thm]{Theorem}

 \newtheorem{Prop}[thm]{Proposition}
 \newtheorem{Co}[thm]{Corollary}
\newtheorem{Lm}[thm]{Lemma}

\newtheorem{Dfi}[thm]{Definition}

\numberwithin{equation}{section}
\newcommand{\be}{\begin{equation}}
\newcommand{\ee}{\end{equation}}
\newcommand{\bg}{\begin{gather}}
\newcommand{\eg}{\end{gather}}
\newcommand{\ba}{\begin{align}}
\newcommand{\ea}{\end{align}}
\newcommand{\bad}{\begin{aligned}}
\newcommand{\ead}{\end{aligned}}
\newcommand{\R}{\mathbb{R}}
\newcommand{\N}{\mathbb{N}}

\newcommand{\Z}{\mathbb{Z}}
\newcommand{\mca}[1]{\mathcal{#1}}

\newcommand{\nf}{\infty}

\def\avint{\mathop{\mathchoice{\,\rlap{-}\!\!\int}
                              {\rlap{\raise.15em{\scriptstyle -}}\kern-.2em\int}
                              {\rlap{\raise.09em{\scriptscriptstyle -}}\!\int}
                              {\rlap{-}\!\int}}\nolimits}

\def\avint{\mathop{\,\rlap{-}\!\!\int}\nolimits}

\def\ovwe{\text{\larger[1.5]$\we$}}

\def\XXint#1#2#3{{\setbox0=\hbox{$#1{#2#3}{\int}$ }
\vcenter{\hbox{$#2#3$ }}\kern-.6\wd0}}

\def\wih{\widehat}
\usepackage{dsfont}
\def\mkt{\mkern2mu}
\def\mko{\mkern1mu}

\def\ga{\gamma}
\def\La{\Lambda}

\def\coeq{\coloneq}
\def\lf{\left}

\def\ot{\otimes}
\def\de{\delta}
\def\rg{\right}

\def\al{\alpha}

\def\la{\lambda}

\def\wti{\widetilde}

\newcommand{\we}{\wedge}

\def\dil{\textup{dil}}
\def\rot{\textup{rot}}
\def\std{\textup{std}}
\def\sym{\textup{sym}}
\def\bwe{\textstyle\bigwedge}
\def\loc{\textup{loc}}

\def\vae{\varepsilon}

\def\vp{\varphi}

\def\II{\mathrm{I\!I}}
\def\bII{\vec{\II}}
\def\om{\omega}
\def\p{\partial}
\def\bn{\vec{n}}

\def\bH{\vec{H}}
\def\bu{\vec{u}}
\def\bv{\vec{v}}
\def\bvt{\vec \vartheta}
\def\vt{\vartheta}
\def\bg{\vec{g}}
\def\bC{\vec{C}}

\def \g{\nabla}

\def\Ga{\Gamma}
\def\bD{\vec{D}}

\def\lan{\langle}
\def\ran{\rangle}
\def\bL{\vec{L}}
\def\bR{\vec{R}}

\def\bX{\vec{X}}
\def\bY{\vec{Y}}
\def\bw{\vec{w}}

\def\vet{\vec \eta}

\def\bc{\vec{c}}
\def\vp{\varphi}
\def\bP{\vec{\phi}}
\def\bP{\vec{\Phi}}
\def\dvol{d\textup{vol}}

\def\si{\sigma}
\def\Si{\Sigma}

\newcommand{\res}{\mathbin{\hbox{\vrule height 5pt width .4pt depth 0pt\vrule height .6pt width 4pt depth 0pt}}} 
\newcommand{\metricsub}[3]{_{
    \mkern-#3mu
    \smash{\lower #1\hbox{$\scriptscriptstyle #2$}}
}}
\newcommand{\resg}{\res_g}
\newcommand{\sbul}{\mathbin{
  \mathchoice{\scalebox{0.7}{$\bullet$}}{\scalebox{0.7}{$\bullet$}}%
             {\scalebox{0.7}{$\bullet$}}{\scalebox{0.7}{$\bullet$}}}
}
\newcommand{\bulg}{\sbul_g}
\newcommand{\dwe}{\mathbin{\dot{\wedge}}}
\newcommand{\dres}{\mathbin{\dot{\res}_g}}
\newcommand{\wres}{\mathbin{\ovs{\ovwe}{\res}_{\mkern -4mu g}}}

\newcommand{\ov}[1]{\overline{#1}}
\newcommand{\msc}[1]{\mathscr{#1}}
\newcommand{\ti}[1]{\tilde{#1}}
\newcommand{\ovs}[2]{\overset{#1}{#2}}

\newcommand{\lap}{\Delta}
\newcommand{\Imm}{\mathrm{Imm}}

\begin{document}
\title[Regularity for scale-invariant energies]{Regularity of Critical Points of Scale-Invariant Geometric Energies for Even-Dimensional Submanifolds of $\R^m$}
\author{\small Tian Lan}
\address{\smaller Department of Mathematics, ETH Zürich, 101 Rämistrasse, 8092 Zürich, Switzerland}
\subjclass[2020]{Primary 35B65; Secondary 35J48, 53A07, 58A14, 58E15}
\keywords{Generalized Willmore energies, submanifold invariants, integrability by compensation, critical Sobolev regularity, Hodge--Dirac estimates}
\date{\today}
\begin{abstract}
We consider scale-invariant curvature energies for immersions of closed manifolds of even dimension $n=2h$ into $\R^m$, with principal term $\int_\Sigma |\g^{(h-1)} \bII|_g^2\,\dvol_g$ and arbitrary lower-order polynomial extrinsic invariants of the same scaling.
Following the four-dimensional approach developed in joint work with Bernard, Martino, and Rivi\`ere, we prove that every weak critical immersion in the natural Sobolev class $W^{h+1,2}$, whose induced metric and its inverse have $L^\nf$ coefficients, is real-analytic in harmonic coordinates.
The proof combines geometric conservation laws, additional structural identities, and elliptic estimates with critical Sobolev coefficients to obtain Morrey decay and bootstrap to full regularity.
\end{abstract}
\maketitle
 \vskip0.5cm
\tableofcontents

\section{Introduction}\label{sec:intro}
\subsection{Geometric background and motivation}\label{sec:geobac}
\
\vskip5pt
Invariant theory plays an important role in describing the scale-invariant geometric energies studied in this work and their pointwise first variations in terms of tensor contractions. We thus begin with a brief review of the literature on geometric invariants.

For a Riemannian manifold $(\Si^n,g)$, the classical Weyl’s invariant theory shows that local scalar invariants depending polynomially on the jets of the metric coefficients and $(\det g)^{-1}$ can be expressed as linear combinations of complete contractions of the Riemann curvature tensor and its covariant derivatives; see for instance \cite{weyl97,Gilkey75,Atiyah73}.

In conformal geometry, Fefferman--Graham~\cite{Fefferman85} developed a systematic construction of local \textit{conformal invariants} on Riemannian manifolds through an ambient metric approach. 
Subsequently, Bailey--Eastwood--Graham~\cite{Bailey94} proved that all scalar conformal invariants arise from this construction when the dimension $n$ is odd and for weights bounded in absolute value by $n$ when $n$ is even; see also~\cite[Thm.~9.4]{FeffermanGraham12}.
A different question is to characterize scalar Riemannian invariants of weight $-n$ whose integrals over closed $n$-dimensional manifolds are conformally invariant.
For even $n$, Alexakis's resolution of the Deser--Schwimmer conjecture gives a decomposition of such an invariant into a local conformal invariant, divergence of a natural vector field, and a multiple of the Chern--Gauss--Bonnet integrand~\cite{DeserSchwimmer93,Alexakis12}.

For an immersed Riemannian submanifold, the induced metric provides intrinsic invariants, while the second fundamental form and the normal connection provide extrinsic ones.
An early treatment of local submanifold invariants was given by Gilkey \cite{Gilkey75}. More recently, Graham–Kuo~\cite{grahamkuo26} expressed natural submanifold tensors as linear combinations of contractions of covariant derivatives of the ambient curvature and the second fundamental form. 

The classification of conformal invariants for Riemannian submanifolds is subtler and remains open in general. For closed surfaces of codimension one or two, Mondino--Nguyen \cite{mondino2018} classified global conformal invariants whose integrands are linear combinations of complete contractions of the tangential and normal curvature tensors and the second fundamental form without covariant derivatives. 
In a recent work, Case--Khaitan--Lin--Tyrrell--Yuan~\cite{CaseEtAl26} developed constructions of local and global conformal invariants of submanifolds using extrinsic ambient space and renormalization of curvature integrals.

For closed surfaces, the \textit{Willmore energy} is a fundamental example of a global conformal invariant; see~\cite{LaMaRi26} for a survey of recent developments. In higher even dimensions, a particularly important family of conformally invariant curvature energies arises from the renormalized area construction introduced by Graham--Witten \cite{graham1999} in the context of the AdS/CFT correspondence.
Later, Graham--Reichert~\cite{grahamreichert2020} derived these higher-dimensional Willmore energies and studied their variational properties.
They identified the first variations of these energies as the obstruction to smoothness of the associated asymptotically minimal extensions and obtained an explicit formula in dimension four. 
For Euclidean ambient space, the energy of an immersion $\bP\in\Imm(\Sigma^4,\R^m)$ is given by
\begin{equation}\label{eq:GR}
    \mathcal{E}_{GR}(\bP)\coloneq \int_{\Sigma^4} \Big(\big|\g^{\perp}\vec H_{\bP} \big|^2_{g_{\bP}} - \big| \bH_{\bP}\cdot \bII_{\bP} \big|^2_{g_{\bP}} + 7\, \big| \bH_{\bP} \big|^4 \Big)\, d\textup{vol}_{g_{\bP}}.
\end{equation}
Here $g_{\bP}$ and $\bII_{\bP}$ are the first and second fundamental forms of $\bP$ respectively, $\bH_{\bP}\coloneq \frac14\textup{tr}_{g_{\bP}}(\bII_{\bP})$ is the mean curvature vector, and $\g^{\perp}$ denotes the induced connection on the normal bundle, see~\eqref{defgdvol},~\eqref{not-met}, and~\eqref{defnorcov}. 
In the hypersurface case, this energy was already derived by a different method by Guven~\cite{Guven05} in 2005, although a factor of $-2$ had been dropped in the original calculations, as pointed out in~\cite{grahamreichert2020}. 
The formula~\eqref{eq:GR} was also obtained independently by Zhang \cite{zhang}.
For further studies of this functional, other notions of \textit{generalized Willmore energy}, and connections with GJMS operators and $Q$-curvatures, see~\cite{gover2017,graham2017,gover2020,blitz2023,AG2024,M2024,CaseGrahamKuo25}.

In what follows, we restrict our discussion to Euclidean ambient spaces.
For even dimensions $n=2h\ge 4$, all the generalized Willmore energies discussed above share a principal quadratic term that can be written after normalization in either of the following forms:
\begin{equation}\label{altergbPgH}
    \int_{\Si^n}|\g^{(h+1)}\bP|_g^2\, \dvol_g \qquad\text{ or } \qquad n^2\int_{\Si^n}|\g^{(h-1)}\bH|_g^2\,\dvol_g.
\end{equation}
Indeed, as in~\cite[Eq.~26]{Guven05} and~\eqref{intindwbP}, repeated integration by parts and control of the commutators of covariant derivatives imply that these two integrals differ by an integral of a linear combination of lower-order terms in the form~\eqref{eq:comcon}--\eqref{eq:condsks}; see also~\eqref{difcovcan}.\footnote{In dimension two, the two integrals instead differ by a Gauss--Bonnet term, while the class of lower-order terms in~\eqref{defmcaL} is empty.} 
The class considered in this paper retains this principal quadratic term but does not require conformal invariance.
We impose invariance under rigid motions and dilations of $\R^m$ and under reparametrizations. 
Our class thus includes all the generalized Willmore energies discussed above after normalization.

\subsection{Analytical and variational setting}
\
\vskip5pt
The analysis of weak critical points of scale-invariant geometric energies often involves critical Sobolev spaces $W^{s,p}(B^n)$ with $sp=n$ and $1<p<\infty$.
These spaces embed into every finite $L^q$ but not into $L^\infty$, and direct estimates of nonlinear terms may fail to initiate an elliptic bootstrap.
However, certain geometric structures lead to useful reformulations of the Euler--Lagrange equations. Examples include the use of moving frames for harmonic maps from surfaces to Riemannian manifolds~\cite{H2002}, \textit{conservation laws} for two-dimensional conformally invariant nonlinear PDEs~\cite{Riv07,Riv08}, and Coulomb gauges in four-dimensional Yang--Mills theory~\cite{Uhlenbeck82}; see also the survey~\cite{Ri20}.
Wente's inequality~\cite{Wente69}, Hardy-space estimates for div--curl products~\cite{CLMS93}, and Lorentz-space estimates provide key analytic tools for exploiting these structures through \textit{integrability by compensation}.

The conservation-law approach developed by Rivi\`ere~\cite{Riv07} for second-order systems in dimension two was extended by Lamm--Rivi\`ere~\cite{LammRiv08} to fourth-order systems in dimension four, including the biharmonic map equations.
De Longueville--Gastel~\cite{deLonGas21} extended this approach to systems of polyharmonic map type of order $2m$ in dimension $2m$ for every integer $m\ge 3$.
In these works, antisymmetric structure in the least regular coefficient enables the construction of conservation laws and the proof of continuity of weak solutions.

For the Willmore energy, Rivi\`ere~\cite{Riv08} used conservation laws to prove regularity of weak critical immersions.
Bernard~\cite{Ber} subsequently derived these conservation laws through Noether's theorem using invariance under ambient translations, rotations, and dilations. 
More recently, the regularity result~\cite[Thm.~1.3]{Riv08} was extended to dimension four by Bernard, the author, Martino, and Rivi\`ere~\cite{BerLanMarRiv26} using conservation laws together with additional structural equations established in~\cite{Bernard25}. See also~\cite{KuwLamLi15,CafStiViv24} for some regularity results on curvature functionals in other variational settings.

In this work, we extend the four-dimensional approach in~\cite{BerLanMarRiv26} to a general class of scale-invariant curvature energies in higher even dimensions. Unlike the systems with a fixed polyharmonic principal part in~\cite{deLonGas21}, the Euler--Lagrange equation for our problem is degenerate due to reparametrization invariance, and its principal coefficients depend on the immersion itself.
Since curvature bounds are invariant under reparametrizations, suitable choices of coordinates are needed to translate these bounds into regularity of the immersion.
For surfaces, \textit{conformal coordinates} provide a natural choice. 
The existence of such coordinates goes back to Gauss for real-analytic metrics and was later proved under weaker regularity assumptions; see for instance~\cite{Ahlfors60}.

For suitably normalized conformal immersions, Müller--Šverák~\cite{MullerSverak95} obtained local $W^{2,2}$ estimates and continuity of the conformal metric under a sufficiently small local $L^2$ bound on the second fundamental form; see also~\cite{Toro94,H2002} for related results.
We note that for closed surfaces of fixed topology, an $L^2$ bound on the second fundamental form is equivalent to a bound on the Willmore energy by the Gauss--Bonnet theorem.
In even dimension $n=2h\ge4$, a natural scale-invariant analogue of this curvature control is
\[
\mathcal E_n(\bP):=\sum_{j=1}^{h}\int_\Sigma\big|\nabla^{(j-1)}\bII\big|_g^{\frac n{j}}\,d\mathrm{vol}_g<\infty.
\]

Since conformal coordinates are generally unavailable in higher dimensions, Martino--Rivi\`ere~\cite{MarRiv2,MarRiv26} used \textit{harmonic coordinates} as a higher-dimensional substitute. 
As in the two-dimensional case, they proved that bounds on $\mca E_n(\bP)$ yield local $W^{h+1,2}$ estimates for the immersion and continuity of the induced metric in suitably chosen harmonic coordinates, provided that $\mca E_n(\bP)$ is sufficiently small on the patch. Thus, we adopt their notion of \textit{weak immersions} as the natural variational framework for scale-invariant curvature energies; see also~\cite{KuwertLi12,Riv14,MonRiv14,Ri16} for related results in dimension two.

\begin{Dfi}\label{defweakimm}
    Let $k\in\N\cup\{0\}$, $n\in \N^+$, $p\in [1,\nf]$, and let $(\Si,\bar g)$ be an $n$-dimensional closed oriented Riemannian manifold. We define
    \begin{align*}
        \mathcal{I}_{k,p}(\Si,\R^m)\coloneq \left\{
        \bP\in W^{k+2,p}(\Si,\R^m) \colon
        \displaystyle \exists\, c_{\bP}\ge 1,\quad c^{-1}_{\bP} \bar g \leq g_{\bP} \leq c_{\bP}\mko \bar g
        \right\}.
    \end{align*}
    Here $g_{\bP}=\bP^*g_{\std}$ denotes the metric induced by $\bP$. We define $\mathcal{I}_{k,p}(B^n,\R^m)$ analogously, replacing $\bar g$ by the Euclidean metric on $B^n$.
\end{Dfi}

We focus on the class $\mathcal I_{h-1,2}(\Si,\R^m)$ with $h\in \N^+$ and $n=2h$. An immersion $\bP\in \mathcal I_{h-1,2}(\Si,\R^m)$ need not be $C^1$ and may fail to define a graph in the original coordinates. 
For $n\ge4$, Martino--Rivi\`ere proved in \cite{MarRiv2} that such a weak immersion induces an atlas of $g_{\bP}$-harmonic coordinates in which the coefficients of $g_{\bP}$ belong to $W^{2,(h,1)}\hookrightarrow C^0$ and the transition maps are $C^1$.

Inspired by the generalized Willmore energies discussed in Section~\ref{sec:geobac}, we consider geometric energies of the following type, where $F$ is a real polynomial in the partial derivatives of $\bP$ and $(\det g_{ij})^{-1}$ in local coordinates (see~\eqref{eq:defDalf} and~\eqref{eq:defitecov}):
\begin{align}\label{eq:firdefE}
    \bP\in \mathcal{I}_{h-1,2}(\Si,\R^m) \mapsto \int_{\Si} \bigg(\big| \g^{(h+1)} \bP\big|_g^2+F\Big( (D^\al \bP)_{|\al|\le h +1},(\det g_{ij})^{-1}\Big)\bigg)\, \dvol_g.
\end{align}
We are interested in the case where the $n$-form in~\eqref{eq:firdefE} is invariant under dilations and rigid motions of $\R^m$, and invariant under reparametrizations. Weyl's invariant theory~\cite[Thm.~2.9.A]{weyl97}, together with the arguments in~\cite{Atiyah73,Gilkey75,grahamkuo26} and Section~\ref{sec:cov}, implies that $F$ is a linear combination of \textit{geometric complete contractions} with respect to $g$ of the form
\begin{equation}\label{eq:comcon}
    \textup{contr}\Big(\big(\g^{(k_1)}\mko\bII\cdot \g^{(k_2)}\mko\bII\big)\ot  \cdots \ot \big(\g^{(k_{2s-1})}\mko\bII\cdot \g^{(k_{2s})}\mko\bII\big) \Big).
\end{equation}
Here $\g^{(k_{2l-1})}\mko\bII\cdot \g^{(k_{2l})}\mko\bII$ is a $(0,k_{2l-1}+k_{2l}+4)$-tensor field for $1\le l\le s$.\footnote{Replacing $\g$ in~\eqref{eq:firdefE}--\eqref{eq:comcon} by the normal covariant derivative $\g^\perp$ yields the same class of geometric invariants; see Section~\ref{sec:cov}.} We assume $s\ge2$, so that these terms are lower-order compared with the principal term
$\big|\g^{(h+1)}\bP\big|_g^2=\big|\g^{(h-1)}\mko\bII\big|_g^2$. Using integration by parts, we can also assume $0\le k_i\le h-1$ for all $1\le i\le 2s$. Moreover, scale invariance gives $\sum_{i=1}^{2s} (k_{i}+1)=n$. We summarize these conditions:
\begin{align}\label{eq:condsks}
    \begin{dcases}
    s\ge 2,\\
    \forall 1\le i\le 2s,\qquad 0\le k_{i}\le h-1,\\
   \sum_{i=1}^{2s} (k_{i}+1)=n.
    \end{dcases}
\end{align}
We choose a finite index set $\mca L$ and a family $\{P_\la:\la\in \mca L\}$ such that
\begin{equation}\label{defmcaL}
 \{P_\la: \la\in \mca L\} =\big\{\text{all complete contractions of the form~\eqref{eq:comcon} satisfying~\eqref{eq:condsks}}\big\}.
\end{equation}

Here we do not consider the additional contractions involving the tangent or ambient volume forms,
which may change sign under orientation-reversing transformations. For a complete contraction of the form~\eqref{eq:comcon}, the total number of tangent indices is $\sum_{i=1}^{2s}k_i+4s$. Hence $\sum_{i=1}^{2s}k_i$ must be even, and the scale-invariance in~\eqref{eq:condsks} then implies that $n$ must be even.

Since the ambient space is Euclidean, the Gauss equation (see for instance~\cite{dC92}) gives
\begin{equation}\label{Gaseq}
    \text{Riem}_{ijk\ell}=\bII_{ik}\cdot \bII_{j\ell}-\bII_{i\ell}\cdot \bII_{jk}.
\end{equation}
For $n=2h\ge4$, the Gauss equation and integration by parts imply that the integral of any Riemannian invariant of weight $-n$ lies in the linear span of
\[
    \left\{\int_\Sigma P_\lambda(\bP)\,\dvol_g:\lambda\in\mathcal L\right\},
\]
with coefficients in the linear combination independent of $\bP$ and topology of $\Si$. In particular, this applies to the intrinsic global conformal invariants considered in~\cite{Alexakis12} on closed submanifolds of $\R^m$.

We note that when $n=2$, the set $\mca L$ is empty. When $n=4$, it consists of the eight quartic contractions listed in \cite[Eq.~(I.6)]{BerLanMarRiv26}. 
Given $\vec c=(c_\la)_{\la\in \mca L}$ with $c_\la\in\R$, we define
\begin{align}\label{eq:defE}
   \forall \,\bP\in \mathcal{I}_{h-1,2}(\Si,\R^m),\qquad E_{\vec c}(\bP)=\int_{\Si} \bigg(\big| \g^{(h-1)}\mko\bII\big|_g^2+\sum_{\la\in \mca L} c_\la P_\la(\bP)\bigg) \, \dvol_g.
\end{align}
Then all the generalized Willmore energies discussed in Section~\ref{sec:geobac} are contained in the class~\eqref{eq:defE} after normalization and a suitable choice of $\vec c$.

\subsection{The main result}
\
\vskip5pt
A weak immersion $\bP\in \mathcal I_{h-1,2}(\Si,\R^m)$ is said to be a \textit{weak critical point} of $E_{\vec c}$ if for any $\vec w\in C_c^\infty(\Si,\R^m)$, it holds that
\[
   \frac d{dt} \left. E_{\vec c}\left( \bP+t\mkt \vec w \right)\right|_{t=0}=0.
\]
The main result of the present article is as follows.
\begin{Th}
\label{th-main}
Let $n=2h$ be a positive even integer and $m>n$. Fix a closed oriented smooth manifold $\Si$ of dimension $n$, and a vector $\vec{c}=(c_\la)_{\la\in \mca L}$ with $c_\la\in \R$. Then every weak critical point $\bP\in \mathcal I_{h-1,2}(\Si,\R^m)$ of $E_{\vec c}$ is real-analytic in any $g_{\bP}$-harmonic coordinates.
\end{Th}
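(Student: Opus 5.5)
The proof works locally in $g_{\bP}$-harmonic coordinates and has three stages: rewrite the Euler--Lagrange equation as conservation laws, prove Morrey decay for the top-order curvature, and bootstrap to smoothness and then analyticity.

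\emph{Setup.} By the result of Martino--Rivi\`ere~\cite{MarRiv2}, we can cover $\Si$ by $g_{\bP}$-harmonic charts on balls $B^n$ with three properties. The metric coefficients lie in $W^{2,(h,1)}\hookrightarrow C^0$, $\bP\in W^{h+1,2}(B^n)$, and after shrinking the balls the quantity $\mca E_n(\bP)$ restricted to each ball is as small as needed. In these coordinates $\Delta_g \bP = n\bH$, and each coordinate function $x^i$ satisfies $\Delta_g x^i=0$. These two relations tie regularity of $\bP$ to regularity of $g$ and $\bII$.

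\emph{Conservation laws.} Next we compute the first variation of $E_{\vec c}$ in divergence form. Invariance of the integrand under ambient translations shows that the Euler--Lagrange equation reads $\p_{i}\big(\sqrt{\det g}\,\vec V^{i}\big)=0$. Here $\vec V$ equals $\g^{(h-1)}$ applied to $\Delta^{h-1}_\perp\bH$ (up to constants), plus lower-order contractions that are at least quadratic in $\g^{(k)}\bII$. Noether's theorem for rotations and for dilations, as in~\cite{Ber,BerLanMarRiv26}, gives two further divergence-free quantities, $\bP\wedge \vec V+\dots$ and $\bP\cdot\vec V+\dots$.

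On the ball we use Poincaré lemmas to write each divergence-free field as $d^{*}$ of a potential form ($\vec L$, $S$, $\vec R$). Then, following the structural identities of~\cite{Bernard25} adapted to order $2h$, we combine the potentials with $\g\bP$ and $\bn$. The result is a Hodge--Dirac system in which the highest derivatives of $\bH$, or equivalently of $\g^{(h-1)}\bII$, appear on the left. On the right they appear only through products of the form (critical Sobolev quantity)$\times$(potential), which have div--curl/Jacobian structure.

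\emph{Morrey decay.} This step is the main obstacle. The right-hand sides contain lower-order products $\g^{(k_1)}\bII\cdots\g^{(k_{2s})}\bII$ with $\sum(k_i+1)=n$. These are not compensated; they are merely critical, so each lies in $L^1$ or a Lorentz space at the endpoint. I would control them by Hölder inequalities in Lorentz spaces $L^{(n/(k+1),2)}$, using the smallness of $\mca E_n$ to absorb them. The genuinely compensated terms are handled by Wente/CLMS-type estimates for Hodge--Dirac operators whose coefficients are $W^{2,(h,1)}$ metrics.

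Together these give, for small $r$, a decay estimate
\[\|\g^{(h-1)}\bII\|_{L^2(B_{r})}+\sum_k\|\g^{(k)}\bII\|_{L^{(\frac n{k+1},2)}(B_r)}\le \tfrac12(\ldots)(B_{2r})+Cr^{\beta}.\]
Iterating this estimate yields Morrey decay $\int_{B_r}|\g^{(h-1)}\bII|^2\le Cr^{2\beta}$. The delicate points are two. First, the metric appears in the principal coefficients, so we must simultaneously improve $g$ through the harmonic-coordinate equation $\p_i(\sqrt{g}g^{ij}\p_j g_{kl})=Q(\p g,\p g)+\bII\ast\bII$. Second, every lower-order term must have enough integrability gain; I would organise them by the index weights $k_i+1$.

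\emph{Bootstrap.} Morrey decay combined with Adams-type embeddings for Riesz potentials gives $\g^{(h-1)}\bII\in L^{p}$ for some $p>2$. Feeding this back into the system yields $\bP\in W^{h+1,p}$ and $g\in W^{h,p}\subset C^{0,\alpha}$, so the system is no longer critical. Standard Schauder and $L^p$ theory for the now-regular elliptic system of order $2h+2$, iterated together with the harmonic-coordinate equation for $g$, gives $C^\infty$. Finally, in harmonic coordinates the pair $(\bP,g)$ solves a nonlinear elliptic system with analytic structure. Morrey's (or Friedman's) analyticity theorem for nonlinear elliptic systems then gives real-analyticity.
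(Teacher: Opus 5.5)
\textbf{The gap is in your Morrey-decay step.} You plan to treat the leftover lower-order curvature products as ``merely critical'' terms, bound them by H\"older in $L^{(\frac n{k+1},2)}$, and absorb them using smallness. That cannot close.

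In the Euler--Lagrange current $\vec V=d(\lap_g^h\bP)+\sum_\ga a_\ga\vec Q_\ga(\bP)$, the lower-order terms contain factors up to $\g^{(n-2)}\bII$. These lie in negative-order Sobolev spaces, so Lorentz--H\"older does not even apply. The best available bound is $\vec Q_\ga\in L^1+W^{-h,2}$, and $L^1$ embeds only into $W^{-h,(2,\nf)}$. The data of $d*_gd(\lap_g^h\bP)=-d*_g\sum_\ga a_\ga\vec Q_\ga$ therefore sit at an excluded endpoint of the elliptic/Morrey estimates: $W^{-1,1}\hookrightarrow W^{-h,(\frac{2h}{h+1},\nf)}$. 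At best this returns a weak-type bound, with no smallness on small balls. You get neither improvement nor decay of $\|\g^{(h-1)}\bII\|_{L^2(B_r)}$. A small prefactor does not help, because the failure is in the linear estimate, not in the constant.

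A side remark: the principal part of $\vec V$ is $d(\lap_g^h\bP)$. Applying $\g^{(h-1)}$ to $\lap_\perp^{h-1}\bH$ gives the wrong order for $h\ge3$.

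\textbf{The missing idea.} The top-order lower-order terms must never be estimated directly, and the surviving ones must gain a derivative. The paper does this in four steps.
\begin{enumerate}
\item It absorbs all $\vec Q_\ga$ into a potential $\bL\in W^{1-h,(2,\nf)}$ with $d*_g\bL=*_g\vec V$ and $d\bL=0$. Constructing $\bL$ needs Hodge--Dirac right inverses at negative order with $W^{h,2}$ coefficients; a Poincar\'e lemma alone is not enough.
\item In the dilation and rotation laws, the correction terms $\vt_{\dil},\bvt_{\rot}$ sit one derivative lower, and no factor in them carries the top order. The gain estimate \eqref{eq:multicritgain} therefore places them in the non-endpoint space $\msc E_n=W^{2-h,(\frac{2h}{h+1},2)}$, with bound $C\varepsilon_{\bP}^2$.
\item Both $\de$ and $d$ of the potentials are prescribed, in particular $dS=-2\bL\dwe d\bP$ and $d\bR=-2\bL\ovs{\ovwe}{\we}d\bP$. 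With these choices, the algebraic identity of Lemma~\ref{ber-prop:II.2} eliminates $\bL$ entirely.
\item The remaining errors are curvature ($\g\vet$ or $\bII$, in $W^{h-1,2}$) times objects in $W^{2-h,(2,\nf)}$, namely $S$, $\bR$ and $d(\lap_g^{h-1}\bP)$. These are one derivative better than $\bL$, and plain Sobolev--Lorentz product inequalities put the products in $\msc E_n$.
\end{enumerate}
No Wente/CLMS compensation is used. The gain comes from the regularity gap between $\bL$ and $(S,\bR)$, and from where the derivatives fall.

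The outcome is
\[
d*_gd\vec u=d*_g\vec{\mca R}_3,\qquad \pi_N\de(\vec u\res d\bP)=(2n-6)\lap_g^h\bP+\vec{\mca R}_4,
\]
with $\vec{\mca R}_3,\vec{\mca R}_4\in\msc E_n$ of size $\varepsilon_{\bP}^2$. Together with Morrey-type estimates for $d*_gd$ and $\lap_g^{h-1}$ in negative-order spaces, this is what makes the decay iteration close. Your remaining steps are in line with the paper: harmonic charts, Adams' estimate, the bootstrap, and Morrey's analyticity theorem.
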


When $n=2$, the Gauss equation reduces $E_{\vec c}$, modulo a topological term, to the Willmore energy, and Theorem~\ref{th-main} follows from the regularity theorem in \cite{Riv08}. 
The four-dimensional case was recently proved in \cite{BerLanMarRiv26}. Thus in this work, we focus on the case $n\ge 6$.
\subsection{New challenges and strategy of the proof}
\
\vskip5pt
Let $n=2h\ge6$, and we consider a weak critical immersion $\bP\in\mathcal I_{h-1,2}(B^n,\R^m)$ in harmonic coordinates.
We write $g=g_{\bP}$ and omit the subscript $\bP$ when there is no ambiguity.

The first difficulty compared with~\cite{BerLanMarRiv26} is to derive conservation laws with precise control of the principal and lower-order terms. Methods for deriving Euler--Lagrange equations for curvature functionals were developed in~\cite{AmbMan98,Mantegazza02}. 
Conservation laws associated with ambient symmetries were also obtained in~\cite{ArrCapGuv00,CapGuv02,Guven05}.
For a scalar Lagrangian $\msc L$ and $\bw=\frac d{dt} \bP_t\big|_{t=0}$, their local first-variation formulas can be written schematically as
\begin{equation}\label{schepoivar}
    \frac d{dt}\Big(\mathscr L\big(g_t,\bII_t,\g_t\bII_t,\cdots\big)\,\dvol_{g_t}\Big)\Big |_{t=0}
    =\Big(\big\langle\g\bw,\vec f(\bP)\big\rangle_g+\mathrm{div}_g\mko \Theta(\bw,\bP)\Big)\,\dvol_g,
\end{equation}
where $g_t=g_{\bP_t}$, $\bII_t=\bII_{\bP_t}$, etc. In~\eqref{schepoivar}, $\vec f(\bP)$ is the translational stress and $\Theta(\bw,\bP)$ is a linear differential operator in $\bw$, both constructed from derivatives of $\msc L$ with respect to its geometric arguments.
In our weak-immersion framework, for well-definedness of the energy of $\bP_t$, we need the variation field $\bw\in W^{1,\infty}\cap W^{h+1,2}$. Since the Gauss map $\bn$ is controlled only in $L^\infty\cap W^{h,2}$, the normal projection $\pi_N\bw$ need not belong to this variation space, even if $\bw$ is smooth. Thus, we compute the first variation for arbitrary ambient variation fields, without decomposing them into tangential and normal components.
Moreover, to obtain the conservation laws and the corresponding estimates in Lemma~\ref{lm-dilations}, we need to write $\vec f$ and $\Theta$ in geometric contraction forms, identify the principal terms, and control the lower-order contractions. We derive the following pointwise first-variation formula (see~\eqref{ptvargwgp} and~\eqref{ptvardivgw}):
\begin{align}\label{ptvarform}\begin{aligned}
     &\frac d{dt}\bigg (\Big(\big| \g_t^{(h-1)}\mko\bII_t\big|_{g_t}^2+\sum_{\la\in \mca L} c_\la P_\la(\bP_t)\Big) \, \dvol_{g_t}\bigg)\bigg|_{t=0}\\
     &=-2\, \text{div}_g\bigg(\g \bw\cdot \lap_g^{h}\mko \bP+\sum_{\ga'\in I'} \bar a_{\ga'}\mko \bar P_{\ga'}(\bw,\bP)\bigg)\mko \dvol_g\\
    &\quad +2\mkt\bigg\lan\g \bw,\, \g \Big(\lap_{g}^{h}\mkt\bP\Big)+ \sum_{\ga\in I} a_{\ga}\mko \vec Q_{\ga}(\bP) \bigg\ran_g\mko\dvol_g.
     \end{aligned}
\end{align}
Here $I$ and $I'$ are finite index sets, and the real coefficients $a_\ga,\bar a_{\ga'}$ depend only on $\bc$. The terms $\bar P_{\ga'}(\bw,\bP)$ and $\vec Q_{\ga}(\bP)$ are partial contractions of lower differential order in $\bP$ than $\g \bw\cdot \lap_g^{h}\bP$ and $\g \Big(\lap_{g}^{h}\mkt\bP\Big)$ respectively. 
The precise contraction structure of $\vec Q_\ga(\bP)$ is also needed for a later bootstrap argument based on the Euler--Lagrange equation. 

In the four-dimensional work~\cite{BerLanMarRiv26}, the pointwise variation formula was obtained by treating each lower-order term and the principal term separately.
These computations used $\bII_{ij}=(-1)^{m-1}\bn\,\res (\bn\,\res \p_{i}\p_j\bP )$ and a formula for $\frac{d}{dt}\bn_t\big|_{t=0}$.
In higher dimensions, we need a more general method to treat the pointwise variations of the contractions in~\eqref{eq:defE}. By using invariant theory, we express the first variation of the energy density as a sum of $L^1$ contractions of covariant derivatives of $\bP$ and $\bw$. 
Repeated applications of the Leibniz rule~\eqref{eq:Leibru} and order control of commutators of covariant derivatives then yield~\eqref{ptvarform}. In particular, this leads to the Euler--Lagrange equation:
\begin{align}
\begin{dcases}\label{inteullag}
    d*_g\vec V=0,\\
    \displaystyle \vec V=d\big(\lap_g^h\bP\big)+\sum_{\ga\in I}a_\ga\vec Q_\ga(\bP),\\
    \displaystyle \vec V\in L^1+W^{-h,2}\big(B^n,\R^m\ot\R^n\big).
\end{dcases}
\end{align}

Another challenge compared to~\cite{BerLanMarRiv26} is that the conserved Noether currents and their potentials lie in spaces of increasingly negative Sobolev order as $n$ increases.
The $L^1$ component in~\eqref{inteullag} in general does not lie in $W^{-h,2}$ and we only have the endpoint embedding $L^1\hookrightarrow W^{-h,(2,\nf)}$.
Moreover, the induced metric $g$ has only critical Sobolev regularity, with coefficients in $L^\nf\cap W^{h,2}$.
The existence and estimates of the potentials thus require higher negative-order elliptic estimates with such coefficients.

Our elliptic estimates start from $W^{1,p}$ estimates for divergence-form equations with VMO coefficients.
Difference-quotient arguments yield estimates for higher positive Sobolev orders, while duality and a bounded projection from $W^{k,p}$ onto $W^{k,p}_0$ provide right inverses at negative orders.
We also establish higher-order Morrey-type estimates for divergence-form equations and for $\lap_g^{h-1}$.
The same scheme extends the $W^{1,p}$ solvability theory in~\cite{BerLanMarRiv26} for the Hodge Laplacian to the required higher Sobolev orders. In particular, this implies right-inverse estimates for the Hodge--Dirac operator at higher Sobolev orders, which will be used to construct the Noether potentials with prescribed exterior derivatives and codifferentials. 

We now return to the Euler--Lagrange equation~\eqref{inteullag}. For $\vec K=\lap_g^h\mkt\bP$, combining~\eqref{inteullag} and the lower-order structure of $\vec Q_\ga(\bP)$ with the product estimates yields
\[
    d*_g d\big(\lap_g^h \bP\big)\in W^{-h,\frac{2h}{h+1}}+W^{-1,1}\big(B^n,\R^m\ot \bwe^n \R^n\big).
\]
However, the space $W^{-1,1}$ embeds only into $W^{-h,\lf(\frac{2h}{h+1},\nf\rg)}$, and the exponent $\frac{2h}{h+1}$ lies at an excluded endpoint of Lemma~\ref{lm:ellcacciolp} for $\ell=h-1$.
Thus, the Morrey-type estimates do not imply improved regularity directly from~\eqref{inteullag}.

Using the Euler--Lagrange equation and right-inverse estimates for the Hodge--Dirac operator, we construct $\bL\in W^{1-h,(2,\nf)}\big(B^n,\R^m\ot\bwe^2\R^n\big)$ such that
\begin{numcases}{}
 d*_g\bL=*_g\vec V,\label{eq:d*gL=*gV}\\
 d\bL=0.\label{eq:dL0=0int}
\end{numcases}
The dilation and rotation invariance then yield further conservation laws through~\eqref{ptvarform}.
As in the four-dimensional case, these conservation laws are used to construct a scalar-valued $2$-form $S$ and a $\bwe^2\R^m$-valued $2$-form $\bR$ satisfying
\begin{align}\label{choiceSR}
 \begin{dcases}
 \delta S=\bL\mkt\dres d\bP+\vt_{\dil},\quad
       &dS=-2\bL\dwe d\bP,\\
 \delta\bR=\bL\wres d\bP+d\bP\we\vec K+\bvt_{\rot},\quad
       &d\bR=-2\bL\ovs{\ovwe}{\we}d\bP.
 \end{dcases}
\end{align} 
Here $\de$ is the \textit{codifferential} with respect to $g$, and $\vt_{\dil}$, $\bvt_{\rot}$ are lower-order terms. Combined with the structural identities in Section~\ref{sec:strid}, the choices of $dS$ and $d\bR$ in~\eqref{choiceSR} will lead to cancellations among the principal terms and yield an estimate for $\vec K=\lap_g^h \bP$. 
Applying the aforementioned Morrey-type estimate for $\lap_g^{h-1}$ then gives an estimate for $\lap_g\bP$. Now the harmonic coordinate condition implies
\[
    \lap_g\bP=-g^{ij} \p_i\p_j\bP.
\]
Differentiating this equation and applying the Morrey-type estimates for divergence-form equations will yield an estimate for $D^2\bP$, which closes the iteration. Here we note that the product structure of the lower-order terms is essential: for instance, we have
\[
     \|\vt_{\dil}\|_{W^{2-h,\lf(\frac{2h}{h+1},2\rg)}(B^n)}+\|\bvt_{\rot}\|_{W^{2-h,\lf(\frac{2h}{h+1},2\rg)}(B^n)}\le C(\La,n,\bc)\|D^2\bP\|_{W^{h-1,2}(B^n)}^2,
\]
where $\La$ is the ellipticity constant of $g$.
After restricting to a sufficiently small ball and rescaling, we may assume that $\|D^2\bP\|_{W^{h-1,2}(B^n)}$ is small enough to close the Morrey iteration. Once the final Sobolev--Lorentz--Morrey estimate for $D^2\bP$ is obtained, the integrability will be improved by the Riesz potential estimate~\cite{Adams75}, and the smoothness will follow by a bootstrap argument. A classical result by Morrey~\cite{Morrey58} then implies the real-analyticity of $\bP$.

The paper is organized as follows.
Section~\ref{sec-preliminaries} records basic facts on covariant derivatives, as well as Sobolev--Lorentz product estimates used to control the nonlinear terms.
Section~\ref{sec:ellestcri} establishes the elliptic and Hodge--Dirac estimates, and section~\ref{sec:strid} derives the structural identities.
Section~\ref{sec:pfmainThm} combines these ingredients to prove the main theorem.

\subsubsection*{Acknowledgements.}
The author is grateful to his PhD supervisor, Tristan Rivi\`ere, for introducing him to the higher-dimensional regularity problem in 2024.

\section{Preliminaries}\label{sec-preliminaries}
\subsection{Notation} \label{geono}
\
\vskip5pt
\begin{enumerate}[label=\textbullet, leftmargin=2em, itemindent=0pt, itemsep=1ex]
\item We write $\p_{x^i}= \frac \p{\p x^i}$, and           abbreviate to $\p_i$ when there is no ambiguity. 
\item Let $U\subset \R^n$ be an open subset and $f\colon U\to \R$. A vector $\al=(\al_1,\dots,\al_n)\in \N_0^n$ is called a \textit{multi-index} of order $|\al|=\sum_{i=1}^n \al_i$. We write 
\begin{equation}\label{eq:defDalf}
    D^\al f\coloneq \p_{x^1}^{\al_1}\cdots \p_{x^n}^{\al_n} f.
\end{equation}
Let $k\in \N_0$. We define
$$D^{k} f\coloneq \{D^\al f\colon |\al|=k\},\qquad |D^{k} f|\coloneq \bigg(\sum_{|\al|=k} |D^\al f|^2\bigg)^{\frac 12}.$$
\item In the Euclidean space $\R^n$, we define $B_r(x)\coloneq \{y\in \R^n\colon |y-x|<r\}$, and write $B_r\coloneq B_r(0)$, $B^n\coloneq B_1\subset \R^n$. 
    \item We use $C(\alpha,\beta,\dots)$ to denote a positive constant depending on $\alpha,\,\beta,\dots$ only. Similarly, when a term $A$ depends only on $\alpha,\,\beta,\dots$, we write $A=A(\alpha,\beta,\dots)$.
    \item For open sets $U$ and $V$ of a given manifold, we write $V\Subset U$ if $\overline V$ is compact and $\overline V\subset U$.
    \item Let $\Sigma$ be a closed smooth manifold, $E$ be a vector bundle over $\Sigma$, we denote by $L^{p}(\Si,E)$ the space of $L^{p}$ sections of $E$. This convention also applies to other distribution spaces.
\item We denote by $\mathcal L^n$ the Lebesgue measure on $\R^n$. For $U\subset \R^n$ with $\mathcal L^n(U)<\infty$, $f\in L^1(U)$, we write $\fint_U f\coloneq(\mathcal L^n(U))^{-1}\int _U f$.
\item For an open set $U\subset \R^n$, we denote by $\mathcal D'(U,\R^m)$ the space of $\R^m$-valued distributions on $U$ with the weak$^*$ topology, and we write $\mathcal D'(U)=\mathcal D'(U,\R)$.
 \item 
    Let $(V, g)$ be an $n$-dimensional inner product space with a positively oriented orthonormal basis $(e_1,\dots,e_n)$. The \textit{Hodge star operator} $*_g$ (see for instance \cite[Sec.~3.3]{Jost17}) is defined as the unique linear operator from $\bwe ^k V$ to $\bwe ^{n-k}V$ satisfying 
    \[
    \alpha\we *_g\, \beta=\langle \alpha,\beta  \rangle_g\, e_1\we\cdots \we e_n, \qquad \text{for all }\, \al,\beta\in \bwe^k V.\]
   When $V=\R^m$ and $g=g_{\text{std}}$ is the standard inner product on $\R^m$, we write $\star$ instead of $*_g$.
   \item  Let $U\subset \R^n$ be an open set and let $\bP\colon U\to \R^m$ be an immersion. We denote by $\g$ the Levi--Civita connection of $g=g_{\bP}$, and write $\g_i\coloneq \g_{\p/\p x^i}$.
   \item 
   Let $U\subset \R^n$ be an open set and let $g$ be a Riemannian metric on $U$. On the space of differential $\ell$-forms on $U$, we define the
\textit{codifferential} and \textit{Hodge Laplacian} (see \cite[Defs.~3.3.1--3.3.2]{Jost17}) by
\begin{equation}
\label{eq:defd*glap}
 \delta\coloneq(-1)^\ell*_g^{-1}d\,*_g=(-1)^{n(\ell+1)+1}*_gd\,*_g\qquad \text{and} \qquad \Delta_g\coloneq d\mko \delta+\delta d.
\end{equation} 
 If $\al\in C^\nf\big(U,\bwe^\ell T^* U\big)$ and $\beta \in C^\nf\big(U,\bwe^{\ell+1} T^* U\big)$ with at least one of $\al,\beta$ compactly supported in $U$, then we have
\begin{equation}\label{eq:d^*adjoi}
    \int_U \lan d\al,\beta \ran_g \,\dvol_g = \int_U \lan \al,\delta\beta\ran_g\,\dvol_g.
\end{equation}
When $\ell=0$, $\lap_g=-\g^i\g_i$ coincides with the nonnegative Laplace--Beltrami operator on functions.
 \item Throughout this paper, we use the Einstein summation convention, and set $\delta_i^j=\de_{ij}=\mathbf 1_{i=j}$. 
 \item We leave out the symbols $\otimes$ for sections of $\bwe \R^m\ot \bwe T^*U$ when there is no ambiguity. For instance, we write $\p_i\bP \,dx^j=\p_i\bP\ot dx^j$.
\item Let $U\subset\R^{n}$ be open and $\bP\colon U\to\R^{m}$ be an immersion. Set $g=g_{\bP}=\bP^*g_{\text{std}}$, where $g_{\std}$ is the Euclidean metric on $\R^m$. Denote 
\begin{gather}
\begin{gathered}\label{defgdvol}
   g_{ij}=\p_i\bP\cdot \p_j\bP,\quad (g^{ij})=(g_{ij})^{-1},\quad  \det g=\det(g_{ij}), \\
   d\textup{vol}_g=(\det g)^{\frac 12}dx^1\we\cdots \we dx^n.
   \end{gathered}
\end{gather}
The pullback metric induces a pairing on $T^* U$ with $\lan dx^i,dx^j \ran_g=g^{ij}$; we denote by $|\cdot|_{g}$
the associated pointwise norm. We also write $|\cdot|_{\R^{m}}$ for the
Euclidean norm in $\R^{m}$ (abbreviated to $|\cdot|$ when unambiguous).

Let $\pi_N$ denote the orthogonal projection onto the normal bundle of $\bP(U)\subset \R^m$. We define the \textit{Gauss map} and \textit{second fundamental form} 
\begin{gather}
\begin{gathered}\label{not-met}
\bn_{\bP}\coloneq \star \,\frac{\p_1 \bP \we \cdots \we  \p_n \bP}{|\p_1 \bP\we\cdots\we\p_n \bP|},\\
\bII_{\bP}(X,Y)\coloneq\pi_N X(d\bP(Y)),\qquad \text{for all } X,Y\in T_p U\text{ and all } p\in U.
\end{gathered}
\end{gather}
We shall often omit the subscript ${\bP}$ when there is no ambiguity. We denote 
\[\bII_{ij}\coloneq \bII\Big(\frac \p{\p x^i},\frac{\p}{\p x^j}\Big) \quad\text{and}\quad \bII_i^j\coloneq g^{jk}\bII_{ik}.
\]
The \textit{mean curvature vector} is defined by
\[
\bH\coloneq \frac 1n \mkt \mbox{tr}_{g}\,\bII=\frac 1n\mkt g^{ij}\,\bII_{ij}.
\]

 \item  We define a bilinear map $\dwe\colon\big( \R^m\ot\bwe^{k_1} T^*_p U\big)\times\big(\R^m\ot\bwe^{k_2} T^*_pU\big) \rightarrow  \bwe^{k_1+k_2} T^*_pU$ by 
 \[
              (\vec u_1 \mkt dx_I)\dwe(\vec u_2\mkt dx_J)=( \vec u_1\cdot  \vec u_2) \mkt dx_I\wedge dx_J,
\]
where $\vec u_1,\vec u_2\in\R^m$, $I,J $ are multi-indices. If one of $k_1,k_2$ is $0$, we use $\cdot$ instead of $\dwe$ for convenience. Similarly, for $\vec v,\vec v_1,\vec v_2\in \bwe\R^m$ and multi-indices $I,J$, we define bilinear maps $\we$ and $\overset{\ovwe}{\wedge}$ by 
\begin{align}\label{defbiopes}\begin{aligned}
    dx_I \we (\vec v\, dx_J)&=\vec v\, dx_I \we dx_J,\\
         \vec v_1 \we (\vec v_2\mkt dx_J)&=( \vec v_1 \we  \vec v_2)\mkt  dx_J,\\
         (\vec v_1 \mkt dx_I)\overset{\ovwe}\wedge(\vec v_2\mkt dx_J)&=( \vec v_1 \we  \vec v_2) \mkt dx_I\wedge dx_J.
         \end{aligned}
\end{align}
\item 
Unless explicitly stated otherwise, for an open set $U\subset \R^n$, we identify $T_x^*U$ with $\R^n$ for $x\in U$, and write $D$, $|\cdot|$ for the flat Euclidean derivative and
pointwise tensor norms. We reserve $\g$, $\lan\cdot,\cdot \ran_g$, and $|\cdot|_g$ for the metric $g$. 
For a tensor field $A=(A_{I})$, we say $A\in W^{k,p}(U)$ if $A_{I}\in W^{k,p}(U)$ for each $I=(i_1,\dots,i_s)\in \{1,\dots,n\}^s$ and we set
\begin{equation}\label{eq:convsonab}
    |DA|\coloneq\bigg(\sum_{i=1}^n\sum_{I} |\partial_i\mko A_{I}|^2\bigg)^{\frac 12},\qquad 
    \|A\|_{W^{k,p}(U)}\coloneq\bigg(\sum_{I} \|A_I\|^2_{W^{k,p}(U)}\bigg)^{\frac 12}.
\end{equation}
The same convention applies to any Banach distribution space, e.g., $W^{k,(p,q)}$.
\end{enumerate}
\subsection{On iterated covariant derivatives}\label{sec:cov}
\vskip5pt
\begin{Dfi}\label{def:cov}
    Let $U\subset \R^n$ be an open set and let $\bP\colon U\to \R^m$ be an immersion. We denote by $\g$ the Levi--Civita connection of $g=g_{\bP}$. Let $A=A_{j_1\dots j_s} \mko dx^{j_1}\ot\cdots \ot dx^{j_s}$ be a $(0,s)$-tensor field on $U$. For $k\in \N$, we define the $(0,k+s)$-tensor field $\g^{(k)}A$ inductively by:
    \begin{align}\begin{aligned}\label{eq:defitecov}
       & \g^{(0)}A=A,\\[0.3ex]
       &\big(\g^{(k)}A\big)_{ i_1\dots i_k j_1\dots j_s}\coloneq \big(\g_{i_1}(\g^{(k-1)}A)\mko\big)_{i_2\dots i_kj_1\dots  j_{s}}, \quad k\ge 1.
        \end{aligned}
    \end{align}
    We also use the notation
    \begin{equation}\label{eq:notitecov}
        \g_{i_1\dots i_k}A_{j_1\dots j_s}\coloneq \big(\g^{(k)}A\big)_{ i_1\dots i_k j_1\dots j_s}.
    \end{equation}
 We define the $\R^m$-valued $(0,k+2)$-tensor field $\g^{(k)} \mko\bII$ by the same inductive formula, where, by abuse of notation, $\g$ denotes the connection on $\R^m$-valued tensor fields induced by the Levi--Civita connection of $g_{\bP}$ on $T^* U$ and the flat connection $D$ on $\R^m$.
Similarly, the \textit{iterated normal covariant derivatives} of the second fundamental form are defined by
\begin{equation}
\begin{aligned}\label{defnorcov}
        (\g^\perp)^{(0)}\mko\bII&\coeq\bII,\\ \big(\mko(\g^\perp)^{(k)}\mko  \bII\big)_{ i_1\dots i_{k+2}}&\coloneq \pi_N \Big(\g_{i_1}\big(\mko(\g^\perp)^{(k-1)}\mko\bII\big)\mko\Big)_{i_2\dots i_{k+2}}, \quad k\ge 1.
        \end{aligned}
\end{equation}
\end{Dfi}

In the notation~\eqref{eq:defitecov}--\eqref{eq:notitecov}, the covariant derivative on tensor fields satisfies the usual Leibniz rule:
\begin{align}\label{eq:Leibru}
    \g_i(A_{i_1\dots i_k} B_{j_1\dots j_s})=B_{j_1\dots j_s}\g_i A_{i_1\dots i_k} +A_{i_1\dots i_k}\g_i B_{j_1\dots j_s}.
\end{align}
Expanding the identity $\g^{(k)}(\g\bP\cdot \bII)=0$ yields the following lemma.
\begin{Lm}\label{lm:gpdot2ff}
    For each $k\in \N^+$, the $(0,k+3)$-tensor $\g\bP\cdot \g^{(k)}\mko\bII$ is a linear combination, up to permutations of indices, of the tensors 
    $$ \g^{(\ell)}\mko\bII\cdot \g^{(k-1-\ell)}\mko\bII,\qquad 0\le\ell \le k-1. $$
\end{Lm}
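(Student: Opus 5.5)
Since $\bII_{ij}=\pi_N\p_i\p_j\bP$ is normal, we have $\g_p\bP\cdot\bII_{ij}=0$ for every $p,i,j$, i.e.\ the $(0,3)$-tensor $\g\bP\cdot\bII$ vanishes identically. The plan is to apply $\g^{(k)}$ to this identity and expand with the Leibniz rule~\eqref{eq:Leibru}:
\[
0=\g^{(k)}\big(\g\bP\cdot\bII\big)=\sum_{r=0}^{k}\ \sum_{\text{index splittings}}\g^{(r)}(\g\bP)\cdot\g^{(k-r)}\mko\bII ,
\]
where the inner sum runs over the ways of distributing the $k$ differentiation indices $i_1,\dots,i_k$ between the two factors (each term is a permutation of indices of a tensor product). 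The term with $r=0$ is exactly $\g\bP\cdot\g^{(k)}\mko\bII$ with the differentiation indices in front. So it suffices to show that every term with $r\ge1$ is, up to permutation of indices, of the form $\g^{(\ell)}\mko\bII\cdot\g^{(k-1-\ell)}\mko\bII$.

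For this, the key step is the identity $\g^{(2)}\bP=\g\g\bP=\bII$: indeed $\g_i\g_j\bP=\p_i\p_j\bP-\Gamma_{ij}^p\p_p\bP$, which is the normal part $\pi_N\p_i\p_j\bP$ because the Christoffel symbols of $g_{\bP}$ are exactly the tangential coefficients $\Gamma_{ij}^p=g^{pq}\p_q\bP\cdot\p_i\p_j\bP$. Hence for $r\ge1$, $\g^{(r)}(\g\bP)=\g^{(r+1)}\bP=\g^{(r-1)}\mko\bII$ (with the index ordering of Definition~\ref{def:cov}, so that the last two indices are those of $\bII$ after the appropriate relabeling). Therefore each term with $r\ge1$ equals $\g^{(r-1)}\mko\bII\cdot\g^{(k-r)}\mko\bII$ up to permuting indices, and setting $\ell=r-1\in\{0,\dots,k-1\}$ gives exactly the claimed form. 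Moving these terms to the other side (with multiplicities given by binomial-type coefficients from the index splittings) yields the lemma.

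The only point requiring care is bookkeeping: the Leibniz rule applied iteratively produces terms in which the derivative indices are interleaved with the original ones, and the expression $\g^{(r)}(\g\bP)$ naturally has indices in the order (differentiation indices, then $p$), so one must check that identifying it with $\g^{(r-1)}\mko\bII$ only involves a permutation of indices (no commutators of covariant derivatives arise, since we never reorder derivatives within a single factor — we only reorder tensor slots of the product). This is the step I would expect to be the (mild) main obstacle; everything else is a direct expansion. Also one should note that the statement is pointwise/algebraic for smooth immersions, and by density the same identity holds in the weak setting where both sides make sense.
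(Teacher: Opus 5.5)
Your proposal is correct and is essentially the paper's argument: the paper obtains the lemma exactly by expanding $\g^{(k)}(\g\bP\cdot\bII)=0$ with the Leibniz rule~\eqref{eq:Leibru} and using $\g^{(2)}\bP=\bII$. Your bookkeeping concern is even milder than you suggest. By the inductive Definition~\ref{def:cov}, $\g^{(r-1)}\big(\g^{(2)}\bP\big)=\g^{(r+1)}\bP$ holds with identical index order, so the only permutations needed are those of the tensor slots produced by the Leibniz expansion.
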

We next compare $\g^{(k)}\mko\bII$ and $(\g^\perp)^{(k)}\mko\bII$ for $k\in\N^+$. Since $(\g^\perp)^{(k-1)} \mko\bII$ is normal-valued, we compute 
$$
    \g_i \big(\mko(\g^\perp)^{(k-1)}\mko \bII\big) \cdot \p_j\bP=-\bII_{ij}\cdot (\g^\perp)^{(k-1)}\mko \bII. $$
It follows that
$$
    (\g^\perp)^{(k)}\mko \bII= \g  \big(\mko(\g^\perp)^{(k-1)}\mko \bII\big)+  dx^i\ot \Big(\big(\bII_i^j \cdot  (\g^\perp)^{(k-1)}\mko \bII\big) \mko\p_j\bP\Big).$$
Since $\g^{(2)}\bP= \bII$, an induction argument together with~\eqref{eq:Leibru} implies that for every $k\ge 1$, we can write 
\begin{equation}\label{naperpdif}
     (\g^\perp)^{(k)}\mko \bII=\g^{(k)}\mko \bII +\sum_{r\in R_k} a_{r,k}\mko \vec P_{r,k}.   
\end{equation}
Here $R_k$ is a finite index set, $a_{r,k}\in \R$, and each $\vec P_{r,k}$ is a partial contraction (with $k+2$ free indices) in the form (see~\cite{Alexakis12,mondino2018} for notation on contractions):
\begin{equation}\label{parrepdif}
    \text{pcontr}\Big(\big(\g^{(k_1)}\mko\bII\cdot \g^{(k_2)}\mko\bII\big)\ot\cdots \ot \big(\g^{(k_{2s-1})}\mko\bII\cdot \g^{(k_{2s})}\mko\bII\big)\ot \g^{(k_0)}\bP \Big),
\end{equation}
for suitable nonnegative integers $s$ and $k_i$ ($0\le i\le 2s$) depending on $r$. Moreover, we have 
\begin{align}\label{condsksgp-g}
    \begin{dcases}
    1\le k_0\le k,\\[1ex]
    \forall 1\le i\le 2s,\qquad 0\le k_i\le k-1,\\
    \sum_{i=0}^{2s} (k_i+1)=k+3.
    \end{dcases}
\end{align} 
This shows that the class of geometric functionals in~\eqref{eq:defE} is unchanged when $\g$ in~\eqref{eq:comcon} and~\eqref{eq:defE} is replaced by $\g^\perp$.

Now let $x\in U$. In $g$-geodesic normal coordinates centered at $x$, each $D^\al (g_{ij})(x)$ with $1\le i,j\le n$ and $|\al|\ge 2$ can be expressed as a universal polynomial in the components at $x$ of the Riemann curvature tensor of $g$ and its iterated covariant derivatives. We then expand the iterated covariant derivatives using the Christoffel symbols and~\eqref{Gaseq}. Thus, for any $k\ge 3$, indices $1\le i_1,\dots,i_k\le n$, and any $\R^m$-valued function $\vec f$ defined on $U$, we can write
\begin{equation}\label{difcovcan}
\p_{i_1}\cdots \p_{i_k}\vec f(x)=\g_{i_1\dots i_k}\vec f(x)+\vec P_{i_1\dots i_k}\Big(\bII(x),\dots, \g^{(k-3)}\mko \bII(x),\g \vec f(x),\dots,\g^{(k-1)}\vec f(x)\Big),
\end{equation}
where the ambient components of $\vec P_{i_1\dots i_k}$ are universal polynomials in the components of the iterated covariant derivatives of $\vec f$ and $\bII$ at $x$. 
Setting $\vec f=\bP$ in~\eqref{difcovcan} and combining this with Lemma~\ref{lm:gpdot2ff} and the invariance-theory arguments in~\cite{Atiyah73,grahamkuo26}, we obtain the classification of geometric quantities appearing in~\eqref{eq:firdefE}.
\subsection{Hodge star and the contraction operators}\label{sec:uselem}
\
\vskip5pt
We define the \textit{interior product} and \textit{first order contraction} between multivectors (see also \cite[Sec.~1.5]{Federer96} and \cite[Sec.~I]{Riv08}).
\begin{Dfi}\label{defresbul}
    Let $(V,g)$ be a finite-dimensional inner product space. For $\alpha\in \bwe^pV$ and $\beta\in \bwe^q V$ with $q\le p$, we define the \textit{interior product} $\alpha\,\resg \beta\in \bwe^{p-q}V$ satisfying 
    \begin{align}\label{eq:defres}
    \langle \alpha\,\resg \beta,  \gamma\rangle_g=\langle \alpha, \beta\wedge \gamma\rangle_g,\qquad \text{for all }\gamma\in \bwe^{p-q}V. 
    \end{align}
For $q>p$, set $\al\, \resg \beta\coloneq0$. We also define the \textit{first order contraction} $\bulg \mko \colon \bwe^{p} V\times \bwe^{q} V\to \bwe^{p+q-2} V$ as follows. For $\al\in \bwe^p V$ and $\beta\in V$, set $\al \bulg \beta\coloneq\al\mko\resg \beta$; and for $\beta\in \bwe^{q_1} V$, $\ga\in \bwe^{q_2} V$, it holds that
\begin{align}\label{eq:defbul}
    \al\bulg(\beta \we \ga)=(\al\bulg \beta) \we \ga+(-1)^{q_1q_2}  (\al \bulg \ga)\we \beta.
\end{align}
\end{Dfi}
    In particular, if $\alpha,\beta \in \bwe^pV$, then we have $\alpha\,\resg\,\beta=\langle \alpha,\beta \rangle_g$. In addition, for $\al\in \bwe^p V$, $\beta\in\bwe V$, $v\in V$, we have the following identities:
   \begin{numcases}{}   *
   _g\,(\alpha\wedge \beta )=(*_g\,\alpha)\,\resg \beta \label{*_gcommwed},\\[0.4ex]
   (\al\we \beta)\,\resg v=(\al\,\resg v) \we \beta+(-1)^p\, \alpha\we (\beta\,\resg v),\label{prodrule}
   \end{numcases}\smallskip
  A consequence of \eqref{*_gcommwed} is that, for $\al\in \bwe^p V$ and $\beta\in \bwe^q V$ we have
  \begin{align}\label{*_gcomres}
      *_g\, (\al\, \resg \beta)=\begin{dcases}
          (*_g\, \al) \we \beta,& \text{if }\dim(V) \text{ is odd},\\[0.4ex]
          (-1)^q (*_g\, \al) \we \beta, \quad& \text{if }\dim(V) \text{ is even}.
      \end{dcases}
  \end{align} 
  Combining~\eqref{eq:defbul} and \eqref{prodrule}, for $u_1,u_2,v_1,v_2\in V$, we also obtain
\begin{align}\label{bul2vecs}
\begin{aligned}
    (u_1\we u_2) \bulg (v_1\we v_2)
    &=\lan u_1,v_1\ran_g \,u_2\we v_2+\lan u_2,v_2\ran_g \,u_1\we v_1\\
    &\quad-\big(\lan u_1,v_2\ran_g \mkt u_2\we v_1+\lan u_2,v_1\ran_g \,u_1\we v_2\big).
    \end{aligned}
\end{align}

When $V=\R^m$ with the Euclidean metric, we write $\,\res,\sbul$ instead of $\,\resg,\bulg$, and denote by $\cdot$ the inner product on $\bwe \R^m$ induced by the Euclidean metric. 

\subsection{Sobolev--Lorentz spaces and product inequalities}\label{sec:soblor}
\
\vskip5pt
We equip sums and intersections of compatible Banach spaces with their standard norms; see for instance~\cite[Ch.~5]{Bennett88}. We write $L^p+L^q(U)\coloneq L^p(U)+L^q(U)$. The same convention applies to Sobolev--Lorentz spaces.

\begin{Dfi}[Lorentz spaces]\label{def-Lor}
Let $U\subset\R^{n}$ be a measurable set. Given a measurable function $f\colon U\to\R$, we define the distribution function and the decreasing rearrangement of $f$ as
\[
d_{f}(\lambda)\coloneq\mathcal{L}^{n}\bigl\{x\in U:|f(x)|>\lambda\bigr\},
\qquad
f^{*}(t)\coloneq\inf\bigl\{\lambda\ge 0 : d_{f}(\lambda)\le t\bigr\}.
\]
For $1\le p<\infty$ and $1\le q\le\infty$, we define the Lorentz quasinorm
\[
|f|_{L^{p,q}(U)}
\coloneq\bigl\|t^{\frac1p}f^{*}(t)\bigr\|_{L^{q}(\R_{+},\,dt/t)}
    =p^{\frac1q}\bigl\|\lambda\,d_{f}(\lambda)^{\frac1p}\bigr\|_{L^{q}(\R_{+},\,d\lambda/\lambda)}.
\]
The Lorentz space $L^{p,q}(U)$ consists of all measurable $f$ with
$|f|_{L^{p,q}(U)}<\infty$. 
When $p>1$, the Lorentz quasinorm is equivalent to a norm (see for instance \cite[Ch.~4, Thm.~4.6]{Bennett88}), which we denote by
$\|\cdot\|_{L^{p,q}(U)}$.
\end{Dfi}
\begin{Dfi}
    \label{dfi-So-Lor}
    Let $k\in \N^+$ and $U\subset \R^n$ be a bounded open set. For $1< p< \infty$, $1\le q\le \infty$, set 
    \[
    W^{k,(p,q)}(U)\coloneq\Big\{f\in L^{p,q}(U)\colon \p^\al f\in L^{p,q}(U) \text{ for each } 0\le |\al|\le k\Big\}.
    \]
   We define the scale-homogeneous Sobolev--Lorentz norms
    \begin{equation}\label{eq:uniSobnm}
        \|f\|_{W^{k,(p,q)}(U)}\coloneq 
              \sum_{i=0}^k \mca L^n(U)^{-\frac in}\mkt\|D^{k-i}f\|_{L^{p,q}(U)}.
    \end{equation}
    The same convention applies to Sobolev norms. In particular, if $1<p<\frac nk$, then H\"older's inequality for Lorentz spaces~\cite[Thm.~4.5]{Hunt} and the Sobolev--Lorentz embeddings (see e.g.~\cite[Sec.~2.1]{deLonGas21}) imply that for all $r>0$ and $f\in W^{k,(p,q)}(B_r)$,
    \begin{align}\label{equivSobnorm}
    C_1\|f\|_{W^{k,(p,q)}(B_r)}\le \sum_{i=0}^k \|D^{k-i}f\|_{L^{\frac{pn}{n-pi},q}(B_r)}\le C_2\|f\|_{W^{k,(p,q)}(B_r)},
    \end{align}
    where $C_1,C_2>0$ are constants independent of $r$ and $f$.
    We also define the negative-order Sobolev--Lorentz space and its norm by
\begin{align}\label{negSobnm}
\begin{aligned}
   &W^{-k,(p,q)}(U)\coloneq\bigg\{f\in\mathcal D'(U)\colon f=\sum_{|\alpha|= k} D^\alpha f_{\alpha} \mbox{ for some }\{f_{\al}\}\subset L^{p,q}(U)\bigg\},\\
       &\|f\|_{W^{-k,(p,q)}(U)}\coloneq\inf \bigg\{\sum_{|\alpha|= k}\|f_\alpha\|_{L^{p,q}(U)}\colon f=\sum_{|\alpha|= k} D^\alpha f_{\alpha}\bigg\}.
       \end{aligned}
\end{align}
For $1\le p\le \nf$, the spaces $W^{-k,p}(U)$ and $W^{k,p}(U)$ are defined analogously. \\
When $U$ is bounded, we denote by $W_0^{k,p}(U)$ the closure of $C_c^\infty(U)$ in $W^{k,p}(U)$, equipped with the norm $\|f\|_{W^{k,p}_0(U)}\coloneq\|D^k f\|_{L^p(U)}$.
     \end{Dfi}

Let $U\subset \R^n$ be an open set, and let $k\in \N_0$ with $k\le n-1$. For any $p\in [\frac n{n-k}, \nf)$, $q\in (1,\frac nk]$ (we set $\frac n0\coloneq \nf$), and $r\in [1,\nf]$, we define 
\begin{align}
\begin{aligned}\label{defX1X2}
  X^{-k,(p,r)}(U)&\coloneq \begin{cases}W^{-k,(p,r)}(U) &\text{if }p>\frac n{n-k},\\
   L^1+W^{-k,p}(U) &\text{if }p=\frac n{n-k},
   \end{cases}\\[0.3ex] 
   X^{k,(q,r)}(U)&\coloneq \begin{cases}
        W^{k,(q,r)}(U) &\text{if }q<\frac nk,\\
        L^\nf \cap W^{k,q}(U) &\text{if }q=\frac nk.
    \end{cases}
    \end{aligned}
\end{align}
We record the following estimates for products in Sobolev--Lorentz spaces, which follow by the Leibniz-rule arguments in~\cite[Thms.~2.37--2.38 and~2.54--2.55]{deLongueville19} together with H\"older's inequality for Lorentz spaces~\cite[Thm.~4.5]{Hunt} and the Sobolev--Lorentz embeddings (see e.g.~\cite[Lem.~II.7]{BerLanMarRiv26}). See also~\cite[Eq.~(1)]{deLonGas21}.
\begin{Lm}\label{lm:proinene}
Let $U\subset \R^n$ be a bounded Lipschitz domain, and let $k\in \N_0$ with $k\le n-1$. 
Let
$$
\frac n{n-k}\le p_1,p_2<\nf,\qquad 1< q_0,q_1,q_2\le \frac nk,\qquad 1\le r_0,r_1,r_2\le \nf.
$$ 
We assume that
\[\frac 1{p_1}+\frac 1{q_0}=\frac 1{p_2}+\frac kn, 
\qquad \frac 1{q_1}+\frac 1{q_0}=\frac 1{q_2}+\frac kn,
\qquad \frac 1{r_2}\le \frac 1{r_0}+\frac 1{r_1}.\]
For the endpoint cases, we further assume that for any $j\in\{0,1,2\}$ and $i\in \{1,2\}$,
\[q_j=\frac nk \;\;\Longrightarrow\;\; r_j=\nf, \qquad \qquad  p_i=\frac n{n-k} \;\;\Longrightarrow \;\;r_i=1.
\] Then for all $T\in X^{-k,(p_1,r_1)}(U)$, $f_0\in X^{k,(q_0,r_0)}(U)$, and $f_1\in X^{k,(q_1,r_1)}(U)$, the following inequalities hold:
\begin{align}\label{eq:fTproinene}
      \|f_0T\|_{X^{-k,(p_2,r_2)}(U)}&\le C(U,p_1,r_1, q_0,r_0) \mko\|T\|_{X^{-k,(p_1,r_1)}(U)} \|f_0\|_{X^{k,(q_0,r_0)}(U)},\\
\label{eq:ffproinepo}
       \|f_0f_1\|_{X^{k,(q_2,r_2)}(U)}&\le C(U,q_0,r_0,q_1,r_1)\mko \|f_1\|_{X^{k,(q_1,r_1)}(U)} \|f_0\|_{X^{k,(q_0,r_0)}(U)}.
\end{align}
\end{Lm}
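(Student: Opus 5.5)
We prove the two estimates in Lemma~\ref{lm:proinene} by reducing to H\"older's inequality in Lorentz spaces combined with the Sobolev--Lorentz embeddings, applied term by term after a Leibniz expansion. The positive-order estimate~\eqref{eq:ffproinepo} is handled first. The negative-order estimate~\eqref{eq:fTproinene} is then obtained by writing $T$ as a sum of derivatives of Lorentz functions and moving derivatives off $T$ by a reverse Leibniz rule.

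\textbf{The positive-order estimate.} For $|\alpha|\le k$ we expand
\[
D^\alpha(f_0f_1)=\sum_{\beta\le\alpha}\binom{\alpha}{\beta}D^\beta f_0\,D^{\alpha-\beta}f_1 .
\]
By~\eqref{equivSobnorm}, when $q_j<\frac nk$ we have $D^{i}f_j\in L^{\frac{nq_j}{n-q_j(k-i)},r_j}$. When $q_j=\frac nk$, the intermediate derivatives $D^if_j$ with $i<k$ lie in $L^{\frac ni,\,\frac nk}$ (or in $L^\nf$ for $i=0$) by the Sobolev--Lorentz embedding for $W^{k,n/k}$. We apply H\"older's inequality for Lorentz spaces to each product $D^\beta f_0\,D^{\alpha-\beta}f_1$. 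The exponent relation $\frac1{q_1}+\frac1{q_0}=\frac1{q_2}+\frac kn$ is exactly what makes the integrability exponents add up to the one required for $D^{|\alpha|}(f_0f_1)$ in $W^{k,(q_2,r_2)}$. The condition $\frac1{r_2}\le\frac1{r_0}+\frac1{r_1}$ handles the second Lorentz index, using the monotonicity of $L^{p,q}$ in $q$.

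The endpoint terms in which all derivatives fall on one factor while the other is only in $L^\nf$ are handled directly by $L^\nf\times L^{q,r}\to L^{q,r}$. This is where the requirement $r_j=\nf$ when $q_j=\frac nk$ enters. When $q_2=\frac nk$, the same expansion gives $W^{k,n/k}$, and $L^\nf$ is trivial for the product. The scale-homogeneous norms~\eqref{eq:uniSobnm} are compatible with this because each term scales correctly.

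\textbf{The negative-order estimate.} By duality, it suffices to treat $p_i>\frac n{n-k}$ in the generic case. Write $T=\sum_{|\gamma|=k}D^\gamma T_\gamma$ with $T_\gamma\in L^{p_1,r_1}$. Then
\[
f_0D^\gamma T_\gamma=\sum_{\beta\le\gamma}c_{\beta\gamma}\,D^{\gamma-\beta}\big(T_\gamma\,D^\beta f_0\big),
\]
with universal coefficients; this is the reverse Leibniz identity, proved by induction on $|\gamma|$. Each term $T_\gamma D^\beta f_0$ lies in $L^{s,r_2}$ with $\frac1s=\frac1{p_1}+\frac1{q_0}-\frac{k-|\beta|}n$ by H\"older and the embedding above. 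Since $|\gamma-\beta|=k-|\beta|$ derivatives remain, we must place $D^{\gamma-\beta}(\cdot)$ in $W^{-k,(p_2,r_2)}$. For this we use the embedding
\[
W^{-j,(s,r)}\hookrightarrow W^{-k,(p_2,r)},\qquad \frac1s-\frac jn=\frac1{p_2}-\frac kn .
\]
This embedding is the dual of the Sobolev--Lorentz embedding. It is realized on a bounded Lipschitz domain by writing $g=\sum_{|\mu|=k-j}D^\mu G_\mu$ with $G_\mu$ obtained from Riesz potentials of an extension, and it costs a constant depending on $U$.

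\textbf{Endpoint cases.} If $p_1=\frac n{n-k}$ (so $r_1=1$) and $T=T_1+T_2\in L^1+W^{-k,p_1}$, the $L^1$ part is multiplied by $f_0\in X^{k,(q_0,r_0)}$. When $q_0=\frac nk$ we get $f_0\in L^\nf$ and $f_0T_1\in L^1$ directly. Otherwise $p_2>p_1$ and we need $L^1\cdot f_0\in W^{-k,(p_2,r_2)}$. This follows by duality from~\eqref{eq:ffproinepo}: for $\varphi\in W^{k,(p_2',r_2')}_0$, we have $f_0\varphi\in W^{k,(n/k,\cdot)}\cap L^\nf$, and the pairing with $T_1$ is bounded. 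The $W^{-k,p_1}$ part is treated as in the generic case.

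\textbf{Main obstacle.} We expect the main obstacle to be exactly this endpoint bookkeeping, in particular verifying $L^1\hookrightarrow W^{-k,(\frac n{n-k},\nf)}$-type statements with the correct second Lorentz index. Throughout, we rely on the cited results~\cite{deLongueville19,Hunt} for the Lorentz-space interpolation facts.
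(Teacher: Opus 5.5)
Your route is the same as the paper's, which just invokes the Leibniz-rule arguments of de~Longueville together with H\"older's inequality in Lorentz spaces and the Sobolev--Lorentz embeddings. Your treatment of \eqref{eq:ffproinepo}, and of \eqref{eq:fTproinene} when $p_1,p_2>\frac n{n-k}$, is sound. One minor point: the index relation in your dual embedding has the wrong sign. It should read $\frac1s+\frac jn=\frac1{p_2}+\frac kn$, which is what your own $s$ satisfies with $j=k-|\beta|$.

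The endpoint part of \eqref{eq:fTproinene}, however, has a genuine gap: it treats a case that cannot occur and skips the one that does. The exponent relation gives $\frac1{p_2}-\frac1{p_1}=\frac1{q_0}-\frac kn\ge 0$, so $p_2\le p_1$ always. Hence $p_1=\frac n{n-k}$ together with $p_2\ge\frac n{n-k}$ forces $q_0=\frac nk$ and $p_2=p_1$, and your ``otherwise $p_2>p_1$'' branch is vacuous. The duality argument offered there could not work anyway: for $q_0<\frac nk$, $f_0$ is unbounded, and $f_0T_1$ with $T_1\in L^1$ need not even be a distribution.

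The endpoint that genuinely needs an argument is $p_2=\frac n{n-k}$ (so $r_2=1$). This occurs exactly when $\frac1{p_1}+\frac1{q_0}=1$, e.g.\ for $p_1>\frac n{n-k}$ and $q_0=p_1'<\frac nk$. There the top-order term $\beta=\gamma$ of your reverse Leibniz expansion satisfies $T_\gamma D^\gamma f_0\in L^{1,\sigma}$ with $\frac1\sigma=\frac1{r_0}+\frac1{r_1}\ge 1$. Your generic step would then need $L^1\hookrightarrow W^{-k,\frac n{n-k}}$, which is false: by duality it would give $W^{k,n/k}_0\hookrightarrow L^\infty$, and indeed $I_k$ maps $L^1$ only into $L^{\frac n{n-k},\infty}$.

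Your remark ``by duality, it suffices to treat $p_i>\frac n{n-k}$'' does not cover this. Duality only bounds $f_0T$ as a functional on $L^\infty\cap W^{k,n/k}_0$, and such functionals need not lie in $L^1+W^{-k,\frac n{n-k}}$ (e.g.\ $\delta_0$). The same problem affects your claim that the $W^{-k,p_1}$ part at $p_1=\frac n{n-k}$ is ``treated as in the generic case'', since there $T_\gamma D^\gamma f_0\in L^{\frac n{n-k}}\cdot L^{\frac nk}\subset L^1$.

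The repair is easy, and it is precisely why $X^{-k,(p,r)}$ is defined as $L^1+W^{-k,p}$ at $p=\frac n{n-k}$ and why $r_2=1$ is imposed. Send the top-order term to the $L^1$ summand, using $L^{1,\sigma}\subset L^1$ for $\sigma\le 1$. Every term with $|\beta|<k$ has $\frac1s=1-\frac{k-|\beta|}n<1$, with second Lorentz index at most $1$. These terms are handled exactly as in your generic case and land in $W^{-k,(\frac n{n-k},1)}\subset W^{-k,\frac n{n-k}}$.
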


For $0\le k\le n-1$, $p\in [\frac n{n-k}, \nf)$, and $q\in (1,\frac nk]$, we denote \[
    X^{k,q}(U)\coloneq X^{k,(q,q)}(U),\qquad X^{-k,p}(U)\coloneq X^{-k,(p,p)}(U).
\]
Combining~\eqref{eq:fTproinene}--\eqref{eq:ffproinepo} with the Sobolev--Lorentz embeddings, we obtain the following multiproduct estimates.

\begin{Co}\label{cor:multiproinene}
Let $n=2h\ge4 $ with $h\in \N^+$, and let $U\subset\R^n$ be a bounded Lipschitz domain.
Let $N\in\N^+$ and $a_1,\ldots,a_N\in\N_0$ satisfy
\[
    a\coloneq\sum_{i=1}^N a_i\le n.
\]
Then for all $f_i\in X^{h-a_i,2}(U)$, $1\le i\le N$, their product belongs to $X^{h-a,2}(U)$, with
\begin{equation}\label{eq:multiposine}
    \bigg\|\prod_{i=1}^N f_i\bigg\|_{X^{h-a,2}(U)}\le C(U,N)\prod_{i=1}^N\|f_i\|_{X^{h-a_i,2}(U)}.
\end{equation}
If we assume in addition that $a_i\le a-1$ for all $1\le i\le N$, then we have
\begin{equation}\label{eq:multicritgain}
    \bigg\|\prod_{i=1}^N f_i\bigg\|_{X^{h+1-a,\lf(\frac{2h}{h+1},2\rg)}(U)}\le C(U,N)\prod_{i=1}^N\|f_i\|_{X^{h-a_i,2}(U)}.
\end{equation}
\end{Co}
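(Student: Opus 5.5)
The plan is to prove both estimates by induction on $N$, reducing everything to the two-factor estimates \eqref{eq:fTproinene}--\eqref{eq:ffproinepo} of Lemma~\ref{lm:proinene}, taken with $k=h$ or $k=h-a$ as appropriate.

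\textbf{Bookkeeping.} For $n=2h$, the space $X^{h-a_i,2}$ scales like $L^{n/a_i}$. If $a_i\le h$, then $X^{h-a_i,2}=X^{h-a_i,(2,2)}$ is the positive-order space; it coincides with the critical space $L^\nf\cap W^{h,2}$ when $a_i=0$, and equals $W^{h-a_i,2}$, with $2<\frac{n}{h-a_i}$, when $a_i\ge1$. If $a_i>h$, then $X^{h-a_i,2}$ is a negative-order space. It is the endpoint space $L^1+W^{h-a_i,2}$ exactly when $2=\frac{n}{n-(a_i-h)}$, that is, when $a_i=n$.

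Since $\sum a_i\le n$, at most one factor has $a_i>h$, and at most one factor has $a_i=n$. In the latter case all other $a_j=0$, so those factors lie in $L^\nf\cap W^{h,2}$.

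I will first reduce to factors with $a_i\ge1$. Multiplication by an element of $L^\nf\cap W^{h,2}$ preserves every space $X^{h-b,2}$ with $0\le b\le n$. For $b\le h$ this is \eqref{eq:ffproinepo} with $q_0=\frac nk$, $k=h-b$. For $b>h$ it is \eqref{eq:fTproinene} with $q_0=\frac{n}{k}$, $k=b-h$, using that $L^\nf\cap W^{h,2}\hookrightarrow L^\nf\cap W^{b-h,\frac{n}{b-h}}$.

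\textbf{Proof of \eqref{eq:multiposine}.} Peel off one factor at a time, keeping track of scaling exponents.

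If all remaining $a_i\le h$ and their sum is at most $h$, apply \eqref{eq:ffproinepo} with $k=h-a$. Here one uses the embedding $W^{h-a_i,2}\hookrightarrow W^{h-a,(\frac{n}{h-a+a_i}\cdot\text{adj})}$ via Sobolev--Lorentz. Concretely, $f_i\in W^{h-a,(q_i,2)}$ with $\frac1{q_i}=\frac{a_i}{n}+\frac{h-a}{n}$, and the relation $\frac1{q_1}+\frac1{q_0}=\frac1{q_2}+\frac kn$ then matches exactly the target $q_2=2$. The Lorentz indices combine as $\frac12\le\frac12+\frac12$.

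If the partial sum exceeds $h$, the product of the factors with small $a_i$ is first placed in a positive-order space of order $h-b\ge0$. Then \eqref{eq:fTproinene} is applied with $k=a-h$, pairing the one negative-order factor (or a positive-order factor embedded into $W^{-(a-h),\cdot}$) against a factor of order $a-h$. The exponents then check out by the same scaling identity.

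\textbf{Proof of \eqref{eq:multicritgain}.} The hypothesis $a_i\le a-1$ excludes the case where a single factor carries all of the weight, so no factor is an endpoint $L^1$-type factor, and the product has at least two nontrivial factors. I would prove a two-factor version first: for $b_1,b_2\ge1$, the product $X^{h-b_1,2}\cdot X^{h-b_2,2}$ lies in $X^{h+1-b_1-b_2,(\frac{2h}{h+1},2)}$. This follows by embedding each factor one derivative lower, $W^{h-b_i,2}\hookrightarrow W^{h-b_i-1,(q,2)}$, so that the scaling matches the exponent $\frac{2h}{h+1}$. One then applies Lemma~\ref{lm:proinene} with $k=|h+1-b_1-b_2|$ and Lorentz index $r_2=2\ge$ the harmonic combination of $2,2$. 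Finally, the general case follows by multiplying the remaining factors in using \eqref{eq:multiposine} and the $L^\nf$-multiplier step.

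\textbf{Main obstacle.} I expect the endpoint cases to be the hardest part: verifying the required conditions $r=\nf$ or $r=1$ at the critical exponents. This matters especially when the target $X^{h+1-a}$ has order exactly $-(h-1)$ with $p=\frac{2h}{h+1}$, which is not an endpoint since $\frac{n}{n-k}=\frac{2h}{h+1}$ occurs only for $k=h-1$. At that point one must check whether $L^1+W$ is forced or whether the Lorentz index $2$ suffices. My first attempt would be to arrange the pairing so that the endpoint exponent falls on a positive-order factor in $L^\nf\cap W$ rather than on the distribution.
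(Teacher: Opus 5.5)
Your handling of the case $a\le h$ (iterating \eqref{eq:ffproinepo} with $k=h-a$, resp.\ $k=h+1-a$, after Sobolev--Lorentz embeddings) and of the $L^\infty\cap W^{h,2}$ multipliers is correct and agrees with the paper. \textbf{The gap is in the case $a>h$.} There you apply \eqref{eq:fTproinene} at the order of the target space: $k=a-h$ for \eqref{eq:multiposine}, and $k=|h+1-b_1-b_2|$ for your two-factor version of \eqref{eq:multicritgain}. You then assert that the exponents check out by scaling. But \eqref{eq:fTproinene} at order $k$ requires the multiplier to lie in $X^{k,(q_0,r_0)}$, i.e.\ to carry $k$ derivatives, and that is not a scaling condition.
\begin{itemize}
\item \emph{One negative-order factor.} If $a_1=h+\ell$, the complementary product $F=\prod_{i\ge2}f_i$ only has order $h-(a-h-\ell)=n-a+\ell$. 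This is smaller than $a-h$ whenever $2a>3h+\ell$. For example, with $n=6$, $a_1=4$, $a_2=2$, you have $f_1\in W^{-1,2}$ and $f_2\in W^{1,2}$, but $k=3$ would need $f_2\in X^{3,\cdot}$.
\item \emph{All $a_i\le h$ but $a>h$.} Here no subproduct has the required order. For example, with $n=6$ and $a_1=a_2=a_3=2$ (three $W^{1,2}$ functions), nothing has $3$ derivatives. Embedding one factor into $W^{-(a-h),\cdot}$ does not help, since its partner still needs $a-h$ derivatives.
\item \emph{Your two-factor version of \eqref{eq:multicritgain}} fails the same way, e.g.\ for $b_1=b_2=h$ (two $L^2$ functions, $k=h-1$).
\end{itemize}

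The missing idea is to choose $k$ according to the \emph{factors}, not the target, and only afterwards descend by a negative-order embedding. If all $a_i\le h$, H\"older and Sobolev--Lorentz embeddings put the product in $L^{n/a}$. The dual embedding then gives $L^{n/a}\hookrightarrow X^{h-a,2}$ (resp.\ $X^{h+1-a,(\frac{2h}{h+1},2)}$). If $a_1=h+\ell>h$, apply \eqref{eq:fTproinene} at order $k=\ell$, using $F\in X^{n-a+\ell,2}\hookrightarrow X^{\ell,\frac{2n}{2a-n}}$; note $n-a+\ell\ge\ell$ since $a\le n$. This yields $f_1F\in X^{-\ell,(\frac{n}{a-\ell},2)}$, which embeds into $X^{h-a,2}$. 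For \eqref{eq:multicritgain}, the hypothesis $a_1\le a-1$ is exactly $h+1-a\le-\ell$, which permits the embedding into $X^{h+1-a,(\frac{2h}{h+1},2)}$. Your ``main obstacle'' remark is also mistaken. Order $1-h$ with exponent $\frac{2h}{h+1}$ \emph{is} the endpoint $p=\frac{n}{n-k}$ with $k=h-1$. So for $a=n$ the target of \eqref{eq:multicritgain} is $L^1+W^{1-h,\frac{2h}{h+1}}$, and its $L^1$ component absorbs the H\"older product directly.
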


\begin{proof}
If $a\le h$, under the corresponding assumptions, successive applications of~\eqref{eq:ffproinepo} with $k=h-a$ and $k=h+1-a$ respectively, combined with the Sobolev--Lorentz embeddings, yield the estimates~\eqref{eq:multiposine}--\eqref{eq:multicritgain}.

Thus we assume $a>h$. If $a_i\le h$ for all $1\le i\le N$, then by Sobolev--Lorentz embeddings and H\"older's inequality, we obtain
\begin{equation}\label{proLna}
     \bigg\|\prod_{i=1}^N f_i\bigg\|_{L^{\frac na}(U)}\le C(U,N)\prod_{i=1}^N\|f_i\|_{X^{h-a_i,2}(U)}.
\end{equation}
Since $h<a\le n$, the estimates~\eqref{eq:multiposine}--\eqref{eq:multicritgain} then follow from~\eqref{proLna} and Sobolev--Lorentz embeddings.

It remains to consider the case where there exists exactly one $a_i>h$.
After relabeling, we write $a_1=h+\ell$ and set $F\coloneq\prod_{i=2}^N f_i$.
Since $\sum_{i=2}^N a_i=a-h-\ell\le h-\ell$, the positive-order case proved above gives
\begin{equation}\label{estprodf2fN}
    \|F\|_{X^{n-a+\ell,2}(U)}\le C(U,N)\prod_{i=2}^N\|f_i\|_{X^{h-a_i,2}(U)}.
\end{equation}
Since $a\le n$, combining~\eqref{eq:fTproinene} and~\eqref{estprodf2fN} with the embedding $X^{n-a+\ell,2}(U)\hookrightarrow X^{\ell,\frac{2n}{2a-n}}(U)$ yields
\[
    \|f_1F\|_{X^{-\ell,(\frac n{a-\ell},2)}(U)}\le C(U) \|f_1\|_{X^{-\ell,2}(U)}\|F\|_{X^{\ell,\frac{2n}{2a-n}}(U)} \le C(U,N)\prod_{i=1}^N\|f_i\|_{X^{h-a_i,2}(U)}.
\]
Since $a\ge a_1=h+\ell$ and $f_1 F=\prod_{i=1}^N f_i$, the embedding $X^{-\ell,(\frac n{a-\ell},2)}(U)\hookrightarrow X^{h-a,2}(U)$ then implies~\eqref{eq:multiposine}.
Under the additional assumption $a_1\le a-1$, we have $h+1-a\le -\ell$, hence the estimate~\eqref{eq:multicritgain} follows from the embedding $X^{-\ell,(\frac n{a-\ell},2)}(U)\hookrightarrow X^{h+1-a,\lf(\frac{2h}{h+1},2\rg)}(U)$.
\end{proof}
In contrast to the preceding product estimates involving $W^{k,q}$ with $q\le n/k$, the following estimates are based on $W^{k,q}$ with $q>n/k$ and will be used in the integrability and differentiability bootstraps in Section~\ref{sec:ndreg}.
\begin{Lm}\label{lm:multinonexact}
Let $U\subset\R^n$ be a bounded Lipschitz domain, $\sigma\in\N^+$, and let $2<q<\nf$ with $q>n/\si$.
Let $N\in\N^+$ and $a_1,\ldots,a_N\in\N_0$ satisfy
\[
    a\coloneq\sum_{i=1}^N a_i\le2\sigma.
\]
Then for all $f_i\in W^{\sigma-a_i,q}(U)$, $1\le i\le N$, we have
\begin{equation}\label{eq:multinoex}
    \bigg\|\prod_{i=1}^N f_i\bigg\|_{W^{\sigma-a,q}(U)}\le C(U,\sigma,N,q)\prod_{i=1}^N\|f_i\|_{W^{\sigma-a_i,q}(U)}.
\end{equation}
If we assume in addition that $a_i\le a-1$ for all $1\le i\le N$, then for any $1<p\le q$ satisfying
\begin{equation}\label{condmultinonexact}
    \frac1p>\frac2q-\frac{\sigma-1}{n},
\end{equation}
we have
\begin{equation}\label{eq:multinonexact}
    \bigg\|\prod_{i=1}^N f_i\bigg\|_{W^{\sigma+1-a,p}(U)}\le C(U,\sigma,N,p,q)\prod_{i=1}^N\|f_i\|_{W^{\sigma-a_i,q}(U)}.
\end{equation}
\end{Lm}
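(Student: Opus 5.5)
The plan is to reduce both estimates to the Leibniz rule together with H\"older's inequality and the supercritical Sobolev embeddings. Supercriticality is the key: since $q>n/\sigma$, the space $W^{\sigma,q}(U)$ is a Banach algebra embedded in $L^\infty\cap C^0(\overline U)$.

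\textbf{Proof of \eqref{eq:multinoex}.} Recall the standard scale of embeddings on a bounded Lipschitz domain:
\[
W^{j,q}\hookrightarrow L^\infty \ \text{ if } jq>n, \qquad W^{j,q}\hookrightarrow L^{r} \ \text{ with } \tfrac1r=\tfrac1q-\tfrac jn \ \text{ if } jq<n,
\]
and every finite $r$ in the borderline case $jq=n$.

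1. First treat the case $a\le\sigma$. Expand $D^\beta\prod_i f_i$ with $|\beta|\le\sigma-a$ by the Leibniz rule. A typical term is $\prod_i D^{\beta_i}f_i$ with $\sum_i|\beta_i|\le\sigma-a$. Each factor satisfies $D^{\beta_i}f_i\in W^{j_i,q}$, where $j_i=\sigma-a_i-|\beta_i|$. Since $\sum_i(\sigma-j_i)\le\sigma$, at most one factor can have $j_i<\sigma/2$, and the exponent count
\[
\sum_{i:\,j_iq<n}\Big(\frac1q-\frac{j_i}n\Big)\le \frac1q
\]
follows from $\sigma q>n$. This is a routine check, done by placing the least regular factor in $L^{r}$ with $r$ close to $q$ and all the others in $L^\infty$ or in large $L^r$. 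H\"older's inequality then puts each term in $L^q$.

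2. If $a>\sigma$ (so $\sigma-a<0$), at most one index, say $a_1$, can exceed $\sigma$. Write $a_1=\sigma+\ell$ and let $F=\prod_{i\ge2}f_i$.
   - Since $\sum_{i\ge2}a_i=a-\sigma-\ell\le\sigma-\ell$, the positive case gives $F\in W^{2\sigma-a+\ell,q}\hookrightarrow W^{\ell,q}$ when $2\sigma-a\ge0$.
   - Multiplication by $F\in W^{\ell,q}\cap W^{\ell,\infty}$-type spaces acts on $W^{-\ell,q}$ by duality. Write $f_1=\sum_{|\alpha|=\ell}D^\alpha g_\alpha$ and move derivatives with the Leibniz rule. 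We need $F\in W^{\ell,q}$ to multiply $L^q$ into $W^{-\ell,q}$; this holds because $W^{\ell,q}$ multiplies $W^{\ell,q'}_0$-type test spaces when $\ell\le\sigma$ and $q>n/\sigma$.
   - The gap $\sigma-a\le-\ell$ is then absorbed by the embedding $W^{-\ell,q}\hookrightarrow W^{\sigma-a,q}$.

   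If no $a_i$ exceeds $\sigma$ but $a>\sigma$, the product lies in $L^q$ by step~1 with zero derivatives. Hence it lies in $W^{\sigma-a,q}$.

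\textbf{Proof of \eqref{eq:multinonexact}.} Repeat the same bookkeeping with one more derivative, targeting $L^p$ instead of $L^q$. Now a Leibniz term may carry total order $\sigma+1-a$, so one factor can lose its full regularity $\sigma-a_i$ and sit only in $W^{-1,q}$ or $L^q$. The hypothesis $a_i\le a-1$ guarantees that some other factor with $a_k\ge1$ exists, and therefore that factor lies in $W^{\sigma-a_k,q}\hookrightarrow L^{r}$ with $\tfrac1r\ge\tfrac1q-\tfrac{\sigma-1}n$.

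H\"older's inequality with two $L^q$-type factors gives integrability exponent $\tfrac1p$ provided
\[
\frac1p\ge\frac1q+\Big(\frac1q-\frac{\sigma-1}n\Big),
\]
which is exactly \eqref{condmultinonexact}. Strict inequality leaves room for the borderline $L^\infty$ failure, and $p\le q$ gives the lower-order terms on the bounded domain $U$. The negative-order case $\sigma+1-a<0$ is handled as in step~2, pairing the single very irregular factor $f_1$ against $F$ via duality.

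\textbf{Main obstacle.} The hard step is the negative-order case in both estimates: justifying that multiplication by $F$ maps $W^{-\ell,q}$ into $W^{-\ell,p}$, with the right exponent accounting, when $\ell$ can be as large as $\sigma$. I would first try to use the divergence representation in \eqref{negSobnm} and expand $F\,D^\alpha g_\alpha$ as a sum of terms $D^{\gamma}\big((D^{\alpha-\gamma}F)\,g_\alpha\big)$, estimating each $D^{\alpha-\gamma}F\,g_\alpha$ in $L^p$ by H\"older's inequality as above.
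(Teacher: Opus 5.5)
Your positive-order arguments are fine. Leibniz, H\"older and the supercritical embeddings close with the counts $\sum_i(\sigma-j_i)\le\sigma$, resp.\ $\le\sigma+1$; in \eqref{eq:multinonexact} the hypothesis $a_i\le a-1$ even guarantees $j_i\ge0$.

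The gap is in the negative-order cases. You flag them as the main obstacle, but then route them through intermediate claims that are false.
\begin{itemize}
\item In step~2 you need $F\in W^{\ell,q}$ to multiply $W^{-\ell,q}$ into itself, i.e.\ $W^{\ell,q}$ to multiply $W^{\ell,q'}_0$. This requires $\ell q>n$ and fails for $\ell=1$, $q<n$: the term $(DF)\,\phi$ with $DF\in L^q$, $\phi\in W^{1,q'}_0$ is not in $L^{q'}$ in general. Concretely, take $n=6$, $\sigma=3$, $q=2.1$, $a_1=4$, $a_2=2$, and write $f_1=\sum_k\partial_k g_k$ with $g_k\in L^q$. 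H\"older only gives $Fg_k\in L^r$ with $1/r=2/q-1/n\approx0.786>1/q$, so the route through $W^{-1,q}$ breaks down.
\item The claim ``if no $a_i$ exceeds $\sigma$ but $a>\sigma$, the product lies in $L^q$'' is also false. For $a_1=a_2=\sigma$ one only has $f_1f_2\in L^{q/2}$.
\item The same problem recurs in \eqref{eq:multinonexact}. Keep the data above and take $p=1.6$, which is admissible since $1/p=0.625>2/q-(\sigma-1)/n\approx0.619$. The target is $W^{-2,p}$, but you aim for $W^{-\ell,p}=W^{-1,p}$ by estimating each $(D^{\alpha-\gamma}F)\,g_\alpha$ in $L^p$. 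That cannot close, because $1/r\approx0.786>1/p$.
\end{itemize}

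The missing idea is to never pass through $W^{-\ell,\cdot}$. You must use at the same time the full regularity $F\in W^{2\sigma-a+\ell,q}$ and the full negativity $\sigma-a$ (resp.\ $\sigma+1-a$) of the target. The regularity gap $s_1+s_2-s$ is then $\sigma$, resp.\ $\sigma-1$. These are exactly what $q>n/\sigma$, resp.\ \eqref{condmultinonexact}, pay for, so in general there is no room for a detour.

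Your reverse-Leibniz expansion $F\,D^\alpha g_\alpha=\sum_\gamma c_\gamma D^\gamma\big((D^{\alpha-\gamma}F)\,g_\alpha\big)$ can be completed this way:
\begin{itemize}
\item Place $(D^{\alpha-\gamma}F)\,g_\alpha$ in $L^r$ by H\"older.
\item Then place it in $W^{\sigma-a+|\gamma|,q}$, resp.\ $W^{\sigma+1-a+|\gamma|,p}$, by the dual Sobolev embedding $L^r\hookrightarrow W^{-k,t}$, valid for $1/r\le 1/t+k/n$ and $r>1$. Here $q>2$ is what keeps $r>1$.
\end{itemize}
In the example this works: $0.786\le 1/p+1/n\approx0.792$. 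The same H\"older-plus-dual-embedding step, rather than an $L^q$ bound, handles the case where no $a_i$ exceeds $\sigma$; for instance $L^{q/2}\hookrightarrow W^{-\sigma,q}$ because $2/q\le 1/q+\sigma/n$.

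Once repaired, this gives an elementary alternative to the paper's proof. The paper instead:
\begin{itemize}
\item reduces to $N=2$ (for \eqref{eq:multinonexact} via $F=\prod_{i\ge2}f_i$ with $1\le a_1\le a-1$, after treating $\sigma=1$ by H\"older and reducing to $p\ge q/2$);
\item extends to $\R^n$ by a Sobolev extension operator;
\item applies the Behzadan--Holst multiplication theorem. Its hypotheses are exactly $s_1+s_2-s=\sigma>n/q$ with $s_1+s_2\ge0>n(2/q-1)$, resp.\ $s_1+s_2-s=\sigma-1>n(2/q-1/p)\ge0$.
\end{itemize}
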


\begin{proof}
To prove~\eqref{eq:multinoex}, by induction, it suffices to consider the case $N=2$.
Using the conditions $q>2$, $\sigma>n/q$, and $a\le 2\sigma$, we obtain
\begin{numcases}{}
    \sigma-a_1+\sigma-a_2-\big(\sigma-(a_1+a_2)\mko\big)=\sigma>\frac nq,\\
    \sigma-a_1+\sigma-a_2\ge 0>n\Big(\frac 2q-1\Big).\label{conp1+p2>n2q}
\end{numcases}
The inequality~\eqref{eq:multinoex} then follows from~\cite[Thm.~A.1]{Behzadan21} combined with the Sobolev extension; see for instance~\cite{Rych99}.

We now prove~\eqref{eq:multinonexact}.
If $\sigma=1$, then the conditions $\sum_{i=1}^N a_i=a\le 2\sigma=2$ and $a_i\le a-1$ imply that $a=2$ and exactly two of $(a_i)_{1\le i\le N}$ equal $1$.
In this case, the inequality~\eqref{eq:multinonexact} follows from H\"older's inequality, the embedding $W^{1,q}(U)\hookrightarrow L^\nf(U)$, and $p<\frac q2$.

We thus assume $\sigma\ge2$.
It suffices to consider $p\ge q/2$, since the remaining cases follow from the estimate with $p=q/2$.
As in the proof of~\eqref{eq:multinoex}, we first assume $N=2$.
Since $a_i\le a-1$ for $i=1,2$, we have $\sigma+1-a\le \min\{\sigma-a_1,\sigma-a_2\}$.
By the assumptions~\eqref{condmultinonexact} and $p\ge q/2$, we also obtain
\begin{equation}\label{k-1>n(2q-1p}
    \sigma-a_1+\sigma-a_2-\big(\sigma+1-a\big)=\sigma-1>n\Big(\frac 2q-\frac 1p \Big)\ge 0.
\end{equation}
Using the conditions $1<p\le q$ and~\eqref{conp1+p2>n2q}--\eqref{k-1>n(2q-1p}, the estimate~\eqref{eq:multinonexact} for $N=2$ then follows from~\cite[Thm.~A.1]{Behzadan21}.
For the case of general $N\in \N^+$, after relabeling, we assume $1\le a_1\le a-1$ and set $F\coloneq\prod_{i=2}^N f_i$.
Then by~\eqref{eq:multinoex}, we have
\begin{equation}\label{eq:prodF}
    \|F\|_{W^{\sigma-a+a_1,q}(U)}\le C(U,\sigma,N,q)\prod_{i=2}^N\|f_i\|_{W^{\sigma-a_i,q}(U)}.
\end{equation}
The inequality~\eqref{eq:multinonexact} for general $N$ follows from~\eqref{eq:prodF} and the proved case for $N=2$.
\end{proof}

\section{Elliptic estimates for coefficients in critical Sobolev spaces}\label{sec:ellestcri}  
In this section we establish the elliptic estimates needed for the Noether potentials and the Morrey iteration.
We impose the smallness assumptions~\eqref{smaassaij} and~\eqref{eq:dg_bound} for direct applications in Section~\ref{sec:pfmainThm}; these assumptions may be replaced by suitable local moduli control as in~\cite[Sec.~III]{BerLanMarRiv26}.
Related results concerning $W^{1,n}$ coefficients can be found in~\cite{Miranda63,CruzUribe16,laMan20}.

\subsection{Elliptic estimates for divergence-form equations}\label{SobMorest}
\
\vskip5pt
This subsection extends~\cite[Lem.~II.9]{BerLanMarRiv26} to higher positive and negative Sobolev orders.
We begin with the corresponding global solvability and right-inverse estimates.

\begin{Lm}\label{lm:exisol}
    Let $n\in \N^+$, $k\in \N_0$ with $k\le n-1$, and let $1< p<\frac nk$ (set $\frac nk\coloneq \nf$ if $k=0$). Let $U\subset \R^n$ be a bounded $C^{k+1}$ domain. Suppose $\{a^{ij}\}_{i,j=1}^n\subset L^{\infty}\cap W^{1,n}(U)$ satisfies, for some constant
    $\La\ge 1$,
    \begin{equation}\label{elli}
   |a^{ij}(x)|\le \La\quad \text{ and }\quad   a^{ij}(x)\,\xi_i\mko \xi_j \ge \La^{-1}|\xi|^2,\qquad \text{for a.e. } x\in U \text{ and all } \,\xi\in \R^n.
\end{equation}
Then there exists $\vae=\vae(\La,U,p)>0$ such that the following holds. Suppose that
\begin{equation}\label{smaassaij}
 \sum_{i,j=1}^n \|D\mko a^{ij}\|_{L^n(U)}\le \vae.
\end{equation}
Suppose $a^{ij}\in W^{k,\frac nk}(U)$ and $\eta>0$ is a constant such that
\begin{equation}\label{ashidera}
       \sum_{i,j=1}^n \|D\mko a^{ij}\|_{W^{k-1,\frac nk}(U)}\le \eta,\quad \text{if }k\ge 2.     
\end{equation} 
Then for any $f\in W^{k-1,p}(U)$, there exists a unique $u\in W^{k+1,p}\cap W_0^{1,p}(U)$ solving
\begin{equation}\label{eqsuf}
    \begin{dcases}
    \begin{aligned}
        \p_i(a^{ij} \p_j u)&= f&& \text{ in }\;U,\\
        u&=0&& \text{ on }\,\p U,
        \end{aligned}
    \end{dcases}
\end{equation}
with the estimate 
\begin{equation}\label{estuf}
    \|u\|_{W^{k+1,p}(U)}\le C(\La,U,p,\eta)\mko  \|f\|_{W^{k-1,p}(U)}.
\end{equation}
Moreover, there exists a bounded linear operator $\msc R_k:W^{-k-1,p'}(U)\to W^{-k+1,p'}(U)$ ($p'\coloneq \frac p{p-1}$) such that for all $T\in W^{-k-1,p'}(U)$, there holds\footnote{Here $a^{ij}\mko \p_j ( \msc R_kT)\in \mca D'(U)$ is understood in the sense of Lemma~\ref{lm:proinene}.}
\begin{equation}\label{eqSTdis}
    \p_i\big(a^{ij}\mko \p_j ( \msc R_kT)\mko\big)= T\quad \text{in }\mca D'(U),
\end{equation}
with the estimate
\begin{equation}\label{estST}
    \|\msc R_k T\|_{W^{-k+1,p'}(U)}\le C(\La,U,p,\eta)\mko  \|T\|_{W^{-k-1,p'}(U)}.
\end{equation}
\end{Lm}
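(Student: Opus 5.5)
We would argue by induction on $k$, taking the case $k=0$ from the known $W^{1,p}$ theory. For $k=0$ the coefficients are VMO, since $W^{1,n}\hookrightarrow \mathrm{VMO}$. The $W^{1,p}$ existence, uniqueness and estimate for $\partial_i(a^{ij}\partial_j u)=f\in W^{-1,p}$ with zero boundary data are then classical, e.g.\ \cite[Lem.~II.9]{BerLanMarRiv26}. In fact the smallness \eqref{smaassaij} lets us treat the equation as a perturbation of a constant-coefficient problem: freeze $a^{ij}$ at its mean $\bar a^{ij}$ and write
\[
\partial_i(\bar a^{ij}\partial_j u)=f-\partial_i\big((a^{ij}-\bar a^{ij})\partial_j u\big).
\]
By Poincar\'e--Sobolev, $\|a-\bar a\|_{\mathrm{BMO}}\lesssim\|Da\|_{L^n}\le\varepsilon$, and this gives a contraction in $W^{1,p}_0$.

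For $k\ge1$ we would use difference quotients, or equivalently formal differentiation justified by approximation. Differentiating the equation in tangential directions near flat boundary pieces, after $C^{k+1}$ flattening, and in all directions in the interior gives
\[
\partial_i\big(a^{ij}\partial_j(\partial_\ell u)\big)=\partial_\ell f-\partial_i\big((\partial_\ell a^{ij})\partial_j u\big).
\]
The commutator term is controlled by Lemma~\ref{lm:proinene}. With $Da\in W^{k-1,n/k}$ and $Du\in W^{k-1,p}$ at the previous stage, which embeds into the right Lorentz space, the product $(\partial_\ell a)\,\partial_j u$ lies in $W^{k-1,p}$ precisely because $p<n/k$. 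This is where the restriction on $p$ and the bound $\eta$ from \eqref{ashidera} enter, and the constant depends on $\eta$. The normal derivatives are recovered from the equation itself, solving for $a^{nn}\partial_n^2u$ using ellipticity $a^{nn}\ge\Lambda^{-1}$ and the same product estimates. A partition of unity and absorption of lower-order terms, using uniqueness and a compactness or contradiction argument to remove the $\|u\|_{L^p}$ term, then yield \eqref{estuf}. We expect this bookkeeping, namely matching the Sobolev--Lorentz exponents at each derivative level while keeping the products inside the ranges of Lemma~\ref{lm:proinene}, to be the main technical obstacle, especially at boundary charts where the tangential derivatives of flattened coefficients must be handled. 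Existence at level $k$ follows from existence at level $0$ plus this a priori regularity, first for smooth approximants of $a$ and $f$ and then by passing to the limit.

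For the right inverse $\mathscr R_k$ we would use duality. Let $L$ be the solution operator just constructed for the adjoint coefficients $a^{ji}$, which satisfy the same hypotheses, so $L\colon W^{k-1,p}\to W^{k+1,p}\cap W^{1,p}_0$ is bounded. Fix a bounded projection $P$ from $W^{k+1,p}(U)$ onto $W^{k+1,p}_0(U)$, which exists on $C^{k+1}$ domains; compatibility with the Dirichlet data is delicate and should be checked. Then set
\[
\langle\mathscr R_kT,\varphi\rangle\coloneqq\langle T,PL\varphi\rangle,\qquad \varphi\in W^{k-1,p}_0,
\]
using the identification of $W^{-k+1,p'}$ with the dual of $W^{k-1,p}_0$ and of $W^{-k-1,p'}$ with the dual of $W^{k+1,p}_0$. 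The estimate \eqref{estST} is immediate from boundedness. To verify \eqref{eqSTdis}, test against $\psi\in C^\infty_c$. This requires $PL\big(\partial_j(a^{ij}\partial_i\psi)\big)=\psi$, which holds if $P$ fixes functions already in $W^{k+1,p}_0$, by uniqueness. The step that needs the most care is justifying the pairing $a^{ij}\partial_j(\mathscr R_kT)$ via Lemma~\ref{lm:proinene}, so that the distributional identity makes sense.
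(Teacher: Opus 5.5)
Your overall architecture matches the paper's proof. It inducts on $k$ from the small-BMO $W^{1,p}$ theory and moves the commutator $\partial_i\big((\partial_\ell a^{ij})\partial_j u\big)$ to the right-hand side. It flattens the boundary, takes tangential derivatives and recovers the normal derivative from the equation. It defines $\mathscr R_k$ by duality through a bounded projection onto $W^{k+1,p}_0$, checked against test functions exactly as you do. Only boundedness of $P$ and $P=\mathrm{id}$ on $W^{k+1,p}_0$ are used, so nothing about Dirichlet data needs checking.

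\textbf{Gap in the inductive step.} The exponent count there is wrong as written. From $\partial_\ell a^{ij}\in W^{k-1,n/k}$ and $Du\in W^{k-1,p}$ you do \emph{not} get $(\partial_\ell a^{ij})\partial_j u\in W^{k-1,p}$. Since $\partial_\ell a^{ij}$ is only in $L^n$, not $L^\infty$, the top-order term $(\partial_\ell a^{ij})\,D^{k-1}\partial_j u$ lies only in $L^{np/(n+p)}$, and \eqref{eq:ffproinepo} gives only $W^{k-1,np/(n+p)}$. This fails already for $k=1$. No embedding of $W^{k-1,p}$ repairs it, because the top derivative of $u$ carries no extra integrability. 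So the commutator is not in $W^{k-2,p}$, and the order-$(k-1)$ result cannot be applied to $\partial_\ell u$ at exponent $p$.

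The missing step is to apply the induction hypothesis first at the Sobolev-improved exponent $\tilde p=np/(n-p)$:
\begin{itemize}
\item $f\in W^{k-1,p}\hookrightarrow W^{k-2,\tilde p}$, and $\tilde p<n/(k-1)$ precisely because $p<n/k$.
\item Hence $\|u\|_{W^{k,\tilde p}}\le C\|f\|_{W^{k-1,p}}$.
\item Since $1/\tilde p+1/n=1/p$, the product estimate then yields $\|(\partial_\ell a^{ij})\partial_j u\|_{W^{k-1,p}}\le C(\Lambda,U,p,\eta)\|f\|_{W^{k-1,p}}$.
\end{itemize}
This is where the restriction $p<n/k$ genuinely enters.

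\textbf{Base case.} Your perturbative justification does not work. Smallness of $\|Da\|_{L^n}$, hence of $\|a-\bar a\|_{\mathrm{BMO}}$, does not make multiplication by $a-\bar a$ small on $L^p$, since its norm is $\|a-\bar a\|_{L^\infty}$. For example, take $a^{ij}=(1+\min\{1,t\psi_+\})\delta^{ij}$ with $\psi\in W^{1,n}$ unbounded, $\psi\ge0$ vanishing near $\partial U$, and $\|D\psi\|_{L^n}=1$. Then $\|Da\|_{L^n}=O(t)$, but $a$ has oscillation $1$ for every $t$. Freezing at one constant therefore gives no contraction in $W^{1,p}_0$. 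You need a genuine small-BMO theory such as Byun--Wang's, which is what the paper invokes.

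\textbf{Minor points.}
\begin{itemize}
\item No compactness or contradiction argument is needed to remove $\|u\|_{L^p}$. The global base estimate already gives $\|u\|_{W^{1,p}}\le C\|f\|_{W^{-1,p}}\le C\|f\|_{W^{k-1,p}}$, whereas a contradiction argument over a family of coefficients would be awkward to make uniform in $(\Lambda,U,p,\eta)$.
\item Obtaining existence from a priori estimates for mollified coefficients plus uniqueness is a legitimate alternative. The paper instead proves regularity of the $W^{1,p}_0$ solution directly, using a very-weak-solution result to get $u\in W^{2,p}_{\mathrm{loc}}$ before differentiating.
\end{itemize}
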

\begin{proof}
For $u\in W^{1,p}(U)$ with $1<p<\nf$, set
\begin{equation}\label{defLu}
    Lu\coloneq \p_i(a^{ij}\p_j u)\in W^{-1,p}(U).
\end{equation}
Since $a^{ij}\in W^{1,n}(U)\hookrightarrow \text{BMO}(U)$, by~\cite[Thm.~1.5]{Byun04} and~\eqref{smaassaij}, for small enough $\vae=\vae(\La,U,p)>0$ and any $f\in W^{-1,p}(U)$, there exists a unique
$u\in W^{1,p}_0(U)$ solving~\eqref{eqsuf}
and we have
\begin{equation}\label{VMO-W1p}
    \|u\|_{W^{1,p}(U)}
    \le C(\La,U,p)\mko \|f\|_{W^{-1,p}(U)}.
\end{equation} 
 Now we prove~\eqref{estuf} by induction on $k\ge 0$. The case $k=0$ has been proved. Let $1\le k\le n-1$, and assume the estimate has
been proved at order $k-1$.

Let $1<p<\frac n {k}$, $f\in W^{k-1,p}(U)$, and let $u\in W^{1,p}_0(U)$ solve~\eqref{eqsuf}. We aim to show that $u\in W^{k+1,p}(U)$. Set $\ti p\coloneq \frac {pn}{n-p}$. Since $f\in W^{k-1,p}(U)\hookrightarrow W^{k-2,\ti p}(U)$ and $\ti p<\frac n{k-1}$, by the induction hypothesis, we have
\begin{equation}\label{ind-est}
    \|u\|_{W^{k,\ti p}(U)}
    \le C(\La,U,p,\eta)\mko \|f\|_{W^{k-2,\ti p}(U)}\le  C(\La,U,p,\eta)\mko \|f\|_{W^{k-1,p}(U)}.
\end{equation}
Differentiating the equation $ Lu= f$, we obtain for any $\ell\in \{1,\dots,n\}$, 
\begin{equation}\label{reppiaijpjlu}
L(\p_\ell u)=\p_\ell f-\p_i(\p_\ell a^{ij}\p_j u) \quad \text{in }\mca D'(U).
\end{equation}
Combining~\eqref{smaassaij},~\eqref{ashidera}, and~\eqref{ind-est} with the inequality~\eqref{eq:ffproinepo} gives
\begin{align}\label{piplaijpu}
\begin{aligned}
    \|\p_i(\p_\ell a^{ij}\p_j u)\|_{W^{k-2,p}(U)}
    &\le \|\p_\ell a^{ij}\p_j u\|_{W^{k-1,p}(U)}\\
    &\le C(U,p)\mko\|\p_\ell a^{ij}\|_{W^{k-1,\frac n{k}}(U)}\|\p_j u\|_{W^{k-1,\ti p}(U)}\\
    &\le C(\La, U,p,\eta)\mko  \|f\|_{W^{k-1,p}(U)}.
\end{aligned}
\end{align}
Let $U'\Subset U$ be a subdomain, and let $\chi\in C_c^\nf(U)$ such that $\chi=1$ on $U'$. Equation~\eqref{reppiaijpjlu} implies
\begin{align}\label{exppiapjchipu}
  L(\chi\mko\mko \p_\ell u)=\chi\big(\p_\ell f-\p_i(\p_\ell a^{ij}\p_j u) \mko\big)+\p_i(a^{ij}\p_j\chi \mko \p_\ell u)+\p_i \chi \mko a^{ij}\p_j\p_\ell u.
\end{align}
Similar to~\eqref{piplaijpu}, we obtain 
\begin{equation}\label{estpchiertm}
   \|\p_i(a^{ij}\p_j\chi \mko \p_\ell u)\|_{W^{k-2,p}(U)} +\|\p_i \chi \mko a^{ij}\p_j\p_\ell u\|_{W^{k-2,p}(U)}\le C(\La,U,U',p,\eta) \|f\|_{W^{k-1,p}(U)}.
\end{equation}
Therefore, combining~\eqref{piplaijpu}--\eqref{estpchiertm} yields
\begin{equation*}
    \| L(\chi\mko\mko \p_\ell u)\|_{W^{k-2,p}(U)}\le C(\La,U,U',p,\eta)\mko \|f\|_{W^{k-1,p}(U)}.
\end{equation*}
By~\eqref{reppiaijpjlu} and arguing as in the proof of~\cite[Thm.~4.1]{laMan20}, we obtain $u\in W^{2,p}_{\loc}(U)$, hence $\chi\mko \p_\ell u\in W^{1,p}(U)$. Then by the induction hypothesis, since $p<\frac n{k}<\frac n{k-1}$, we have 
\begin{align*}
  \|u\|_{W^{k+1,p}(U')} 
  &\le C(U)\Big(\|u\|_{L^p(U)} +\sum_{\ell=1}^n\|\chi\mko \p_\ell u\|_{W^{k,p}(U)}\Big)\\
  &\le  C(\La,U,p,\eta)\Big( \|f\|_{W^{k-1,p}(U)}+\sum_{\ell=1}^n
    \| L(\chi\mko\mko \p_\ell u)\|_{W^{k-2,p}(U)}\Big)\\
    &\le C(\La,U,U',p,\eta)\mko \|f\|_{W^{k-1,p}(U)}.
\end{align*}
The boundary regularity of $u$ follows by a similar argument combined with the flattening procedure as in~\cite[Sec.~5.1]{Byun05}. Since $\partial U$ is of class $C^{k+1}$, upon choosing the boundary charts sufficiently small, 
the transformed coefficients $\ti a_{ij}$ belong to $L^\infty\cap W^{k,n/k}$ with small enough $\|D\ti a_{ij}\|_{L^n}$. The same induction may then be applied to tangential difference
quotients, which preserve the Dirichlet condition, and the
remaining highest-order pure normal derivative is recovered from the
transformed equation using uniform ellipticity; see also~\cite[Sec.~6.3.2]{evans}.

A finite covering of $\partial U$ and a partition of unity give $u\in W^{k+1,p}(U)$ and
\begin{equation*}
    \|u\|_{W^{k+1,p}(U)} \le C(\La,U,p,\eta)\mko \|f\|_{W^{k-1,p}(U)}.
\end{equation*}
By induction, the estimate~\eqref{estuf} is proved.

It remains to prove the existence of a solution to~\eqref{eqSTdis} with the estimate~\eqref{estST}. Suppose $k\ge 1$. We define the adjoint operator of $L$ for all $v\in W^{k+1,p}(U)\cap W^{1,p}_0(U)$:
\begin{equation}\label{defL*u}
    L^*v\coloneq \p_i(a^{ji}\p_jv).
\end{equation}
For
$\vp\in W^{k-1,p}_0(U)$, let $v_\vp\in W^{k+1,p}(U)\cap
W^{1,p}_0(U)$ be the unique solution of
\begin{equation}\label{defvphi}
    L^*v_\vp=\vp.
\end{equation}
Applying the estimate~\eqref{estuf} to $L^*$, we obtain
\begin{equation}\label{adj-est}
    \|v_\vp\|_{W^{k+1,p}(U)}\le  C(\La,U,p,\eta)\mko \|\vp\|_{W^{k-1,p}(U)}.
\end{equation}
We fix a bounded linear projection $P\colon W^{k+1,p}(U)\to W_0^{k+1,p}(U)$ (see for instance~\cite[Sec.~1.5.1]{grisvard11}). The projection is chosen independently of $p$, although its operator norm may depend on $p$. Let $T\in W^{-k-1,p'}(U)=\big(W^{k+1,p}_0(U)\big)^*$. Now for any $\vp\in  W^{k-1,p}_0(U)$, we define
\[
   \lan \msc R_k T,\vp\rangle
    \coloneq
    \langle  T, Pv_\vp\rangle .
\]
By~\eqref{adj-est}, we have
\[
    |\langle \msc R_k T,\vp\rangle|
    \le
    C(\La,U,p)\mko \|T\|_{W^{-k-1,p'}(U)}
    \|\vp\|_{W^{k-1,p}_0(U)} .
\]
Thus $\msc R_k T\in W^{-k+1,p'}(U)$, and
\[
    \|\msc R_k T\|_{W^{-k+1,p'}(U)}
    \le
    C(\La,U,p,\eta)\mko \|T\|_{W^{-k-1,p'}(U)} .
\]
Finally, Lemma~\ref{lm:proinene} implies that $L\msc R_k T\in W^{-k-1,p'}(U)$, and we have for any $\psi\in C_c^\infty(U)$,
\[
    \langle L\msc R_k T,\psi\rangle
    =
    \langle \msc R_k T,L^*\psi\rangle
    =
    \langle T,\psi\rangle.
\]
Hence $L\msc R_k T=T$ in $\mca D'(U)$,
which proves the claim.    
\end{proof}
Under the setting of Lemma~\ref{lm:exisol}, we now derive an inhomogeneous Morrey-type estimate for solutions of the equations $\p_i(a^{ij}\mko\p_jS)=T$. The (scale-homogeneous) Sobolev--Lorentz norms are defined in~\eqref{eq:uniSobnm}--\eqref{negSobnm}.
\begin{Lm}\label{lm:ellcacciolp}
  Let $n\in \N^+$, $\ell, k\in \N_0$ with $\ell\le k\le n-1$. Assume that
 \[1\le s,s_0\le \nf,\qquad \frac n{n-k}< q_0< \nf,\qquad 1<p<\frac n{\ell}.
 \] 
Here we set $\frac n0\coloneq \nf$. Let  $\al>0$. Then there exists $\vae_0=\vae_0(\La,n,p,q_0,\al)>0$ such that the following holds. 
Suppose $\{a^{ij}\}_{i,j=1}^n$ satisfies~\eqref{elli}--\eqref{ashidera} for $\vae=\vae_0$ and constants $\La,\eta>0$, with $U=B^n$. 
 Suppose $S\in W^{-k+1,(q_0,s_0)}(B^n)$ satisfies
 \begin{equation}\label{relST}
     \p_i(a^{ij}\mko \p_j S)= T\quad \text{in }\mca D'(B^n).
 \end{equation}
 We obtain the following estimates:
\begin{enumerate}[label=(\roman*),leftmargin=2.5em]
\item \label{posmorest}Assume $\al<\frac np-\ell$. If $T\in W^{\ell-1,(p,s)}(B^n)$, then $S\in W_{\loc}^{\ell+1,(p,s)}(B^n)$, and for all $r\in (0,\frac 12]$, we have
\begin{align}
  \|D S\|_{W^{\ell,(p,s)}(B_r)} &\le C(\La,\al,p,q_0,n,\eta) \Big(\|T\|_{W^{\ell -1,(p,s)}(B_1)}+r^{\al} \|S\|_{W^{-k+1,(q_0,s_0)}(B_1)}\Big),\label{eq:ellcacciop1}\\
   \|D S\|_{W^{\ell,(p,s)}(B_r)} &\le C(\La,\al,p,n,\eta) \Big(\| T\|_{W^{\ell-1,(p,s)}(B_1)}+r^{\al} \|D S\|_{L^{p,s}(B_{3/4})}\Big)\label{eq:ellcacciopg}.
  \end{align}
\item \label{negmorest}
Assume $\al<\frac n{p'}+\ell$. If $T\in W^{-\ell-1,(p',s)}(B^n)$, then
$S\in W_{\loc}^{-\ell+1,(p',s)}(B^n)$, and for all $r\in(0,\frac12]$,
we have
\begin{equation}\label{eq:ellcaccioneggrad}
\begin{aligned}
    \|D S\|_{W^{-\ell,(p',s)}(B_r)}
    \le C(\La,p,q_0,n,\eta,\al)\Big(
    &\|T\|_{W^{-\ell-1,(p',s)}(B_1)}
    +r^\al\|S\|_{W^{-k+1,(q_0,s_0)}(B_1)}\Big).
\end{aligned}
\end{equation}
\end{enumerate}
\end{Lm}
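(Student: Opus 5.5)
The plan is the standard decomposition of $S$ into a part solving the equation with localized data and a part that is homogeneous on a smaller ball. The localized part is controlled by the global right-inverse estimates of Lemma~\ref{lm:exisol}. The homogeneous part gains Morrey decay through a Campanato-type iteration made possible by the smallness~\eqref{smaassaij}. Everything is done first at unit scale. The general radius $r$ is then handled by scaling: the norms~\eqref{eq:uniSobnm}--\eqref{negSobnm} are scale-homogeneous, and $\|Da^{ij}\|_{L^n}$ and the higher norms in~\eqref{ashidera} are scale-invariant, so the hypotheses are preserved on every ball $B_\rho(x)\subset B^n$ after rescaling to $B^n$.

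\textbf{Step 1 (splitting).} Fix a cutoff $\chi\in C_c^\infty(B_{7/8})$ with $\chi=1$ on $B_{3/4}$. For \ref{posmorest}, extend $T$ from $B_{7/8}$ by a bounded extension operator to $\tilde T\in W^{\ell-1,(p,s)}(B^n)$. Lemma~\ref{lm:exisol} is stated for $L^p$, so I would first pass to Lorentz spaces by real interpolation in $p$ (the exponent range is open). Let $S_1$ be the solution given by~\eqref{estuf} with $k$ replaced by $\ell$, and set $S_2\coloneq S-S_1$; then $\p_i(a^{ij}\p_jS_2)=0$ in $B_{3/4}$. For \ref{negmorest}, set $S_1\coloneq\msc R_\ell(\chi T)$, with the estimate~\eqref{estST} (also interpolated). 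Here $\chi T\in W^{-\ell-1,(p',s)}$ is controlled by $T$ via Lemma~\ref{lm:proinene} with smooth $f_0$.

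\textbf{Step 2 (decay for homogeneous solutions).} Let $v$ solve $\p_i(a^{ij}\p_jv)=0$ in $B_\rho$. I would prove
\[
\|Dv\|_{W^{\ell,(p,s)}(B_{\tau\rho})}\le C\tau^{\beta}\|Dv\|_{W^{\ell,(p,s)}(B_\rho)},\qquad \beta\in(\al,\tfrac np-\ell),
\]
where $\beta$ is measured in the scale-homogeneous norm, in which a bounded function corresponds to the exponent $\frac np-\ell$. To get this, freeze the coefficients at their averages on $B_\rho$, so that the frozen operator has constant coefficients and its solutions are smooth with polynomial decay of all orders. The error term is $\p_i((a^{ij}-\bar a^{ij})\p_jv)$, which is small in $W^{\ell-1,(p,s)}$ by Lemma~\ref{lm:proinene}, since the oscillation of $a^{ij}$ is controlled by $\|Da^{ij}\|_{L^n}\le\vae_0$ and by $\eta$. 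Fix $\tau$ with $C\tau^{\beta'}\le\tau^{\beta}$ and then choose $\vae_0$ small. The usual iteration lemma then gives $\rho^{\al}$ decay. This yields~\eqref{eq:ellcacciopg} directly, once $S_1$ is absorbed into the $T$ term.

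\textbf{Step 3 (lowering to the weak norm).} To obtain~\eqref{eq:ellcacciop1} and~\eqref{eq:ellcaccioneggrad}, the norm of $S_2$ on $B_{3/4}$ must be bounded by $\|S\|_{W^{-k+1,(q_0,s_0)}}$ plus the $T$ term. For this I would bootstrap interior regularity for the homogeneous equation. Using cutoffs, $\p_i(a^{ij}\p_j(\psi S_2))$ is a commutator of lower order. I would apply $\msc R_j$ for $j=k,k-1,\dots$ together with~\eqref{estuf}, gaining one derivative on a slightly smaller ball at each step, until $DS_2$ reaches $W^{\ell,(p,s)}(B_{3/4})$. The Sobolev--Lorentz embeddings adjust the integrability at each stage, which is why $q_0>\frac n{n-k}$ is required: the products must be defined in the sense of Lemma~\ref{lm:proinene}. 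In case \ref{negmorest}, the decay in Step 2 is run in negative-order norms, by duality against the adjoint operator $L^*$ from~\eqref{defL*u}. Note that $\al<\frac n{p'}+\ell$ is exactly the scaling exponent of $W^{-\ell,(p',s)}$.

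\textbf{Main obstacle.} The hardest part is Step 2 at negative order, i.e.\ Morrey decay in $W^{-\ell,(p',s)}$. Negative norms are not local, so restricting to balls is delicate. I would obtain this decay from the positive-order decay of the adjoint problem combined with the projection $P$, which keeps the constants uniform in the radius.
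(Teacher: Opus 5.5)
\textbf{The gap is in Step~2.} You claim the freezing error $\p_i\big((a^{ij}-\bar a^{ij})\p_jv\big)$ is small in $W^{\ell-1,(p,s)}$ by Lemma~\ref{lm:proinene}. It is not. To keep the product $(a^{ij}-\bar a^{ij})\,Dv$ at the same exponent $p$, Lemma~\ref{lm:proinene} requires $a^{ij}-\bar a^{ij}\in L^\infty\cap W^{\ell,n/\ell}$ (just $L^\infty$ when $\ell=0$), and neither norm is small. The hypothesis $\|Da^{ij}\|_{L^n}\le\vae_0$ makes $a^{ij}-\bar a^{ij}$ small in BMO and in every $L^r$ with $r<\infty$, but never in $L^\infty$. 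Moreover, for $\ell\ge2$ the piece $D^\ell a^{ij}\,Dv$ is bounded only through $\eta$ in~\eqref{ashidera}, which is not small. So the comparison with the frozen problem cannot close from the smallness of $\vae_0$ alone. You would first need an interior \emph{higher-integrability} bound for the homogeneous solution, i.e.\ $Dv\in W^{\ell,t}$ for some $t>p$. That bound is the real content of the lemma, and your proposal never supplies it.

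Once you have that bound, the Campanato iteration is unnecessary, and this is how the paper argues. It subtracts the particular solution from Lemma~\ref{lm:exisol} (interpolated to Lorentz spaces), so that the remainder is homogeneous on all of $B_1$. The remainder is then bootstrapped by duality: pair it with $\chi L^*v_\vp-L^*(\chi v_\vp)$, where $L^*v_\vp=\vp$ is solved via~\eqref{estuf}. Each step gains one derivative, until the remainder lies in $L^{q_2}_{\loc}$. The very-weak-solution theory \cite[Thm.~4.1]{laMan20} then gives $W^{1,t}_{\loc}$ for every finite $t$, with $\vae_0$ decreased depending on $t$. The interior induction from Lemma~\ref{lm:exisol} upgrades this to a $W^{\ell+1,t}(B_{5/8})$ bound by the weak norm. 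Choosing $p<t<n/\ell$ with $\frac np-\frac nt>\al$, H\"older's inequality in the scale-homogeneous norms gives the factor $r^{\frac np-\frac nt}\le r^\al$ immediately; this is also why $\vae_0$ depends on $\al$.

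Your Step~3 is essentially this duality bootstrap, but you stop at exponent $p$; it must be pushed to some $t>p$. You also cannot argue through $\msc R_j(L(\psi S_2))$: homogeneous solutions in negative spaces are not unique, so that solution says nothing about $\psi S_2$. Instead, pair $\psi S_2$ itself against $L^*v_\vp$, which is legitimate because $\psi S_2$ has compact support. Finally, your ``main obstacle'' disappears. In~(ii) the homogeneous part is measured in $L^t$ and mapped into $W^{-\ell,(p',s)}$ via $L^{\tilde p,s}\hookrightarrow W^{-\ell,(p',s)}$ with $\frac1{\tilde p}=\frac1{p'}+\frac\ell n$, which gives $r^{\frac n{p'}+\ell-\frac nt}$. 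The particular part $\msc R_\ell T$ needs no decay at all, since the norm~\eqref{negSobnm} is monotone under restriction to $B_r$. No negative-order Morrey iteration or adjoint decay argument is needed.
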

\begin{proof}
   We define the operators $L$ and $L^*$ as in~\eqref{defLu} and~\eqref{defL*u} respectively. Suppose $S\in W^{-k+1,(q_0,s_0)}(B_1)$ satisfies~\eqref{relST}. 
   We first prove part~\ref{posmorest}. Assume that $T\in W^{\ell-1,(p,s)}(B_1)$. By Lemma~\ref{lm:exisol} and interpolation (see~\cite[Lem.~II.8]{BerLanMarRiv26}), there exists a unique $S_0\in W^{\ell+1,(p,s)}(B_1)\cap W^{1,1}_0(B_1)$ solving
\[
    LS_0=T.
\]
Moreover, we have 
\begin{equation}\label{eq:R-pos-est}
    \|DS_0\|_{W^{\ell,(p,s)}(B_1)}
    \le C(p,n)\mko \|S_0\|_{W^{\ell+1,(p,s)}(B_1)}\le 
    C(\Lambda,p,n,\eta)\mko
    \|T\|_{W^{\ell-1,(p,s)}(B_1)} .
\end{equation}
Set $S_1\coloneq S-S_0$, then $LS_1=0$ in $\mca D'(B_1)$. Fix $q_1\in(\frac n{n-k},\nf)$ such that 
\[\frac 1{q_1}>\frac 1p-\frac {\ell+k}{n}.\]
We also fix $q_2\in \big(\frac n{n-k},\min(q_1,q_0)\big)$. By the Sobolev embeddings and~\eqref{eq:R-pos-est}, we have
\begin{equation}\label{S1W-k+1est}
    \|S_1\|_{W^{-k+1,q_2}(B_1)}
    \le
    C(\La,p,q_0,n,\eta)\Big(
        \|S\|_{W^{-k+1,(q_0,s_0)}(B_1)}
        +
        \|T\|_{W^{\ell-1,(p,s)}(B_1)}
    \Big).
\end{equation}
Now we prove that $S_1\in L^{q_2}_{\loc}(B_1)$ by a bootstrap argument. 
Without loss of generality here we assume $k\ge 2$. 
Fix $\chi\in C_c^\infty(B_{1})$ with $\chi\equiv1$ on
$B_{7/8}$. As in~\eqref{defvphi}, for $\varphi\in C_c^\infty(B_{7/8})$, let
$v_\varphi\in W^{k+1,q_2'}(B_{1})\cap W^{1,q_2'}_0(B_{1})$ be the solution of
\[
    L^*v_\varphi=\varphi .
\]
Moreover, since $q_2'<\frac nk<\frac n{k-1}$, Lemma~\ref{lm:exisol} implies that 
\begin{equation}\label{vphiest2}
    \|v_\vp\|_{W^{k,q_2'}(B_{1})}\le C(\La,q_2, n,\eta)\|\vp\|_{W^{k-2,q_2'}_0(B_{7/8})}.
\end{equation}
Using $LS_1=0$, we have
\begin{align}\label{S1vpexp}
    \langle S_1,\varphi\rangle
    = \langle S_1,\chi\mko \varphi\rangle=
    \langle S_1,\chi L^*v_\varphi\rangle                              =
    \langle S_1,\chi L^*v_\varphi-L^*(\chi\mko v_\varphi)\rangle .
\end{align}
The commutator on the right-hand side of~\eqref{S1vpexp} is supported in $B_1$. By~\eqref{vphiest2} and~\eqref{eq:ffproinepo}, we estimate
\begin{align}\label{difchiest}
\begin{aligned}
    \|\chi L^*v_\varphi-L^*(\chi v_\varphi)\|_{W^{k-1,q_2'}_0(B_{1})}
    &=\big\|\p_i(a^{ji}\p_j\chi \mko v_\vp)+\p_i \chi \mko a^{ji}\p_j v_\vp\big\|_{W^{k-1,q_2'}(B_{1})}\\
    &\le
    C(q_2,n)\mko\|a^{ij}\|_{L^\nf\cap W^{k,\frac nk}(B_1)}\|v_\varphi\|_{W^{k,q_2'}(B_{1})}\\
    &\le
    C(\La,q_2,n,\eta)\mko \|\varphi\|_{W^{k-2,q_2'}_0(B_{7/8})}.
    \end{aligned}
\end{align}
Therefore, combining~\eqref{S1vpexp}--\eqref{difchiest} gives that for any $\vp\in C_c^\nf(B_{7/8})$,
\[
    |\lan S_1,\varphi\rangle|
    \le
    C(\La,q_2,n,\eta)\mko \|S_1\|_{W^{-k+1,q_2}(B_{1})}
      \|\varphi\|_{W^{k-2,q_2'}_0(B_{7/8})}.
\]
It follows that 
\[
    \|S_1\|_{W^{-k+2,q_2}(B_{7/8})}\le  C(\La,q_2,n,\eta)\mko \|S_1\|_{W^{-k+1,q_2}(B_{1})}.
\]
Repeating the above and using a covering and rescaling argument, we obtain that $S_1\in L^{q_2}_{\loc}(B_1)$ with
\[
    \|S_1\|_{L^{q_2}(B_{7/8})}
    \le
     C(\La,q_2,n,\eta)\mko\|S_1\|_{W^{-k+1,q_2}(B_1)}.
\]
If $k=0$, then $S_1\in W^{1,q_2}(B_1)$; otherwise, we have $q_2>\frac n{n-k}\ge \frac n{n-1}$. In both cases, the proof of~\cite[Thm.~4.1]{laMan20} implies that $S_1\in W^{1,t}_{\loc}(B_1)$ for every finite $t>1$. For each fixed $t$, after decreasing $\vae_0$ accordingly, we have the uniform estimate 
\begin{align}\label{W1testhom}
    \|S_1\|_{W^{1,t}(B_{3/4})} &\le C(\La,q_2,t,n,\eta)\mko \|S_1\|_{{W^{-k+1,q_2}(B_1)}}.
\end{align}
Fix $\al\in(0,\frac np-\ell)$ and choose $t$ such that
\[
    p<t<\frac n\ell,\qquad \frac np-\frac nt>\al.
\]
By~\eqref{W1testhom} and the inductive
argument for interior estimates in the proof of Lemma~\ref{lm:exisol},
we obtain
\[
    \|D S_1\|_{W^{\ell,t}(B_{5/8})}
    \le C(\La,q_2,t,n,\eta)\mko
       \|S_1\|_{W^{-k+1,q_2}(B_1)}.
\]
Then by Sobolev embeddings and H\"older's inequality, using the
scale-homogeneous norms~\eqref{eq:uniSobnm}, for all
$r\in(0,\frac12]$, we have
\begin{align}\label{eq:DS1finest}
\begin{aligned}
    \|D S_1\|_{W^{\ell,(p,s)}(B_r)}
    &\le C(p,t,n)\mkt r^{\frac np-\frac nt}
       \|D S_1\|_{W^{\ell,t}(B_{r})}\\
    &\le C(p,t,n,\al) r^\al \|D S_1\|_{W^{\ell,t}(B_{5/8})}\\
    &\le C(\La,\al,q_2,p,n,\eta)\mkt r^\al
       \|S_1\|_{W^{-k+1,q_2}(B_1)}.
\end{aligned}
\end{align}
Combining~\eqref{eq:R-pos-est},~\eqref{S1W-k+1est}, and~\eqref{eq:DS1finest} with~\eqref{equivSobnorm}
proves~\eqref{eq:ellcacciop1}.

To prove~\eqref{eq:ellcacciopg}, we fix $\ov q_0>n/(n-k)$ such that
\[
    \frac1{\ov q_0}>\frac1p-\frac{k}{n}.
\]
Such a choice is possible since $p>1$. Applying~\eqref{eq:ellcacciop1} on $B_{3/4}$ to
$S-\fint_{B_{3/4}}S$, and using Poincar\'e's inequality and
Sobolev embeddings, for all $r\in(0,\frac38]$, we obtain
\begin{align*}
    \|D S\|_{W^{\ell,(p,s)}(B_r)}
    &\le C(\La,\al,p,n,\eta)\bigg(
       \|T\|_{W^{\ell-1,(p,s)}(B_1)}
       +r^\al\Big\|S-\fint_{B_{3/4}}S
       \Big\|_{W^{-k+1,(\ov q_0,s)}(B_{3/4})}\bigg)\\
        &\le C(\La,\al,p,n,\eta)\bigg(
       \|T\|_{W^{\ell-1,(p,s)}(B_1)}
       +r^\al\Big\|S-\fint_{B_{3/4}}S
       \Big\|_{W^{1,(p,s)}(B_{3/4})}\bigg)\\
    &\le C(\La,\al,p,n,\eta)\Big(
       \|T\|_{W^{\ell-1,(p,s)}(B_1)}
       +r^\al\|D S\|_{L^{p,s}(B_{3/4})}\Big).
\end{align*}
For $r\in(\frac38,\frac12]$, the same estimate follows by the preceding local estimate combined with a covering and rescaling argument.
This proves~\eqref{eq:ellcacciopg}.

Finally, we prove part~\ref{negmorest}. Suppose
$T\in W^{-\ell-1,(p',s)}(B_1)$, then by
Lemma~\ref{lm:exisol} and interpolation, there exists
$\wti S_0\in W^{-\ell+1,(p',s)}(B_1)$ solving
\[
    L\wti S_0=T.
\]
Moreover, we have
\begin{equation}\label{eq:tS0-pos-est}
    \|\wti S_0\|_{W^{-\ell+1,(p',s)}(B_1)}
    \le C(\La,p,n,\eta)\mko
       \|T\|_{W^{-\ell-1,(p',s)}(B_1)}.
\end{equation}
Set $\wti S_1\coloneq S-\wti S_0$. Then $L\wti S_1=0$ in
$\mca D'(B_1)$. Since $p'>\frac n{n-\ell}$, we fix
$q_3\in(\frac n{n-k},\nf)$ such that
\[
    \frac1{q_3}>\frac1{p'}-\frac{k-\ell}{n}.
\]
We also fix $q_4\in(\frac n{n-k},\min(q_3,q_0))$. By the
Sobolev embeddings and~\eqref{eq:tS0-pos-est}, we have
\begin{equation}\label{eq:tS1W-k+1est}
    \|\wti S_1\|_{W^{-k+1,q_4}(B_1)}
    \le C(\La,p,q_0,n,\eta)\Big(
       \|S\|_{W^{-k+1,(q_0,s_0)}(B_1)}
       +\|T\|_{W^{-\ell-1,(p',s)}(B_1)}\Big).
\end{equation}
The preceding bootstrap argument for proving~\eqref{W1testhom}, applied with $q_2$ replaced by $q_4$, implies that
$\wti S_1\in L^\nf_{\loc}(B_1)$. Thus
$S\in W^{-\ell+1,(p',s)}_{\loc}(B_1)$.

Fix $\al\in(0,\frac n{p'}+\ell)$ and choose $t>p'$ such that
\[
    \frac n{p'}+\ell-\frac nt>\al.
\]
By the proof of the homogeneous-solution estimate~\eqref{W1testhom}, after possibly
decreasing $\vae_0$ depending also on $\al$, we have
\[
    \|D\wti S_1\|_{L^t(B_{3/4})}
    \le C(\La,p,q_0,n,\eta,\al)\mko
       \|\wti S_1\|_{W^{-k+1,q_4}(B_1)}.
\]
By the Sobolev embedding and scaling, for all
$r\in(0,\frac12]$, we obtain
\begin{align*}
    \|D\wti S_1\|_{W^{-\ell,(p',s)}(B_r)}
    &\le C(p,n,t)\mkt r^{\frac n{p'}+\ell-\frac nt}
       \|D\wti S_1\|_{L^t(B_{r})}\\
    &\le C(\La,p,q_0,n,\eta,\al)\mkt r^\al
       \|\wti S_1\|_{W^{-k+1,q_4}(B_1)}.
\end{align*}
In addition, by~\eqref{eq:tS0-pos-est}, we have
\begin{align*}
    \|D\wti S_0\|_{W^{-\ell,(p',s)}(B_r)}
    &\le C(p,n)\mko
       \|\wti S_0\|_{W^{-\ell+1,(p',s)}(B_1)}
    \le C(\La,p,n,\eta)\mko
       \|T\|_{W^{-\ell-1,(p',s)}(B_1)}.
\end{align*}
Combining these estimates with~\eqref{eq:tS1W-k+1est}
proves~\eqref{eq:ellcaccioneggrad}. This completes the proof.
\end{proof}
For use in the bootstrap argument in Section~\ref{sec:ndreg}, we also record a local elliptic estimate for coefficients in $W^{\si,q}$ with $q>n/\si$.

\begin{Lm}\label{lm:ellboot}
Let $n\ge2$, $\sigma\in\N^+$ with $\si\ge n/2$, and let $2<q<\nf$ with $q>n/\si$.
Suppose $a^{ij}\in W^{\sigma,q}(B^n)$ satisfies~\eqref{elli}. Let $1<p\le q$ satisfy
\begin{equation}\label{condellboot}
    \frac2q-\frac{\sigma-1}{n}<\frac1p<\frac{n+2}{2n}.
\end{equation}
Let $\ell\in\Z$ with $1-n/2\le\ell\le \si$, and let $f \in W^{\ell-1,p}(B^n)$. Suppose that $u\in W^{\ell,q}(B^n)$ satisfies 
\[
    Lu\coloneq\p_i(a^{ij}\p_j u)=f \qquad \text{in }\mca D'(B^n).
\]
Then $u\in W^{\ell+1,p}_{\loc}(B^n)$.
\end{Lm}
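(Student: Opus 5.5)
The plan is to reduce the statement to the global solvability theory of Lemma~\ref{lm:exisol}, through a localization and a one-step-at-a-time bootstrap in the regularity of $u$. First, since $\sigma q>n$, we have $W^{\sigma,q}(B^n)\hookrightarrow C^0(\overline{B^n})$. Hence $a^{ij}$ is uniformly continuous, and on small balls $B_\rho(x_0)$ the oscillation of $a^{ij}$ and $\|Da^{ij}\|_{L^n(B_\rho(x_0))}$ are as small as we like. After rescaling, the smallness hypotheses~\eqref{smaassaij}--\eqref{ashidera} of Lemma~\ref{lm:exisol} therefore hold on such balls. In particular $\eta$ is controlled by $\|a^{ij}\|_{W^{\sigma,q}}$ via the embedding $W^{\sigma,q}\hookrightarrow W^{k,n/k}$ for $k\le\sigma$. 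By a covering argument it suffices to prove $u\in W^{\ell+1,p}$ near each point, and we may assume from the start that the ball is such a small ball.

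\textbf{Localization.} Fix $\chi\in C_c^\infty(B^n)$. As in~\eqref{exppiapjchipu}, we write
\[
L(\chi u)=\chi f+\p_i\big(a^{ij}\p_j\chi\, u\big)+\p_i\chi\, a^{ij}\p_j u .
\]
The term $\chi f$ lies in $W^{\ell-1,p}$. The commutator terms involve only $u$ and one derivative of $u$, multiplied by $a^{ij}$ or $\p\chi$. We estimate them with Lemma~\ref{lm:multinonexact}, treating $a^{ij}$ as a factor in $W^{\sigma,q}$ (that is, $a_i=0$) and $u$ or $\p u$ as a factor of lower order. Suppose we currently know $u\in W^{m,q'}_{\loc}$ for some $\ell\le m$ and some exponent $q'$. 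Then the commutator lies in $W^{m-1,\tilde p}$ for an exponent $\tilde p$ slightly better than $q'$. The condition~\eqref{condmultinonexact}, which is the left inequality in~\eqref{condellboot}, is exactly what guarantees that the exponent $p$ can be reached in finitely many steps.

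\textbf{The solution operator, in three cases.}
\begin{itemize}[leftmargin=2em]
\item \emph{Positive orders.} If $\ell-1\ge0$, we use the $W^{k+1,p}$ estimate~\eqref{estuf} with $k=\ell$; the required hypothesis $p<n/k$ can be relaxed because the coefficients are now better than critical. Alternatively, we run the difference-quotient induction from the proof of Lemma~\ref{lm:exisol} directly.
\item \emph{Negative orders.} Here we proceed by duality, exactly as in the bootstrap in the proof of Lemma~\ref{lm:ellcacciolp}. For a test function $\varphi$ we solve $L^*v_\varphi=\varphi$ in the dual space $W^{-\ell+1,p'}$ and use~\eqref{S1vpexp}. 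The commutator $\chi L^*v_\varphi-L^*(\chi v_\varphi)$ gains one order. Pairing with the current regularity of $u$ then upgrades $u$ by one derivative at a time.
\item \emph{Why the range of $\ell$.} The lower bound $\ell\ge1-n/2$ and the right inequality $1/p<(n+2)/(2n)$ ensure that the dual exponents satisfy $p'<2n/(n-2)$ and $-\ell+1\le n/2$. This places the dual problem in the admissible range of Lemma~\ref{lm:exisol}, where $1<p'<n/k$.
\end{itemize}
Iterating this step a bounded number of times yields $\chi u\in W^{\ell+1,p}$, and hence $u\in W^{\ell+1,p}_{\loc}(B^n)$.

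The main obstacle will be the exponent bookkeeping for the products $a^{ij}\p_j u$ and $\p\chi\,a^{ij}\p_ju$ when $\ell$ is negative. In that regime $\p_j u$ is a distribution of negative order, and we must check that multiplication by $a^{ij}\in W^{\sigma,q}$ is bounded into $W^{\ell-1,p}$, with no loss at the endpoints, in every intermediate step of the iteration. I would first verify this for $N=2$ using~\cite[Thm.~A.1]{Behzadan21}, as in the proof of Lemma~\ref{lm:multinonexact}, and then check that the constraints~\eqref{condellboot} are preserved along the finite chain of intermediate exponents between $q$ and $p$.
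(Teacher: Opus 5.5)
Your treatment of $\ell\le 0$ is fine and agrees in substance with the paper. The paper just invokes Lemma~\ref{lm:ellcacciolp}\ref{negmorest}, which rests on the same duality with $L^*v_\varphi=\varphi$. Since $f\neq0$, you must either keep the extra term $\langle f,\chi v_\varphi\rangle$ in \eqref{S1vpexp} or first subtract a particular solution from Lemma~\ref{lm:exisol}. Your check that $p'<2n/(n-2)$ and $-\ell\le n/2-1$ keep the dual problem in the range of Lemma~\ref{lm:exisol} is correct.

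The gap is the case $\ell\ge1$, which is the real content of the lemma: $\ell$ runs up to $\sigma$, and $\sigma$ becomes arbitrarily large in the bootstrap of Section~\ref{sec:ndreg}. Your main route, \eqref{estuf} with $k=\ell$, needs $\ell\le n-1$ and $p<n/\ell$. Because $p>2n/(n+2)$, this already fails once $\ell\ge(n+2)/2$. The claim that ``$p<n/k$ can be relaxed because the coefficients are better than critical'' is exactly what has to be proved. The proof of Lemma~\ref{lm:exisol} does not carry over, because it rests on the critical product $W^{k-1,n/k}\times W^{k-1,\tilde p}\to W^{k-1,p}$ with $\tilde p=np/(n-p)$, and that is precisely where $p<n/k$ is used. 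For $1\le\ell<\min(n,n/p)$ your localization plus uniqueness in $W^{1,p}_0$ does work, but that range does not cover the statement.

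The missing step is to differentiate the equation and induct on $\ell$, using the a priori information $u\in W^{\ell,q}$. You have $L(\partial_\mu u)=\partial_\mu f-\partial_i(\partial_\mu a^{ij}\,\partial_j u)$ with $\partial_\mu a^{ij}\in W^{\sigma-1,q}$ and $\partial_j u\in W^{\ell-1,q}$. Apply \eqref{eq:multinonexact} with $a=\sigma-\ell+2$. This is admissible, since $a\le 2\sigma$ and $a_i\le a-1$ hold exactly because $1\le\ell\le\sigma$. It puts the product in $W^{\ell-1,p}$, so $L(\partial_\mu u)\in W^{\ell-2,p}$, and the case $\ell-1$ then applies to $\partial_\mu u\in W^{\ell-1,q}$.

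This single product of two top-order factors is where the left inequality of \eqref{condellboot} is used. There is no finite chain of intermediate exponents inside the lemma; the exponent iteration happens later, in the proof of the main theorem. Conversely, the products you single out as the main obstacle are harmless. By \eqref{eq:multinoex}, multiplication by $a^{ij}$ preserves $W^{\ell,q}$ and $W^{\ell-1,q}$ for $1-n/2\le\ell\le\sigma$. Hence the commutator $L(\chi u)-\chi Lu$ lies in $W^{\ell-1,q}\subset W^{\ell-1,p}$ with no loss of integrability.
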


\begin{proof}
We first consider the case $\ell<0$. By the conditions $q>2$ and~\eqref{condellboot}, we obtain for $1-n/2\le \ell <0$ that
\[
    p> \frac {2n}{n+2}\ge \frac{n}{n+\ell},\qquad q>2\ge \frac{n}{n+\ell-1}. 
\]
The estimate then follows from Lemma~\ref{lm:ellcacciolp}~\ref{negmorest}. 

For $\ell\ge 0$, we now prove the estimate by induction on $\ell$. The case $\ell=0$ has been proved in Lemma~\ref{lm:ellcacciolp}. Let $\ell\in \{1,\dots,\si\}$, and we assume the estimate has been proved at order $\ell-1$.
Differentiating the equation $ Lu= f$, we obtain for any $\mu\in \{1,\dots,n\}$, 
\[
    L(\p_\mu u)=\p_\mu f-\p_i(\p_\mu a^{ij}\p_j u) \qquad \text{in }\mca D'(B^n).
\]
Since $\p_\mu a^{ij}\in W^{\si-1,q}(B^n)$ and $\p_j u\in W^{\ell-1,q}(B^n)$, applying~\eqref{eq:multinonexact} with $a=\si-\ell+2$ gives $\p_\mu a^{ij}\p_j u\in W^{\ell-1,p}(B^n)$. It follows that
\[
    L(\p_\mu u)=\p_\mu f-\p_i(\p_\mu a^{ij}\p_j u)\in W^{\ell-2,p}(B^n).
\]
Since $\p_\mu u\in W^{\ell-1,q}(B^n)$, by induction hypothesis, we obtain $\p_\mu u\in W^{\ell,p}_{\loc}(B^n)$ and hence $u\in W^{\ell+1,p}_{\loc}(B^n)$. This completes the proof.
\end{proof}

\subsection{Right-inverse estimates for the Hodge--Dirac operator}
\
\vskip5pt
\label{sec:hod-dec}

To solve \eqref{eq:d*gL=*gV}--\eqref{eq:dL0=0int} and the equations associated with the conservation laws in Section~\ref{sec:conlaws}, we prove some existence results for the Hodge--Dirac operator $d+\delta$, which extend the results of~\cite[Sec.~III]{BerLanMarRiv26} to higher positive and negative Sobolev orders. 

Let $n\ge3$, $k\in\N_0$ with $k\le n-3$. Let $U\subset \R^n$ be a bounded $C^{k+1}$ domain and let $g$ be a metric on $U$ satisfying
\[
    g=(g_{ij})_{i,j=1}^n\in L^\nf\cap W^{k+1,\frac n{k+1}}(U,\R^{n\times n}_{\sym}).\] 
Let $\vae\in (0,1)$ be a small enough constant to be determined later, and let $\La>0$ be a constant. Throughout this subsection, we assume
\begin{numcases}{}
    \|D g\|_{W^{k,\frac n{k+1}}(U)} \le \vae, \label{eq:dg_bound} \\[0.5ex]
    \Lambda^{-1}\mkt  |\xi|^{2}\le g_{ij}(x)\mkern2mu \xi^{i}\xi^{j}\le \Lambda\mkt  |\xi|^{2},
    \quad\text{for a.e. } x\in U\text{ and all }\mkt \xi\in\R^{n}. \label{eq:ellipticity}
\end{numcases}
Writing $\al=\sum_{|I|=\ell}\al_I\mko dx^I$, by~\cite[Eq.~(4.11)]{mitrea01}, there exist coefficient tensors $b_I^{iJ}$, $c_I^{iJ}$, and $d_I^J$, depending only on $g$ and its first derivatives, such that
\begin{equation}\label{eq:HodLapcomp}
\begin{aligned}
    -(\lap_g\al)_I
    &=\p_i\big(g^{ij}\p_j\al_I\big)
      +\p_i\big(b_I^{iJ}\al_J\big)
      +c_I^{iJ}\p_i\al_J
      +d_I^J\al_J.
\end{aligned}
\end{equation}
Moreover, we have
\begin{equation}\label{eq:Hodcoeffpt}
    |b_I^{iJ}|+|c_I^{iJ}|\le C(\La)|Dg|,
    \qquad |d_I^J|\le C(\La)|Dg|^2
    \qquad \text{a.e. in }U.
\end{equation}
Consequently, Lemma~\ref{lm:proinene} together with~\eqref{eq:dg_bound}--\eqref{eq:ellipticity} implies
\begin{equation}\label{eq:Hodcoeffhigh}
    \begin{aligned}
   \sum_{i,I,J}\Big(
     \|b_I^{iJ}\|_{W^{k,\frac n{k+1}}(U)}
     +\|c_I^{iJ}\|_{W^{k,\frac n{k+1}}(U)}
     \Big)
    +\sum_{I,J}\|d_I^J\|_{W^{k,\frac n{k+2}}(U)}
    \le C(\La,U,n)\, \vae.
\end{aligned}
\end{equation}

Building on the solvability result~\cite[Thm.~III.3]{BerLanMarRiv26}, we first establish the higher-order Sobolev estimates for the Dirichlet problem.

\begin{Lm}\label{lm:exisolHod}
Let $n\ge 3$, $k\in\N_0$ with $k\le n-3$, $0\le\ell\le n$, and
\[
        \frac n{n-1}<p<\frac n{k+1}.  
\]
Let $U\subset\R^n$ be a bounded $C^{k+1}$ domain, and $\La\ge 1$ be a constant.  Then there exists $\vae=\vae(\La,U,p)>0$ such that the following holds. Suppose $g$ satisfies~\eqref{eq:dg_bound}--\eqref{eq:ellipticity}. 
Then for any $\beta\in W^{k-1,p}\big(U,\bwe^\ell\R^n\big)$, there exists a unique
    $\al\in W^{k+1,p}\cap
    W^{1,p}_0\big(U,\bwe^\ell\R^n\big)$
solving
\begin{equation}\label{eq:HodDirpos}
    \lap_g\al=\beta\qquad \text{in }U,
\end{equation}
with the estimate
\begin{equation}\label{est:HodDirpos}
    \|\al\|_{W^{k+1,p}(U)}
    \le C(\La,U,p)\mko
    \|\beta\|_{W^{k-1,p}(U)}.
\end{equation}
Moreover, there exists a bounded linear operator
\[
    \msc G_{k}\colon
    W^{-k-1,p'}\big(U,\bwe^\ell\R^n\big)
    \longrightarrow
    W^{-k+1,p'}\big(U,\bwe^\ell\R^n\big)
\]
such that, for every $T\in W^{-k-1,p'}\big(U,\bwe^\ell\R^n\big)$,
\begin{equation}\label{eq:HodDirneg}
       \lap_g(\msc G_kT)=T\qquad\text{in }\mca D'\big(U,\bwe^\ell\R^n\big),
\end{equation}
with the estimate
\begin{equation}\label{est:HodDirneg}
    \|\msc G_kT\|_{W^{-k+1,p'}(U)}
    \le C(\La,U,p)\mko
    \|T\|_{W^{-k-1,p'}(U)}.
\end{equation}
\end{Lm}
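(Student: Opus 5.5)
We would follow the scheme of Lemma~\ref{lm:exisol}, with the scalar operator $L$ replaced by the componentwise expression~\eqref{eq:HodLapcomp} of $-\lap_g$. The base case $k=0$ is the $W^{1,p}$ Dirichlet solvability of~\cite[Thm.~III.3]{BerLanMarRiv26}: for $\frac n{n-1}<p<n$ and $\vae$ small, every $\beta\in W^{-1,p}$ has a unique $\al\in W^{1,p}_0$ solving $\lap_g\al=\beta$, with $\|\al\|_{W^{1,p}}\le C\|\beta\|_{W^{-1,p}}$. Uniqueness at higher orders then comes for free, since $W^{k+1,p}\cap W^{1,p}_0\subset W^{1,p}_0$. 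So the work is in regularity and the estimate~\eqref{est:HodDirpos}, followed by a duality construction of $\msc G_k$.

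For the regularity we argue by induction on $k$. Let $1\le k\le n-3$ and $\frac n{n-1}<p<\frac n{k+1}$, and set $\ti p=\frac{pn}{n-p}$. Then $\ti p<\frac nk$ and $\ti p>\frac n{n-1}$, and $W^{k-1,p}\hookrightarrow W^{k-2,\ti p}$. The induction hypothesis therefore gives $\|\al\|_{W^{k,\ti p}}\le C\|\beta\|_{W^{k-1,p}}$. Writing~\eqref{eq:HodLapcomp} as $\p_i(g^{ij}\p_j\al_I)=-(\lap_g\al)_I-\p_i(b_I^{iJ}\al_J)-c_I^{iJ}\p_i\al_J-d_I^J\al_J$, we differentiate in $x^\mu$ and localize with a cutoff $\chi$, exactly as in~\eqref{reppiaijpjlu}--\eqref{estpchiertm}. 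The right-hand side for $\chi\p_\mu\al$ then consists of $\chi\p_\mu\beta\in W^{k-2,p}$ together with products of the form
\[
\p_\mu g^{ij}\,\p_j\al,\quad b\,\al,\quad \p_\mu b\,\al,\quad c\,D\al,\quad d\,\al,\quad\text{and cutoff commutators}.
\]
We estimate each product in $W^{k-2,p}$ or $W^{k-1,p}$ using Lemma~\ref{lm:proinene}. The inputs are~\eqref{eq:Hodcoeffhigh}, which puts $b,c,Dg\in W^{k,\frac n{k+1}}$ and $d\in W^{k,\frac n{k+2}}$, and $\al\in W^{k,\ti p}$. The exponent bookkeeping is the same as in~\eqref{piplaijpu}, shifted by one order, which is why the range is $p<\frac n{k+1}$ rather than $\frac nk$.

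The lower-order terms $b\,\al$, $c\,D\al$ and $d\,\al$ are coupled across the components $\al_J$. The key point is that their coefficients are $O(\vae)$ by~\eqref{eq:Hodcoeffhigh}, so they can be absorbed. Concretely, we treat the system as a perturbation of the diagonal operator $\al_I\mapsto\p_i(g^{ij}\p_j\al_I)$. Lemma~\ref{lm:exisol} applies to each component with $a^{ij}=g^{ij}$: we have $\|Dg^{-1}\|_{L^n}\le C(\La)\vae$, and $\eta$ is bounded by $C\vae$. We then apply that lemma at order $k-1$ to $\chi\p_\mu\al_I$, after first getting $W^{2,p}_{\loc}$ as in~\cite[Thm.~4.1]{laMan20}, where the coupling terms are harmless lower-order perturbations. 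This gives the interior bound $\|\al\|_{W^{k+1,p}(U')}\le C\|\beta\|_{W^{k-1,p}}$.

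The main obstacle is the boundary estimate, since the Dirichlet condition $\al\in W^{1,p}_0$ acts on all components and does not decouple. We flatten $\p U$ with $C^{k+1}$ charts, which is what forces the $C^{k+1}$ regularity of $U$. The transformed principal part remains diagonal with small-oscillation coefficients, and the coupling terms stay $O(\vae)$ in the same spaces. We then run the induction on tangential derivatives, which preserve the zero trace, and recover the pure normal derivatives from the equation using uniform ellipticity of $g^{nn}$, as in the proof of Lemma~\ref{lm:exisol}. A partition of unity yields~\eqref{est:HodDirpos}. The boundary flattening and the required smallness of the charts are the delicate part, and we would cite~\cite[Sec.~5.1]{Byun05} for them.

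The negative-order operator $\msc G_k$ is built by duality, as for $\msc R_k$. The formal adjoint of $\lap_g$ is $\lap_g$ itself, up to the weight $(\det g)^{1/2}$: with respect to $\dvol_g$ it is self-adjoint by~\eqref{eq:d^*adjoi}, while against the flat pairing it is $\sqrt{\det g}^{-1}\lap_g\sqrt{\det g}$. That weight lies in $L^\nf\cap W^{k+1,\frac n{k+1}}$, so it is a bounded multiplier by Lemma~\ref{lm:proinene}. For $\vp\in W^{k-1,p}_0$ we let $v_\vp\in W^{k+1,p}\cap W^{1,p}_0$ solve the adjoint Dirichlet problem, fix the projection $P\colon W^{k+1,p}\to W^{k+1,p}_0$, and define $\lan\msc G_kT,\vp\ran\coloneq\lan T,Pv_\vp\ran$. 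Estimate~\eqref{est:HodDirneg} follows from~\eqref{est:HodDirpos}. Testing against $\psi\in C_c^\infty$ gives~\eqref{eq:HodDirneg}, because $v_{\lap_g^*\psi}=\psi$ by uniqueness, so $P$ acts as the identity there. Lemma~\ref{lm:proinene} guarantees that $\lap_g(\msc G_kT)$ is a well-defined distribution.
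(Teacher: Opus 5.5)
Your plan is correct in substance and shares the paper's skeleton. That skeleton is the $k=0$ case from \cite[Thm.~III.3]{BerLanMarRiv26}, induction on $k$ through $\ti p=\frac{np}{n-p}$, and the duality construction of $\msc G_k$ with a fixed projection $P$. Your induction step, however, takes a much longer road than the paper's.

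The paper never differentiates, localizes, absorbs, or flattens the boundary for the system. Once the induction hypothesis gives $\|\al\|_{W^{k,\ti p}}\le C\|\beta\|_{W^{k-1,p}}$, the coupling terms $\p_i(b_I^{iJ}\al_J)$, $c_I^{iJ}\p_i\al_J$, $d_I^J\al_J$ already lie in $W^{k-1,p}$. Their norm is $\le C\|\al\|_{W^{k,\ti p}}$ by \eqref{eq:Hodcoeffhigh} and Lemma~\ref{lm:proinene}; this is exactly where $\ti p<\frac nk$, i.e.\ $p<\frac n{k+1}$, is used. Moving them to the right-hand side of \eqref{eq:HodLapcomp}, each component $\al_I\in W^{1,p}_0(U)$ solves a \emph{scalar} Dirichlet problem $\p_i(g^{ij}\p_j\al_I)=f_I$ with $f_I\in W^{k-1,p}$. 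Lemma~\ref{lm:exisol} at order $k$ applies, since $p<\frac nk$ and $\|D(g^{ij})\|_{L^n}\le C(\La,U)\vae$. Combined with uniqueness in $W^{1,p}_0$, it gives $\|\al\|_{W^{k+1,p}(U)}\le C\big(\|\beta\|_{W^{k-1,p}}+\|\al\|_{W^{k,\ti p}}\big)$ in one step, boundary included.

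In particular, the ``main obstacle'' you single out is not there. $W^{1,p}_0(U,\bwe^\ell\R^n)$ is defined componentwise, so the full Dirichlet condition does decouple. The only coupling sits in lower-order terms that the induction hypothesis already controls, and no smallness of $b,c,d$ is needed for them.

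Your route would also work: re-running the derivative/cutoff argument, the $W^{2,p}_{\loc}$ step via \cite{laMan20}, and the tangential flattening of \cite{Byun05} for a coupled system. But it re-proves Lemma~\ref{lm:exisol} instead of using its statement. It also moves the delicate boundary analysis onto a system rather than leaving it inside the scalar lemma, where the paper has already done it.

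One correction in the duality part. The adjoint of $\lap_g$ for the flat pairing is $M_g\lap_gM_g^{-1}$, where $M_g$ is $(\det g)^{1/2}$ times the Gram matrix of $\lan\cdot,\cdot\ran_g$ on $\bwe^\ell$. Already for $\ell=0$ this is $\psi\mapsto(\det g)^{1/2}\lap_g\big((\det g)^{-1/2}\psi\big)$, the opposite conjugation to the one you wrote. Used literally, your formula would break the identity $\lap_g\msc G_kT=T$. Since $M_g^{\pm1}$ have entries in $L^\infty\cap W^{k+1,\frac n{k+1}}$, the corrected adjoint Dirichlet problem still satisfies \eqref{est:HodDirpos}, and the rest of your duality argument goes through unchanged.
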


\begin{proof}
When $k=0$, the assertion follows from~\cite[Thm.~III.3 and Rem.~III.9]{BerLanMarRiv26}. We prove~\eqref{est:HodDirpos} by induction on $k$. Let $1\le k\le n-3$ and assume that the result has been proved at order $k-1$. Suppose
\[
    \frac n{n-1}<p<\frac n{k+1},\qquad
    \beta\in W^{k-1,p}\big(U,\bwe^\ell\R^n\big).
\]
Let $\al\in W^{1,p}_0\big(U,\bwe^\ell\R^n\big)$ be the solution of~\eqref{eq:HodDirpos}, and we set $\ti p\coloneq np/(n-p)$. Since $n/(n-1)<\ti p<n/{k}$, the induction hypothesis together with Sobolev embeddings yields
\begin{equation}\label{eq:Hodindalpha}
    \|\al\|_{W^{k,\ti p}(U)}
    \le C(\La,U,p)\mko \|\beta\|_{W^{k-2,\ti p}(U)}\le C(\La,U,p)\mko
    \|\beta\|_{W^{k-1,p}(U)}.
\end{equation}
By~\eqref{eq:Hodcoeffhigh} and Lemma~\ref{lm:proinene}, we obtain
\begin{equation}\label{eq:Hodlowterms}
\begin{aligned}
 &\sum_{i,I,J}\Big(
 \|\p_i(b_I^{iJ}\al_J)\|_{W^{k-1,p}(U)}
 +\|c_I^{iJ}\p_i\al_J\|_{W^{k-1,p}(U)}
 +\|d_I^J\al_J\|_{W^{k-1,p}(U)}
 \Big)\\
 & \le C(\La,U,p)\mko
 \|\al\|_{W^{k,\ti p}(U)}.
\end{aligned}
\end{equation}
Applying Lemma~\ref{lm:exisol} at order $k$ to each component of~\eqref{eq:HodLapcomp}, and then using~\eqref{eq:Hodindalpha}--\eqref{eq:Hodlowterms}, we have
\[
\begin{aligned}
    \|\al\|_{W^{k+1,p}(U)}
    &\le C(\La,U,p)\Big(
       \|\beta\|_{W^{k-1,p}(U)}
       +\|\al\|_{W^{k,\ti p}(U)}\Big)\\
    &\le C(\La,U,p)\mko
       \|\beta\|_{W^{k-1,p}(U)}.
\end{aligned}
\]
This proves~\eqref{est:HodDirpos}.

It remains to prove~\eqref{eq:HodDirneg}--\eqref{est:HodDirneg}. The case $k=0$ follows from the first part applied with exponent $p'$. Assume $k\ge1$. Let $M_g$ be the matrix-valued coefficient determined by
\[
   \int_U\langle\al,\vp\rangle_g\,\dvol_g
   =\int_U (M_g\mko \al)\cdot\vp\,d\mca L^n.
\]
The dual operator of $\lap_g$ with respect to the standard Euclidean pairing is
\[
       \wti\lap_g\coloneq M_g\lap_gM_g^{-1}.
\]
For $\vp\in W^{k-1,p}_0\big(U,\bwe^\ell\R^n\big)$, let
$v_\vp\in W^{k+1,p}\cap W^{1,p}_0\big(U,\bwe^\ell\R^n\big)$ be the unique solution of
\[
       \wti \lap_g v_\vp=\vp.
\]
Then by Lemma~\ref{lm:proinene}, the estimate~\eqref{est:HodDirpos} also holds for $\wti\lap_g$:
\begin{equation}\label{eq:Hodadjointest}
       \|v_\vp\|_{W^{k+1,p}(U)}
       \le C(\La,U,p)\mko
       \|\vp\|_{W^{k-1,p}(U)}.
\end{equation}
We fix, independently of $p$, a bounded linear projection
$P\colon W^{k+1,p}\big(U,\bwe^\ell \R^n\big)\to W^{k+1,p}_0\big(U,\bwe^\ell \R^n\big)$ as in~\cite[Sec.~1.5.1]{grisvard11}. For $T\in W^{-k-1,p'}\big(U,\bwe^\ell\R^n\big)$, define
\[
       \langle \msc G_{k}T,\vp\rangle
       \coloneq\langle T,Pv_\vp\rangle.
\]
The estimate~\eqref{eq:Hodadjointest} gives~\eqref{est:HodDirneg}. Finally, the coefficient bounds~\eqref{eq:Hodcoeffhigh} and Lemma~\ref{lm:proinene} imply that
$\lap_g\msc G_kT\in W^{-k-1,p'}(U)$, and for any
$\psi\in C_c^\nf\big(U,\bwe^\ell\R^n\big)$, we have
\[
   \langle\lap_g\msc G_kT,\psi\rangle
   =\langle\msc G_k T,\wti \lap_g\psi\rangle
     =\langle T,\psi\rangle.
\]
The proof is complete.
\end{proof}
In the remainder of the paper, we shall frequently use the following direct consequence of Lemma~\ref{lm:proinene}. Under the assumptions~\eqref{eq:dg_bound}--\eqref{eq:ellipticity}, if $k\in \N$ with $k\le n-1$, and $n/(n-k)<p<\nf$, then for a current $T$ and a form $\alpha$ on $U$, we have
\begin{gather}\label{eq:Hodstarmult}
\begin{aligned}
    \|*_g T\|_{W^{-k,p}(U)}
    &\le C(\La,U,p)\mko
       \|T\|_{W^{-k,p}(U)},\\
        \|*_g \al\|_{W^{k,p'}(U)}
    &\le C(\La,U,p)\mko
       \|\al\|_{W^{k,p'}(U)}. 
       \end{aligned}
\end{gather}
As in~\eqref{eq:defd*glap}, we denote by $\delta$ the codifferential with respect to $g$. Then it follows from~\eqref{eq:Hodstarmult} that
\begin{equation}\label{est:delTsob}
\begin{aligned}
    \|\de T\|_{W^{-k,p}(U)}&\le C(\La, U,p) \|T\|_{W^{1-k,p}(U)},\\
    \|\de \al\|_{W^{k-1,p'}(U)}&\le C(\La, U,p) \|\al\|_{W^{k,p'}(U)}.
    \end{aligned}
\end{equation}
\begin{Prop}[cf.~{\cite[Cor.~III.6]{BerLanMarRiv26}}]\label{prop:Hod-coclosed}
Let $n\ge3$, $1\le k\le n-2$, and let
\[
      \frac n{n-k}<p<n,\qquad q\in[1,\nf].
\]
Suppose $g$ satisfies~\eqref{eq:dg_bound}--\eqref{eq:ellipticity} for $U=B^n$, with $\vae=\vae(\La,n,p)>0$ sufficiently small. Let
$\ga\in W^{-k-1,(p,q)}\big(B^n,\bwe^{\ell+1}\R^n\big)$ with $0\le\ell\le n$, and assume $d\ga=0$ in $\mca D'\big(B^n,\bwe\R^n\big)$. Then there exists
$\si\in W^{-k,(p,q)}\big(B^n,\bwe^\ell\R^n\big)$ such that\footnote{We write $d*_g\si=0$ instead of $\delta\mko\si=0$ to avoid applying $*_g$ to $W^{-k-1,(p,q)}$-forms, on which it is not a priori well-defined.}
\[
       d\si=\ga\qquad\text{and}\qquad d*_g\si=0
       \qquad\text{in }\;B^n,
\]
with the estimate
\begin{equation}\label{est:Hodcoclosed}
    \|\si\|_{W^{-k,(p,q)}(B^n)}
    \le C(\La,n,p)\mko
       \|\ga\|_{W^{-k-1,(p,q)}(B^n)}.
\end{equation}
\end{Prop}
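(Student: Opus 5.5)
We build $\si$ from the Hodge Laplacian right inverse $\msc G$ of Lemma~\ref{lm:exisolHod}, after first extending $\ga$ to a closed current on a slightly larger ball. The natural ansatz is
\[
   \si\coloneq \delta\mko \msc G\ga,
\]
which is coclosed automatically. Indeed, $*_g\delta\msc G\ga=\pm d*_g\msc G\ga$, so $d*_g\si=0$ holds in $\mca D'$ once $*_g\msc G\ga$ is known to make sense. This requires $\msc G\ga\in W^{-k+1,(p,q)}$, and the products with $g$ are then controlled through~\eqref{eq:Hodstarmult}--\eqref{est:delTsob}.

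The first step is mapping properties. The paper has $\msc G_{k'}:W^{-k'-1,p'}\to W^{-k'+1,p'}$ for $\frac n{n-1}<p<\frac n{k'+1}$. Writing our exponent as the dual exponent $p'$ of some $p$, the condition $\frac n{n-k}<p<n$ on our exponent becomes the range needed for $k'=k$. Note that $k'\le n-3$ is needed there, while here $k\le n-2$; the endpoint $k=n-2$ forces a tweak. For that case I would either reduce $k$ by one, using the embedding $W^{-k-1,p}\hookrightarrow W^{-k,\tilde p}$, or argue directly via the $W^{1,p}$ theory in~\cite{BerLanMarRiv26}. Lorentz exponents $(p,q)$ are handled by real interpolation between two values of $p$ (as in~\cite[Lem.~II.8]{BerLanMarRiv26}), since $\msc G$ is linear and built independently of $p$. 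This yields $\|\delta\msc G\ga\|_{W^{-k,(p,q)}}\le C\|\ga\|_{W^{-k-1,(p,q)}}$.

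The second step is to show $d\si=\ga$. We have $d\si=d\delta\msc G\ga=\ga-\delta d\msc G\ga$, so it suffices to show $\beta\coloneq d\msc G\ga=0$. Since $d\ga=0$, the form $\beta$ satisfies $d\beta=0$ and $\lap_g\beta=\delta d\beta+d\delta\beta=d\lap_g\msc G\ga-d\delta\beta\cdot 0$. More precisely, $\delta d\beta=0$ gives $\lap_g\beta=d\delta\beta=d(\ga-\delta\beta)$ rearranged, so $\lap_g\beta=d\ga=0$. However, $\msc G$ imposes no boundary condition compatible with $d$, so $\beta$ need not vanish; this is the main obstacle. On a ball, the way around it is to replace the equation by a relative-boundary-condition problem, as in~\cite[Cor.~III.6]{BerLanMarRiv26}.

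Concretely, I would extend $\ga$: write $\ga=\sum D^\al\ga_\al$ and cut off to get a compactly supported current on $B_2$. Closedness is lost under the cutoff, so I would first apply a Euclidean homotopy or Bogovskii-type operator to find $\eta\in W^{-k,(p,q)}(B^n)$ with $d\eta=\ga$; this is the flat Poincaré lemma in negative Sobolev spaces. Then I correct coclosedness by solving $\si=\eta+d\phi$ with $d*_g d\phi=-d*_g\eta$, i.e.\ $\delta d\phi=-\delta\eta$. Taking $\phi=\msc G(-\delta\eta)$ and projecting onto coexact parts, or alternatively by induction on the form degree to handle the lost $d\delta\phi$ term, gives $\phi\in W^{-k+1,(p,q)}$. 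Then $d\si=\ga$, and $\delta\si=\delta\eta+\delta d\phi=-d\delta\phi$ must vanish; applying $\delta$ shows $\delta\phi$ solves a homogeneous equation that the induction on degree kills. I expect this degree-induction bookkeeping, run alongside the negative-order product estimates via Lemma~\ref{lm:proinene}, to be the delicate step, with all estimates following from~\eqref{est:HodDirneg} and~\eqref{est:delTsob}.
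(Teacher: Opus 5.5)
\textbf{There is a genuine gap in the step that makes $\si$ coclosed.} Your primitive $\eta\in W^{-k,(p,q)}$ with $d\eta=\ga$ is exactly the paper's first step. You then set $\si=\eta+d\phi$ with $\phi=\msc G(-\delta\eta)$, which gives $\delta\si=-d\delta\phi$. The only information you have on $\delta\phi$ is $\lap_g(\delta\phi)=0$. But $\msc G$ is defined by duality and imposes no boundary condition compatible with $d$ or $\delta$, so a $\lap_g$-harmonic form on the ball need not vanish. No induction on the degree changes this, so $d\delta\phi=0$ is not available. This is the same obstruction you correctly spotted for your first ansatz $\delta\msc G\ga$.

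The missing idea is to make the correction coexact from the start, so that $\delta\phi=0$ holds trivially. Solve $\lap_g\al=\eta$ with $\al=\msc G_{k-1}\eta\in W^{2-k,(p,q)}$, and put $\si\coloneq\delta d\al=\eta-d\delta\al$, i.e.\ take $\phi=-\delta\al$. Then $d\si=d\eta=\ga$, and $d*_g\si=\pm d\,d(*_g d\al)=0$, with no boundary condition needed. This is the paper's proof, and it is what your phrase ``projecting onto coexact parts'' should have become.

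\textbf{The exponent bookkeeping also requires this change.} Applying $\msc G$ to $\ga$ or to $\delta\eta$ (both of order $-k-1$) requires $\msc G_k$. By Lemma~\ref{lm:exisolHod}, $\msc G_k$ is available only for $\frac n{n-k-1}<p<n$ and $k\le n-3$. That is not the full range $\frac n{n-k}<p<n$, $k\le n-2$; your identification of the range is off by one.

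There are two further problems with that route:
\begin{enumerate}
\item For $p\le\frac n{n-k-1}$, the right-hand side $\delta\eta$ is not even a priori defined, since it requires $*_g$ on $W^{-k-1}$. This is exactly the concern of the statement's footnote.
\item Your remedy for $k=n-2$ relies on an embedding $W^{-k-1,p}\hookrightarrow W^{-k,\tilde p}$, which is false because it would gain a derivative.
\end{enumerate}

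Applying $\msc G_{k-1}$ to $\eta\in W^{-k,(p,q)}$ instead matches the hypotheses exactly: the dual exponent satisfies $\frac n{n-1}<p'<\frac nk$, and $k-1\le n-3$. Your extension and cutoff step is also unnecessary, since the weak Poincar\'e lemma of \cite{Costa10} applies directly on $B^n$.
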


\begin{proof}
By the weak Poincar\'e lemma~\cite[Prop.~4.1]{Costa10} and interpolation (see e.g.~\cite[Lem.~II.8]{BerLanMarRiv26}), there exists
$\beta\in W^{-k,(p,q)}\big(B^n,\bwe^\ell\R^n\big)$ satisfying $d\beta=\ga$, with
\begin{equation}\label{est:Hodprimitive1}
    \|\beta\|_{W^{-k,(p,q)}(B^n)}
    \le C(n,p)\mko
       \|\ga\|_{W^{-k-1,(p,q)}(B^n)}.
\end{equation}
Since $n/(n-1)<p'<n/k$, Lemma~\ref{lm:exisolHod} at order $k-1$ and interpolation give a solution
$\al\in W^{2-k,(p,q)}\big(B^n,\bwe^\ell\R^n\big)$ of $\lap_g\al=\beta$, with
\begin{equation}\label{est:Hodalpha1}
    \|\al\|_{W^{2-k,(p,q)}(B^n)}
    \le C(\La,n,p)\mko
       \|\beta\|_{W^{-k,(p,q)}(B^n)}.
\end{equation}
Set $\si\coloneq\delta d\al$. Applying~\eqref{est:delTsob} and using~\eqref{est:Hodprimitive1}--\eqref{est:Hodalpha1} gives
\[
    \|\si\|_{W^{-k,(p,q)}(B^n)}
    \le C(\La,n,p)\mko
       \|\al\|_{W^{2-k,(p,q)}(B^n)}
    \le C(\La,n,p)\mko
       \|\ga\|_{W^{-k-1,(p,q)}(B^n)}.
\]
Moreover, the same product inequalities give $\delta\mko\al\in W^{1-k,(p,q)}(B^n)$. Hence, since $\lap_g=d\delta+\delta d$, we have in $\mca D'(B^n)$,
\[
       d\si=d\big(\lap_g\al-d\mko\delta\al\big)
             =d\beta=\ga,
       \qquad
       d*_g\si=(-1)^{\ell+1}d^2*_g d\al=0.
\]
This completes the proof.
\end{proof}

Combining Proposition~\ref{prop:Hod-coclosed} with the inequalities~\eqref{eq:Hodstarmult} yields the following corollary.

\begin{Co}[cf.~{\cite[Cor.~III.7]{BerLanMarRiv26}}]\label{co-Hod-decw-1p}
Let $n$, $k$, $p$, $q$, and $g$ be as in
Proposition~\ref{prop:Hod-coclosed}, and suppose in addition $k\ge 2$.
\[
    \ga_1\in W^{-k,(p,q)}\big(B^n,\bwe^{\ell+1}\R^n\big),
    \qquad
    \ga_2\in W^{-k,(p,q)}\big(B^n,\bwe^{\ell-1}\R^n\big),
\]where $0\le\ell\le n$. Assume
$d\ga_1=0$ and $d*_g\ga_2=0$ in $\mca D'\big(B^n,\bwe\R^n\big)$. Then there exists
$\si\in W^{1-k,(p,q)}\big(B^n,\bwe^\ell\R^n\big)$ such that
\[
       d\si=\ga_1\qquad\text{and}\qquad \delta\mko\si=\ga_2,
       \qquad\text{in }\;B^n,
\]
with the estimate
\begin{equation}\label{est:Hodfirst}
    \|\si\|_{W^{1-k,(p,q)}(B^n)}
    \le C(\La,n,p)\Big(
       \|\ga_1\|_{W^{-k,(p,q)}(B^n)}
       +\|\ga_2\|_{W^{-k,(p,q)}(B^n)}\Big).
\end{equation}
\end{Co}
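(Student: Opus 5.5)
The plan is to split $\si$ into two pieces: $\si_1$ handles the exterior derivative and $\si_2$ handles the codifferential, so that $\si=\si_1+\si_2$. Both pieces come from Proposition~\ref{prop:Hod-coclosed}, applied at order $k-1$ instead of $k$. This is allowed because $k\ge 2$ gives $1\le k-1\le n-2$. Also $\frac n{n-k+1}<\frac n{n-k}<p<n$, so the exponent range required there holds at order $k-1$ as well.

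\textbf{First piece.} Since $\ga_1\in W^{-k,(p,q)}$ is closed, Proposition~\ref{prop:Hod-coclosed} at order $k-1$ gives $\si_1\in W^{1-k,(p,q)}\big(B^n,\bwe^\ell\R^n\big)$ with
\[
d\si_1=\ga_1,\qquad d*_g\si_1=0,\qquad \|\si_1\|_{W^{1-k,(p,q)}}\le C\|\ga_1\|_{W^{-k,(p,q)}}.
\]
Because $d*_g\si_1=0$, we have $\delta\si_1=\pm *_g d*_g\si_1=0$. This step uses that $*_g$ is well defined on $W^{1-k,(p,q)}$ and on the $d$-image, which holds by \eqref{eq:Hodstarmult} and interpolation, since $1-k\ge 2-n$.

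\textbf{Second piece.} Set $\ti\ga\coloneq *_g\ga_2$, an $(n-\ell+1)$-form. By \eqref{eq:Hodstarmult} (after Lorentz interpolation), $\ti\ga\in W^{-k,(p,q)}$ with norm bounded by $C(\La,n,p)\|\ga_2\|_{W^{-k,(p,q)}}$, and the hypothesis gives $d\ti\ga=0$. Proposition~\ref{prop:Hod-coclosed} at order $k-1$ then yields $\tau\in W^{1-k,(p,q)}\big(B^n,\bwe^{n-\ell}\R^n\big)$ with $d\tau=*_g\ga_2$ and $d*_g\tau=0$. Define $\si_2\coloneq \pm *_g^{-1}\tau$, with the sign chosen so that $\delta\si_2=(-1)^\ell *_g^{-1}d*_g\si_2=\ga_2$. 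Its norm is controlled through \eqref{eq:Hodstarmult}. Moreover, $d\si_2=\pm d*_g^{-1}\tau$, and since $*_g^{-1}=\pm *_g$ on forms of fixed degree, $d\si_2$ is a multiple of $d*_g\tau=0$.

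Summing, $\si=\si_1+\si_2$ satisfies $d\si=\ga_1$ and $\delta\si=\ga_2$, and the estimate \eqref{est:Hodfirst} follows by adding the two bounds.

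\textbf{Main obstacle.} The only delicate point is justifying that $*_g$ acts boundedly on the negative-order Lorentz spaces $W^{-k,(p,q)}$ and $W^{1-k,(p,q)}$. For this I would apply \eqref{eq:fTproinene} with $f_0=g^{ij}$, $\sqrt{\det g}\in L^\nf\cap W^{k,n/k}$, which holds by \eqref{eq:dg_bound} together with the Sobolev embedding $W^{k,n/(k+1)}\hookrightarrow W^{k-1,n/k}$. There is a regularity gap at order $k$: \eqref{eq:dg_bound} only puts $Dg$ in $W^{k,\frac n{k+1}}$, so $g\in W^{k+1,\frac n{k+1}}\hookrightarrow W^{k,\frac nk}$. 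This is exactly enough, and the constraint $p>\frac n{n-k}$ is what makes the product estimate applicable.
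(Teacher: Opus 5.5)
Your proposal is correct and is essentially the paper's proof. You apply Proposition~\ref{prop:Hod-coclosed} at order $k-1$ to $\ga_1$ and to $*_g\ga_2$ up to sign, set $\si=\si_1\pm *_g^{-1}\tau$, and control the norms via \eqref{est:Hodcoclosed} and \eqref{eq:Hodstarmult}; the only cosmetic difference is that the paper places the sign $(-1)^\ell$ in the equation $d\tau=(-1)^\ell *_g\ga_2$ rather than on $*_g^{-1}\tau$.
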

\begin{proof}
The inequality~\eqref{eq:Hodstarmult} implies that $*_g\,\ga_2\in W^{-k,(p,q)}$. Since $p>n/(n-k)>n/(n+1-k)$, applying Proposition~\ref{prop:Hod-coclosed} at order $k-1$ to
$\ga_1$ and $(-1)^\ell*_g\ga_2$ respectively, we obtain an $\ell$-form $\si_1$
and an $(n-\ell)$-form $\tau$, both in $W^{1-k,(p,q)}(B^n)$, such that
\[
    d\si_1=\ga_1,\quad d*_g\si_1=0,
    \qquad
    d\tau=(-1)^\ell*_g\ga_2,\quad d*_g\tau=0.
\]
Set $\si\coloneq\si_1+*_g^{-1}\tau$. Then $d\si=\ga_1$ and, by
\eqref{eq:defd*glap},
\[
    \delta\mko\si=(-1)^\ell*_g^{-1}d\tau=\ga_2.
\]
The estimate~\eqref{est:Hodfirst} follows from
\eqref{est:Hodcoclosed} and the inequalities
\eqref{eq:Hodstarmult} at orders $k-1$ and $k$.
\end{proof}

\section{Structural identities and estimates for the Noether system}\label{sec:strid}
In this section, we provide a higher-dimensional generalization of the structural identities proved in~\cite[Sec.~IV]{BerLanMarRiv26}. Let $n=2h\ge 6$ with $h\in \N^+$, $m>n$, and let
$\bP\in\mathcal I_{h-1,2}(B^n,\R^m)$. We write
$g=\bP^*g_{\std}$, and throughout this section, we fix $\La\ge 1$ such that, for a.e. $x\in B^n$
and every $v\in T_xB^n$,
\begin{align}\label{eq:immcon4d}
 \La^{-1}|v|_{\R^n}^2\le |d\bP_x(v)|_{\R^m}^2\le \La |v|_{\R^n}^2.
\end{align}
We follow the notation in Section~\ref{sec:cov}, and adopt the convention~\eqref{eq:convsonab} for Sobolev spaces of tensor fields. Using the contraction operators introduced in Section~\ref{sec:uselem}, we define the pointwise bilinear operators $\dres$, $\wres$,
$\ovs{\sbul}{\res}_g$, and $\ovs{\sbul}{\sbul}_g$ as in~\eqref{defbiopes}: the upper operators act on the
$\bwe\R^m$-factors, while the lower operators act on the differential-form
factors. Using the metric $g$ to raise indices, we write $\g^i\bP\coloneq g^{ij}\g_j\bP=g^{ij}\p_j\bP$, and we define
\begin{equation}\label{eq:defeta}
 \vet\coloneq\frac12\,d\bP\overset{\ovwe}{\we}d\bP,\qquad \vet_{ij}=\g_i\bP\we \g_j\bP.
\end{equation}
In Section~\ref{sec:pfmainThm}, we will restrict to small enough balls and perform a rescaling. Thus throughout this section, we assume
\begin{align}\label{StruEpassu}
 \varepsilon_{\bP}\coloneq
 \|D^2\bP\|_{W^{h-1,2}(B^n)}\le 1.
\end{align}
In particular, applying~\eqref{eq:ffproinepo} to $\g_{ij}\bP=\p_i\p_j\bP-\Ga_{ij}^k \p_k\bP$ yields
\begin{equation}\label{estg2Phi}
    \|\g^2 \bP\|_{W^{h-1,2}(B^n)}^2\coloneq  \sum_{i,j=1}^n  \|\bII_{ij} \|_{W^{h-1,2}(B^n)}^2\le C(n,\La)\mkt \varepsilon_{\bP}^2.
\end{equation}
For a domain $U\subset B^n$, and a finite-dimensional Euclidean space $E$ regarded as a vector bundle over $U$, we set
\begin{align}\label{defcritspacesIV}
\begin{aligned}
 \msc E_n(U,E)&\coloneq
 W^{2-h,\lf(\frac{2h}{h+1},2\rg)}(U,E),\\
 \msc N_{n,1}(U,E)&\coloneq
 W^{1-h,(2,\nf)}(U,E),\\
 \msc N_{n,2}(U,E)&\coloneq
 W^{2-h,(2,\nf)}(U,E). 
\end{aligned}
\end{align}
In this section, we set $U=B^n$ and thus $U$ in the notation of the above spaces or other distribution spaces will be omitted. For instance, we write 
\[
    \mca D'(\R^m)\coloneq\mca D'(B^n,\R^m).
\]
By~\eqref{eq:fTproinene}, for each space $\msc A\in\{\msc E_n,\msc N_{n,2},\msc N_{n,1}\}$, multiplication by an arbitrary $a\in L^\nf\cap W^{h,2}(B^n)$ is bounded on $\msc A$:
\begin{align}\label{prolorgen}
 \|aT\|_{\msc A}
 \le C(n)\|a\|_{L^\nf\cap W^{h,2}}
       \|T\|_{\msc A}.
\end{align}
 The structural identities below build on~\cite{Bernard25,BerLanMarRiv26}.
 We derive their higher-dimensional counterparts using covariant derivatives and identify the remainder terms explicitly.
This allows us to precisely track the dimension-dependent coefficients and establish the Sobolev--Lorentz estimates for the remainders in negative-order Sobolev spaces.

\subsection{Algebraic and differential identities}

\begin{Lm}\label{ber-prop:II.2}
For
$\bL\in\msc N_{n,1}\big(\R^m\ot\bwe^2\R^n\big)$, define
\begin{equation*}
\begin{dcases}
 A\coloneq\bL\mkt\dres d\bP,
 &B\coloneq2\bL\dwe d\bP,\\
 \bC\coloneq\bL\wres d\bP,
 &\bD\coloneq2\bL\ovs{\ovwe}{\we}d\bP.
\end{dcases}
\end{equation*}
Then in $\msc N_{n,1}\big(\bwe^2\R^m\ot \bwe^1 \R^n\big)$, it holds that
\begin{align}\label{eq:berIII.1.a}
 (5-2n)\mko\bC
 =\vet\mkt\ovs{\sbul}{\res}_g\bC
  +\bD\mkt\ovs{\sbul}{\res}_g\vet
  +\vet\mkt\resg A-B\mkt\resg\vet.
\end{align}
\end{Lm}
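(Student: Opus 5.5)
The identity \eqref{eq:berIII.1.a} is purely pointwise and algebraic. It is linear in $\bL$, and its coefficients are the smooth-in-$d\bP$ tensors $\vet$, $g^{ij}$ together with the contraction operators of Section~\ref{sec:uselem}. I would therefore first prove it for a constant $\bL_x\in\R^m\ot\bwe^2 T^*_xB^n$ at a fixed point $x$ where $d\bP_x$ is injective. The distributional statement in $\msc N_{n,1}$ then follows by linearity. Every term is a product of $\bL$ with a polynomial in $\g\bP$ and $g^{ij}$, which lie in $L^\nf\cap W^{h,2}$, so all products are well defined by \eqref{prolorgen}, and the pointwise identity of the coefficient tensors transfers directly.

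At a fixed point I would choose coordinates in which $g_{ij}(x)=\de_{ij}$. I set $e_i\coloneq\p_i\bP(x)$, an orthonormal family in $\R^m$, and complete it to an orthonormal basis $e_1,\dots,e_m$ of $\R^m$, with normal part spanned by $e_{n+1},\dots,e_m$. By linearity it suffices to take $\bL=\vec u\,dx^a\we dx^b$ with $a<b$ and $\vec u=e_\mu$, and to distinguish two cases: $\mu\le n$ (tangential) and $\mu>n$ (normal). In both cases I then compute $A=\bL\dres d\bP$, $B=2\bL\dwe d\bP$, $\bC=\bL\wres d\bP$, and $\bD=2\bL\ovs{\ovwe}{\we} d\bP$ explicitly. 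For instance, $A=e_\mu\cdot e_b\,dx^a-e_\mu\cdot e_a\,dx^b$ up to the sign convention for $\resg$, and $\bC=e_\mu\we e_b\,dx^a-e_\mu\we e_a\,dx^b$. After that I evaluate the four right-hand terms using $\vet=\sum_{i<j}e_i\we e_j\,dx^i\we dx^j$. The tools here are the formula \eqref{bul2vecs} for $\sbul$ on the $\bwe^2\R^m$ factors, the product rule \eqref{prodrule} for $\res$ on the form factors, and \eqref{eq:defbul} for $\sbul$ on the form factors.

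In the normal case, $A=0$ and $B=0$ because $e_\mu\perp e_i$. The term $\vet\,\ovs{\sbul}{\res}_g\bC$ produces sums over $i$ of $e_i\cdot e_i$-type contractions, which give factors of $n$ or $n-1$. Similarly, $\bD\,\ovs{\sbul}{\res}_g\vet$ yields a dimension-dependent multiple of $\bC$, and I expect the total to be $(5-2n)\bC$. In the tangential case, the terms $\vet\resg A$ and $B\resg\vet$ become nonzero. They should exactly compensate the extra contributions from the $\sbul$ contractions that arise when $e_\mu$ coincides with some $e_i$, namely the case $\mu\in\{a,b\}$ versus $\mu\notin\{a,b\}$. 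This needs a further subcase split.

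The main obstacle is sign and normalization bookkeeping. This includes the convention for $\resg$ on 2-forms contracted with 1-forms, the ordering in $\overset{\ovwe}{\we}$, and the factors $2$ and $\tfrac12$ in $B$, $\bD$, $\vet$. The coefficient $5-2n$ is very sensitive to these. To fix the conventions, I would check against the four-dimensional identity of \cite[Sec.~IV]{BerLanMarRiv26} (where $5-2n=-3$) and \cite{Bernard25}, confirming that my $n=4$ computation reproduces it. Only then would I redo the trace counts with general $n$, tracking which contractions sum over all $n$ tangent directions and which sum over $n-2$ directions, the latter excluding $a$ and $b$.
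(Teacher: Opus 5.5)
Your strategy is sound, and in substance it is the paper's. The identity is pointwise, linear in $\bL$, with coefficients built from $d\bP$ and $g$, and it is checked with \eqref{bul2vecs}. Your transfer to $\msc N_{n,1}$ through the multiplier bound \eqref{prolorgen} is acceptable; the paper instead approximates $\bL$ weak-$*$ by smooth forms.

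As written, however, the proposal does not prove the lemma. Its only nontrivial content, the coefficient $5-2n$, is left as something you ``expect''. The explicit formulas you do record would also steer the computation wrong.

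The signs are not a convention to be calibrated against $n=4$, since Definition~\ref{defresbul} and \eqref{defbiopes} fix them. In a $g$-orthonormal coframe, \eqref{eq:defres} gives $(dx^a\we dx^b)\resg dx^a=dx^b$. So for $\bL=\vec u\,dx^a\we dx^b$ one gets $A=(\vec u\cdot e_a)\,dx^b-(\vec u\cdot e_b)\,dx^a$ and $\bC=\vec u\we e_a\,dx^b-\vec u\we e_b\,dx^a$, the opposite of what you wrote. This sign is not harmless. Reversing the interior product of a $2$-form by a $1$-form flips $A$ and $\bC$ but leaves all four right-hand terms unchanged. With your sign you would therefore arrive at the coefficient $2n-5$.

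In the normal case, the $n$-dependence is also not where you place it. One has $\vet\mkt\ovs{\sbul}{\res}_g\bC=\bC$ exactly, while $\bD\mkt\ovs{\sbul}{\res}_g\vet=2\bC-2(n-1)\bC$. The factor $n-1$ comes from $\sum_k(\vec u\we e_k)\sbul(e_a\we e_k)=(n-1)\,\vec u\we e_a$.

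For the tangential components, your subcases are avoidable, and they genuinely differ: for $\vec u=e_c$ with $c\notin\{a,b\}$, the $\bD$-term already equals $(6-2n)\bC$. The paper treats all $\bL$ at once. It writes each term in covariant components through the quantities $\bC_i$, $A^j\vet_{ij}$, $(\bL_{ij}\cdot\g_k\bP)\vet^{jk}$ and $(\bL_{jk}\cdot\g_i\bP)\vet^{jk}$:
\begin{align*}
 (\bD\mkt\ovs{\sbul}{\res}_g\vet)_i&=(4-2n)\bC_i+2A^j\vet_{ij}+2(\bL_{ij}\cdot\g_k\bP)\vet^{jk},\\
 (\vet\mkt\ovs{\sbul}{\res}_g\bC)_i&=\bC_i-A^j\vet_{ij}+(\bL_{jk}\cdot\g_i\bP)\vet^{jk},\\
 (\vet\mkt\resg A)_i&=-A^j\vet_{ij},\qquad
 (B\mkt\resg\vet)_i=2(\bL_{ij}\cdot\g_k\bP)\vet^{jk}+(\bL_{jk}\cdot\g_i\bP)\vet^{jk}.
\end{align*}
Adding the first three lines and subtracting the last expression gives $(5-2n)\bC_i$ with no case distinction. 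This is exactly the compensation you anticipated between the $A$- and $B$-terms and the extra $\sbul$-contractions, made explicit.
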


\begin{proof}
By~\eqref{prolorgen}, all the terms involved are well-defined in
$\msc N_{n,1}$. Hence by approximation, it suffices to prove the identity
pointwise for smooth $\bL$. In covariant components, with all indices raised
and lowered using $g$, we have
\begin{align}
    &A_i=\bL_{ji}\cdot\g^j\bP,
\qquad
    &&B_{ijk}=2\big(\bL_{ij}\cdot\g_k \bP-\bL_{ik}\cdot\g_j\bP         +\bL_{jk}\cdot\g_i\bP\big),\label{repABcov}\\
    &\bC_i=\bL_{ji}\we\g^j\bP,
    \qquad
    &&\bD_{ijk}=2\big(\bL_{ij}\we\g_k \bP-\bL_{ik}\we\g_j\bP
             +\bL_{jk}\we\g_i\bP\big).\label{repCDcov}              
\end{align}
By~\eqref{repCDcov}, the antisymmetry of $\vet^{jk}$, and an interchange of
$j$ and $k$ in the second summand, we have
\begin{align}
\begin{aligned}\label{Dcontetaexp}
 \big(\bD\mkt\ovs{\sbul}{\res}_g\vet\big)_i
 &=\frac12\bD_{ijk}\sbul\vet^{jk}\\
 &=2\mko
      (\bL_{ij}\we\g_k\bP)
    \sbul(\g^j\bP\we\g^k\bP)+
      (\bL_{jk}\we\g_i\bP)
       \sbul(\g^j\bP\we\g^k\bP).
\end{aligned}
\end{align}
For the first sum, since $\g_i\bP\we \g^i\bP=0$ and
$\g_i\bP\cdot \g^j\bP=\de_i^j$ (which implies
$\g_i\bP\cdot \g^i\bP=n$ under summation), formula~\eqref{bul2vecs} gives
\begin{align}
\begin{aligned}\label{firstDcontraction}
     &(\bL_{ij}\we\g_k\bP)\sbul
       (\g^j\bP\we\g^k\bP)\\
       &=n\mko \bL_{ij}\we\g^j\bP
     -(\bL_{ij}\cdot\g^k\bP)\mko\g_k\bP\we\g^j\bP
     -\de_k^j\mko (\bL_{ij} \we \g^k\bP)\\
    &=-(n-1)\bC_i
  +(\bL_{ij}\cdot\g_k\bP)\mko\vet^{jk}.
\end{aligned}
\end{align}
For the second sum in~\eqref{Dcontetaexp}, another application
of~\eqref{bul2vecs} yields
\begin{align*}
(\bL_{jk}\we\g_i\bP)\sbul
       (\g^j\bP\we\g^k\bP)
&= (\bL_{jk}\cdot\g^j\bP)\g_i\bP\we\g^k\bP
 +\de_i^k(\bL_{jk}\we\g^j\bP)\\
&\quad -(\bL_{jk}\cdot\g^k\bP)\g_i\bP\we\g^j\bP
 -\de_i^j(\bL_{jk}\we\g^k\bP).
\end{align*}
Interchanging $j$ and $k$ in the third summand and using
$\bL_{kj}=-\bL_{jk}$, it follows that
\begin{align}\label{secondDcontraction}
    (\bL_{jk}\we\g_i\bP)\sbul
       (\g^j\bP\we\g^k\bP)=2\bC_i+2A^j\mko\vet_{ij}.
\end{align}
Combining~\eqref{Dcontetaexp}--\eqref{secondDcontraction}, we obtain
\begin{align}
 (\bD\mkt\ovs{\sbul}{\res}_g\vet)_i
 &=(4-2n)\bC_i+2A^j\mko\vet_{ij}
   +2(\bL_{ij}\cdot\g_k\bP)\vet^{jk}.
 \label{Detaindex}
\end{align}
We next compute $\vet\mkt\ovs{\sbul}{\res}_g\bC$. By~\eqref{bul2vecs}, we have
\begin{align*}
 &(\vet\mkt\ovs{\sbul}{\res}_g\bC)_i\\
 &=(\g_j\bP\we\g_i\bP)
    \sbul(\bL_k{}^j\we\g^k\bP)\\
 &=(\g_j\bP\cdot\bL_k{}^j)\g_i\bP\we\g^k\bP
   +\de_i^k\mko\g_j\bP\we\bL_k{}^j
 -\de_j^k\mko\g_i\bP\we\bL_k{}^j
   -(\g_i\bP\cdot\bL_k{}^j)\g_j\bP\we\g^k\bP.
\end{align*}
Here
$\g_j\bP\cdot\bL_k{}^j=-A_k$,
$\bL_j{}^j=0$, and
$\g_j\bP\we\bL_i{}^j=\bC_i$. It follows that
\begin{align}
 (\vet\mkt\ovs{\sbul}{\res}_g\bC)_i
 &=-A^j\mko\vet_{ij}+\bC_i
   +(\bL_{jk}\cdot\g_i\bP)\vet^{jk}.
 \label{etaCindex}
\end{align}
Finally, by~\eqref{repABcov} and an interchange of $j$ and $k$ in the
second summand,
\begin{align}
 (B\mkt\resg\vet)_i
 &=\frac12B_{ijk}\vet^{jk}=2(\bL_{ij}\cdot\g_k\bP)\vet^{jk}
   +(\bL_{jk}\cdot\g_i\bP)\vet^{jk}.
 \label{Betaindex}
\end{align}
Since $(\vet\mkt\resg A)_i=\vet_{ji}A^j=-A^j\vet_{ij}$,
combining~\eqref{Detaindex}--\eqref{Betaindex} yields
\begin{align*}
\big(\vet\mkt\ovs{\sbul}{\res}_g\bC
  +\bD\mkt\ovs{\sbul}{\res}_g\vet
  +\vet\mkt\resg A-B\mkt\resg\vet\big)_i=(5-2n)\bC_i.
\end{align*}
This proves~\eqref{eq:berIII.1.a}.
\end{proof}

For later use, we record the expressions of the exterior derivative and codifferential using covariant derivatives; see for instance~\cite[Sec.~9.4.1]{Petersen16}. Since $n$ is even, the codifferential defined in~\eqref{eq:defd*glap} is given by $\de=-*_g d\,*_g$.
For any smooth $\ell$-form $\omega$ and the Levi-Civita connection $\g$ of any smooth metric, we have
\begin{equation}\label{defd*g4d}
 (d\om)_{j_0\dots j_{\ell}}=\sum_{k=0}^{\ell} (-1)^k\g_{j_k} \om_{j_0\dots \wih{j_k}\dots j_\ell},\qquad (\delta\omega)_{i_2\dots i_\ell}
 =-\g^j\omega_{j i_2\dots i_\ell}.
\end{equation}
In addition, each component of $\g\om$ can be written as a finite sum of
expressions $a_1\mko\p_i(a_2\mkt\om_J)$ where $a_1,a_2\in
L^\nf\cap W^{h,2}$ with norms bounded by $C(n,\La)$. Since taking one derivative maps $\msc N_{n,2}$ continuously into $\msc N_{n,1}$, under the assumptions~\eqref{eq:immcon4d} and~\eqref{StruEpassu} with $g=g_{\bP}$, by~\eqref{prolorgen}, we have
\begin{equation}\label{estdwdew}
    \|d\om\|_{\msc N_{n,1}}+\|\de \om\|_{\msc N_{n,1}}+  \|\g\om\|_{\msc N_{n,1}}\le C(\La,n)\|\om\|_{\msc N_{n,2}}.
\end{equation}
Hence by approximation in both the form and metric, the identities in~\eqref{defd*g4d} remain valid for $\om\in \msc N_{n,2}$ and $g=g_{\bP}$.
\begin{Lm}\label{prop:III2ber}
Let
$\al\in L^\nf\cap W^{h,2}\big(\bwe^2\R^n\big)$ and
$\beta\in\msc N_{n,2}\big(\bwe^2\R^n\big)$. Then the following holds in $\mca D'(B^n,\R^n)$:
\begin{align}\label{eq:III2-scalar}
 \al\resg\delta\beta+d\beta\resg\al
 =\delta(\al\bulg\beta)+d(\al\resg\beta)
   +\mca R_1[\al,\beta;g],
\end{align}
where the remainder satisfies
\begin{align}\label{eq:III2-bd}
 \|\mca R_1[\al,\beta;g]\|_{\msc E_n}
 \le C(n,\La)\|\g\al\|_{W^{h-1,2}}
                  \|\beta\|_{\msc N_{n,2}}.
\end{align}
Similarly, if
$\vec\al\in L^\nf\cap W^{h,2}
\big(\bwe^2\R^m\ot\bwe^2\R^n\big)$, $\vec\beta\in\msc N_{n,2}\big(\bwe^2\R^m\ot\bwe^2\R^n\big)$, and
$\beta\in\msc N_{n,2}\big(\bwe^2\R^n\big)$, then we have
\begin{align}
 \vec\al\resg\delta\beta+d\beta\resg\vec\al
 &=\delta(\vec\al\bulg\beta)+d(\vec\al\resg\beta)
   +\vec{\mca R}_1[\vec\al,\beta;g],\label{eq:III2-vector1}\\
 \vec\al\ovs{\sbul}{\res}_g\delta\vec\beta
 -d\vec\beta\ovs{\sbul}{\res}_g\vec\al
 &=\delta(\vec\al\ovs{\sbul}{\sbul}_g\vec\beta)
   +d(\vec\al\ovs{\sbul}{\res}_g\vec\beta)
   +\vec{\mca R}_1[\vec\al,\vec\beta;g],\label{eq:III2-vector2}
\end{align}
with
\begin{align}\label{eq:III2-bdvec}
\begin{aligned}
 \|\vec{\mca R}_1[\vec\al,\beta;g]\|_{\msc E_n}
 &\le C(n,\La)\|\g\vec\al\|_{W^{h-1,2}}
                    \|\beta\|_{\msc N_{n,2}},\\
 \|\vec{\mca R}_1[\vec\al,\vec\beta;g]\|_{\msc E_n}
 &\le C(n,\La)\|\g\vec\al\|_{W^{h-1,2}}
                    \|\vec\beta\|_{\msc N_{n,2}}.
\end{aligned}
\end{align}
\end{Lm}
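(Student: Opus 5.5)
The identity is pointwise algebra plus one Leibniz rule, so I will prove it for smooth $\al,\beta$ and a smooth metric, extract the remainder as an explicit contraction of $\g\al$ with $\beta$, and then pass to the limit using the product estimates.

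\textbf{Step 1 (smooth identity).} Take $\al,\beta$ and $g$ smooth. Write everything in covariant components using~\eqref{defd*g4d}:
\[
(\delta\beta)_i=-\g^j\beta_{ji},\qquad (d\beta)_{ijk}=\g_i\beta_{jk}-\g_j\beta_{ik}+\g_k\beta_{ij}.
\]
Similarly expand $\delta(\al\bulg\beta)$ and $d(\al\resg\beta)$ via~\eqref{bul2vecs} and the Leibniz rule~\eqref{eq:Leibru}. Every term in which the covariant derivative falls on $\beta$ should match one of the terms of $\al\resg\delta\beta+d\beta\resg\al$; this is an index computation of the same type as in the proof of Lemma~\ref{ber-prop:II.2}. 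The Euclidean flat version is the known identity of~\cite{Bernard25,BerLanMarRiv26}, and no curvature terms arise because only first covariant derivatives occur. What remains is to define
\[
\mca R_1[\al,\beta;g]\coloneq \al\resg\delta\beta+d\beta\resg\al-\delta(\al\bulg\beta)-d(\al\resg\beta),
\]
and to check that it equals a finite sum $\sum c\,(\g\al)*\beta$ of contractions with $g^{-1}$ and universal constants $c$, involving no derivatives of $\beta$.

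\textbf{Step 2 (estimate).} Write $(\g\al)*\beta=\sum a\,\g\al\,\beta$ with $a\in L^\nf\cap W^{h,2}$ (polynomials in $g^{ij}$) of norm at most $C(n,\La)$. Take $f_0=\g\al\in W^{h-1,2}$. Apply~\eqref{eq:fTproinene} with $k=h-2$ and $T=\beta\in W^{2-h,(2,\nf)}$, using $W^{h-1,2}\hookrightarrow W^{h-2,(n/(h-1),2)}$, i.e. $q_0=h$ in that lemma. Then
\[
\tfrac1{p_2}=\tfrac12+\tfrac1h-\tfrac{h-2}{2h}=\tfrac{h+1}{2h},
\]
and the Lorentz exponent satisfies $1/r_2\le 1/2+0$. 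This gives $W^{2-h,(\frac{2h}{h+1},2)}=\msc E_n$, and together with~\eqref{prolorgen} yields~\eqref{eq:III2-bd}. I need to check the endpoint hypotheses: $q_0=h<n/(h-2)$, and $p_2>n/(n-k)=2h/(h+2)$ holds.

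\textbf{Step 3 (approximation).} Let $\beta_j\to\beta$ in $\msc N_{n,2}$ (or weak$^*$ if needed), $\al_j\to\al$ in $W^{h,2}$ with bounded $L^\nf$ norm, and mollify the metric, $g_j\to g$. All terms of~\eqref{eq:III2-scalar} are continuous in $\mca D'$ by~\eqref{estdwdew} and~\eqref{prolorgen}, and the remainder converges by Step 2. Hence the identity passes to the limit.

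\textbf{Step 4 (vector-valued cases).} For~\eqref{eq:III2-vector1} the $\bwe^2\R^m$ factor is inert, so the identity follows componentwise from the scalar one. For~\eqref{eq:III2-vector2} the same computation applies with the interior product on the form side replaced by $\sbul$ on the $\bwe^2\R^m$ factors. The sign change in front of $d\vec\beta$ comes from the graded commutation rule~\eqref{eq:defbul} for $\sbul$ on $2$-vectors. The remainder is again a contraction $(\g\vec\al)*\vec\beta$, estimated as in Step 2.

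\textbf{Main obstacle.} The hard part is the bookkeeping in Step 1: verifying that all terms with $\g\beta$ cancel exactly and tracking the sign in~\eqref{eq:III2-vector2}. The analytic estimates are routine given Lemma~\ref{lm:proinene}.
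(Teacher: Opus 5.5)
Your outline follows the paper's proof. You compute in $g$-covariant components for smooth $\beta$, the $\g\beta$ terms cancel using only the Leibniz rule and $\g g=0$, and the remainder is bilinear in $\g\al$ and $\beta$. It is then estimated by \eqref{eq:fTproinene} with $k=h-2$ after the embedding $W^{h-1,2}\hookrightarrow W^{h-2,(\frac{2h}{h-1},2)}$, and the sign in \eqref{eq:III2-vector2} comes from $\vec u\sbul\vec v=-\vec v\sbul\vec u$ on $\bwe^2\R^m$. Three points need fixing.

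\textbf{Step 1 is only asserted.} It is the entire content of the identity, and it takes three lines. Use
\[
(\delta\beta)_i=-\g^j\beta_{ji},\qquad \al\resg\beta=\tfrac12\al_{jk}\beta^{jk},\qquad (\al\bulg\beta)_{ij}=\al_i{}^k\beta_{jk}-\al_j{}^k\beta_{ik}.
\]
The left-hand side is then $\al_i{}^k\g^j\beta_{jk}+\al^{jk}\g_j\beta_{ki}+\frac12\al^{jk}\g_i\beta_{jk}$. The Leibniz expansion of $\delta(\al\bulg\beta)+d(\al\resg\beta)$ contains exactly these three terms (use $-\al_j{}^k\g^j\beta_{ik}=\al^{jk}\g_j\beta_{ki}$), leaving
\[
\mca R_1[\al,\beta;g]_i=(\g^j\al_j{}^k)\beta_{ik}-(\g^j\al_i{}^k)\beta_{jk}-\tfrac12(\g_i\al_{jk})\beta^{jk}.
\]

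\textbf{The exponent in Step 2 is wrong.} The correct value is $q_0=\frac n{h-1}=\frac{2h}{h-1}$, not $h$. With $\frac1{q_0}=\frac{h-1}{2h}$ you get $\frac1{p_2}=\frac12+\frac{h-1}{2h}-\frac{h-2}{2h}=\frac{h+1}{2h}$, as needed. Your displayed sum $\frac12+\frac1h-\frac{h-2}{2h}$ equals $\frac2h$, and your check $h<\frac n{h-2}$ fails for $h\ge4$. Both happen to work only at $h=3$. The correct admissibility check is $\frac{2h}{h-1}<\frac{2h}{h-2}$.

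\textbf{The approximation in Step 3 needs adjusting.} Smooth forms are not norm-dense in the $L^{2,\nf}$-based space $\msc N_{n,2}$, so weak-$*$ approximation of $\beta$ is required, not optional. The paper approximates only $\beta$. For smooth $\beta$, the Leibniz rule and $\g g=0$ already hold with $\al\in L^\nf\cap W^{h,2}$ and $g=g_{\bP}$, so no mollification of $\al$ or $g$ is needed. This also avoids a problem in your version: mollifications of $\al$ and $g$ do not converge in $L^\nf$. Hence \eqref{prolorgen} alone does not give convergence of the products in $\mca D'$; you would need an additional non-endpoint product estimate.
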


\begin{proof}
By weak-$*$ approximation of $\beta$ in $\msc N_{n,2}$, together with the inequalities~\eqref{prolorgen} and~\eqref{estdwdew}, it suffices to consider smooth $\beta$. Since $\g g=0$,
raising and lowering indices commute with $\g$. We first prove
\eqref{eq:III2-scalar}. From Definition~\ref{defresbul} and~\eqref{defd*g4d}, we have
\begin{equation}
\begin{dcases}
\begin{aligned}
     &(\delta\beta)_i=-\g^j\beta_{ji},\qquad
    &&(d\beta)_{ijk}=\g_i\beta_{jk}+\g_j\beta_{ki}+\g_k\beta_{ij},\\
     &\al\resg\beta=\frac12\al_{jk}\mko\beta^{jk}, \qquad &&(\al\bulg\beta)_{ij}
     =\al_i{}^k\mko\beta_{jk}-\al_j{}^k\beta_{ik}.\label{expalbeta}
     \end{aligned}
 \end{dcases}
\end{equation}
Using the antisymmetry of $\al$ and $\beta$, and interchanging $j$ and $k$
in the last summand, we obtain
\begin{align}
\begin{aligned}\label{lhsIII2scalar}
 \big(\al\resg\delta\beta+d\beta\resg\al\big)_i
 &=\al_i{}^k\g^j\beta_{jk}
   +\frac12\al^{jk}\big(
       \g_i\beta_{jk}+\g_j\beta_{ki}+\g_k\beta_{ij}\big)\\
 &=\al_i{}^k\g^j\beta_{jk}
   +\al^{jk}\g_j\beta_{ki}
   +\frac12\al^{jk}\g_i\beta_{jk}.
\end{aligned}
\end{align}
On the other hand, the identities~\eqref{expalbeta} imply that
\begin{align*}
\begin{dcases}
     \big(\delta(\al\bulg\beta)\big)_i
     =-(\g^j\al_j{}^k)\beta_{ik}
   -\al_j{}^k\g^j\beta_{ik}
   +(\g^j\al_i{}^k)\beta_{jk}
   +\al_i{}^k\g^j\beta_{jk},\\
 \big(d(\al\resg\beta)\big)_i
     =\frac12(\g_i\al_{jk})\beta^{jk}
   +\frac12\al^{jk}\g_i\beta_{jk}.
   \end{dcases}
\end{align*}
Since $-\al_j{}^k\g^j\beta_{ik}=\al^{jk}\g_j\beta_{ki}$, adding these
two identities and comparing with~\eqref{lhsIII2scalar} proves
\eqref{eq:III2-scalar}, where
\begin{align}\label{covR1ab}
 \mca R_1[\al,\beta;g]_i
 =(\g^j\al_j{}^k)\beta_{ik}-(\g^j\al_i{}^k)\beta_{jk}
   -\frac12(\g_i\al_{jk})\beta^{jk}.
\end{align}
Now the Sobolev--Lorentz embedding  (see e.g.~\cite[Ch.~4, Thm.~4.18]{Bennett88}) implies that $W^{h-1,2}\hookrightarrow W^{h-2,(\frac{2h}{h-1},2)}$. Then by~\eqref{eq:fTproinene}, we obtain the following inequality for any $f\in W^{h-1,2}$ and $T\in \msc N_{n,2}$:
\begin{equation}\label{ineproENn}
    \|fT\|_{\msc E_n}\le C(n) \|f\|_{W^{h-1,2}}\|T\|_{\msc N_{n,2}}.
\end{equation}
The estimate~\eqref{eq:III2-bd} then follows by combining~\eqref{prolorgen} with~\eqref{ineproENn}.

Replacing $\al$ by $\vec\al$ in the preceding component computation gives
\eqref{eq:III2-vector1}, with
\begin{align}\label{covR1abvec}
 \vec{\mca R}_1[\vec\al,\beta;g]_i
 =(\g^j\vec\al_j{}^k)\beta_{ik}-(\g^j\vec\al_i{}^k)\beta_{jk}
   -\frac12(\g_i\vec\al_{jk})\beta^{jk}.
\end{align}
For~\eqref{eq:III2-vector2}, we use that
$\vec u\sbul\vec v=-\vec v\sbul\vec u$ for
$\vec u,\vec v\in\bwe^2\R^m$. Thus
\begin{align*}
\big(\vec\al\mkt\ovs{\sbul}{\res}_g\delta\vec\beta
      -d\vec\beta\mkt\ovs{\sbul}{\res}_g\vec\al\big)_i=\vec\al_i{}^k\sbul\g^j\vec\beta_{jk}
  +\frac12\mko\vec\al^{jk}\sbul(d\vec \beta)_{ijk}.
\end{align*}
The remaining argument is the same as that for scalar-valued differential forms. This completes the proof.
\end{proof}
\subsection{Derivation of the structural equations}
\
\vskip5pt
Since $\bP\in \mathcal{I}_{h-1,2}(B^n, \R^m)$, inductively applying Lemma~\ref{lm:proinene} (or using~\eqref{difcov}--\eqref{congpdif} together with~\eqref{eq:fTproinene} and Corollary~\ref{cor:multiproinene}), for any $2\le k \le n$ and $1\le i_1,\dots,i_k\le n$, we obtain
\begin{equation}\label{nabbPspa}
    \|\g_{i_1\dots i_k}\bP\|_{W^{h+1-k,2}}\le  C(n,\La)\mko \vae_{\bP}.
\end{equation}
As in~\eqref{not-met}, let $\bn$ be the Gauss map and
$\pi_N$ the orthogonal projection onto the normal bundle. Since $n$ is
even, for every $\vec v\in\R^m$,
\begin{equation}\label{pinvres}
 \pi_N\mko\vec v=(-1)^{m-1}\bn\res(\bn\res\vec v).
\end{equation}
Under~\eqref{eq:immcon4d} and~\eqref{StruEpassu}, the coefficients of
$\pi_N$ and $\pi_T\coloneq\mathrm{id}-\pi_N$ are multipliers on
$\msc E_n$, $\msc N_{n,2}$, and $\msc N_{n,1}$ by~\eqref{prolorgen}. In the remainder of this section, we set 
\begin{equation}\label{defvecK}
    \vec K\coloneq \lap_g^{h}\bP.
\end{equation}
By~\eqref{eq:fTproinene} and~\eqref{nabbPspa}, we have
\begin{align}
 \vec K&\in W^{1-h,2}(B^n,\R^m)\hookrightarrow \msc N_{n,1}(B^n,\R^m).\label{gpHdotpp}
\end{align}
Moreover, by Lemma~\ref{lm:gpdot2ff}, $\g\bP\cdot \vec K$ is a linear combination of partial contractions of the tensors 
\[
    \big\{\g^{(\ell)}\bP\cdot\g^{(n+1-\ell)}\bP\big\}_{2\le \ell\le h}.
\]
Combining~\eqref{eq:immcon4d},~\eqref{StruEpassu} and~\eqref{nabbPspa} with Lemma~\ref{lm:proinene}, we then obtain 
\begin{equation}\label{estpiTK}
    \|\pi_T\vec K\|_{\msc E_n}\le C(n,\La)  \mko \varepsilon_{\bP}^2.
\end{equation}
We now apply Lemmas~\ref{ber-prop:II.2} and~\ref{prop:III2ber} to the Noether system that will be constructed in Section~\ref{sec:pfmainThm}.

\begin{Prop}\label{prop:LSRsys}
Let
\begin{align*}\begin{dcases}\begin{aligned}
    &\bL\in\msc N_{n,1}\big(\R^m\ot\bwe^2\R^n\big), &&\\
    &S\in\msc N_{n,2}\big(\bwe^2\R^n\big),
     &&\bR\in\msc N_{n,2}\big(\bwe^2\R^m\ot\bwe^2\R^n\big),\\&
     \vt_{\dil}\in\msc E_n\big(\bwe^1\R^n\big),
    &&\bvt_{\rot}\in\msc E_n\big(\bwe^2\R^m\ot\bwe^1\R^n\big).
    \end{aligned}
\end{dcases}
\end{align*}
Let $\vec K$ be defined as in~\eqref{defvecK}, and suppose
\begin{align}\label{sys:d*dSR}
\begin{dcases}
 \delta S=\bL\mkt\dres d\bP+\vt_{\dil},\quad
 &dS=-2\bL\dwe d\bP,\\
 \delta\bR=\bL\wres d\bP+d\bP\we\vec K+\bvt_{\rot},\quad
 &d\bR=-2\bL\ovs{\ovwe}{\we}d\bP.
\end{dcases}
\end{align}
Then the following holds in $\mca D'\big(B^n,\bwe^2\R^m\ot \bwe^1\R^n\big)$:
\begin{align}\label{eq:RSKsys}\begin{aligned}
    &\delta\big(\mko(2n-5)\bR+\vet\mkt\ovs{\sbul}{\sbul}_g\bR+\vet\mko\bulg S\big)
     +d\big(\vet\mkt\ovs{\sbul}{\res}_g\bR+\vet\mkt\resg S\big)\\
    &=(3n-6)\mko d\bP\we\vec K+\vec{\mca R}_0,
\end{aligned}
\end{align}
where $\vec{\mca R}_0\in\msc E_n\big(\bwe^2\R^m\ot\bwe^1\R^n\big)$ and satisfies
\begin{align}\label{R3ptbd}
 \|\vec{\mca R}_0\|_{\msc E_n}
 \le C(n,\La)\Big(&\|\vt_{\dil}\|_{\msc E_n}
     +\|\bvt_{\rot}\|_{\msc E_n}
     +\varepsilon_{\bP}
       \big(\|S\|_{\msc N_{n,2}}+\|\bR\|_{\msc N_{n,2}}+\vae_{\bP}\big)\Big).
\end{align}
\end{Prop}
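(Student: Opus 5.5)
We apply Lemma~\ref{prop:III2ber} three times, with $\al=\vet$ (respectively $\vec\al=\vet$) and $\beta\in\{S,\bR\}$, and then close the algebra with Lemma~\ref{ber-prop:II.2}. The coefficient $\vet=\frac12 d\bP\ovs{\ovwe}{\we}d\bP$ lies in $L^\nf\cap W^{h,2}$, and since $\g\vet$ is built from $\g^2\bP=\bII$ and $\g\bP$, \eqref{estg2Phi} gives $\|\g\vet\|_{W^{h-1,2}}\le C(n,\La)\vae_{\bP}$.

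First, \eqref{eq:III2-scalar} with $\al=\vet$ (interpreted with the $\bwe^2\R^m$-valued coefficient as in \eqref{eq:III2-vector1}) and $\beta=S$ gives
\[
\vet\resg\delta S+dS\resg\vet=\delta(\vet\bulg S)+d(\vet\resg S)+\vec{\mca R}_1[\vet,S;g].
\]
Second, \eqref{eq:III2-vector2} with $\vec\al=\vet$ and $\vec\beta=\bR$ gives
\[
\vet\ovs{\sbul}{\res}_g\delta\bR-d\bR\ovs{\sbul}{\res}_g\vet=\delta(\vet\ovs{\sbul}{\sbul}_g\bR)+d(\vet\ovs{\sbul}{\res}_g\bR)+\vec{\mca R}_1[\vet,\bR;g].
\]
By \eqref{eq:III2-bdvec}, both remainders are bounded in $\msc E_n$ by $C\vae_{\bP}(\|S\|_{\msc N_{n,2}}+\|\bR\|_{\msc N_{n,2}})$.

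Next, insert the system \eqref{sys:d*dSR} into the left-hand sides. In the notation of Lemma~\ref{ber-prop:II.2}, $\delta S=A+\vt_{\dil}$, $dS=-B$, $\delta\bR=\bC+d\bP\we\vec K+\bvt_{\rot}$, and $d\bR=-\bD$. Summing the two identities and adding $(2n-5)\delta\bR$ to both sides, the left side becomes
\[
(2n-5)\delta\bR+\vet\resg A-B\resg\vet+\vet\ovs{\sbul}{\res}_g\bC+\bD\ovs{\sbul}{\res}_g\vet+(\text{terms with }\vec K,\vt,\bvt).
\]
By \eqref{eq:berIII.1.a}, the four $\bL$-terms sum to $(5-2n)\bC$. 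Since $(2n-5)\delta\bR=(2n-5)\bC+(2n-5)d\bP\we\vec K+(2n-5)\bvt_{\rot}$, the $\bC$-terms cancel exactly. This cancellation is the whole point of the choice of $dS$ and $d\bR$. The $\vt$-terms $\vet\resg\vt_{\dil}$, $\vet\ovs{\sbul}{\res}_g\bvt_{\rot}$, and $(2n-5)\bvt_{\rot}$ are bounded in $\msc E_n$ by \eqref{prolorgen}, and they go into $\vec{\mca R}_0$.

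The remaining step, which is the main obstacle, is the $\vec K$ contribution $(2n-5)d\bP\we\vec K+\vet\ovs{\sbul}{\res}_g(d\bP\we\vec K)$, which must equal $(3n-6)d\bP\we\vec K$ up to an $O(\vae_{\bP}^2)$ error. We split $\vec K=\pi_N\vec K+\pi_T\vec K$. By \eqref{estpiTK} together with \eqref{prolorgen}, the tangential part contributes at most $C\vae_{\bP}^2$ in $\msc E_n$. For the normal part we compute pointwise with \eqref{bul2vecs}, using $\g_j\bP\cdot\vec N=0$ and $\g_j\bP\cdot\g^k\bP=\de_j^k$:
\[
\big(\vet\ovs{\sbul}{\res}_g(d\bP\we\vec N)\big)_i=(\g_j\bP\we\g_i\bP)\sbul(\g^j\bP\we\vec N)=(n-1)\,\g_i\bP\we\vec N,
\]
with the sign fixed by the convention for $\ovs{\sbul}{\res}_g$ on $1$-forms, consistent with \eqref{etaCindex}. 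The total coefficient is then $(2n-5)+(n-1)=3n-6$, as claimed. Finally, the normal projection is reinserted by adding back $\pi_T\vec K$ at cost $C\vae_{\bP}^2$.

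All products make sense in the stated spaces by \eqref{prolorgen} and \eqref{ineproENn}, so the identity holds in $\mca D'$. Collecting the errors gives \eqref{R3ptbd}.
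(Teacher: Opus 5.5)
Your proposal is correct and follows the paper's proof essentially step for step: you insert the system into Lemma~\ref{ber-prop:II.2}, apply \eqref{eq:III2-vector1}--\eqref{eq:III2-vector2} with $\vec\alpha=\vec\eta$, cancel the $\vec C$-terms via $(2n-5)\delta\vec R$, and evaluate $\vec\eta\,\overset{\bullet}{\llcorner}_g(d\vec\Phi\wedge\vec K)$ with \eqref{bul2vecs}. The only difference is cosmetic. You split $\vec K=\pi_N\vec K+\pi_T\vec K$ before applying \eqref{bul2vecs}, whereas the paper applies it to the full $\vec K$ and directly obtains the remainder $-d\vec\Phi\wedge\pi_T\vec K$, bounded by \eqref{estpiTK}.
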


\begin{proof}
Set
\begin{equation*}
 A=\delta S-\vt_{\dil},\qquad B=-dS,
 \qquad
 \bC=\delta\bR-d\bP\we\vec K-\bvt_{\rot},\qquad
 \bD=-d\bR.
\end{equation*}
Lemma~\ref{ber-prop:II.2} then gives
\begin{align*}
&(5-2n)\big(\delta\bR-d\bP\we\vec K-\bvt_{\rot}\big)\\
&=\vet\mkt\ovs{\sbul}{\res}_g\bC
  +\bD\mkt\ovs{\sbul}{\res}_g\vet
  +\vet\mkt\resg A-B\mkt\resg\vet\\
&=\vet\mkt\ovs{\sbul}{\res}_g\delta\bR
  -\vet\mkt\ovs{\sbul}{\res}_g(d\bP\we\vec K)
  -\vet\mkt\ovs{\sbul}{\res}_g\bvt_{\rot}
  -d\bR\mkt\ovs{\sbul}{\res}_g\vet+\vet\mkt\resg\delta S-\vet\mkt\resg\vt_{\dil}
  +dS\mkt\resg\vet.
\end{align*}
By~\eqref{eq:III2-vector1}--\eqref{eq:III2-vector2}, we have
\begin{align*}
 \vet\mkt\ovs{\sbul}{\res}_g\delta\bR
 -d\bR\mkt\ovs{\sbul}{\res}_g\vet
 &=\delta(\vet\mkt\ovs{\sbul}{\sbul}_g\bR)
   +d(\vet\mkt\ovs{\sbul}{\res}_g\bR)
   +\vec{\mca R}_1[\vet,\bR;g],\\
 \vet\mkt\resg\delta S+dS\mkt\resg\vet
 &=\delta(\vet\mko\bulg S)+d(\vet\mkt\resg S)
   +\vec{\mca R}_1[\vet,S;g].
\end{align*}
Substitution and rearrangement yield
\begin{align}\label{estRSsys1}
\begin{aligned}
&\delta\big((2n-5)\bR+\vet\mkt\ovs{\sbul}{\sbul}_g\bR
                         +\vet\mko\bulg S\big)
 +d\big(\vet\mkt\ovs{\sbul}{\res}_g\bR+\vet\mkt\resg S\big)\\
&=(2n-5) \mko d\bP\we\vec K
  +\vet\mkt\ovs{\sbul}{\res}_g(d\bP\we\vec K)
  +(2n-5)\mko\bvt_{\rot}
  +\vet\mkt\ovs{\sbul}{\res}_g\bvt_{\rot}
  \\&\qquad+\vet\mkt\resg\vt_{\dil}-\vec{\mca R}_1[\vet,\bR;g]
  -\vec{\mca R}_1[\vet,S;g].
\end{aligned}
\end{align}
It remains to compute the second term on the right. By~\eqref{bul2vecs}, we have
\begin{align}
\begin{aligned}\label{etabulresnwe}
&\big(\vet\mkt\ovs{\sbul}{\res}_g(d\bP\we\vec K)\mko\big)_i
 =(\g_j\bP\we\g_i\bP)
       \sbul(\g^j\bP\we\vec K)\\
&=n\mko \g_i\bP\we\vec K
 +0-(\g_j\bP\cdot\vec K)\g_i\bP\we\g^j\bP
 -\de_i^j(\g_j\bP\we\vec K)\\
&=(n-1)\g_i\bP\we\vec K-\g_i\bP\we\pi_T\vec K.
\end{aligned}
\end{align}
Here we used $\g_j\bP\cdot \g^j\bP=n$, $\g_j\bP\we\g^j\bP=0$, and
$\pi_T\vec K=(\vec K\cdot\g_j\bP)\g^j\bP$. Since
$2n-5+n-1=3n-6$, combining~\eqref{estRSsys1}--\eqref{etabulresnwe}
gives~\eqref{eq:RSKsys}, where
\begin{align}\label{defvecR3}
\begin{aligned}
    \vec{\mca R}_0=&-d\bP\we\pi_T\vec K
    +(2n-5)\bvt_{\rot}
    +\vet\mkt\ovs{\sbul}{\res}_g\bvt_{\rot}
    +\vet\mkt\resg\vt_{\dil}\\
    &-\vec{\mca R}_1[\vet,\bR;g]
  -\vec{\mca R}_1[\vet,S;g].
\end{aligned}
\end{align}
Finally, the definition~\eqref{eq:defeta} of $\vet$ gives
\[
 \g_\ell\mko \vet_{ij}
 =\bII_{\ell i}\we\g_j\bP
  +\g_i\bP\we\bII_{\ell j}.
\]
Thus by~\eqref{estg2Phi} and Lemma~\ref{lm:proinene}, we have
\begin{align}\label{nablaetaest}
 \|\g\vet\|_{W^{h-1,2}}
 \le C(n,\La)\mko\varepsilon_{\bP}.
\end{align}
The estimate~\eqref{R3ptbd} follows by combining~\eqref{prolorgen},~\eqref{eq:III2-bdvec},~\eqref{estpiTK}, and~\eqref{nablaetaest}.
\end{proof}

To handle the term $d\bP\we\vec K$ in~\eqref{eq:RSKsys}, we use the
following identity.

\begin{Lm}\label{lm:d*XwedPhi}
Let $\bX\in\msc N_{n,2}\big(\R^m\ot\R^n\big)$. Then the following holds in $\mca D'\big(\bwe^2\R^m\ot\bwe^1\R^n\big)$:
\begin{align}\label{d*XwedPhi}
    \delta\big(\bX\ovs{\ovwe}{\we}d\bP\big)
    =(\delta\bX)\we d\bP+d\big(\bX\wres d\bP\big)
   +d\bX\wres d\bP+\vec{\mca R}_1[\bX;g],
\end{align}
where the remainder $\vec{\mca R}_1[\bX;g]\in\msc E_n$ satisfies
\begin{align}\label{vecR4ptbd}
 \|\vec{\mca R}_1[\bX;g]\|_{\msc E_n}
 \le C(n,\La)\mko\varepsilon_{\bP}\mko\|\bX\|_{\msc N_{n,2}}.
\end{align}
\end{Lm}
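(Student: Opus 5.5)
This is proved the same way as Lemma~\ref{prop:III2ber}. First we reduce to smooth $\bX$: by weak-$*$ approximation of $\bX$ in $\msc N_{n,2}$, together with~\eqref{prolorgen} and~\eqref{estdwdew}, every term in~\eqref{d*XwedPhi} depends continuously on $\bX$. So it suffices to verify the identity pointwise for smooth $\bX$ and to write the remainder out explicitly in covariant components.

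Write $\bX=\bX_j\,dx^j$. Then
\[
\big(\bX\ovs{\ovwe}{\we}d\bP\big)_{ij}=\bX_i\we\g_j\bP-\bX_j\we\g_i\bP,
\qquad \delta\bX=-\g^j\bX_j,
\]
and
\[
(\bX\wres d\bP)=\bX^j\we\g_j\bP,\qquad
(d\bX)_{ij}=\g_i\bX_j-\g_j\bX_i .
\]
By~\eqref{defd*g4d}, the left-hand side is
\[
-\g^i\big(\bX_i\we\g_j\bP-\bX_j\we\g_i\bP\big).
\]
We expand it with the Leibniz rule~\eqref{eq:Leibru}. The terms in which $\g$ falls on $\bX$ are
\[
-(\g^i\bX_i)\we\g_j\bP+(\g^i\bX_j)\we\g_i\bP .
\]
The terms in which $\g$ falls on $d\bP$ are
\[
-\bX_i\we\bII^i{}_j+\bX_j\we\g^i\g_i\bP,
\]
and the last one equals $-\bX_j\we\lap_g\bP$. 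On the right-hand side:
\begin{itemize}[label=\textbullet, leftmargin=2em]
\item $(\delta\bX)\we d\bP$ gives $-(\g^i\bX_i)\we\g_j\bP$.
\item $d(\bX\wres d\bP)_j=\g_j(\bX^i\we\g_i\bP)$ gives $(\g_j\bX^i)\we\g_i\bP+\bX^i\we\bII_{ji}$.
\item $(d\bX\wres d\bP)_j=(d\bX)_{ij}\we\g^i\bP$ gives $(\g_i\bX_j-\g_j\bX_i)\we\g^i\bP$.
\end{itemize}
The terms $(\g_j\bX^i)\we\g_i\bP$ cancel, so the first-order-in-$\bX$ parts of the two sides agree. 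This sign bookkeeping (orientation of $\wres$ and of the wedge with $d\bP$) is the only delicate point, and I would fix the conventions by checking against the flat case $g=\delta$, $\bP$ linear.

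All remaining terms are of the form $\bX\otimes\g^{(2)}\bP$, contracted with $g$. The remainder is therefore
\[
\vec{\mca R}_1[\bX;g]_j=-\bX^i\we\bII_{ij}-\bX_i\we\bII^i{}_j-\bX_j\we\lap_g\bP ,
\]
up to signs fixed by the computation above, and using $\bII_{ij}=\bII_{ji}$.

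For the estimate~\eqref{vecR4ptbd}, each term of the remainder is a product of a component of $\g^{(2)}\bP$, multiplied by $L^\nf\cap W^{h,2}$ metric coefficients, with a component of $\bX$. By~\eqref{estg2Phi}, the first factor has $W^{h-1,2}$ norm at most $C(n,\La)\vae_{\bP}$. The product inequality~\eqref{ineproENn}, combined with~\eqref{prolorgen}, then gives
\[
\|\vec{\mca R}_1[\bX;g]\|_{\msc E_n}\le C(n,\La)\,\vae_{\bP}\,\|\bX\|_{\msc N_{n,2}}.
\]
The remainder is continuous in $\bX$ for this topology, so passing to the limit in the approximation extends the identity to all $\bX\in\msc N_{n,2}$.
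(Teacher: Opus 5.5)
Your proposal is correct and follows the paper's proof essentially verbatim. Like the paper, you reduce to smooth $\bX$ by weak-$*$ approximation, expand $\delta(\bX\ovs{\ovwe}{\we}d\bP)$, $d(\bX\wres d\bP)$ and $d\bX\wres d\bP$ in covariant components, cancel the first-order terms, and bound the zeroth-order remainder via \eqref{estg2Phi} and \eqref{ineproENn}.

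Your signs are already right, so the hedge ``up to signs'' is unnecessary. Since $-\lap_g\bP=n\bH$ and $\bII$ is symmetric, your remainder is exactly the paper's $\vec{\mca R}_1[\bX;g]_j=n\bX_j\we\bH-2\bX^i\we\bII_{ij}$.
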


\begin{proof}
By~\eqref{prolorgen} and~\eqref{estdwdew}, both sides of~\eqref{d*XwedPhi} belong to $\msc N_{n,1}$, hence by weak-$*$ approximation of $\bX$ in $\msc N_{n,2}$, it suffices to consider smooth $\bX$. Set
$\bY=\bX\ovs{\ovwe}{\we}d\bP$, then we have
\[
 \bY_{ij}=\bX_i\we\g_j\bP-\bX_j\we\g_i\bP.
\]
Using~\eqref{defd*g4d}, $\g_i\g_j\bP=\bII_{ij}$, and
$\g^j\g_j\bP=n\mko\bH$, we compute
\begin{align}
\begin{aligned}\label{deltaXwedcoord}
 (\delta\bY)_i
 &=-\g^j\big(\bX_j\we\g_i\bP-\bX_i\we\g_j\bP\big)\\
 &=(\delta\bX)\we\g_i\bP -\bX^j\we\bII_{ij}
   +(\g^j\bX_i)\we\g_j\bP
  +n\mko\bX_i\we\bH.
\end{aligned}
\end{align}
On the other hand, writing $\vec q=\bX\wres d\bP=\bX_j\we\g^j\bP$, we obtain
\begin{align*}
 (d\vec q)_i
 &=(\g_i\bX_j)\we\g^j\bP+\bX^j\we\bII_{ij},\\
 \big(d\bX\wres d\bP\big)_i
 &=(\g^j\bX_i)\we\g_j\bP-(\g_i\bX_j)\we\g^j\bP.
\end{align*}
Summing these two equations yields
\begin{align}\label{dXrescoord}
 \Big(d\big(\bX\wres d\bP\big)+d\bX\wres d\bP\Big)_i
 =(\g^j\bX_i)\we\g_j\bP+\bX^j\we\bII_{ij}.
\end{align}
Subtracting~\eqref{dXrescoord} from~\eqref{deltaXwedcoord} implies~\eqref{d*XwedPhi}, with
\begin{equation}\label{R1Xexplicit}
 \big(\vec{\mca R}_1[\bX;g]\big)_i
 =n\bX_i\we\bH-2\bX^j\we\bII_{ij}.
\end{equation}
The estimate~\eqref{vecR4ptbd} follows by combining~\eqref{eq:immcon4d} and~\eqref{estg2Phi} with~\eqref{ineproENn}.
\end{proof}

Before applying Proposition~\ref{prop:LSRsys}, we record the following normal
projection identity.

\begin{Lm}\label{lm:nd*RS}
Let
$S\in\msc N_{n,2}\big(\bwe^2\R^n\big)$ and
$\bR\in\msc N_{n,2}\big(\bwe^2\R^m\ot\bwe^2\R^n\big)$. Then the following holds in $\mca D'(\R^m)$:
\begin{align}\label{eq:norproRS}
 \pi_N\mko\delta\Big(
   \big(\vet\mkt\ovs{\sbul}{\res}_g\bR+\vet\mkt\resg S\big)\mko\res d\bP\Big)
 =-\pi_N\Big((\delta\bR)\mkt\ovs{\res}{\res}_g d\bP\Big)
   +\vec{\mca R}_2[\bR;g],
\end{align}
where
$\vec{\mca R}_2[\bR;g]\in\msc E_n(\R^m)$ and satisfies
\begin{align}\label{rmdbdnRS}
 \|\vec{\mca R}_2[\bR;g]\|_{\msc E_n}
 \le C(n,\La)\mko\vae_{\bP}\mko \|\bR\|_{\msc N_{n,2}}.
\end{align}
\end{Lm}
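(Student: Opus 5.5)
The plan is to prove~\eqref{eq:norproRS} pointwise for smooth $S$ and $\bR$ and then pass to the limit, exactly as in Lemmas~\ref{prop:III2ber} and~\ref{lm:d*XwedPhi}. By~\eqref{prolorgen} and~\eqref{estdwdew}, every term in~\eqref{eq:norproRS} lies in $\msc N_{n,1}$, and $\pi_N$ acts as a multiplier there. So weak-$*$ approximation of $S,\bR$ in $\msc N_{n,2}$ reduces the statement to smooth forms, provided the remainder is an explicit bilinear expression controlled by~\eqref{ineproENn}.

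\emph{Setup.} Set $\vec Q\coloneq\vet\mkt\ovs{\sbul}{\res}_g\bR+\vet\mkt\resg S$. This is a $\bwe^2\R^m$-valued function with components
\[
\vec Q=\tfrac12\vet_{jk}\sbul\bR^{jk}+\tfrac12 S^{jk}\vet_{jk}.
\]
Then $(\vec Q\res d\bP)_i=\vec Q\res\g_i\bP$. By~\eqref{defd*g4d}, together with $\g^i\g_i\bP=n\bH$, we get
\[
\delta(\vec Q\res d\bP)=-(\g^i\vec Q)\res\g_i\bP-n\,\vec Q\res\bH .
\]

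\emph{The $S$-part and the zeroth-order term.} First I will show that the $S$-contributions vanish identically after applying $\pi_N$. This matters because the bound~\eqref{rmdbdnRS} involves only $\|\bR\|_{\msc N_{n,2}}$.
\begin{itemize}
\item Since $\vet_{jk}=\g_j\bP\we\g_k\bP$ is a tangent bivector and $\bH$ is normal, we have $\vet_{jk}\res\bH=0$.
\item The term $S^{jk}\vet_{jk}\res\g_i\bP$ is tangent, so it is killed by $\pi_N$.
\item From $\g_\ell\vet_{jk}=\bII_{\ell j}\we\g_k\bP+\g_j\bP\we\bII_{\ell k}$, the normal part of $(\g^i\vet_{jk})\res\g_i\bP\, S^{jk}$ is a multiple of $\bII_{jk}S^{jk}$. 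This vanishes because $\bII$ is symmetric and $S$ is antisymmetric.
\end{itemize}
For the $\bR$-part of $\vec Q\res\bH$, the same reasoning applies: $\vet_{jk}\sbul\bR^{jk}$ is a sum of wedges each containing at least one tangent factor. Its interior product with $\bH$ is therefore tangent modulo terms of the form (normal vector)$\,\times(\text{tangent}\cdot\bH)=0$. If a piece does not vanish exactly, it is in any case of the form $\bII\cdot\bR$, and~\eqref{ineproENn} bounds it by $C\vae_{\bP}\|\bR\|_{\msc N_{n,2}}$.

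\emph{The $\bR$-principal part.} It remains to treat
\[
-\tfrac12\pi_N\big[(\g^i\vet_{jk}\sbul\bR^{jk}+\vet_{jk}\sbul\g^i\bR^{jk})\res\g_i\bP\big].
\]
The piece containing $\g\vet$ is a product of $\bII$ (bounded in $W^{h-1,2}$ by~\eqref{estg2Phi}) with $\bR$. So, by~\eqref{ineproENn} and~\eqref{nablaetaest}, it goes into $\vec{\mca R}_2$ with the bound $C\vae_{\bP}\|\bR\|_{\msc N_{n,2}}$. For the piece containing $\g\bR$, I will expand $(\g_j\bP\we\g_k\bP)\sbul\g^i\bR^{jk}$ via~\eqref{eq:defbul} and~\eqref{bul2vecs}, writing $\g^i\bR^{jk}$ as a sum of simple bivectors. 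I will then take $\res\g_i\bP$ and project normally, discarding every term that ends in a tangent vector. After contracting with $\g_i\bP\cdot\g^j\bP=\de_i^j$, what survives should be exactly
\[
\pi_N\big[(\g^j\bR_{ji})\res\g^i\bP\big]=-\pi_N\big[(\delta\bR)\mkt\ovs{\res}{\res}_g d\bP\big],
\]
by $(\delta\bR)_i=-\g^j\bR_{ji}$. Any terms in which the derivative falls elsewhere are rewritten with the Leibniz rule~\eqref{eq:Leibru}, again at the cost of $\bII\cdot\bR$ remainders.

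\emph{Main obstacle.} I expect the main obstacle to be this final algebraic bookkeeping. Each normal-valued contribution must either match the principal term with the correct coefficient and sign, or vanish by the symmetry of $\bII$ or the antisymmetry of $\bR$. The work lies in tracking the sign conventions of $\sbul$, $\res$ and the double contraction $\ovs{\res}{\res}_g$. I would first check the identity on a simple bivector $\bR^{jk}=\vec a\we\vec b$, separating the tangent and normal components of $\vec a$ and $\vec b$, and then extend by linearity.
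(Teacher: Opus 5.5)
Your proposal is correct and follows the same route as the paper's proof. The paper also reduces to smooth $S,\bR$ by weak-$*$ approximation, shows that the $S$-contribution and the term $\vec Q\res n\bH$ vanish after normal projection, and takes the $\g\vet$ piece as the remainder $\vec{\mca R}_2$; it likewise applies the identity \eqref{bul2vecs} to simple bivectors $\vec Z=\vec u\we\vec v$ to turn $-\tfrac12\pi_N\big((\vet_{jk}\sbul\g^i\bR^{jk})\res\g_i\bP\big)$ into $-\pi_N\big((\delta\bR)\ovs{\res}{\res}_g d\bP\big)$. The only cosmetic difference is that the paper handles the $S$-part by first computing $\vec Q_S\res d\bP=S_k{}^j\g_j\bP$ and then differentiating, whereas you differentiate first; both give zero normal component.
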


\begin{proof}
By~\eqref{prolorgen}, both sides of~\eqref{eq:norproRS} belong to $\msc N_{n,1}$. Hence by weak-$*$ approximation of $S$ and $\bR$ in $\msc N_{n,2}$, it suffices to compute for smooth $S$ and $\bR$. Set
\[
 \vec Q_S\coloneq \vet\mkt\resg S= \frac12\mko S^{ij}\mko \vet_{ij},
 \qquad
 \vec Q_R\coloneq \vet\mkt\ovs{\sbul}{\res}_g\bR=\frac12\mko\vet_{ij}\sbul\bR^{ij}.
\]
For the contribution of $S$, the identity
$(\vec a\we\vec b)\mkt\res\vec c
=(\vec a\cdot\vec c)\vec b-(\vec b\cdot\vec c)\vec a$ gives
\begin{align}
\begin{aligned}\label{QSresPhi}
 (\vec Q_S\mkt\res d\bP)_k
 &=\frac12S^{ij}\big(\mko
     (\g_i\bP\we\g_j\bP)\mkt \res\g_k\bP\big)\\
 &=\frac12S^{ij}\big(g_{ik}\g_j\bP-g_{jk}\g_i\bP\big)
  =S_k{}^j\g_j\bP.
\end{aligned}
\end{align}
It follows that
\begin{align*}
 \pi_N\mko\delta(\vec Q_S\mkt\res d\bP)
 &=-\pi_N\g^k(S_k{}^j\g_j\bP)\\
 &=-\pi_N\big(\mko(\g^kS_k{}^j)\g_j\bP
                    +S^{kj}\mko\bII_{kj}\big)=0.
\end{align*}
The last equality holds since the first term is tangent-valued and the second vanishes by the
symmetry of $\bII$ and antisymmetry of $S$.

We next compute the contribution of $\bR$. Differentiating $\vec Q_R\mkt\res d\bP$, we have
\begin{align}
\begin{aligned}\label{diffQRresdP}
    &\pi_N\delta(\vec Q_R\mkt\res d\bP)\\
    &=-\frac12\pi_N\Big(
       \big((\g^k\vet_{ij})\sbul\bR^{ij}\big)\mkt \res\g_k\bP
      +(\vet_{ij}\sbul\g^k\bR^{ij})\mkt \res\g_k\bP +(\vet_{ij}\sbul\bR^{ij})\mkt \res\g^k\g_k\bP\Big).\\
\end{aligned}
\end{align}
      Setting $\vec Z=\vec u\we\vec v$ with $\vec u,\vec v\in \R^m$, and applying~\eqref{bul2vecs}, we obtain
\begin{align}
\begin{aligned}\label{bulletnormalalg}
    &\pi_N\big( \mko (\vet_{ij}\sbul\vec Z)\mkt \res\g_k\bP\big)\\
    &= g_{jk}(\g_i\bP\cdot \vec u) \pi_N \vec v-  g_{ik}(\g_j\bP\cdot \vec u) \pi_N \vec v- g_{jk}(\g_i\bP\cdot \vec v) \pi_N \vec u+  g_{ik}(\g_j\bP\cdot \vec v) \pi_N \vec u\\
    &=g_{jk}\mko\pi_N(\vec Z\res\g_i\bP)
  -g_{ik}\mko\pi_N(\vec Z\res\g_j\bP). 
  \end{aligned}
\end{align}
By pointwise linearity in $\vec Z$, we may apply~\eqref{bulletnormalalg} with $\vec Z=\g^k\bR^{ij}$. It follows that
\begin{align}
\begin{aligned}\label{normalderRalg}
    \frac12\pi_N\Big(
     (\vet_{ij}\sbul\g^k\bR^{ij})\mkt \res\g_k\bP\Big)&=\frac12\pi_N\Big(
  (\g_j\bR^{ij})\mkt \res\g_i\bP
  -(\g_i\bR^{ij})\mkt \res\g_j\bP\Big)\\
    &=-\pi_N\Big((\g_i\bR^{ij})\mkt \res\g_j\bP\Big)
     =\pi_N\Big((\delta\bR)\mkt\ovs{\res}{\res}_g d\bP\Big),
\end{aligned}
\end{align}
where the second equality follows by interchanging $i$ and $j$ and using $\bR^{ji}=-\bR^{ij}$. Moreover, by~\eqref{bul2vecs}, for a.e. $x\in B^n$, the bivector $\vet_{ij}\sbul\bR^{ij}$ is a sum of wedge products, each containing a tangential factor. Hence its contraction with the normal
vector $\g^k\g_k\bP=n\bH$ is tangent-valued. It follows that
\begin{align}\label{etabulRtan}
    \pi_N \big(\mko (\vet_{ij}\sbul\bR^{ij})\mkt \res\g^k\g_k\bP\big)=0.
\end{align}
Now we define 
\begin{equation}\label{R2Rexplicit}
 \vec{\mca R}_2[\bR;g]
 =-\frac12\pi_N\Big(
       \big(\mko (\g^k\vet_{ij})\sbul\bR^{ij}\big)\mkt \res\g_k\bP\Big).
\end{equation}
The estimate~\eqref{rmdbdnRS} follows from~\eqref{eq:immcon4d} and~\eqref{estg2Phi}, together with the inequalities~\eqref{prolorgen}
and~\eqref{ineproENn}. Finally, substituting~\eqref{normalderRalg}--\eqref{R2Rexplicit} into~\eqref{diffQRresdP} yields
\begin{equation*}
    \pi_N\delta(\vec Q_R\mkt\res d\bP)
    =-\pi_N\Big((\delta\bR)\mkt\ovs{\res}{\res}_g d\bP\Big)
  +\vec{\mca R}_2[\bR;g].
\end{equation*}
This completes the proof.
\end{proof}

Combining Proposition~\ref{prop:LSRsys} and
Lemmas~\ref{lm:d*XwedPhi}--\ref{lm:nd*RS}, we obtain the structural equations that will be used in
Section~\ref{sec:pfmainThm}.

\begin{Co}\label{co:sysvecu}
Let $\bL$, $S$, $\bR$, $\vt_{\dil}$, $\bvt_{\rot}$, and $\vec K$ be as in
Proposition~\ref{prop:LSRsys}. Define
\begin{equation}\label{defuetabul}
    \bu\coloneq
    \vet\mkt\ovs{\sbul}{\res}_g\bR+\vet\mkt\resg S -(3n-6)\,d\big(\lap_g^{h-1}\bP\big)\wres d\bP.
\end{equation}
Then we have 
\begin{equation}\label{estvecu}
    \|\vec u\|_{\msc N_{n,2}}\le C(n,\La) \big(\|\bR\|_{\msc N_{n,2}}+\|S\|_{\msc N_{n,2}}+\vae_{\bP}\big).
\end{equation}
Moreover, the following hold in $\mca D'\big(\bwe^2\R^m\ot\bwe^n\R^n\big)$ and $\mca D'(\R^m)$ respectively:\footnote{We retain the divergence-form equation~\eqref{eq:d*du=d*R3+} rather than writing $\de\mko d\vec u=\de \vec{\mca R}_3$, since $d*_g \vec{\mca R}_3\in W^{1-h,\lf(\frac{2h}{h+1},2\rg)}$ and multiplication by $(\det g)^{-1/2}$ falls under the endpoint case of~\eqref{eq:fTproinene}. The resulting $L^1$ term on the right-hand side is less convenient for applying the elliptic estimates in Section~\ref{sec:ellestcri}, for which we would need to
return to the divergence form.}
\begin{align}
 d*_g d\bu&=d*_g\vec{\mca R}_3,
 \label{eq:d*du=d*R3+}\\
 \pi_N\delta(\bu\mkt\res d\bP)
 &=(2n-6)\lap_g^{h}\bP+\vec{\mca R}_4,
 \label{eq:ndotd*uresdp}
\end{align}
where the remainders satisfy
\begin{align}\label{remtotptbd}
\begin{aligned}
 \|\vec{\mca R}_3\|_{\msc E_n}
 +\|\vec{\mca R}_4\|_{\msc E_n}
 \le C(n,\La)\Big(&\|\vt_{\dil}\|_{\msc E_n}
 +\|\bvt_{\rot}\|_{\msc E_n}
 +\varepsilon_{\bP}\big(
   \|S\|_{\msc N_{n,2}}+\|\bR\|_{\msc N_{n,2}}
   +\vae_{\bP}\big)\Big).
\end{aligned}
\end{align}
\end{Co}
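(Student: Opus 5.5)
The plan is to absorb the principal term $d\bP\we\vec K$ on the right of~\eqref{eq:RSKsys} into an exact plus coexact expression, using Lemma~\ref{lm:d*XwedPhi}. The relevant choice is $\bX\coloneq d\big(\lap_g^{h-1}\bP\big)$.

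\emph{Step 1: the choice of $\bX$.} Since $\lap_g=\delta d$ on functions, we have $\delta\bX=\lap_g^h\bP=\vec K$ and $d\bX=0$. By~\eqref{nabbPspa} and Lemma~\ref{lm:proinene}, $\lap_g^{h-1}\bP$ is a sum of contractions of $\g^{(k)}\bP$ with $k\le n-2$ against bounded metric coefficients. Hence $\bX\in W^{2-h,2}\hookrightarrow\msc N_{n,2}$ with $\|\bX\|_{\msc N_{n,2}}\le C(n,\La)\vae_{\bP}$.

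Lemma~\ref{lm:d*XwedPhi} then gives
\[
\delta\big(\bX\ovs{\ovwe}{\we}d\bP\big)=\vec K\we d\bP+d\big(\bX\wres d\bP\big)+\vec{\mca R}_1[\bX;g].
\]
Componentwise, $\vec K\we d\bP=-d\bP\we\vec K$, so $d\bP\we\vec K$ equals $-\delta(\bX\ovs{\ovwe}{\we}d\bP)+d(\bX\wres d\bP)+\vec{\mca R}_1[\bX;g]$. I will double-check this sign convention against~\eqref{defbiopes}.

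\emph{Step 2: the estimate~\eqref{estvecu} and equation~\eqref{eq:d*du=d*R3+}.} Substituting into~\eqref{eq:RSKsys} and moving the exact term to the left gives
\[
d\bu=-\delta\Big((2n-5)\bR+\vet\mkt\ovs{\sbul}{\sbul}_g\bR+\vet\bulg S+(3n-6)\bX\ovs{\ovwe}{\we}d\bP\Big)+\vec{\mca R}_3,
\]
where $\vec{\mca R}_3\coloneq\vec{\mca R}_0+(3n-6)\vec{\mca R}_1[\bX;g]$. The bound~\eqref{estvecu} follows from~\eqref{prolorgen}, because $\vet$ and $d\bP$ have coefficients in $L^\nf\cap W^{h,2}$, together with the bound on $\bX$ from Step~1.

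Applying $d*_g$ kills the coexact term, since $*_g\delta=\pm d*_g$ and $d^2=0$. This step is justified by approximation as in~\eqref{estdwdew}, and it yields~\eqref{eq:d*du=d*R3+}. The bound on $\vec{\mca R}_3$ follows from~\eqref{R3ptbd} and~\eqref{vecR4ptbd} combined with $\|\bX\|_{\msc N_{n,2}}\le C\vae_{\bP}$.

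\emph{Step 3: the normal projection~\eqref{eq:ndotd*uresdp}.} I would split $\bu$ into its first two terms and the $\bX$-term.

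For the first two terms, Lemma~\ref{lm:nd*RS} gives $-\pi_N\big((\delta\bR)\ovs{\res}{\res}_g d\bP\big)+\vec{\mca R}_2$. I then insert $\delta\bR$ from~\eqref{sys:d*dSR}, using $(\vec a\we\vec b)\res\vec c=(\vec a\cdot\vec c)\vec b-(\vec b\cdot\vec c)\vec a$ on each contribution:
\begin{itemize}
\item In the $\bL$-term, $(\bL_{kj}\we\g^k\bP)\res\g^j\bP$ equals a tangential vector plus $g^{kj}\bL_{kj}=0$ by antisymmetry, so $\pi_N$ annihilates it.
\item The $d\bP\we\vec K$ term gives $\pm\big(n\vec K-\pi_T\vec K\big)$.
\item The $\bvt_{\rot}$ term is directly controlled in $\msc E_n$ by~\eqref{prolorgen}.
\end{itemize}

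For the $\bX$-term, $(\bX_j\we\g^j\bP)\res\g_k\bP=(\bX_j\cdot\g_k\bP)\g^j\bP-\bX_k$. Hence its $\pi_N\delta$ equals $\pi_N\delta\bX=\vec K-\pi_T\vec K$, plus $\pi_N\big(-\g^k[(\bX_j\cdot\g_k\bP)\g^j\bP]\big)$. The last term equals $-\pi_N\big((\bX_j\cdot\g_k\bP)\bII^{kj}\big)$, a product of $\bII\in W^{h-1,2}$ with an $\msc N_{n,2}$ quantity. By~\eqref{ineproENn} it is bounded in $\msc E_n$ by $C\vae_{\bP}^2$.

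Collecting coefficients gives $(3n-6)-n=2n-6$. The $\pi_T\vec K$ terms go into $\vec{\mca R}_4$ via~\eqref{estpiTK}, which proves~\eqref{remtotptbd}.

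The main obstacle I expect is Step~3, namely the sign bookkeeping. The coefficient must come out exactly $2n-6$ rather than $4n-6$; this depends on the relative signs from Lemma~\ref{lm:nd*RS}, from $\vec K\we d\bP$ versus $d\bP\we\vec K$, and from the $\bX$-term. I would fix these signs by a pointwise computation for smooth data in normal coordinates before estimating anything.
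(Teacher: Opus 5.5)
Your plan follows the paper's proof step by step. You use the same $\bX=d\big(\lap_g^{h-1}\bP\big)$, the same rewriting of $d\bP\we\vec K$ via Lemma~\ref{lm:d*XwedPhi}, and the same remainder $\vec{\mca R}_3=\vec{\mca R}_0+(3n-6)\vec{\mca R}_1[\bX;g]$. You also split $\bu$ the same way for the normal-projection identity. Steps~1 and~2 are correct as written.

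In Step~3 there is a sign slip. Taken literally, it produces the wrong coefficient you were worried about. With $\vec q=\bX\wres d\bP$, you correctly have
\[
(\vec q\res d\bP)_k=(\bX_j\cdot\g_k\bP)\g^j\bP-\bX_k .
\]
By linearity of $\delta$ this gives
\[
\pi_N\delta(\vec q\res d\bP)=-\pi_N\delta\bX-(\bX_j\cdot\g_k\bP)\bII^{kj}=-\pi_N\vec K-(\bX_j\cdot\g_k\bP)\bII^{kj},
\]
not $+\pi_N\delta\bX$.

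The $\pm$ you left open in the $d\bP\we\vec K$ contribution is already fixed by the minus sign in Lemma~\ref{lm:nd*RS}. The $S,\bR$ part of $\bu$ therefore contributes $-\pi_N(n\vec K-\pi_T\vec K)=-n\pi_N\vec K$. Since $\bu$ contains $-(3n-6)\vec q$, the total is
\[
-n\pi_N\vec K+(3n-6)\pi_N\vec K=(2n-6)\pi_N\vec K .
\]
Writing $\pi_N\vec K=\vec K-\pi_T\vec K$ then puts $-(2n-6)\pi_T\vec K$ into $\vec{\mca R}_4$, which is controlled by~\eqref{estpiTK}.

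With the sign you wrote for the $\bX$-term, you would instead get $-(4n-6)\pi_N\vec K$. So your final ``$(3n-6)-n$'' does not follow from your intermediate claims. After this correction the argument is complete and matches the paper's.
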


\begin{proof}
Set $\bX\coloneq d\big(\lap_g^{h-1}\bP\big)$. By~\eqref{nabbPspa} and Lemma~\ref{lm:proinene}, we have
\begin{equation}\label{estX}
    \|\bX\|_{\msc N_{n,2}}\le C(n)\|\lap_g^{h-1}\bP\|_{W^{3-h,2}}\le C(n,\La) \mko \vae_{\bP}.
\end{equation}
The estimate~\eqref{estvecu} then follows from~\eqref{prolorgen} and~\eqref{estX}. In addition, there holds
\begin{equation}\label{deltaXK}
    d\bX=0,\qquad  \delta\bX=\delta \mko d\big(\lap_g^{h-1}\bP\big)=\vec K.
\end{equation}
By Lemma~\ref{lm:d*XwedPhi}, and using
$\vec K\we d\bP=-d\bP\we\vec K$, we obtain
\begin{align}\label{lapHndPdd*sys}
 d\bP\we\vec K
 =-\delta\big(\bX\ovs{\ovwe}{\we}d\bP\big)
   +d\big(\bX\wres d\bP\big)+\vec{\mca R}_1[\bX;g].
\end{align}
Substituting~\eqref{lapHndPdd*sys} into~\eqref{eq:RSKsys} and using~\eqref{defuetabul}, it follows that
\begin{align}\label{d*dsysRSX}
&\delta\Big((2n-5)\bR+\vet\mkt\ovs{\sbul}{\sbul}_g\bR
 +\vet\mko\bulg S+(3n-6)\bX\ovs{\ovwe}{\we}d\bP\Big)
 +d\bu=\vec{\mca R}_3,
\end{align}
where
\begin{equation}\label{defvecR2}
 \vec{\mca R}_3\coloneq
 \vec{\mca R}_0+(3n-6)\vec{\mca R}_1[\bX;g].
\end{equation}
By~\eqref{eq:defd*glap}, we have $d*_g\de=-d*_g*_g \,d\,*_g=0$. The identity~\eqref{eq:d*du=d*R3+} then follows from applying $d\,*_g$ to~\eqref{d*dsysRSX}. Combining~\eqref{R3ptbd},~\eqref{vecR4ptbd}, and~\eqref{estX} gives the required estimate for $\vec{\mca R}_3$.

It remains to prove~\eqref{eq:ndotd*uresdp} with the required estimate for $\vec{\mca R}_4$. By Lemma~\ref{lm:nd*RS} and~\eqref{sys:d*dSR}, we have
\begin{align}
\begin{aligned}\label{normalBpart}
    &\pi_N\mko\delta\Big(
 \big(\vet\mkt\ovs{\sbul}{\res}_g\bR+\vet\mkt\resg S\big)\mkt \res d\bP\Big)\\
    &=-\pi_N\Big(
  \big(\bL\wres d\bP+d\bP\we\vec K+\bvt_{\rot}\big)
        \ovs{\res}{\res}_g d\bP\Big)
  +\vec{\mca R}_2[\bR;g].
  \end{aligned}
  \end{align}
  Now we compute
\begin{align}
\begin{dcases} \label{Ldbcontr}
     \Big(\big(\bL\wres d\bP\big)\mkt\ovs{\res}{\res}_g d\bP\Big)=(\bL_{ji}\we\g^j\bP)\mkt \res\g^i\bP=(\bL_{ji}\cdot\g^i\bP)\g^j\bP-g^{ij}\bL_{ji},\\
 \Big((d\bP\we\vec K)\mkt\ovs{\res}{\res}_g d\bP\Big)=(\g_i\bP\we\vec K)\mkt \res\g^i\bP
   =n\vec K-\pi_T\vec K.
 \end{dcases}
\end{align}
Since $g^{ij}\bL_{ji}=0$, it follows that
\begin{equation}\label{ndotLweresdpres}
 \pi_N\Big(\big(\bL\wres d\bP\big)
           \ovs{\res}{\res}_g d\bP\Big)=0.
\end{equation}
Combining~\eqref{normalBpart}--\eqref{ndotLweresdpres} then gives
\begin{align}\label{applyLmnd*RS}
    \pi_N\mko\delta\Big(
 \big(\vet\mkt\ovs{\sbul}{\res}_g\bR+\vet\mkt\resg S\big)\mkt \res d\bP\Big)=-n\mko\pi_N\vec K -\pi_N\big(\bvt_{\rot}\mkt\ovs{\res}{\res}_g d\bP\big)+\vec{\mca R}_2[\bR;g].
\end{align}
For the contribution of $\bX$, set
$\vec q\coloneq\bX\wres d\bP=\bX_i\we\g^i\bP$. Then we have
\begin{align}\label{qresPhiindex}
 (\vec q\mkt\res d\bP)_k
 =(\bX_i\we\g^i\bP)\mkt\res \g_k\bP = (\bX_i\cdot\g_k\bP)\g^i\bP-\bX_k.
\end{align}
Applying $-\g^k$ to~\eqref{qresPhiindex} and taking the normal projection, we obtain
\begin{align}
\begin{aligned}\label{proqresdP}
     \pi_N\mko\delta(\vec q\mkt\res d\bP)
     &=-\pi_N\Big(
   \g^k(\bX_i\cdot\g_k\bP)\g^i\bP
   +(\bX_i\cdot\g_k\bP)\mko\bII^{ki}
   -\g^k\bX_k\Big)\\
 &=-(\bX_i\cdot\g_k\bP)\mko\bII^{ki}-\pi_N(\de \vec X)\\
 &=-\pi_N \vec K-(\bX_i\cdot\g_k\bP)\mko\bII^{ki}.
 \end{aligned}
\end{align}
Finally, combining~\eqref{defuetabul},~\eqref{applyLmnd*RS}, and~\eqref{proqresdP} yields
\begin{align}\begin{aligned}
    &\pi_N\delta(\bu\mkt\res d\bP)\\
     &=\big(\mko(3n-6)-n\big)\pi_N\vec K
  -\pi_N\big(\bvt_{\rot}\mkt\ovs{\res}{\res}_g d\bP\big)
  +\vec{\mca R}_2[\bR;g]+(3n-6)(\bX_i\cdot\g_k\bP)\mko\bII^{ki}\\
 &=(2n-6)\vec K+\vec{\mca R}_4.
 \end{aligned}
\end{align}
Here we set
\begin{align}\label{defvecR4}
\begin{aligned}
 \vec{\mca R}_4\coloneq -(2n-6)\pi_T\vec K
 -\pi_N\big(\bvt_{\rot}\mkt\ovs{\res}{\res}_g d\bP\big)+\vec{\mca R}_2[\bR;g]+(3n-6)(\bX_i\cdot\g_k\bP)\mko\bII^{ki}.
\end{aligned}
\end{align}
This proves~\eqref{eq:ndotd*uresdp}. The required estimate for $\vec{\mca R}_4$ follows from~\eqref{estpiTK},~\eqref{rmdbdnRS} and~\eqref{estX}, together with the product inequalities~\eqref{prolorgen} and~\eqref{ineproENn}.
\end{proof}

\section{Proof of the main theorem}\label{sec:pfmainThm}
In this section, we complete the proof of Theorem~\ref{th-main} by deriving the conservation laws associated with $E_{\bc}$ and applying the results from previous sections. The cases $n=2$ and $n=4$ have been proved in~\cite{Riv08} and~\cite{BerLanMarRiv26} respectively. Henceforth, let $n=2h\ge6$ with $h\in \N^+$.

Using local coordinates, we assume $\bP\in \mathcal{I}_{h-1,2}(B^n, \R^m)$. Throughout Section~\ref{sec:pfmainThm}, we write $\p_i=\p_{x^i}$, and use the notation~\eqref{defbiopes} and the convention~\eqref{eq:convsonab}. We fix a constant $\La\ge 1$ such that
\begin{align}\label{weaimmconbP}
    \Lambda^{-1} \mko |v|^2_{\R^n} \le | d\bP_x(v)|^2_{\R^m}  \le \Lambda  \mko |v|^2_{\R^n},\qquad \text{for a.e. }x\in B^n \text{ and all }v\in T_x B^n.
\end{align}
As in~\eqref{eq:defE},  we fix $\vec c=(c_\la)_{\la\in\mca L}$ and define 
\[E_{\vec c}(\bP)=\int_{B^n} \bigg(\big| \g^{(h-1)}\mko\bII\big|_g^2+\sum_{\la\in \mca L} c_\la P_\la(\bP)\bigg) \, \dvol_g.\]
For any open set $U\subset B^n$, we say $\bP$ is a weak critical point of $E_{\bc}$ on $U$ if for any $\vec w\in C_c^\infty(U,\R^m)$, it holds that
\[
   \frac d{dt} \left. E_{\vec c}\left( \bP+t\mkt \vec w \right)\right|_{t=0}=0.
\]

\subsection{The Euler--Lagrange equation and estimate of the Noether current $\vec V$}\label{sec:EullagestV}
\
\vskip5pt
Before deriving the Euler--Lagrange equation, we carry out the rescaling needed for the estimates in Sections~\ref{sec:strid} and~\ref{sec:pfmainThm}.

\subsubsection*{Rescaling.}

For $\rho\in(0,1)$, we define 
\begin{equation}\label{phidil}
    \bP_\rho(x)\coloneq\rho^{-1}\bP(\rho\mko x).
\end{equation}
Then since $E_{\bc}$ is invariant under rescaling, $\bP_\rho$ is a weak critical point of $E_{\bc}$ on $B_1$ if and only if $\bP$ is a weak critical point of $E_{\bc}$ on $B_\rho$. In addition, the condition~\eqref{weaimmconbP} remains valid if $\bP$ is replaced by $\bP_\rho$. By change of variables and using the scale-homogeneous Sobolev norms~\eqref{eq:uniSobnm}, we have
\begin{align}\label{Dphinormscal}
    \|D^2\bP\|_{W^{h-1,2}(B_\rho)}=\|D^2 \bP_\rho\|_{W^{h-1,2}(B_1)}.
\end{align}
By Sobolev--Lorentz embeddings, for each $1\le k\le h$, we have $D^{k+1}\bP\in L^{n/k,2}(B^n)$. In addition, the inequality~\eqref{equivSobnorm} gives
\begin{equation}\label{comphomSobLnk}
     \|D^2\bP\|_{W^{h-1,2}(B_\rho)}\le C(n)  \sum_{k=1}^h \|D^{k+1}\bP\|_{L^{\frac nk,2}(B_\rho)}.
\end{equation}
In view of~\eqref{Dphinormscal}--\eqref{comphomSobLnk}, after restricting
to a sufficiently small ball and rescaling, we may relabel $\bP_\rho$
as $\bP$ and assume
\begin{equation}\label{defep_0}
    \varepsilon_{\bP}\coloneq \|D^2\bP\|_{W^{h-1,2}(B^n)}\le\vae_0<1.
\end{equation}
Here $\vae_0\in (0,1)$ is a constant to be determined later.\\

Since the $n$-form $\Big(\big| \g^{(h-1)}\mko\bII\big|_g^2+\sum_{\la\in \mca L} c_\la P_\la(\bP)\Big) \, \dvol_g$ is pointwise invariant under translations, dilations, and rotations in the ambient space $\R^m$, we apply Noether's theorem to derive the Euler--Lagrange equation in divergence form, together with some conservation laws for weak critical points of $E_{\vec c}$. These conservation laws form the main ingredients for Proposition~\ref{sysestLSR}, proved at the end of the next subsection.
\subsubsection*{The Noether current associated to translations.}
\begin{Lm}
\label{lm-critic-W^{1,1}} Let $\bP\in \mca I_{h-1,2}(B^n,\R^m)$. Then there exist a finite index set $I$, coefficients $\{a_\ga\}_{\ga\in I}\subset \R$ depending only on $\bc$, and covector fields $\big\{\vec Q_\ga(\bP) \big\}_{\ga\in I}\subset W^{-h,2}+L^1\big(B^n,\R^m\ot \R^n\big)$ such that the following hold:
\begin{enumerate}[label=(\roman*),leftmargin=2.5em]
    \item \label{expQgabP} Each $\vec Q_\ga(\bP)$ is a partial contraction (with one free index) in the form
    \begin{equation}
        \label{pconvecQga}\text{pcontr}\Big(\big(\g^{(\ell_1)}\mko\bII\cdot \g^{(\ell_2)}\mko\bII\big)\ot\cdots \ot \big(\g^{(\ell_{2s-1})}\mko\bII\cdot \g^{(\ell_{2s})}\mko\bII\big)\ot \g^{(\ell_0)}\bP \Big),
    \end{equation}
    for suitable $s\in \N^+$ and non-negative integers $\ell_i$ ($0\le i\le 2s$), depending on $\ga\in I$. Moreover, we have
\begin{align}\label{condskivQ}\begin{dcases}
    1\le \ell_0\le n,\\[1ex]
     \forall 1\le i\le 2s,\qquad 0\le \ell_{i}\le n-2,\\
    \sum_{i=0}^{2s} (\ell_i+1)=n+2.   
    \end{dcases}
\end{align}
    \item $\bP$ is a weak critical point of $E_{\bc}$ if and only if 
    \begin{equation}\label{ELequ}
        d*_g\bigg(d\Big(\lap_g^{h}\mkt\bP\Big)+ \sum_{\ga\in I} a_{\ga}\mko \vec Q_{\ga}(\bP)\bigg)=0.
        \end{equation}
\end{enumerate}
\end{Lm}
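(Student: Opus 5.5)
The plan is to compute the first variation of $E_{\bc}$ along $\bP_t=\bP+t\bw$ pointwise in contraction form, derive the schematic formula~\eqref{ptvarform}, and then integrate against compactly supported $\bw$ so that the divergence term disappears. We never split $\bw$ into tangential and normal parts, since $\pi_N\bw$ is not an admissible variation.

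\textbf{First variation of the geometric quantities.} In coordinates, $\frac{d}{dt}g_{ij}=\p_i\bw\cdot\p_j\bP+\p_i\bP\cdot\p_j\bw$. The variation of the Christoffel symbols is a tensor of the form $\g^{(2)}\bw\cdot\g\bP$ plus $\g\bw\cdot\bII$. Since $\g^{(2)}\bP=\bII$ and $\g^{(k)}\mko\bII=\g^{(k+2)}\bP$, the variation of $\g^{(k+2)}\bP$ is $\g^{(k+2)}\bw$ plus terms $\g^{(a)}(\dot\Gamma)\ot\g^{(b)}\bP$. It is cleanest to regard every term of the density, including $|\g^{(h+1)}\bP|_g^2$ and each $P_\la$, as a complete contraction of the tensors $\g^{(j)}\bP$ for $1\le j\le h+1$, with the fixed factors $\g\bP$ produced by the normal projections. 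Lemma~\ref{lm:gpdot2ff} and~\eqref{naperpdif} let us rewrite those projections. Differentiating the metric contractions and $\dvol_g$ then produces $g^{-1}\dot g\,g^{-1}$ and $\frac12\mathrm{tr}_g\dot g$. The result is that the derivative of the density equals a finite sum of complete contractions, each linear in some $\g^{(r)}\bw$ with $1\le r\le h+1$. Each contraction carries weight bookkeeping: counting each $\g^{(j)}\bP$ with $j\ge2$ as weight $j-1$, the other factors have total weight $n+1-r$. Only one term is principal, namely $2\lan\g^{(h+1)}\bw,\g^{(h+1)}\bP\ran_g$ coming from $|\g^{(h+1)}\bP|^2_g$. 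All other terms either contain at least two curvature factors or have $r\le h$ with a single factor $\g^{(j)}\bP$, $j\le h+1$.

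\textbf{Integration by parts to one derivative on $\bw$.} We apply the Leibniz rule~\eqref{eq:Leibru} repeatedly to move all but one derivative off $\bw$, writing each contraction as a divergence plus $\lan\g\bw,\cdot\ran_g$. The principal term becomes $2\lan\g\bw,\g\lap_g^h\bP\ran_g$ up to a divergence. Here we commute covariant derivatives to turn $\g^{*(h)}\g^{(h)}$ into $\lap_g^h$. Each commutator produces Riemann tensors, which by the Gauss equation~\eqref{Gaseq} are $\bII\cdot\bII$, so they only add lower-order contractions of the form~\eqref{pconvecQga}.

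Collecting the $\lan\g\bw,\cdot\ran_g$ coefficients gives the $\vec Q_\ga(\bP)$. The count~\eqref{condskivQ} follows from scale invariance: the factors beyond $\g\bw$ carry total weight $n+1$, and writing $\g^{(\ell_0)}\bP$ for the single non-curvature factor gives $\sum(\ell_i+1)=n+2$. After moving derivatives, we re-express any curvature factor $\g^{(j)}\mko\bII$ with $j\ge n-1$ as $\g^{(\ell_0)}\bP$; a bound $\ell_0\le n$ suffices because at most $h-1+h+1$ derivatives accumulate. Expressing $\g\bP$-contractions via Lemma~\ref{lm:gpdot2ff} keeps the $\ell_i$ with $1\le i\le 2s$ at most $n-2$.

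\textbf{Main obstacle: regularity and the weak formulation.} We must check that every term makes sense for $\bP\in\mca I_{h-1,2}$. Then each $\vec Q_\ga$ lies in $W^{-h,2}+L^1$ by Corollary~\ref{cor:multiproinene}: products with all $\ell_i\le h-1$ are $L^1$, and a single high-order factor $\g^{(\ell_0)}\bP$ with $\ell_0>h+1$ is handled by the one-large-factor case of that corollary. We must also justify the pointwise computation. We do this by approximating $\bP$ by smooth immersions in $W^{h+1,2}$ with uniformly controlled metric, which is possible since the energy density and its derivative in $t$ are continuous in this topology by the same product estimates. Finally, for $\bw\in C_c^\infty$ the divergence terms integrate to zero, giving
\[
\frac{d}{dt}E_{\bc}(\bP+t\bw)\Big|_{t=0}=2\int\Big\lan\g\bw,\,d\lap_g^h\bP+\sum_\ga a_\ga\vec Q_\ga\Big\ran_g\dvol_g .
\]
This vanishes for all such $\bw$ exactly when~\eqref{ELequ} holds distributionally, since $\lan d\bw,\vec V\ran_g\dvol_g=d\bw\we*_g\vec V$. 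The delicate bookkeeping is ensuring that commutators and moved derivatives never push a curvature index above $n-2$ or produce non-integrable pairings; I expect that to be the step needing the most care.
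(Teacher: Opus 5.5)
Your proposal is correct in outline and has the same architecture as the paper's proof: a pointwise first-variation identity in contraction form, Leibniz-rule integration by parts down to $\langle\g\bw,\cdot\rangle_g$ with commutators converted by the Gauss equation, $\vec Q_\ga\in W^{-h,2}+L^1$ via Corollary~\ref{cor:multiproinene}, and approximation by smooth immersions. It differs from the paper in two technical steps.

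First, you compute the variation covariantly, from $\dot g$ and $\dot\Gamma^k_{ij}=g^{kl}(\g_{ij}\bw\cdot\g_l\bP+\g_l\bw\cdot\bII_{ij})$. The paper instead writes the density as a polynomial in $D^\al\bP$ and $(\det g)^{-1}$, differentiates in coordinates, and returns to covariant contractions using reparametrization invariance and the invariant-theory argument of \cite{Atiyah73}. Your route is more explicit and avoids invariant theory. Note, though, that the $\dot\Gamma$-term paired with $\g\bP$ also contains $\g^{(h+1)}\bw$ (its tangential part). It is lower order only because $\g\bP\cdot\g^{(h+1)}\bP$ is quadratic in $\bII$ by Lemma~\ref{lm:gpdot2ff}. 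In return, the paper's coordinate form directly shows that the density is $C^1$ in $t$ with values in $L^1$; you only assert this.

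Second, for the principal term you move all $h$ derivatives and then permute indices of $\g^{(n+1)}\bP$. The paper interleaves integrations by parts so that it only permutes indices of $\g^{(n)}\bP$ and $\g^{(3)}\bw$. Your version also works. Swapping positions $p,p+1$ of $\g^{(n+1)}\bP$ produces terms $\g^{(a)}\mathrm{Riem}*\g^{(n-1-a)}\bP$ with $a\le p-1\le n-2$, since the two innermost indices commute freely.

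Several of your justifications should be replaced.

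The bound $\ell_i\le n-2$ for paired factors does not come from Lemma~\ref{lm:gpdot2ff}. Also, a factor sitting inside a dot pair cannot be relabelled as $\g^{(\ell_0)}\bP$. The correct reason is your own weight count. In a lower-order term $(\g^{(k_1)}\bw\cdot\g^{(k_2)}\bP)\otimes\cdots$ with $k_2\ge1$, every paired factor $\g^{(k_i)}\bII$ has a partner of weight at least one. Hence $k_1+k_i\le n-1$, and after $k_1-1$ integrations by parts its order is at most $n-2$. Only the $\bw$-partner carries the free $\R^m$ index, and it ends with $\ell_0=k_1+k_2-1\le n$, since $k_1+k_2\le n+1$ away from the principal term.

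The one-large-factor case of Corollary~\ref{cor:multiproinene} can also be triggered by a curvature factor such as $\g^{(n-2)}\bII$ coming from the commutators, not only by $\g^{(\ell_0)}\bP$. The corollary covers this case anyway.

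Finally, the existence of smooth approximating immersions with uniform metric bounds comes from $D\bP\in\mathrm{VMO}$; this is the approximation theorem of \cite{LaMaRi26} that the paper invokes. It does not follow from continuity of the density.
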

\begin{proof}
For the estimates of the Noether current and subsequent applications, we may assume without loss of generality that~\eqref{weaimmconbP} and~\eqref{defep_0} hold. 
By Lemma~\ref{lm:proinene} and~\eqref{nabbPspa}, for any $2\le k \le n+1$ and $1\le i_1,\dots,i_k\le n$, we obtain that
\begin{align}\label{nablaphispa}
  \big(\g^{(k-2)}\bII\big)_{i_1\dots i_k}\in\begin{dcases}
         W^{h+1-k,2}(B^n,\R^m)&\text{ if }\,2\le k\le n,\\
    L^1+W^{-h,2}(B^n,\R^m) \,&\text{ if }\,k=n+1.
    \end{dcases}
\end{align}
When $k\le h+1$, the Sobolev--Lorentz embedding gives
\begin{equation}\label{gIILp}
   \big(\g^{(k-2)}\bII\big)_{i_1\dots i_k}\in L^{\frac n{k-1},2}(B^n)\hookrightarrow L^{\frac n{k-1}}(B^n).
\end{equation} 
Applying H\"older's inequality with~\eqref{eq:condsks}--\eqref{defmcaL} then implies
\begin{equation}\label{integrandL1}
    \big| \g^{(h-1)}\mko\bII\big|_g^2+\sum_{\la\in \mca L} c_\la P_\la(\bP)\; \in L^1(B^n).
\end{equation}
Hence $E_{\bc}(\bP)$ is well-defined for $\bP\in \mca I_{h-1,2}(B^n,\R^m)$. Now we rewrite the integrand of $E_{\bc}$ in terms of partial derivatives of $\bP$. Expanding the iterated covariant derivatives using the Christoffel symbols, for any $k\ge 2$ and $1\le i_1,\dots,i_k\le n$, we can write
\begin{equation}\label{difcov}
    \big(\g^{(k-2)}\bII\big)_{i_1\dots i_k}=\g_{i_1\dots i_k}\bP=\p_{i_1}\cdots \p_{i_{k}}\bP+\sum_{r\in \wti R_{k}} a_{r,i_1\dots i_k}\mko\wti P_{r,i_1\dots i_k}(\bP).
\end{equation}
Here $\wti R_k$ is a finite index set, the coefficients $a_{r,i_1\dots i_k}\in \R$ are constants, and each $ \wti P_{r,i_1\dots i_k}(\bP)$ is of the form
 \begin{align}\label{expPr}
    (\det g_{ij})^{-q}\bigg( \prod_{l=1}^{s}\big(D^{\al_{2l-1}} \bP\cdot D^{\al_{2l}}\bP\big)\bigg) D^{\al_0}\bP,
 \end{align}
 for suitable $q,s\in \N$ and multi-indices $\al_i$ ($0\le i\le 2s$), depending on $r$.\footnote{Equation~\eqref{difcov} should be distinguished from~\eqref{difcovcan} with $\vec f=\bP$, which is written in geodesic normal coordinates. Here and below, the symbols $q$, $s$, and $\al_i$ are local and may vary from one occurrence to another.}
 Moreover, these multi-indices satisfy
 \begin{align}\label{congpdif}
    \begin{dcases}
    1\le |\al_0|\le k-1,\\[1ex]
    \forall\, 1\le i\le 2s,\qquad 1\le |\al_{i}|\le k,\\
    \sum_{i=0}^{2s} (|\al_{i}|-1)=k-1.
    \end{dcases}
\end{align}  
Consequently, we can write
 \begin{align}\label{cooexpgra}
 \begin{aligned}
     &\big| \g^{(h-1)}\mko\bII\big|_g^2+\sum_{\la\in \mca L} c_\la P_\la(\bP)\\
     &= g^{i_1j_1}\cdots g^{i_{h+1}j_{h+1}}\big(\p_{i_1}\cdots \p_{i_{h+1}}\bP\big)\cdot \big(\p_{j_1}\cdots \p_{j_{h+1}} \bP\big)+\sum_{r\in I_1} b_{r,1} \mko\wti Q_{r,1}(\bP).
     \end{aligned}
 \end{align}
 Here $I_1$ is a finite index set, the coefficients $b_{r,1}\in \R$ depend only on $\bc$, and each $\wti Q_{r,1}(\bP)$ is of the form 
 \begin{align}\label{expQr}
    (\det g_{ij})^{-q} \prod_{l=1}^s \big(D^{\al_{2l-1}} \bP\cdot D^{\al_{2l}}\bP\big),
 \end{align}
for suitable $q,s\in \N^+$ and multi-indices $\al_i$ ($1\le i\le 2s$), depending on $r$. Moreover, it follows from~\eqref{difcov}--\eqref{congpdif} and~\eqref{eq:condsks} that the multi-indices in~\eqref{expQr} satisfy
\begin{align}\label{condsksQ}
    \begin{dcases}
    \forall \,1\le i\le 2s,& 1\le |\al_{i}|\le h+1,\\[0.3ex]
     \forall \,1\le l\le s,& |\al_{2l-1}|+|\al_{2l}|\le n+1,\\
    \sum_{i=1}^{2s} (|\al_{i}|-1)=n.&
    \end{dcases}
\end{align}  

Next, we consider a $C^1$ variation $(\bP_t)_{|t|<1}$ in $\mathcal{I}_{h-1,2}(B^n, \R^m)$ that is $C^1$ in $t$, with $\bP_0=\bP$. Here $\mathcal{I}_{h-1,2}(B^n, \R^m)$ is regarded as an open subset of $ W^{1,\nf}\cap W^{h+1,2}(B^n,\R^m)$, equipped with the intersection norm.
We write $g=g_{\bP}$, $g_t=g_{\bP_t}$, $\bII=\bII_{\bP}$, $\bII_t=\bII_{\bP_t}$, and denote by $\g_t$ the covariant derivative with respect to $g_t$. Let
\begin{equation}\label{ptvardef}
    \bw\coloneq \frac d{dt}\mko \bP_t\Big|_{t=0}\;\in W^{1,\nf}\cap W^{h+1,2}(B^n,\R^m).
\end{equation} 
Applying~\cite[Prop.~2.5.2]{Cartan71} and H\"older's inequality as in the proof of~\eqref{integrandL1}, we obtain
\begin{equation}\label{ptvarinL1}
  t\mapsto \Big(\big| \g_t^{(h-1)}\mko\bII_t\big|_{g_t}^2+\sum_{\la\in \mca L} c_\la P_\la(\bP_t)\Big) \, \dvol_{g_t}\; \in C^1\Big((-1,1),L^1\big(B^n,\bwe^n \R^n\big)  \Big).
\end{equation}

We now compute the first variation of the energy density in~\eqref{ptvarinL1}. By standard computations, see for instance \cite[Sec.~5.1]{LaMaRi26}, we have
\begin{align}\label{varmet}\begin{dcases}
    \frac{d}{dt} \,g_t^{ij}\mko\Big|_{t=0}=-g^{ik}g^{\ell j}(\p_{k}\vec w \cdot\p_{\ell}\bP+\p_{\ell}\vec w \cdot\p_{k}\bP),\\
    \frac{d}{dt} \,\dvol_{g_t}\mko\Big|_{t=0}=\lan \g\bw ,\g\bP\ran_g\, \dvol_g.
    \end{dcases}
\end{align}
Combining~\eqref{cooexpgra}--\eqref{expQr} with~\eqref{varmet}, we obtain
\begin{align}\label{ptvarrou}\begin{aligned}
    &\frac d{dt}\bigg (\Big(\big| \g_t^{(h-1)}\mko\bII_t\big|_{g_t}^2+\sum_{\la\in \mca L} c_\la P_\la(\bP_t)\Big) \, \dvol_{g_t}\bigg)\bigg|_{t=0}\\
    &=\bigg(2\,g^{i_1j_1}\cdots g^{i_{h+1}j_{h+1}}\big(\p_{i_1}\cdots \p_{i_{h+1}}\bw\big)\cdot \big(\p_{j_1}\cdots \p_{j_{h+1}} \bP\big)+\sum_{r\in I_2} b_{r,2} \mkt\wti Q_{r,2}(\bw,\bP)\bigg)\mkt \dvol_g.
    \end{aligned}
\end{align}
Here $I_2$ is a finite index set, the coefficients $b_{r,2}\in \R$ depend only on $\bc$, and each $\wti Q_{r,2}(\bw,\bP)$ is of the form
 \begin{align}\label{expQr2}
    (\det g_{ij})^{-q} \big(D^{\al_1}\bw\cdot D^{\al_2}\bP\big)\prod_{l=2}^s \big(D^{\al_{2l-1}} \bP\cdot D^{\al_{2l}}\bP\big),
 \end{align}
for suitable $q,s\in \N^+$ and multi-indices $\al_i$ ($1\le i\le 2s$), depending on $r$. In addition, by~\eqref{condsksQ}, the multi-indices in~\eqref{expQr2} satisfy
\begin{align}\label{condskstQ}
    \begin{dcases}
    \forall\, 1\le i\le 2s,\qquad 1\le |\al_{i}|\le h+1,\\
    \sum_{i=1}^{2s} (|\al_{i}|-1)=n,\\[0.2ex]
    |\al_{1}|+|\al_{2}|\le n+1.
    \end{dcases}
\end{align}  
Applying~\eqref{difcovcan} with $\vec f=\bP$ and $\vec f=\bw$ respectively, we rewrite the right-hand side of~\eqref{ptvarrou} in terms of polynomials in the components of iterated covariant derivatives of $\bP$ and $\bw$.
Since the integrand of $E_{\bc}(\bP_t)$ is independent of reparametrizations, its pointwise first variation with respect to $t$ is invariant as well. 
By Lemma~\ref{lm:gpdot2ff} and the invariance-theory argument in~\cite[Sec.~2]{Atiyah73}, we then obtain\footnote{For smooth immersions, this follows by using geodesic normal coordinates as in~\eqref{difcovcan}. The approximation result~\cite[Thm.~4.6]{LaMaRi26} then implies~\eqref{ptvargwgp} for weak immersions $\bP\in \mca I_{h-1,2}(B^n,\R^m)$.}
\begin{align}\label{ptvargwgp}\begin{aligned}
     &\frac d{dt}\bigg (\Big(\big| \g_t^{(h-1)}\mko\bII_t\big|_{g_t}^2+\sum_{\la\in \mca L} c_\la P_\la(\bP_t)\Big) \, \dvol_{g_t}\bigg)\bigg|_{t=0}\\
     &=\bigg(2\mkt \Big\lan \g^{(h+1)}\mko\bw,\g^{(h+1)}\mko\bP \Big\ran_g+\sum_{r\in I_3} b_{r,3}\mkt Q_r(\bw,\bP)\bigg)\mkt \dvol_g.
     \end{aligned}
\end{align}
Here $I_3$ is a finite index set, the coefficients $b_{r,3}\in \R$ depend only on $\bc$, and each $ Q_{r}(\bw,\bP)$ is a complete contraction in the form
\begin{align}\label{contgwgp}
    \text{contr}\Big(\big(\g^{(k_1)}\bw\cdot \g^{(k_2)}\bP \big)\ot\big(\g^{(k_3)}\mko\bII\cdot \g^{(k_4)}\mko\bII\big)\ot\cdots \ot\big(\g^{(k_{2s-1})}\mko\bII\cdot \g^{(k_{2s})}\mko\bII\big) \Big),
\end{align}
for suitable $s\ge 2$ and non-negative integers $k_i$ ($1\le i\le 2s$), depending on $r$. 
Moreover, it follows from~\eqref{condskstQ} that the orders $k_i$ appearing in~\eqref{contgwgp} satisfy\footnote{The terms $Q_r(\bw,\bP)$ appearing in~\eqref{ptvargwgp} indeed satisfy $k_2\le h+1$ and $k_i\le h-1$ for $3\le i\le 2s$, which ensure their $L^1$ integrability by Sobolev embeddings and H\"older's inequality. 
We retain the weaker conditions~\eqref{condskiQ} to include the additional terms arising from the computations below.}
\begin{equation}\label{condskiQ}
\begin{dcases}
    1\le k_1\le h+1,\qquad 1\le k_2\le n,\\
    k_1+k_2\le n+1,\\
    0\le k_i\le n-1\qquad (3\le i\le 2s),\\
    \sum_{i=1}^{2s}(k_i+1)=n+4.
\end{dcases}
\end{equation}
In particular, since $k_2\ge 1$ and $k_i\ge 0$ for all $i\ge 3$, the last line of~\eqref{condskiQ} implies that
\begin{align}\label{k1+ki<n}
    \forall\, 3\le i\le 2s,\qquad k_1+k_i\le n-1.
\end{align}
We claim that the integrand on the right-hand side of~\eqref{ptvargwgp} admits the following decomposition in the sense of distributions:
\begin{align}\label{ptvardivgw}\begin{aligned}
    &2\mkt \Big\lan \g^{(h+1)}\mko\bw,\g^{(h+1)}\mko\bP \Big\ran_g+\sum_{r\in I_3} b_{r,3}\mkt Q_r(\bw,\bP)\\
    &=-2\, \text{div}_g\bigg(\g \bw\cdot \lap_g^{h}\mko \bP+\sum_{\ga'\in I'} \bar a_{\ga'}\mko \bar P_{\ga'}(\bw,\bP)\bigg)\\
    &\quad +2\mkt\bigg\lan\g \bw,\, \g \Big(\lap_{g}^{h}\mkt\bP\Big)+ \sum_{\ga\in I} a_{\ga}\mko \vec Q_{\ga}(\bP) \bigg\ran_g.
    \end{aligned}
\end{align}
Here $I, I'$ are finite index sets, the coefficients $a_\ga,\bar a_{\ga'}\in \R$ depend only on $\bc$, $\vec Q_\ga(\bP)$ can be expressed as in~\ref{expQgabP} and each $\bar P_{\ga'}(\bw,\bP)$ is a partial contraction (with one free index) in the form
\begin{align}\label{condvecPgaw}
    \text{pcontr}\Big(\big(\g^{(\ell'_1)}\bw\cdot \g^{(\ell'_2)}\bP \big)\ot\big(\g^{(\ell'_3)}\mko\bII\cdot \g^{(\ell'_4)}\mko\bII\big)\ot\cdots \ot\big(\g^{(\ell'_{2\bar s-1})}\mko\bII\cdot \g^{(\ell'_{2\bar s})}\mko\bII\big) \Big),
\end{align}
for suitable $\bar s\ge 1$ and non-negative integers $\ell'_i$ ($1\le i\le 2\bar s$), depending on $\ga'\in I'$. In addition, the integers $\ell'_i$ in~\eqref{condvecPgaw} satisfy \begin{align}\label{condsliP}\begin{dcases}
    1\le \ell'_1\le h,\\
    1\le \ell'_2\le n-1,\\[1ex]
     \forall\, 3\le i\le 2\bar s,\qquad 0\le \ell'_{i}\le n-3,\\
    \sum_{i=1}^{2\bar s} (\ell'_i+1)=n+3.   
    \end{dcases}
\end{align}
Before proving~\eqref{ptvardivgw}, we first estimate the norm of its right-hand side. We retain the spaces $\msc E_n$, $\msc N_{n,1}$, and $\msc N_{n,2}$ from~\eqref{defcritspacesIV}, and define 
\begin{equation}\label{defNn0}
    \msc N_{n,0}(U,E)\coloneq W^{-h,2}+L^1(U,E).
\end{equation}
Combining~\eqref{weaimmconbP},~\eqref{defep_0}, and~\eqref{nabbPspa} with~\eqref{eq:fTproinene} yields
\begin{align}\label{gwdotlapPhiest}
    \big\|\g \bw\cdot \lap_g^{h}\mko \bP\big\|_{W^{1-h,2}(B^n)}\le C(\La,n) \mkt\vae_{\bP} \mko\|D\bw\|_{L^\nf \cap W^{h,2}(B^n)}.
\end{align}
Similar to~\eqref{nabbPspa}, by induction and applying Lemma~\ref{lm:proinene} with~\eqref{defep_0}, for any $2\le k\le n$, and $1\le i_1,\dots,i_k\le n$, we obtain
\begin{equation}\label{estnablaw}
    \|\g_{i_1\dots i_k}\bw\|_{W^{h+1-k,2}(B^n)}\le C(n,\La)\Big(\|D^2\bw\|_{W^{h-1,2}}+\vae_{\bP}\|D\bw\|_{L^\nf\cap W^{h,2}}\Big).  
\end{equation}
For $\bar P_{\ga'}(\bw,\bP)$ in~\eqref{condvecPgaw}, set $a_1=\ell'_1-1$, $a_2=\ell'_2-1$, and $a_i=\ell'_i+1$ for $i\ge3$. By~\eqref{condsliP}, we have
\begin{equation}\label{condweig}
    \sum_{i=1}^{2\bar s}a_i=n-1,\qquad 0\le a_1\le h-1,\qquad 0\le a_i\le n-2\quad(i\ge2).
\end{equation}
Combining~\eqref{nabbPspa} and~\eqref{estnablaw} with the product inequality~\eqref{eq:multicritgain} yields $\bar P_{\ga'}(\bw,\bP)\in \msc E_n (B^n)$. 
To track the norms, by~\eqref{condweig}, there exists at least one $a_i>0$ with $i\ge 2$. Moreover, if $a_1=0$, then $\bar s\ge2$ and there are at least two $a_i>0$ with $i\ge 2$. Thus by~\eqref{defep_0}, we obtain in all cases
\begin{equation}\label{estPgamma'wp}
     \|\bar P_{\ga'}(\bw,\bP)\|_{ \msc E_n (B^n)}\le C(n,\La) \mko\vae_{\bP} \Big(\vae_{\bP}\|D\bw\|_{L^\nf\cap W^{h,2}(B^n)}+\|D^2\bw\|_{W^{h-1,2}(B^n)} \Big).
\end{equation}
To estimate $\vec Q_\ga(\bP)$, by~\eqref{pconvecQga}--\eqref{condskivQ}, we apply~\eqref{eq:multiposine} with $a=n$   and obtain
\begin{equation}\label{estQgammap}
    \|\vec Q_\ga(\bP)\|_{\msc N_{n,0}(B^n)}
    \le C(n,\La)\mko\vae_{\bP}^2.
\end{equation}
Here the presence of at least two
$\bII$-factors yields the factor $\vae_{\bP}^2$. Combining~\eqref{gwdotlapPhiest} and~\eqref{estPgamma'wp}--\eqref{estQgammap} with~\eqref{eq:fTproinene}, we obtain that the right-hand side of~\eqref{ptvardivgw} lies in $\msc N_{n,0}$. Thus by the approximation theorem for weak immersions~\cite[Thm.~4.6]{LaMaRi26}, it suffices to prove~\eqref{ptvardivgw} for smooth $\bP$ and $\bw$, with $g=g_{\bP}$.  

For the principal term on the left-hand side of~\eqref{ptvardivgw}, the Leibniz rule~\eqref{eq:Leibru} gives
\begin{align} \label{giwjbPint}
\begin{aligned}
    \g_{i_1\dots i_{h+1}}\bw\cdot \g_{j_1\dots j_{h+1}}\bP&=\sum_{k=1}^{h-1}(-1)^{k+1}\g_{i_k}\Big(\g_{i_{k+1}\dots i_{h+1}}\bw\cdot \g_{i_{k-1}\dots i_1j_1\dots j_{h+1}}\bP \Big)\\
    &\quad +(-1)^{h-1}\g_{i_{h}i_{h+1}}\bw\cdot \g_{i_{h-1}\dots i_1j_1\dots j_{h+1}}\bP,\\
    \end{aligned}
\end{align}
where for $k=1$ we set $ \g_{i_{k-1}\dots i_1j_1\dots j_{h+1}}:=  \g_{j_1\dots j_{h+1}}$. After contracting $i_s$ against $j_s$ for all $1\le s\le h+1$, each term under the summation symbol in~\eqref{giwjbPint} is of the form $\text{div}_g\big(\bar P_{\ga'}(\bw,\bP)\big)$ as in~\eqref{ptvardivgw}--\eqref{condsliP}. 

To handle the last term in~\eqref{giwjbPint}, we first permute the indices of $\g^{(n)}\bP$ up to lower-order terms. Indeed, by~\eqref{difcovcan}, we can write
\begin{equation}\label{tiPi1jn2+1}
    \g_{i_{h-1}\dots i_1j_1\dots j_{h+1}}\bP= \g_{j_{h+1}\mko i_{h-1}\dots i_1j_1\dots j_{h}}\bP+\ti P_{i_1\dots i_{h-1}\mko j_1\dots j_{h+1}}(\bP),
\end{equation}
where $\ti P_{i_1\dots i_{h-1}\mko j_1\dots j_{h+1}}(\bP)$ is a linear combination of partial contractions of tensor products of covariant derivatives $\g^{(k)}\bP$ ($1\le k\le n-1$). By the Leibniz rule, we have
\begin{align}\begin{aligned}\label{gjn2in2n21w}
    \g_{i_{h}i_{h+1}}\bw\cdot \g_{j_{h+1}\mko i_{h-1}\dots i_1j_1\dots j_{h}}\bP
    &=\g_{j_{h+1}}\Big(  \g_{i_{h}i_{h+1}}\bw\cdot \g_{ i_{h-1}\dots i_1j_1\dots j_{h}}\bP\Big)\\
    &\quad -\g_{j_{h+1}\mko i_{h}\mko i_{h+1}}\bw\cdot \g_{ i_{h-1}\dots i_1j_1\dots j_{h}}\bP.
    \end{aligned}
\end{align}
After contracting $i_s$ against $j_s$ for all $1\le s\le h+1$, the first term on the right-hand side of~\eqref{gjn2in2n21w} is of the form $\text{div}_g\big(\bar P_{\ga'}(\bw,\bP)\big)$ as in~\eqref{ptvardivgw}--\eqref{condsliP}. For the last term of~\eqref{gjn2in2n21w}, we permute the indices of $\g^{(3)}\bw$: by~\eqref{difcovcan} (or using the Ricci identity for covector fields together with~\eqref{Gaseq}), we can write
\begin{equation}
    \g_{j_{h+1}\mko i_{h}\mko i_{h+1}}\bw=\g_{i_{h}\mko i_{h+1}\mko j_{h+1}}\bw+\ti P_{i_{h}\mko i_{h+1}\mko j_{h+1}}(\bw,\bP),
\end{equation}
where $\ti P_{i_{h}\mko i_{h+1}\mko j_{h+1}}(\bw,\bP)$ is a linear combination of partial contractions of $(\bII \cdot \bII)\ot \g \bw$. By the Leibniz rule again, we have
\begin{align}\label{gin2i1j1jn2bP}
    \begin{aligned}
        \g_{i_{h}\mko i_{h+1}\mko j_{h+1}}\bw\cdot \g_{ i_{h-1}\dots i_1j_1\dots j_{h}}\bP&= \g_{i_{h}}\Big(\g_{i_{h+1}\mko j_{h+1}}\bw\cdot \g_{ i_{h-1}\dots i_1j_1\dots j_{h}}\bP\Big)\\
        &\quad -\g_{i_{h+1}\mko j_{h+1}}\bw\cdot \g_{ i_{h}\dots i_1j_1\dots j_{h}}\bP
    \end{aligned}
\end{align}
For the last term of~\eqref{gin2i1j1jn2bP}, we permute the indices of $\g^{(k)}\bP$ again. By~\eqref{difcovcan}, we can write
\begin{equation}\label{tiP'i1jn2}
    \g_{ i_{h}\dots i_1j_1\dots j_{h}}\bP=\g_{ i_1j_1\dots i_{h}\mko j_{h}}\bP+\ti P'_{i_1j_1\dots i_{h}\mko j_{h}}(\bP),
\end{equation}
where $\ti P'_{i_1j_1\dots i_{h}\mko j_{h}}(\bP)$ is a linear combination of partial contractions of tensor products of covariant derivatives $\g^{(k)}\bP$ ($1\le k\le n-1$).
Combining~\eqref{tiPi1jn2+1}--\eqref{tiP'i1jn2}, we obtain that
\begin{align}\label{intindwbP}
    \begin{aligned}
    &\g_{i_{h}i_{h+1}}\bw\cdot \g_{i_{h-1}\dots i_1j_1\dots j_{h+1}}\bP\\
          &=\g_{i_{h}i_{h+1}}\bw\cdot \ti P_{i_1\dots i_{h-1}\mko j_1\dots j_{h+1}}(\bP)-\ti P_{i_{h}\mko i_{h+1}\mko j_{h+1}}(\bw,\bP)\cdot \g_{ i_{h-1}\dots i_1j_1\dots j_{h}}\bP\\
          &\quad +\g_{i_{h+1}\mko j_{h+1}}\bw\cdot \ti P'_{i_1j_1\dots i_{h}\mko j_{h}}(\bP)+\g_{j_{h+1}}\Big(  \g_{i_{h}i_{h+1}}\bw\cdot \g_{ i_{h-1}\dots i_1j_1\dots j_{h}}\bP\Big)\\
          &\quad -\g_{i_{h}}\Big(\g_{i_{h+1}\mko j_{h+1}}\bw\cdot \g_{ i_{h-1}\dots i_1j_1\dots j_{h}}\bP\Big)+\g_{i_{h+1}\mko j_{h+1}}\bw\cdot \g_{ i_1j_1\dots i_{h}\mko j_{h}}\bP.
    \end{aligned}
\end{align}
After contracting $i_s$ against $j_s$ for all $1\le s\le h+1$, each of the first three terms on the right-hand side of~\eqref{intindwbP} is a finite linear combination of contractions of the form~\eqref{contgwgp} satisfying~\eqref{condskiQ}. 
Concerning the last term of~\eqref{intindwbP}, the Leibniz rule gives
\begin{align}\label{lapn2bP}\begin{aligned}
    \g_{i_{h+1}\mko j_{h+1}}\bw\cdot \g_{ i_1j_1\dots i_{h}\mko j_{h}}\bP&=\g_{i_{h+1}}\Big( \g_{j_{h+1}}\bw\cdot \g_{ i_1j_1\dots i_{h}\mko j_{h}}\bP\Big)\\
    &\quad -\g_{j_{h+1}}\bw\cdot \g_{ i_{h+1}\mko i_1j_1\dots i_{h}\mko j_{h}}\bP.
    \end{aligned}
\end{align}
Since $\g$ is compatible with the metric $g$, we have 
\begin{align}
\text{div}_g\Big(\g \bw\cdot \lap_g^{h}\mko \bP \Big)&=(-1)^{h}g^{i_1j_1}\cdots g^{i_{h+1}\mko j_{h+1}}\g_{i_{h+1}}\Big( \g_{j_{h+1}}\bw\cdot \g_{ i_1j_1\dots i_{h}\mko j_{h}}\bP\Big),\\
    \Big\lan \g\bw,\g \Big(\lap_g^{ h}\mkt\bP\Big)\Big\ran_g&=(-1)^{h}g^{i_1j_1}\cdots g^{i_{h+1}\mko j_{h+1}}\g_{j_{h+1}}\bw\cdot \g_{i_{h+1}\mko i_1j_1\dots i_{h}j_{h}}\bP.\label{gwdglapbp}
\end{align}
Combining~\eqref{giwjbPint} and~\eqref{intindwbP}--\eqref{gwdglapbp}, we obtain that, up to lower-order terms of the form~\eqref{contgwgp}--\eqref{condskiQ}, the principal term $2\mkt \Big\lan \g^{(h+1)}\mko\bw,\g^{(h+1)}\mko\bP \Big\ran_g$ can be expressed as in the right-hand side of~\eqref{ptvardivgw}. 

It remains to treat the lower-order terms of the form~\eqref{contgwgp}--\eqref{condskiQ}. The same iterated application of the Leibniz rule as in~\eqref{giwjbPint} shows that each of such terms can be expressed as a linear combination of terms of the form $\text{div}_g\big(\bar P_{\ga'}(\bw,\bP)\big)$ and $\big\lan \g\bw, \vec Q_\ga(\bP)\big\ran_g$, where $\vec Q_\ga(\bP)$ and $\bar P_{\ga'}(\bw,\bP)$ are respectively described by~\eqref{pconvecQga}--\eqref{condskivQ} and~\eqref{condvecPgaw}--\eqref{condsliP}. 
In particular, the bounds $\ell_0\le n$ in~\eqref{condskivQ} and $\ell_2'\le n-1$ in~\eqref{condsliP} follow from ~\eqref{k1+ki<n} and the third line of~\eqref{condskiQ}. Therefore, the expression~\eqref{ptvardivgw} is proved.

Finally, for a smooth immersion $\bP$ and $\bw\in C_c^\nf (B^n,\R^m)$, integrating~\eqref{ptvargwgp} and~\eqref{ptvardivgw} gives
\[
 \frac d{dt}E_{\bc}(\bP_t)\Big|_{t=0}
 =2\int_{B^n}\bigg\lan d\bw,\,
 d\Big(\lap_g^{h}\bP\Big)+\sum_{\ga\in I}a_\ga\vec Q_\ga(\bP)
 \bigg\ran_g\,\dvol_g.
\]
We set
\begin{equation}\label{defVmase}
 \vec V\coloneq d\Big(\lap_g^{h}\bP\Big)
       +\sum_{\ga\in I}a_\ga\vec Q_\ga(\bP).
\end{equation}
 Integration by parts then yields
\begin{equation}\label{varEfor}
 \frac d{dt}E_{\bc}(\bP_t)\Big|_{t=0}
       =-2\mko\big\lan d*_g\vec V,\bw\big\ran.
\end{equation}
 For $\bP\in\mathcal I_{h-1,2}(B^n,\R^m)$ satisfying~\eqref{weaimmconbP} and~\eqref{defep_0}, combining~\eqref{nabbPspa} and~\eqref{estQgammap} gives
\begin{equation}\label{estbV}
    \|\vec V\|_{\msc N_{n,0}(B^n)}\le C(n,\La,\bc)\mko \vae_{\bP}.
\end{equation}
Thus, by the approximation theorem~\cite[Thm.~4.6]{LaMaRi26}, the variation formula~\eqref{varEfor} remains valid for $\bP\in\mathcal I_{h-1,2}(B^n,\R^m)$, where $\langle\cdot,\cdot\rangle$ is the canonical distributional pairing with the test field $\bw\in C_c^\nf(B^n,\R^m)$.
This completes the proof.
\end{proof}

\subsection{The Noether currents associated to dilations and rotations}\label{sec:conlaws}
\
\vskip5pt
Let $\bP\in \mca I_{h-1,2}(B^n,\R^m)$ satisfying~\eqref{weaimmconbP} and~\eqref{defep_0} be a weak critical point of $E_{\bc}$. We define $\msc N_{n,0}$ and $\vec V\in \msc N_{n,0}(B^n,\R^m\ot \R^n)$ as in~\eqref{defNn0} and~\eqref{defVmase} respectively. Lemma~\ref{lm-critic-W^{1,1}} then implies that 
\[
    d*_g\vec V=0.
\]
Moreover, combining~\eqref{eq:fTproinene},~\eqref{defep_0},~\eqref{estbV}, and the Sobolev--Lorentz embedding (see e.g.~\cite[Lem.~II.7]{BerLanMarRiv26}) yields that
\begin{equation}\label{soblorest*V}
    \|*_g\vec V\|_{W^{-h,(2,\nf)}(B^n)}\le C(n)\|*_g\vec V\|_{\msc N_{n,0}(B^n)}\le C(n,\La) \|\vec V\|_{\msc N_{n,0}(B^n)}\le C(n,\La,\bc)\mko \vae_{\bP}.
\end{equation}

In the remainder of this paper, we retain the notation of spaces $\msc E_n$, $\msc N_{n,1}$, and $\msc N_{n,2}$ from~\eqref{defcritspacesIV}, and use the subscript loc to denote the corresponding local Sobolev--Lorentz spaces. Applying Proposition~\ref{prop:Hod-coclosed} to $*_g\,\vec V$ with $k=h-1$, $p=2$, and using~\eqref{prolorgen}, we obtain $\bL\in \msc N_{n,1}\big(B^n,\R^m\ot \bwe^2\R^n\big)$ such that
\begin{subnumcases}{}
 d*_g\bL=*_g\,\vec V,\label{d*gL=*V}\\[0.3ex]
 d\bL=0.\label{dL0=0}
\end{subnumcases}
In addition, we have the estimate
\begin{align}\label{est-L_0}\begin{aligned}
     \|\bL\|_{\msc N_{n,1}(B^n)}
    &\le C(n,\La)\|*_g \bL\|_{\msc N_{n,1}(B^n)}\\
     &\le C(n,\La)\|*_g\vec V\|_{W^{-h,(2,\nf)}(B^n)}\\
     &\le C(n,\La,\bc)\mko\varepsilon_{\bP}.
 \end{aligned}
\end{align}

Since the $n$-form $\Big(\big| \g^{(h-1)}\mko\bII\big|_g^2+\sum_{\la\in \mca L} c_\la P_\la(\bP)\Big) \, \dvol_g$ is pointwise invariant under dilations and rotations, Noether's theorem yields the corresponding conservation laws, which we now establish.
\begin{Lm}
\label{lm-dilations}
Let $\bP$ satisfying~\eqref{weaimmconbP} be a weak critical point of $E_{\bc}$, and define $\bL$ as in~\eqref{d*gL=*V}--\eqref{est-L_0}. Then there exist $\vt_{\dil}\in\msc E_n(B^n,\R^n)$ and $\bvt_{\rot}\in\msc E_n\big(B^n,\bwe^2\R^m\ot\bwe^1\R^n\big)$ such that
\begin{subnumcases}{}
d*_g\big( \bL \,\dres  d\bP+\vt_{\dil}\mko\big)=0,\label{cons-law-dilation}\\[1ex]
d*_g \Big(\bL \wres d\bP +\mko d\bP\we  \lap_g^{h}\mko \bP+\bvt_{\rot} \Big)=0.\label{cons-law-rotation}
\end{subnumcases}
In addition, under~\eqref{defep_0}, we have
\begin{align}\label{ptbdvts}
 \|\vt_{\dil}\|_{\msc E_n(B^n)}
       +\|\bvt_{\rot}\|_{\msc E_n(B^n)}
       \le C(n,\La,\bc)\mko\varepsilon_{\bP}^{\,2}.
\end{align}
\end{Lm}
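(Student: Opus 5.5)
The plan is to apply Noether's argument through the pointwise variation formula \eqref{ptvargwgp}--\eqref{ptvardivgw} with the Killing-type fields $\bw=\bP$ (dilation) and $\bw=\vec\Omega\bP$ with $\vec\Omega\in\mathfrak{so}(m)$ (rotation), after localization. The integrand $n$-form is pointwise invariant under these ambient transformations: it is exactly invariant under rotations, and invariant under dilations because the energy is scale-invariant. Hence for these $\bw$ the left-hand side of \eqref{ptvargwgp} vanishes identically. Since $\bP\notin C_c^\nf$, I will first note that \eqref{ptvardivgw} holds for $\bw\in W^{1,\nf}\cap W^{h+1,2}$, which includes $\bw=\bP$ and $\bw=\vec\Omega\bP$, by the same approximation argument (\cite[Thm.~4.6]{LaMaRi26}). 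This yields, in $\mca D'$,
\[
0=-2\,\mathrm{div}_g\Big(\g\bw\cdot\lap_g^h\bP+\sum_{\ga'}\bar a_{\ga'}\bar P_{\ga'}(\bw,\bP)\Big)+2\langle \g\bw,\vec V\rangle_g .
\]

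Next I rewrite $\langle d\bw,\vec V\rangle_g\,\dvol_g=d\bw\dwe *_g\vec V$ using \eqref{d*gL=*V}--\eqref{dL0=0}. For dilations, $d\bw=d\bP$, so $d\bP\dwe d*_g\bL=\pm d\big(d\bP\dwe *_g\bL\big)$, and $d\bP\dwe*_g\bL$ equals $\pm *_g(\bL\dres d\bP)$. For rotations, the $\bwe^2\R^m$-valued version gives $d\bP\wedge *_g\vec V=\pm d(*_g(\bL\wres d\bP))$. Here I use $d(d\bP)=0$; the products are well defined by \eqref{prolorgen} and an approximation argument. This converts the identity into $d*_g(\cdots)=0$.

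For the dilation law, the term $\g\bP\cdot\lap_g^h\bP=\pi_T$-type quantity is controlled as in \eqref{estpiTK}, and $\bar P_{\ga'}(\bP,\bP)$ is in $\msc E_n$. I therefore set
\[
\vt_{\dil}:=\mp\Big(\g\bP\cdot\lap_g^h\bP+\sum\bar a_{\ga'}\bar P_{\ga'}(\bP,\bP)\Big).
\]
For the rotation law, the principal part $\g(\vec\Omega\bP)\cdot\lap_g^h\bP$ assembles, after pairing against all $\vec\Omega$, into $d\bP\we\lap_g^h\bP$, which is exactly the term retained in \eqref{cons-law-rotation}. The $\bar P_{\ga'}$ terms, with the first slot $\g^{(\ell_1')}\bw$ replaced by $\vec\Omega\g^{(\ell_1')}\bP$ and wedged, define $\bvt_{\rot}$.

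The main obstacle is the estimate \eqref{ptbdvts}. Here $\bw=\bP$ is not small, since $\|D\bw\|_{L^\nf}\sim\La$, so \eqref{estPgamma'wp} gives only $\vae_{\bP}(\vae_{\bP}+\|D^2\bP\|)\lesssim\vae_{\bP}^2$. That bound suffices because $\|D^2\bw\|_{W^{h-1,2}}=\vae_{\bP}$. The counting in \eqref{condweig} guarantees at least two positive weights (one from $\bw$ when $\ell_1'\ge2$, or two $\bII$-type factors otherwise), so that \eqref{eq:multicritgain} yields the quadratic factor. For $\vt_{\dil}$ I must also check that the principal term is genuinely lower order. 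Lemma~\ref{lm:gpdot2ff} shows $\g\bP\cdot\vec K$ is a sum of products $\g^{(\ell)}\bP\cdot\g^{(n+1-\ell)}\bP$, $2\le\ell\le h$, which lies in $\msc E_n$ with bound $\vae_{\bP}^2$ by Corollary~\ref{cor:multiproinene}. For rotations no such reduction is needed, since that term is kept explicitly. I expect the careful sign and duality bookkeeping between $\dres$, $\wres$ and $*_g$ to be tedious but routine.
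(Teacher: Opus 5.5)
Your proposal is correct and follows the paper's proof essentially step by step. You apply \eqref{ptvargwgp}--\eqref{ptvardivgw} to the invariant variations $\bw=\bP$ and $\bw=\vec a\res\bP$ (equivalently your $\vec\Omega\bP$), rewrite $d\bw\dwe *_g\vec V=d\bw\dwe d*_g\bL=d*_g(\bL\dres d\bw)$, and absorb the remaining terms into $\vt_{\dil}$ and $\bvt_{\rot}$. The quadratic bound \eqref{ptbdvts} then follows, as you say, from \eqref{estpiTK} for $d\bP\cdot\lap_g^h\bP$ and from \eqref{estPgamma'wp} with $\|D\bw\|_{L^\nf\cap W^{h,2}}\le C(n,\La)$ and $\|D^2\bw\|_{W^{h-1,2}}\le C\,\vae_{\bP}$.
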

\begin{proof}
To prove~\eqref{cons-law-dilation}, we first consider the variation $\bP_t=(1+t)\bP$ with 
$$
    \bw=\frac d{dt}\mko \bP_t\Big|_{t=0}=\bP.
$$
Denote $\g_t$, $\bII_t$, and $g_t$ as in the proof of Lemma~\ref{lm-critic-W^{1,1}}. Since the $n$-form $\Big(\big| \g_t^{(h-1)}\mko\bII_t\big|_{g_t}^2+\sum_{\la\in \mca L} c_\la P_\la(\bP_t)\Big)\dvol_{g_t}$ is independent of $t$, using~\eqref{ptvargwgp} and~\eqref{ptvardivgw}, we obtain that
\begin{align}\label{dilconcomp1}
\begin{aligned}
    0&=   \frac d{dt}\bigg(\Big(\big| \g_t^{(h-1)}\mko\bII_t\big|_{g_t}^2+\sum_{\la\in \mca L} c_\la P_\la(\bP_t)\Big)\dvol_{g_t}\bigg)\bigg|_{t=0}\\
    &=-2\mko d*_g\bigg(d \bw\cdot \lap_g^{h}\mko \bP+\sum_{\ga'\in I'} \bar a_{\ga'}\mko \bar P_{\ga'}(\bw,\bP)\bigg)+2\mkt d \bw\dwe *_g \vec V
    \end{aligned}
\end{align}
By~\eqref{d*gL=*V} and~\eqref{*_gcomres}, we have
\begin{align}\label{d*(PV)}
    d \bw\dwe *_g \vec V=d\bw\dwe d*_g\bL=-d\big( \mko(*_g\,\bL)\dwe d\bw\big)=d*_g(\bL \,\dres  d\bw).
\end{align}
Set 
\begin{equation}\label{defvtdil}
    \vt_{\dil}\coloneq -\bigg(d \bP\cdot \lap_g^{h}\mko \bP+\sum_{\ga'\in I'} \bar a_{\ga'}\mko \bar P_{\ga'}(\bP,\bP)\bigg).
\end{equation}
The conservation law~\eqref{cons-law-dilation} then follows from~\eqref{dilconcomp1}--\eqref{defvtdil}. Combining~\eqref{prolorgen},~\eqref{estpiTK}, and~\eqref{estPgamma'wp}, we obtain the estimate
\begin{align}\label{estvtdil}
 \|\vt_{\dil}\|_{\msc E_n(B^n)}
       \le C(n,\La,\bc)\mko\varepsilon_{\bP}^{\,2}.
\end{align}
Next, we prove~\eqref{cons-law-rotation}.  Fix $\vec a\in \bwe^2\R^m$, and we define $\bP_t$ by
\begin{align}\label{varrot}
  \begin{dcases} 
  \frac d{dt}\mko \bP_t=\vec a\,\res \bP_t, 
  \qquad t\in \R,\\[0.3ex]
  \bP_0=\bP.
  \end{dcases}
\end{align} 
For this variation, since $ \frac d{dt}\mko \bP_t\cdot \bP_t=\vec a \cdot (\bP_t\we \bP_t)=0$, we have $|\bP_t|^2=|\bP|^2$ for all $t\in \R$. In addition, $\bP_t$ depends linearly on $\bP$.
Hence there exists $Q_t\in \text{SO}(m)$, depending only on $\vec a$ and $t$, such that $\bP_t=Q_t\circ \bP$ on $B^n$. Consequently, the $n$-form $\Big(\big| \g_t^{(h-1)}\mko\bII_t\big|_{g_t}^2+\sum_{\la\in \mca L} c_\la P_\la(\bP_t)\Big)\dvol_{g_t}$ is independent of $t$, and the equations~\eqref{dilconcomp1}--\eqref{d*(PV)} remain valid if we take $\bw=\vec a\,\res \bP$. By~\eqref{eq:defres}, we have 
\begin{align}
    d (\vec a\,\res \bP)\cdot \lap_g^{h}\mko \bP&=\vec a\cdot \big(d\bP\we  \lap_g^{h}\mko \bP\big),\label{dwdlaprew}\\
     \bL\,\dres d(\vec a\,\res \bP)&=-\vec a\cdot \big(\bL \wres d\bP\big).\label{Lddwrew}
\end{align}
Similar to~\eqref{dwdlaprew}, we can write 
\begin{equation}\label{ParesPhirew}
    \sum_{\ga'\in I'} \bar a_{\ga'}\mko \bar P_{\ga'}(\vec a\,\res \bP,\bP)=\vec a\cdot \bvt_{\rot}. 
\end{equation}
The estimate~\eqref{estPgamma'wp} gives
\[
 \bigg\|\sum_{\ga'\in I'}\bar a_{\ga'}
       \bar P_{\ga'}(\vec a\,\res\bP,\bP)\bigg\|_{\msc E_n(B^n)}
       \le C(n,\La,\bc)\mko|\vec a|\mko\varepsilon_{\bP}^{\,2}.
\]
Hence~\eqref{ParesPhirew} defines $\bvt_{\rot}\in\msc E_n$ and proves the remaining estimate in~\eqref{ptbdvts}.
By~\eqref{d*(PV)} and~\eqref{Lddwrew}, we have 
\begin{equation}\label{dwd*Vrot}
    d(\vec a\,\res \bP)\dwe *_g \vec V=-\vec a \cdot d*_g \big(\bL \wres d\bP\big).
\end{equation}
Combining~\eqref{dilconcomp1},~\eqref{dwdlaprew},~\eqref{ParesPhirew} with~\eqref{dwd*Vrot}, we obtain that for $\bP_t$ defined in~\eqref{varrot}, there holds
\begin{align*}
    0&= \frac d{dt}\bigg(\Big(\big| \g_t^{(h-1)}\mko\bII_t\big|_{g_t}^2+\sum_{\la\in \mca L} c_\la P_\la(\bP_t)\Big)\dvol_{g_t}\bigg)\bigg|_{t=0}\\
    &=-2\mkt \vec a \cdot  d*_g\Big(\bL \wres d\bP +\mko d\bP\we  \lap_g^{h}\mko \bP+\bvt_{\rot} \Big).
\end{align*}
Since $\vec a\in \bwe^2 \R^m$ is arbitrary, the conservation law~\eqref{cons-law-rotation} follows.
\end{proof}
Define $\bL$ as in~\eqref{d*gL=*V}--\eqref{est-L_0}. Then by~\eqref{dL0=0}, we have
\begin{align}\label{dLwedbp}
 \begin{dcases}
 d(\bL\dwe d\bP)=d\bL\dwe d\bP=0,\\[0.5ex]
 d\big(\bL\ovs{\ovwe}{\we}d\bP\big)
       =d\bL\ovs{\ovwe}{\we}d\bP=0.
 \end{dcases}
\end{align}
Invoking~\eqref{prolorgen},~\eqref{nabbPspa}, and~\eqref{est-L_0}, we estimate
\begin{align}\label{estLwedplapH}\begin{aligned}
&\|\bL\mkt\dres d\bP\|_{\msc N_{n,1}(B^n)}
 +\|\bL\wres d\bP\|_{\msc N_{n,1}(B^n)}  +\|\bL\dwe d\bP\|_{\msc N_{n,1}(B^n)}\\
&\quad +\|\bL\ovs{\ovwe}{\we}d\bP\|_{\msc N_{n,1}(B^n)}
 +\|d\bP\we\lap_g^{h}\mko \bP\|_{\msc N_{n,1}(B^n)} \le C(n,\La,\bc)\mko\varepsilon_{\bP}.
 \end{aligned}
\end{align}
By~\eqref{ptbdvts} and $\msc E_n\hookrightarrow\msc N_{n,1}$, we also have
\begin{align}\label{estvts}
 \|\vt_{\dil}\|_{\msc N_{n,1}(B^n)}
       +\|\bvt_{\rot}\|_{\msc N_{n,1}(B^n)}
       \le C(n,\La,\bc)\mko\varepsilon_{\bP}^{\,2}.
\end{align}
Combining~\eqref{cons-law-dilation},~\eqref{cons-law-rotation}, and~\eqref{dLwedbp} with Corollary~\ref{co-Hod-decw-1p} ($k=h-1$, $p=2$ when $n\ge6$), we obtain $S\in\msc N_{n,2}\big(B^n,\bwe^2\R^n\big)$ and $\bR\in\msc N_{n,2}\big(B^n,\bwe^2\R^m\ot\bwe^2\R^n\big)$
such that
\begin{align*}
 \begin{dcases}
 \delta S=\bL\mkt\dres d\bP+\vt_{\dil},\quad
       &dS=-2\bL\dwe d\bP,\\[0.5ex]
 \delta\bR=\bL\wres d\bP+d\bP\we\vec K+\bvt_{\rot},\quad
       &d\bR=-2\bL\ovs{\ovwe}{\we}d\bP.
 \end{dcases}
\end{align*}
Moreover, from the estimates~\eqref{est:Hodfirst},~\eqref{estLwedplapH}, and~\eqref{estvts}, we obtain that
\[
 \|S\|_{\msc N_{n,2}(B^n)}
       +\|\bR\|_{\msc N_{n,2}(B^n)}
       \le C(n,\La,\bc)\mko\varepsilon_{\bP}.
\]

We summarize our results of Sections~\ref{sec:EullagestV}--\ref{sec:conlaws} in the following proposition.
\begin{Prop}\label{sysestLSR}
Assume $\bP\in\mathcal I_{h-1,2}(B^n,\R^m)$ is a weak critical point of $E_{\bc}$ satisfying~\eqref{weaimmconbP} and~\eqref{defep_0}. Then there exist
$\bL\in\msc N_{n,1}$, $S,\bR\in\msc N_{n,2}$, and
$\vt_{\dil},\bvt_{\rot}\in\msc E_n$, with their respective target spaces as above, such that the following system holds in $\mathcal D'(B^n)$:
\begin{equation}\label{sys-LRS}
 \begin{dcases}
 d*_g\bL=*_g\bigg(d\Big(\lap_g^h\bP\Big)
       +\sum_{\ga\in I}a_\ga\vec Q_\ga(\bP)\bigg),\\
 \delta S=\bL\mkt\dres d\bP+\vt_{\dil},\\
 \delta\bR=\bL\wres d\bP+d\bP\we\lap_g^h\bP+\bvt_{\rot},\\[0.5ex]
 d\bL=0,\\[0.5ex]
 dS=-2\bL\dwe d\bP,\\[0.5ex]
 d\bR=-2\bL\ovs{\ovwe}{\we}d\bP.
 \end{dcases}
\end{equation}
Moreover, the following estimates hold:
\begin{align}\label{estLSR}
 \|\bL\|_{\msc N_{n,1}(B^n)}
 +\|S\|_{\msc N_{n,2}(B^n)}
 +\|\bR\|_{\msc N_{n,2}(B^n)}
 &\le C(n,\La,\bc)\mko\varepsilon_{\bP},\\[1ex]
 \|\vt_{\dil}\|_{\msc E_n(B^n)}
 +\|\bvt_{\rot}\|_{\msc E_n(B^n)}
 +\|\pi_T\lap_g^h\bP\|_{\msc E_n(B^n)}
 &\le C(n,\La,\bc)\mko\varepsilon_{\bP}^{\,2}.
 \label{ptbdvts2}
\end{align}
\end{Prop}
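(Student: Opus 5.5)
The proposition only collects what Sections~\ref{sec:EullagestV}--\ref{sec:conlaws} already construct, so the proof is an assembly step with no new analysis. I will take the objects in order and check each equation and each estimate.

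\textbf{Step 1 (Euler--Lagrange current and $\bL$).} By Lemma~\ref{lm-critic-W^{1,1}}, criticality gives $d*_g\vec V=0$, where $\vec V$ is defined in~\eqref{defVmase}. By~\eqref{estbV} and~\eqref{soblorest*V}, $*_g\vec V$ is a closed form in $W^{-h,(2,\nf)}$ with norm at most $C\vae_{\bP}$. Proposition~\ref{prop:Hod-coclosed}, applied with $k=h-1$ and $p=2$, is admissible because $\frac n{n-h+1}<2<n$ when $n\ge6$. It yields $\bL$ with $d*_g\bL=*_g\vec V$ and $d\bL=0$, together with the bound~\eqref{est-L_0}. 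The $*_g$ versus $*_g^{-1}$ bookkeeping is handled by~\eqref{prolorgen}. This gives the first and fourth lines of~\eqref{sys-LRS}.

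\textbf{Step 2 (dilation and rotation laws, and $S,\bR$).} Lemma~\ref{lm-dilations} gives the conservation laws~\eqref{cons-law-dilation}--\eqref{cons-law-rotation} and the quadratic bound~\eqref{ptbdvts} on $\vt_{\dil}$ and $\bvt_{\rot}$. I then apply Corollary~\ref{co-Hod-decw-1p} with $k=h-1\ge2$ and $p=2$, taking the following data:
\begin{itemize}
\item $\ga_2=\bL\dres d\bP+\vt_{\dil}$, with $\ga_1=-2\bL\dwe d\bP$, to produce $S$;
\item $\ga_2=\bL\wres d\bP+d\bP\we\lap_g^h\bP+\bvt_{\rot}$, with $\ga_1=-2\bL\ovs{\ovwe}{\we}d\bP$, to produce $\bR$.
\end{itemize}
The hypotheses hold as follows:
\begin{itemize}
\item $d\ga_1=0$ follows from $d\bL=0$, via~\eqref{dLwedbp};
\item $d*_g\ga_2=0$ is exactly the content of the two conservation laws;
\item the data lie in $\msc N_{n,1}$ by~\eqref{estLwedplapH} and~\eqref{estvts}, using the embedding $\msc E_n\hookrightarrow\msc N_{n,1}$.
\end{itemize}
The estimate~\eqref{est:Hodfirst} then gives $\|S\|_{\msc N_{n,2}}+\|\bR\|_{\msc N_{n,2}}\le C\vae_{\bP}$. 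Combined with Step 1, this proves~\eqref{estLSR}.

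\textbf{Step 3 (the quadratic bounds).} The estimate~\eqref{ptbdvts2} combines~\eqref{ptbdvts} with~\eqref{estpiTK}. The latter applies because $\vec K=\lap_g^h\bP$ and assumption~\eqref{StruEpassu} holds under~\eqref{defep_0}.

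The only point needing care is the exponent bookkeeping. One must check that $\msc N_{n,0}\hookrightarrow W^{-h,(2,\nf)}$ is the Lorentz-endpoint embedding used in~\eqref{soblorest*V}, and that Corollary~\ref{co-Hod-decw-1p} requires $k\ge2$, which holds precisely because $n\ge6$. Everything else is a direct citation of the earlier results.
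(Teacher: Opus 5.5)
Your proposal is correct and follows the paper's own assembly step for step. You apply Proposition~\ref{prop:Hod-coclosed} with $k=h-1$, $p=2$ to $*_g\vec V$ to get $\bL$, use Lemma~\ref{lm-dilations} for the dilation and rotation laws, apply Corollary~\ref{co-Hod-decw-1p} (which needs $k=h-1\ge2$, i.e.\ $n\ge6$) to the data controlled by \eqref{estLwedplapH}--\eqref{estvts} to get $S$ and $\bR$, and obtain \eqref{ptbdvts2} from \eqref{ptbdvts} and \eqref{estpiTK}; the only point left implicit, as in the paper, is that $\vae_0$ in \eqref{defep_0} must be small enough for $g$ to satisfy the smallness hypothesis \eqref{eq:dg_bound} of the Hodge--Dirac results.
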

\subsection{Morrey decay estimates}\label{sec:finSoblormoest}
\
\vskip5pt
 Throughout the remainder of the paper, we use the scale-homogeneous Sobolev--Lorentz norms defined in~\eqref{eq:uniSobnm}--\eqref{negSobnm}. For every ball $B\subset B^n$, we write
$\varepsilon_{\bP}(B)\coloneq\|D^2\bP\|_{W^{h-1,2}(B)}$, and retain
$\varepsilon_{\bP}=\varepsilon_{\bP}(B^n)$. Combining Corollary~\ref{co:sysvecu} and Proposition~\ref{sysestLSR} with Lemma~\ref{lm:ellcacciolp} yields the following Sobolev--Lorentz--Morrey type estimate.

\begin{Th}\label{th:lapHL432}
Suppose $\bP\in\mathcal I_{h-1,2}(B^n,\R^m)$ is a weak critical point of
$E_{\bc}$ satisfying~\eqref{weaimmconbP}. For any $\al\in(0,n-1)$,
there exists $\vae_0=\vae_0(n,\La,\bc,\al)>0$ such that the following holds.
Assume that
\begin{align}\label{smaIIggvae}
 \varepsilon_{\bP}\le\vae_0.
\end{align}
Then $\vec K=\lap_g^h\bP\in\msc E_{n,\loc}(B^n,\R^m)$ and, for all
$r\in(0,\frac12]$,
\begin{align}\label{est:lapHL432}
 \|\vec K\|_{\msc E_n(B_r)}
 \le C(n,\La,\bc,\al)
       \big(\varepsilon_{\bP}+r^\al\big)\varepsilon_{\bP}.
\end{align}
\end{Th}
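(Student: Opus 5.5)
The plan is to combine Proposition~\ref{sysestLSR} with Corollary~\ref{co:sysvecu}. This produces the $\msc N_{n,2}$-valued $2$-form-valued field $\bu$ defined in~\eqref{defuetabul}, with $\|\bu\|_{\msc N_{n,2}}\le C(n,\La,\bc)\,\varepsilon_{\bP}$ by~\eqref{estvecu} and~\eqref{estLSR}. It satisfies the divergence-form system~\eqref{eq:d*du=d*R3+} and the normal identity~\eqref{eq:ndotd*uresdp}. Inserting~\eqref{ptbdvts2} and~\eqref{estLSR} into~\eqref{remtotptbd} gives
\[
\|\vec{\mca R}_3\|_{\msc E_n}+\|\vec{\mca R}_4\|_{\msc E_n}\le C(n,\La,\bc)\,\varepsilon_{\bP}^2 .
\]
Since $2n-6\neq 0$ for $n\ge 6$, identity~\eqref{eq:ndotd*uresdp} expresses $\vec K$ as $(2n-6)^{-1}\big(\pi_N\delta(\bu\res d\bP)-\vec{\mca R}_4\big)$. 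The coefficients of $\pi_N$ and $d\bP$ are multipliers on $\msc E_n$ by~\eqref{prolorgen}, and $\delta$ costs one derivative as in~\eqref{estdwdew}. It therefore suffices to show that $D\bu\in\msc E_n(B_r)$, i.e.\ $\bu\in W^{3-h,(\frac{2h}{h+1},2)}(B_r)$ locally, with
\[
\|D\bu\|_{\msc E_n(B_r)}\le C\big(\varepsilon_{\bP}^2+r^\al\varepsilon_{\bP}\big).
\]
I expect $\bu$ to be only at the regularity $\msc N_{n,2}$ a priori.

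To gain this derivative I would treat~\eqref{eq:d*du=d*R3+} as an elliptic system for $\bu$. The first option is to replace the exterior system by a scalar divergence-form operator. I would first fix the gauge freedom of $\bu$ (it is a $1$-form in the form factor, $\bwe^2\R^m\ot\bwe^1\R^n$). To do so I would also compute $\delta\bu$, or better $d*_g\bu$. This can be done from the definition~\eqref{defuetabul}, the system~\eqref{sys-LRS}, and Lemma~\ref{prop:III2ber}: the codifferential of $\vet\mkt\ovs{\sbul}{\res}_g\bR+\vet\mkt\resg S$ is controlled up to $\msc E_n$ remainders, and $\delta\big(d(\lap_g^{h-1}\bP)\wres d\bP\big)$ is a lower-order quantity.

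If that calculation is too messy, the alternative is to avoid controlling $\delta\bu$. Instead I would write $\bu=d\phi+\psi$ on $B^n$, where $\psi$ solves $d\psi=d\bu$ with $d*_g\psi=0$ via Proposition~\ref{prop:Hod-coclosed}, with an estimate coming from $d*_g d\bu=d*_g\vec{\mca R}_3$. In fact only $\pi_N\delta(\bu\res d\bP)$ is needed, and the exact piece $d\phi$ contributes through $\delta d\phi$.

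The decomposition step then reduces to components solving $\p_i(a^{ij}\p_j\,\cdot)=T$, where the coefficients $a^{ij}=\sqrt{\det g}\,g^{ij}$ satisfy~\eqref{elli}--\eqref{ashidera} with smallness coming from $\varepsilon_{\bP}\le\vae_0$. The right-hand side $T$ is $\p(\vec{\mca R}_3)$ plus commutator terms with $Dg$, which are bounded by $\varepsilon_{\bP}\|\bu\|_{\msc N_{n,2}}\le C\varepsilon_{\bP}^2$ in $W^{1-h,(\frac{2h}{h+1},2)}$ by~\eqref{ineproENn}. I would then apply Lemma~\ref{lm:ellcacciolp}\ref{negmorest} with $\ell=h-2$, $p'=\frac{2h}{h+1}$, $k=h-1$, $q_0=2$ (Lorentz index $s_0=\nf$), $s=2$. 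Its admissible range is $\al<\frac n{p'}+\ell=(h+1)+(h-2)=n-1$, which matches the hypothesis $\al\in(0,n-1)$. This yields
\[
\|D\bu\|_{W^{2-h,(\frac{2h}{h+1},2)}(B_r)}\le C\big(\varepsilon_{\bP}^2+r^\al\|\bu\|_{\msc N_{n,2}(B_1)}\big)\le C\big(\varepsilon_{\bP}^2+r^\al\varepsilon_{\bP}\big).
\]

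Finally, I would apply $\pi_N\delta(\cdot\res d\bP)$ on $B_r$ and use~\eqref{prolorgen}, keeping the scale-homogeneous norms~\eqref{eq:uniSobnm} so that the multiplier constants are uniform in $r$. This gives~\eqref{est:lapHL432}, and in particular $\vec K\in\msc E_{n,\loc}$.

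The main obstacle is turning the first-order identity~\eqref{eq:d*du=d*R3+}, which involves only $d\bu$, into a genuinely elliptic equation for $\bu$ in divergence form. Doing so requires control of $d*_g\bu$ in $W^{1-h,(\frac{2h}{h+1},2)}$ with a $\varepsilon_{\bP}^2$ bound; the principal terms there must cancel through the structure of~\eqref{sys-LRS}, as in the four-dimensional argument. I would try first to verify that $d*_g\bu$ reduces via Lemma~\ref{prop:III2ber} and~\eqref{sys-LRS} to $\msc E_n$ remainders. Failing that, I would use the Hodge splitting above, checking that the exact part drops out after $\pi_N\delta(\cdot\res d\bP)$ modulo acceptable terms.
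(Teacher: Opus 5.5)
Your analytic chain is the paper's. Proposition~\ref{sysestLSR} and Corollary~\ref{co:sysvecu} give $\|\vec{\mca R}_3\|_{\msc E_n}+\|\vec{\mca R}_4\|_{\msc E_n}\le C\vae_{\bP}^2$ and $\|\bu\|_{\msc N_{n,2}}\le C\vae_{\bP}$. You then apply Lemma~\ref{lm:ellcacciolp}~\ref{negmorest} with $k=h-1$, $\ell=h-2$, $q_0=2$, $s_0=\nf$, $p'=\frac{2h}{h+1}$, $s=2$, which gives exactly the range $\al<n-1$. Finally you read off $\vec K$ from \eqref{eq:ndotd*uresdp}. However, the step you call the main obstacle comes from misreading what kind of object $\bu$ is. As written, your proposal leaves that step open with two workarounds, and neither makes sense here.

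In \eqref{defuetabul}, $\bu$ is not a $\bwe^2\R^m\ot\bwe^1\R^n$-valued $1$-form; it is a $\bwe^2\R^m$-valued \emph{function}. The operations $\ovs{\sbul}{\res}_g$ and $\resg$ fully contract the $2$-form factors of $\vet$ and $\bR$ (resp.\ $S$), and $\wres$ fully contracts the $1$-form factors of $\bX$ and $d\bP$. This matches \eqref{d*dsysRSX}, where $d\bu$ is added to the codifferential of a $2$-form, and the fact that $\bu\res d\bP$ is an $\R^m$-valued $1$-form.

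Consequently there is no gauge to fix. For a $0$-form, $\delta\bu=0$ and $d*_g\bu=0$ hold trivially, and a splitting $\bu=d\phi+\psi$ is meaningless. More importantly, \eqref{eq:d*du=d*R3+} is already a second-order divergence-form equation, componentwise
\[
\p_i\big(a^{ij}\p_j\bu\big)=\p_i\big(a^{ij}(\vec{\mca R}_3)_j\big),\qquad a^{ij}=(\det g)^{1/2}g^{ij}.
\]
This equation is exact and has no commutator terms. By \eqref{prolorgen} its right-hand side has $W^{1-h,(\frac{2h}{h+1},2)}$-norm at most $C\|\vec{\mca R}_3\|_{\msc E_n}\le C\vae_{\bP}^2$, and this exact form is what the paper feeds into Lemma~\ref{lm:ellcacciolp}. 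Do not rewrite it as $\delta d\bu=\delta\vec{\mca R}_3$: multiplying by $(\det g)^{-1/2}$ would land at the endpoint case of \eqref{eq:fTproinene}, as noted in the footnote to Corollary~\ref{co:sysvecu}.

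Your last step also needs one addition, since $D\bu\in\msc E_n(B_r)$ alone does not suffice. Expanding the codifferential gives
\[
\delta(\bu\res d\bP)=-g^{jk}(\p_j\bu)\res\p_k\bP-n\,\bu\res\bH.
\]
The second term must be bounded via \eqref{ineproENn} by $C\vae_{\bP}(B_r)\|\bu\|_{\msc N_{n,2}(B_r)}\le C\vae_{\bP}^2$, using \eqref{equivSobnorm} to get $\vae_{\bP}(B_r)\le C\vae_{\bP}$; this is the paper's \eqref{estndd*ures}. With these corrections your argument coincides with the paper's proof.
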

\begin{proof}
Assume~\eqref{smaIIggvae} holds with $\vae_0\in(0,1)$ to be determined below.
As in~\eqref{defuetabul}, set
\begin{align}\label{defuetabul2}
 \vec u\coloneq\vet\mkt\ovs{\sbul}{\res}_g\bR+\vet\mkt\resg S
       -(3n-6)\bX\wres d\bP,
 \qquad \bX=d\big(\lap_g^{h-1}\bP\big).
\end{align}
By Corollary~\ref{co:sysvecu} and~\eqref{sys-LRS}, we have the following identities in $\mca D'(B^n)$:
\begin{align}
 d*_gd\vec u&=d*_g\vec{\mca R}_3,
       \label{eq:d*du=d*R3+2}\\[0.3ex]
 \pi_N\delta(\vec u\mkt\res d\bP)
       &=(2n-6)\lap_g^h\bP+\vec{\mca R}_4.
       \label{eq:ndotd*uresdp2}
\end{align}
Moreover, combining the estimates~\eqref{estvecu},~\eqref{remtotptbd},~\eqref{estLSR}, and~\eqref{ptbdvts2} gives
\begin{align}\label{estRL432}
 \|\vec{\mca R}_3\|_{\msc E_n(B^n)}
       +\|\vec{\mca R}_4\|_{\msc E_n(B^n)}
       &\le C(n,\La,\bc)\mko\varepsilon_{\bP}^{\,2},\\
 \|\vec u\|_{\msc N_{n,2}(B^n)}
       &\le C(n,\La,\bc)\mko\varepsilon_{\bP}.
       \label{u2nfbd}
\end{align}
Set $a^{ij}\coloneq(\det g)^{1/2}g^{ij}$. In coordinates, the equation~\eqref{eq:d*du=d*R3+2} reads
\begin{equation}\label{eq:d*ducoor}
 \p_i(a^{ij}\p_j\vec u)
       =\p_i\big(a^{ij}(\vec{\mca R}_3)_j\big).
\end{equation}
The product inequality~\eqref{prolorgen} implies
\[
 \big\|\p_i\big(a^{ij}(\vec{\mca R}_3)_j\big)\big\|
       _{W^{1-h,\lf(\frac{2h}{h+1},2\rg)}(B^n)}
 \le C(n,\La)\|\vec{\mca R}_3\|_{\msc E_n(B^n)}.
\]
We apply~\eqref{eq:ellcaccioneggrad} in Lemma~\ref{lm:ellcacciolp} to~\eqref{eq:d*ducoor}, with
\[
 k=h-1,\qquad \ell=h-2,\qquad q_0=2,\qquad s_0=\nf,
 \qquad p'=\frac{2h}{h+1},\qquad s=2.
\]
These exponents are admissible, since
\[
 \frac n{n-k}=\frac{2h}{h+1}<2,
 \qquad p=\frac{2h}{h-1}<\frac n\ell,
 \qquad \frac n{p'}+\ell=n-1.
\]
Also, we have $\|Da^{ij}\|_{W^{h-2,n/(h-1)}(B^n)}\le C(n,\La)\mko\vae_{\bP}$.
Thus, after decreasing $\vae_0$ if necessary, for any $\al\in (0,n-1)$ and $r\in(0,\frac 12]$, the lemma gives
\begin{align}\label{guL432est}\begin{aligned}
    \|D\vec u\|_{\msc E_n(B_r)}
    &\le C(n,\La,\al)\Big(
       \|\vec{\mca R}_3\|_{\msc E_n(B^n)}
       +r^\al\|\vec u\|_{\msc N_{n,2}(B^n)}\Big)\\
 &\le C(n,\La,\bc,\al)
       \big(\varepsilon_{\bP}+r^\al\big)\mko\varepsilon_{\bP}.
       \end{aligned}
\end{align}

Expanding the codifferential and applying the inequality~\eqref{ineproENn}, we obtain
\begin{align}\label{estndd*ures}
 \big\|\pi_N\delta(\vec u\mkt\res d\bP)\big\|_{\msc E_n(B_r)}
 \le C(n,\La)\Big(
 &\|D\vec u\|_{\msc E_n(B_r)}+\varepsilon_{\bP}(B_r)
       \|\vec u\|_{\msc N_{n,2}(B_r)}\Big).
\end{align}
By~\eqref{equivSobnorm}, we have $\varepsilon_{\bP}(B_r)\le C(n)\mko\varepsilon_{\bP}$. Together with~\eqref{u2nfbd} and~\eqref{guL432est}, this implies
\begin{align}\label{estndeldpep}
    \big\|\pi_N\delta(\vec u\mkt\res d\bP)\big\|_{\msc E_n(B_r)}
 \le C(n,\La,\bc,\al)\big(\vae_{\bP}+r^\al\big)\mko\vae_{\bP}.
\end{align}
Combining~\eqref{eq:ndotd*uresdp2},~\eqref{estRL432}, and~\eqref{estndeldpep} proves~\eqref{est:lapHL432}.
The claim $\vec K\in\msc E_{n,\loc}(B^n,\R^m)$ follows by restricting to sufficiently small balls and rescaling.
\end{proof}

In the following, we will prove $\bH\in W^{h,\lf(\frac{2h}{h+1},2\rg)}_{\loc}(B^n,\R^m)$ from the fact that $\vec K=\lap_g^h\bP\in\msc E_{n,\loc}(B^n,\R^m)$. Once this is established, we will carry out an iteration for the following critical norms
\begin{align}\label{defN12r}
 \mca N_1(r)&\coloneq
       \|\bH\|_{W^{h,\lf(\frac{2h}{h+1},2\rg)}(B_r)},\qquad \mca N_2(r)\coloneq \vae_{\bP}(B_r).
\end{align}

\begin{Co}\label{morestHgn}
Suppose $\bP\in\mathcal I_{h-1,2}(B^n,\R^m)$ is a weak critical point of
$E_{\bc}$ satisfying~\eqref{weaimmconbP}, and that the coordinates are
$g_{\bP}$-harmonic. Then we have $\bP\in W^{h+2,\lf(\frac{2h}{h+1},2\rg)}_{\loc}(B^n)$. Moreover, for any $\beta\in(0,1)$, there exists
$\vae_1=\vae_1(n,\La,\bc,\beta)>0$ such that the following holds. Assume
\begin{align}\label{smaIIggvae1}
\mca N_2(1)\le\vae_1.
\end{align}
Then for all $r\in(0,\frac 14]$, we have
\begin{align}\label{HgHmornor}
 \mca N_1(r)+\mca N_2(r)
       \le C(n,\La,\bc,\beta)\mko r^\beta\mca N_2(1).
\end{align}
\end{Co}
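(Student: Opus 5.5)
Starting from Theorem~\ref{th:lapHL432}, I would transfer the Morrey bound on $\vec K=\lap_g^h\bP$ down to $\lap_g\bP=-n\bH$ using the Morrey-type estimates for $\lap_g^{h-1}$. In $g$-harmonic coordinates $\lap_g$ acting on functions is $-(\det g)^{-1/2}\p_i(a^{ij}\p_j\,\cdot)$ with $a^{ij}=(\det g)^{1/2}g^{ij}$, and these coefficients lie in $L^\nf\cap W^{h,2}$ with $\|Da^{ij}\|_{W^{h-1,2}}\le C(n,\La)\,\mca N_2(1)$. Writing $\vec K=\lap_g^{h-1}(-n\bH)$, I would peel off one Laplacian at a time. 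Set $\vec K_j\coloneq\lap_g^{h-j}\bP$, so that $\vec K_h=\vec K$ and $\vec K_1=-n\bH$. Each step reads $\p_i(a^{ij}\p_j\vec K_{j})=-(\det g)^{1/2}\vec K_{j+1}$, and multiplication by $(\det g)^{1/2}$ is bounded by \eqref{prolorgen}. Applying Lemma~\ref{lm:ellcacciolp} \ref{negmorest} while $j+1$ corresponds to a negative order, and part \ref{posmorest} once the order becomes nonnegative, each step gains two derivatives in the $(\frac{2h}{h+1},2)$ Lorentz scale. The decay terms $r^\al\|\vec K_j\|$ in weak spaces are of size $r^\al\mca N_2(1)$ by \eqref{nabbPspa}. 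After $h-1$ steps this gives
\[
\|\bH\|_{W^{h,(\frac{2h}{h+1},2)}(B_{r})}\le C\big(\|\vec K\|_{\msc E_n(B_{2r})}+r^{\al}\mca N_2(1)\big)\le C\big(\mca N_2(1)+r^\al\big)\mca N_2(1),
\]
after shrinking balls by fixed factors (covering/rescaling) and choosing $\al$ close to $n-1$. The first claim, $\bP\in W^{h+2,(\frac{2h}{h+1},2)}_{\loc}$, then follows from the harmonic coordinate identity $\lap_g\bP=-g^{ij}\p_i\p_j\bP$, hence $g^{ij}\p_i\p_j\bP=n\bH$. Differentiating once gives a divergence-form equation for $\p_\mu\bP$ whose right-hand side is $n\p_\mu\bH$ minus commutator terms $\p_\mu g^{ij}\p_{ij}\bP$. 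Lemma~\ref{lm:ellcacciolp} \ref{posmorest} with $\ell=h$, $p=\frac{2h}{h+1}$ (admissible, since $p<n/h=2$), using the smallness of $Dg$, yields the regularity and also
\[
\mca N_2(r)\le C\big(\mca N_1(2r)+\|D g\,D^2\bP\|_{W^{h,(\frac{2h}{h+1},2)}(B_{2r})}+r^{\al'}\mca N_2(1)\big).
\]
This step needs to be organized by bootstrap, since a priori only $\bP\in W^{h+1,2}$ is known and qualitative finiteness must come first.

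The main obstacle is closing the iteration. The nonlinear term $Dg\,D^2\bP$ must be bounded by $C\,\mca N_2(2r)\,\mca N_2(2r)$ in a space that controls $\mca N_2$ via Sobolev--Lorentz embedding, because $W^{h+1,(\frac{2h}{h+1},2)}\hookrightarrow W^{h-1,2}$ scale-invariantly. Corollary~\ref{cor:multiproinene}, \eqref{eq:multicritgain} applies, with $g$ a quadratic expression in $D\bP$. The resulting recursion would be
\[
\mca N_1(r)+\mca N_2(r)\le C\big(\mca N_2(\rho)+(r/\rho)^{\al}\big)\mca N_2(\rho)
\]
for $r\le\rho/4$, obtained by applying the whole chain on $B_\rho$ after rescaling (all norms are scale-homogeneous, and Theorem~\ref{th:lapHL432} together with the lemmas above hold uniformly at each scale under smallness). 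Choosing $\tau$ with $C\tau^\al\le\frac12\tau^\beta$ and $\vae_1$ with $C\vae_1\le\frac12\tau^\beta$ gives $\mca N_2(\tau\rho)\le\tau^\beta\mca N_2(\rho)$. This requires $\al>\beta$, which is possible since $\al<n-1$ is arbitrary and $\beta<1$; it also requires the smallness to persist, which it does because $\mca N_2$ decreases. Iterating and interpolating between dyadic scales gives \eqref{HgHmornor} for $\mca N_2$, and feeding this back into the bound for $\mca N_1$ gives the bound for $\mca N_1$ as well. I expect the delicate points to be the exponent bookkeeping in the $h-1$ negative-order steps, since one must avoid the excluded endpoint noted in the introduction, and making sure the decay term always involves only $\mca N_2(\rho)$ rather than a nonsmall quantity.
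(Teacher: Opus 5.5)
You follow the paper's overall route: Theorem~\ref{th:lapHL432}, then $\bH$ from $\lap_g^{h-1}\bH=-\vec K/n$, then $D^2\bP$ from the differentiated harmonic-coordinate equation, then a decay iteration. The qualitative claim $\bP\in W^{h+2,(\frac{2h}{h+1},2)}_{\loc}$ follows directly from Lemma~\ref{lm:ellcacciolp}, which is itself qualitative, so no separate bootstrap is needed.

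The gap is in closing the iteration for \emph{every} $\beta\in(0,1)$. It has two sources.
\begin{enumerate}
\item Your intermediate bounds mix scales in a way Lemma~\ref{lm:ellcacciolp} does not allow. In \eqref{eq:ellcacciop1}, \eqref{eq:ellcacciopg}, \eqref{eq:ellcaccioneggrad} the source and the decaying term refer to the same outer ball. After rescaling you therefore only get $\mca N_1(r)\le C\big(\|\vec K\|_{\msc E_n(B_\rho)}+(r/\rho)^{\gamma_1}\mca N_2(\rho)\big)$ and $\mca N_2(r)\le C\big(\mca N_1(\rho)+(\vae_1+(r/\rho)^{\beta_0})\mca N_2(\rho)\big)$. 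A bound with the source on $B_{2r}$ and decay $r^\al\mca N_2(1)$ is not available: localizing the source kills the decay, and on the unit ball $\|\vec K\|_{\msc E_n}$ is only $O(\mca N_2(1))$.
\item The exponents are not ``arbitrary $<n-1$''. For the $\p_k\bP$ equation ($\ell=h$, $p=\frac{2h}{h+1}$) the lemma requires $\beta_0<\frac np-\ell=1$. In your Laplacian chain the last step ($\ell=h-1$) caps the decay at $2$. Composing the $h-1$ Morrey steps over nested balls gives at most $(\sum_j\al_j^{-1})^{-1}$; for $n=10$ the caps $8,6,4,2$ give less than $\frac{24}{25}$.
\end{enumerate}

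Consequently, a one-step recursion $\mca N_1(r)+\mca N_2(r)\le C\big(\vae_1+(r/\rho)^{\gamma}\big)\mca N_2(\rho)$ can only come from composing the three estimates over radii $r<\rho_2<\rho_1<\rho$. That forces $\gamma^{-1}=\al_K^{-1}+\gamma_1^{-1}+\beta_0^{-1}$. With the exponents the lemma provides ($\beta_0<1$, $\gamma_1<2$) this gives $\gamma<\frac23$, so your $\tau$-iteration proves \eqref{HgHmornor} only for $\beta$ below that threshold.

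The missing idea is the paper's multi-scale induction:
\begin{enumerate}
\item Always bound $\|\vec K\|_{\msc E_n(B_{r^{\nu-s_0}})}\le C(\vae_1+r^{\nu-s_0})\mca N_2(1)$ relative to the \emph{unit} ball.
\item Apply the $\bH$-estimate with inner/outer ratio $r^{s_0-1}$, and the $D^2\bP$-estimate with ratio $r$.
\item Prove by induction on $\nu$ that $\mca N_2(r^\nu)+\mca N_1(r^{\nu-1})\le C(\vae_1+r^{(\nu-s_0)\beta_0})\mca N_2(1)$, where $\beta_0<\frac{s_0-1}{s_0}$ absorbs the term $r^{s_0-1}\mca N_2(r^{\nu-s_0})$.
\item Amortize the fixed loss of $s_0$ scales by taking $s\gg s_0$, so that $\beta_0\frac{s-s_0}{s}>\beta$.
\item Fix $r_1=r_0^s$, shrink $\vae_1$, and iterate $\mca N_2(r_1)\le r_1^\beta\mca N_2(1)$ by scaling.
\end{enumerate}
A finite bootstrap of the decay exponent would also work. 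Either way the $\bH$-step exponent must exceed $\beta$, so it has to be at least $1$ to reach all $\beta<1$. The paper obtains this by solving globally for a particular solution and estimating the $\lap_g^{h-1}$-harmonic remainder in $W^{h,q}$, $q>2$, which gives decay $r$. Your chain should be replaced by that splitting.
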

\begin{proof}
In view of~\eqref{equivSobnorm}, we choose $\vae_1$ sufficiently small such that the smallness assumptions needed below hold for any ball contained in $B^n$.

Thereom~\ref{th:lapHL432} implies that $\vec K\in\msc E_{n}(B_{1/2},\R^m)$. Using the rescaling in~\eqref{phidil}--\eqref{Dphinormscal}, we normalize $B_{1/2}$ to $B^n$, and thus assume without loss of generality that $\vec K\in \msc E_{n}(B^n,\R^m)$. Since $\lap_g\bP=-n\bH$, we have
\begin{equation}\label{laphphi=K}
    \lap_g^{h-1}\bH=-\frac1n\vec K.
\end{equation}
By~\eqref{eq:ffproinepo} and~\eqref{smaIIggvae1}, we obtain
\begin{equation}\label{estHfrombP}
    \|\bH\|_{W^{h-1,2}(B^n)}\le C(n,\La)\|D^2\bP\|_{W^{h-1,2}(B^n)}\le C(n,\La)\mko \mca N_2(1).
\end{equation}
To further estimate $\bH$ through~\eqref{laphphi=K}, we argue as in the proof
of Lemma~\ref{lm:ellcacciolp}. Set
$a^{ij}\coloneq(\det g)^{1/2}g^{ij}$ and
$F_{h-1}\coloneq-\vec K/n$. For $\nu=h-2,\ldots,0$, we successively apply Lemma~\ref{lm:exisol} and interpolation to construct solutions of
\begin{equation}\label{eq:iteratedparticularV}
\begin{dcases}
    \p_i(a^{ij}\p_jF_\nu)=-(\det g)^{1/2}F_{\nu+1}
    \qquad\text{in }\mca D'(B^n,\R^m),\\
    F_\nu\in W^{h-2\nu,\lf(\frac{2h}{h+1},2\rg)}(B^n,\R^m).
\end{dcases}
\end{equation}
Indeed, by Lemma~\ref{lm:proinene}, multiplication by
$(\det g)^{1/2}$ is bounded on $W^{h-2\nu-2,\lf(\frac{2h}{h+1},2\rg)}$ for $0\le \nu\le h-2$.
When $2\nu\ge h$, we apply~\eqref{estST}
with $k=2\nu+1-h$ and $p=2h/(h-1)$; otherwise, we apply~\eqref{estuf} with
$k=h-2\nu-1$ and $p=2h/(h+1)$. The resulting estimates give, by induction,
\begin{equation}\label{estHFK}
    \|F_\nu\|_{W^{h-2\nu,\lf(\frac{2h}{h+1},2\rg)}(B^n)}
    \le C(n,\La)\mko\|\vec K\|_{\msc E_n(B^n)},
    \qquad 0\le \nu\le h-1.
\end{equation}
Set $\bH_0\coloneq F_0$ and $\bH_1\coloneq\bH-\bH_0$. Then we have $\lap_g^{h-1}\bH_0=-\vec K/n$ and $\lap_g^{h-1}\bH_1=0$. By~\eqref{estHFK},~\eqref{estHfrombP}, and Sobolev--Lorentz embedding, we obtain
\begin{align}\label{estbH1}
\begin{aligned}
    \|\bH_1\|_{W^{h-1,2}(B^n)}&\le C(n,\La) \big(\|\bH_0\|_{W^{h-1,2}(B^n)}+\|\bH\|_{W^{h-1,2}(B^n)} \big)\\
    &\le C(n,\La) \big(\|\vec K\|_{\msc E_n(B^n)}+\mca N_2(1) \big).
    \end{aligned}
\end{align}
We now refine the interior estimate for the homogeneous solution. Set $Z_\nu=\lap_g^\nu\bH_1$ for $0\le \nu\le h-1$. 
Then we have $Z_{h-1}=0$. By Lemma~\ref{lm:proinene} and induction, we estimate
\begin{equation}\label{rouesthomsol}
 \sum_{\nu=0}^{h-2}\|Z_\nu\|_{W^{h-1-2\nu,2}(B^n)}
       \le C(n,\La)\|\bH_1\|_{W^{h-1,2}(B^n)}.
\end{equation}
Fix $2<q<2h/(h-1)$. For $\nu=h-2,\ldots,0$, we successively estimate $Z_\nu$ from the equation
\begin{equation}\label{eq:lapZj=Zj+1}
     \p_i(a^{ij}\p_jZ_\nu)=-(\det g)^{1/2}Z_{\nu+1}.
\end{equation}  
By Lemma~\ref{lm:proinene}, multiplication by $(\det g)^{1/2}$ is bounded on $W^{h-2\nu-2,q}$ for $0\le \nu\le h-2$. When $ \frac{h-1}2<\nu\le h-2$, since $h-1-2\nu \ge 3-h$, the estimate~\eqref{rouesthomsol} implies $Z_\nu\in W^{3-h,2}$. 
Then we apply~\eqref{eq:ellcaccioneggrad} with \[
    k=h-2,\qquad \ell=2\nu+1-h,\qquad p'=s=q,\qquad q_0=s_0=2.
\] 
In this case, since $n/(2\nu+1-h)>2>q'$ and $2>2h/(h+2)$, the exponents are admissible. By induction, it follows that $Z_\nu\in W^{h-2\nu,q}_{\loc}(B^n)$ for $\frac{h-1}2<\nu\le h-2$.
When $2\nu+1\le h$, the estimate~\eqref{rouesthomsol} gives $Z_\nu\in L^2\hookrightarrow W^{2-h,2}$. We apply~\eqref{eq:ellcacciop1} with
\[
    k=h-1,\qquad \ell=h-2\nu-1,\qquad p=s=q,\qquad q_0=s_0=2.
\]
The exponents are admissible since $n/(h-2\nu-1)\ge2h/(h-1)>q$. The resulting estimates on nested balls together with~\eqref{rouesthomsol} give, by induction, 
\begin{equation}\label{estZnuWh-2}
     \|Z_\nu\|_{W^{h-2\nu,q}(B_{1/2})}\le C(n,q,\La) \|\bH_1\|_{W^{h-1,2}(B^n)},\qquad 0\le \nu\le h-2.
  \end{equation}
  In particular, since $Z_0=\bH_1$ and $q>2$, Sobolev embedding then implies $\bH_1\in W^{h,q}(B_{1/2})\hookrightarrow C^0(\ov{B_{1/2}})$. By Sobolev embeddings and
H\"older's inequality, for all $0\le j \le h$ and $r\in(0,\frac 12]$, we have
\[
    \|D^{j}\bH_1\|_{L^{\frac n{1+j},2}(B_r)}\le  C(n)\mkt  r\mko\|D^{j}\bH_1\|_{L^{\frac nj}(B_r)}\le C(n,q)\mkt  r\mko \|\bH_1\|_{W^{h,q}(B_{1/2})}.
\]
Here we set $\frac nj\coloneq \nf$ when $j=0$. By~\eqref{equivSobnorm} and~\eqref{estZnuWh-2}, it follows that
\begin{equation}\label{eq:H1Wh2hh+1est}
    \|\bH_1\|_{W^{h,\lf(\frac{2h}{h+1},2\rg)}(B_r)}
    \le C(n,q)\mkt r\mko\|\bH_1\|_{W^{h,q}(B_{1/2})}
    \le C(n,\La,q)\mkt r\mko\|\bH_1\|_{W^{h-1,2}(B^n)}.
\end{equation}
Combining~\eqref{eq:H1Wh2hh+1est},~\eqref{estHFK} (with $\nu=0$), and~\eqref{estbH1} with the
embedding $W^{h,\lf(\frac{2h}{h+1},2\rg)}\hookrightarrow W^{h-1,2}$, we obtain that $\bH\in W^{h,\lf(\frac{2h}{h+1},2\rg)}_{\loc}(B^n,\R^m)$ and for all $r\in (0,\frac 12]$,
\begin{align}\label{HgHlapH}
 \mca N_1(r) \le C(n,\La)\Big(
       \|\vec K\|_{\msc E_n(B^n)}+r\mkt\mca N_2(1)\Big).
\end{align}
For the estimate of $\mca N_2$, the harmonic coordinate condition is
\begin{equation}\label{harcoreq}
 \forall\,1\le j\le n,\qquad \p_i\mko a^{ij}=0.
\end{equation}
It follows that
\begin{align}\label{ellgraf=H}
 n\bH=-\lap_g\bP=g^{ij}\p_i\p_j\bP.
\end{align}
We differentiate~\eqref{ellgraf=H} once and set $\vec v_k\coloneq \p_k\bP$. Rewriting the resulting equation in divergence form, we obtain
\begin{numcases}{}
 \p_i\big(g^{ij}\p_j\vec v_k\big)=n\mkt\p_k\bH+\vec B_k,\label{eq:DPhidivV}\\[0.3ex]
 \vec B_k\coloneq
       (\p_i g^{ij})\p_j\vec v_k
       -(\p_k g^{ij})\p_i\p_j\bP.
       \label{eq:BkV}
    \end{numcases}
By Lemma~\ref{lm:proinene} and~\eqref{smaIIggvae1}, we have
\begin{align}\label{eq:BkcriticalV}\begin{aligned}
 \sum_k\|\vec B_k\|_{W^{h-1,(\frac {2h}{h+1},2)}(B^n)}
 &\le C(n,\La)\|Dg\|_{W^{h-1,2}(B^n)}
                  \|D^2\bP\|_{W^{h-1,2}(B^n)}\\
 &\le C(n,\La)\mko \vae_{\bP}^2\le  C(n,\La)\mko\vae_1\mko\mca N_2(1).
 \end{aligned}
\end{align}
Now we apply~\eqref{eq:ellcacciopg} in Lemma~\ref{lm:ellcacciolp} to each
$\vec v_k$, with 
\[
    k=\ell=h,\qquad  p=\frac {2h}{h+1},\qquad s=s_0=2, \qquad  q_0=\frac {2h}{h-1}. 
\]
Fix $\beta_0\in (\beta,1)$, depending only on $\beta$. Since $2h/(h+1)<2=n/h$ and $2h/(h-1)>2=n/(n-h)$, the exponents are admissible. 
Since $\bv_k\in L^\nf\hookrightarrow W^{1-h,(\frac{2h}{h-1},2)}$  and $\beta_0<h+1-h=1$, the resulting estimate together with the embedding $W^{h,\lf(\frac{2h}{h+1},2\rg)}\hookrightarrow W^{h-1,2}$ implies that $\bP\in W^{h+2,\lf(\frac{2h}{h+1},2\rg)}_{\loc}$ and for all $r\in (0,\frac 12]$,
\begin{align}\label{bdN2N1}
 \mca N_2(r)\le C(n)\|D^2\bP\|_{W^{h,\lf(\frac{2h}{h+1},2\rg)}(B_r)}\le C(n,\La,\beta)\Big(
       \mca N_1(1)+(\vae_1+r^{\beta_0})\mca N_2(1)\Big).
\end{align}
By Theorem~\ref{th:lapHL432} (with $\al=1$) and
\eqref{smaIIggvae1}, under the condition $\vae_1\le \vae_0<1$, we also obtain for all $r\in (0,\frac 12]$ that
\begin{align}\label{bdlapHN2}
 \|\vec K\|_{\msc E_n(B_r)}
       \le C(n,\La,\bc)(\vae_1+r)\mko\mca N_2(1).
\end{align}
Under the dilation~\eqref{phidil}, we have
\begin{equation}\label{dilgnH}
 \bH_{\bP_\rho}(x)=\rho\mko \bH(\rho\mko x),\qquad
 \lap_{g_{\bP_\rho}}^h\bP_\rho(x)=\rho^{n-1}\vec K(\rho x).
\end{equation}
Consequently, all the three norms $\mca N_1$, $\mca N_2$, and
$\|\vec K\|_{\msc E_n}$ are scale-invariant:
\begin{equation}\label{gggndilinv}
 \mca N_{i,\bP_\rho}(t)=\mca N_i(\rho t)\quad(i=1,2),\qquad
 \big\|\lap_{g_{\bP_\rho}}^h\bP_\rho\big\|_{\msc E_n(B_t)}
       =\|\vec K\|_{\msc E_n(B_{\rho t})}.
\end{equation}
By normalizing an arbitrary interior ball to $B^n$, we assume without loss of generality that all the three norms above are finite.
Combining~\eqref{HgHlapH},~\eqref{bdN2N1}, and~\eqref{bdlapHN2} with the rescaling, for all $0<2r\le\rho\le1$ we obtain
\begin{subnumcases}{}
 \|\vec K\|_{\msc E_n(B_r)}
       \le C(n,\La,\bc)(\vae_1+r/\rho)\mca N_2(\rho),
       \label{lapHN2}\\[0.5ex]
 \mca N_1(r)\le C(n,\La)\Big(
       \|\vec K\|_{\msc E_n(B_\rho)}
       +(r/\rho)\mca N_2(\rho)\Big),
       \label{N1lapHN1}\\[0.5ex]
 \mca N_2(r)\le C(n,\La,\beta)\Big(
       \mca N_1(\rho)
       +(\vae_1+(r/\rho)^{\beta_0})\mca N_2(\rho)\Big).
       \label{N2N1+N2}
\end{subnumcases}
Now we fix integers $s_0\ge2$ and $s>s_0$, depending only on $\beta$, such that
\begin{equation}\label{condss0}
    \beta_0<\frac{s_0-1}{s_0},\qquad
    \beta<\beta_0\mko\frac{s-s_0}{s}.
\end{equation}
By~\eqref{equivSobnorm}, for all $r\le 1$, we have
\begin{equation}\label{N2normcomprad}
    \mca N_2(r)\le C(n)\mca N_2(1).
\end{equation}
Let $0<r\le \frac 12$. For $s_0+1\le\nu\le s$, applying~\eqref{N1lapHN1} with inner radius $r^{\nu-1}$ and outer radius $r^{\nu-s_0}$, and then
\eqref{lapHN2} with outer radius one, we obtain
\begin{align}\begin{aligned}\label{eq:appliabN}
    \mca N_1(r^{\nu-1})
    &\le C(n,\La)\Big(
       \|\vec K\|_{\msc E_n(B_{r^{\nu-s_0}})}
       +r^{s_0-1}\mca N_2(r^{\nu-s_0})\Big)\\
    &\le C(n,\La,\bc)\Big(
       (\vae_1+r^{\nu-s_0})\mca N_2(1)
       +r^{s_0-1}\mca N_2(r^{\nu-s_0})\Big).
       \end{aligned}
\end{align}
In addition, by~\eqref{N2N1+N2}, we have
\begin{equation}\label{eq:N2citera}
    \mca N_2(r^\nu)
    \le C(n,\La,\beta)\Big(
       \mca N_1(r^{\nu-1})
       +(\vae_1+r^{\beta_0})\mca N_2(r^{\nu-1})\Big).
\end{equation}
We prove by induction that for all $s_0+1\le\nu\le s$ and $0<r\le \frac 12$,
\begin{equation}\label{eq:finiteindN12}
    \mca N_2(r^\nu)+\mca N_1(r^{\nu-1})
    \le C(n,\La,\bc,\beta)\big(\vae_1+r^{(\nu-s_0)\beta_0}\big)\mca N_2(1).
\end{equation}
 Since $s_0-1\ge 1>\beta_0$, combining~\eqref{eq:appliabN}--\eqref{eq:N2citera} with~\eqref{N2normcomprad} gives
\begin{align*}
    \mca N_1(r^{s_0})+\mca N_2(r^{s_0+1})
    &\le C(n,\La,\bc,\beta)\big(\vae_1+r+r^{s_0-1}+r^{\beta_0}\big) \mca N_2(1)\\
    &\le C(n,\La,\bc,\beta)\big(\vae_1+r^{\beta_0}\big)\mca N_2(1).
\end{align*}
This proves the case $\nu=s_0+1$. 

Now let $s_0+2\le\nu\le s$, and suppose~\eqref{eq:finiteindN12} holds for each index from $s_0+1$ to $\nu-1$. 
We first estimate the term $r^{s_0-1}\mca N_2(r^{\nu-s_0})$. If $\nu\le2s_0$, then $1\le\nu-s_0\le s_0$. By~\eqref{condss0} and~\eqref{N2normcomprad}, we have
\[
    r^{s_0-1}\mca N_2(r^{\nu-s_0})
    \le C(n)\mko r^{s_0-1}\mca N_2(1)
    \le C(n)\mko r^{s_0\beta_0}\mca N_2(1)\le C(n)\mko r^{(\nu-s_0)\beta_0}\mca N_2(1).
\]
If $\nu>2s_0$, then $s_0+1\le\nu-s_0<\nu$, and the induction hypothesis at index $\nu-s_0$ gives
\begin{align*}
    r^{s_0-1}\mca N_2(r^{\nu-s_0})
    &\le C(n,\La,\bc,\beta)\mko r^{s_0-1}
       \big(\vae_1+r^{(\nu-2s_0)\beta_0}\big)\mca N_2(1)\\
    &\le C(n,\La,\bc,\beta)\big(\vae_1+r^{(\nu-s_0)\beta_0}\big)\mca N_2(1).
\end{align*}
Thus in both cases, we have
\begin{equation}\label{estrv-s0}
    r^{s_0-1}\mca N_2(r^{\nu-s_0})
    \le C(n,\La,\bc,\beta)\big(\vae_1+r^{(\nu-s_0)\beta_0}\big)\mca N_2(1).
\end{equation}
Combining~\eqref{eq:appliabN},~\eqref{eq:N2citera} and~\eqref{estrv-s0} with the induction hypothesis at index $\nu-1$, we obtain~\eqref{eq:finiteindN12} at index $\nu$. This completes the induction.
In particular, there exists $C_0=C_0(n,\La,\bc,\beta)>0$ such that
\begin{align}\label{HgHleer}
    \mca N_2(r^s)+\mca N_1(r^{s-1})
    \le C_0\big(\vae_1+r^{(s-s_0)\beta_0}\big)\mca N_2(1).
\end{align}
By~\eqref{condss0}, we can choose $r_0=r_0(n,\La,\bc,\beta)\in(0,\frac12]$ such that
\begin{align}\label{C0r0albeta}
    C_0r_0^{(s-s_0)\beta_0}\le\frac12r_0^{s\beta}.
\end{align}
Set $r_1\coloneq r_0^s$ and decrease $\vae_1$ such that $\vae_1\le r_1^\beta/(2C_0)$ and all the preceding smallness requirements for $\vae_1$ hold. Then~\eqref{HgHleer}--\eqref{C0r0albeta} give
\begin{align}\label{HgHlerbe}
    \mca N_2(r_1)\le r_1^\beta\mca N_2(1).
\end{align}
By dilation in~\eqref{gggndilinv}, we iterate it on
$B_{r_1},B_{r_1^2},\ldots$ to obtain
\[
    \mca N_2(r_1^\nu)\le r_1^{\nu\beta}\mca N_2(1),
    \qquad \nu\in\N_0.
\]
For $r_1^\nu<r\le r_1^{\nu-1}$ with $\nu\ge1$, by~\eqref{equivSobnorm}, we have
\[
    \mca N_2(r)
    \le C(n)\mca N_2(r_1^{\nu-1})
    \le C(n)\mko r_1^{(\nu-1)\beta}\mca N_2(1)
    \le C(n)r_1^{-\beta}r^\beta\mca N_2(1).
\]
Thus for all $r\in (0,1]$, there holds
\[
    \mca N_2(r)\le C(n,\La,\bc,\beta)r^\beta\mca N_2(1),
    \qquad 0<r\le1.
\]
Finally, for all $r\in (0,\frac 14]$, applying~\eqref{N1lapHN1} and~\eqref{lapHN2} yields
\begin{align*}
    \mca N_1(r)
    &\le C(n,\La)\Big(
       \|\vec K\|_{\msc E_n(B_{2r})}
       +\mca N_2(2r)\Big)\\
    &\le C(n,\La,\bc)\mca N_2(4r)\\
    &\le C(n,\La,\bc,\beta)\mko r^\beta\mca N_2(1).
\end{align*}
This completes the proof.
\end{proof}
\subsection{Proof of Theorem~\ref{th-main}}\label{sec:ndreg}
\
\vskip5pt
We are now ready to prove the main theorem.
\begin{hproof5}
Let $\bP_0\in\mathcal I_{h-1,2}(\Sigma,\R^m)$ be a weak critical point of $E_{\bc}$. 
By~\cite[Thm.~1.2]{MarRiv2}, there are local $g_{\bP_0}$-harmonic coordinates in which the metric belongs to $W^{2,(h,1)}\hookrightarrow C^0$. Let $\vec y$ be a local bi-Lipschitz harmonic coordinate map provided by~\cite[Sec.~5]{MarRiv2}. 
Then~\cite[Sec.~6.1.3]{MarRiv2} gives $\vec y\mko^{-1}\in W^{h+1,2}_{\loc}$ and $\bP_0\circ \vec y\mko^{-1}\in W^{1,\nf}_{\loc}\cap W^{h+1,2}_{\loc}$.

The proof of Lemma~\ref{lm-critic-W^{1,1}} shows that the first variation formula remains valid for compactly supported variations in $W^{1,\nf}\cap W^{h+1,2}$.
Since the integrand of $E_{\bc}$ is invariant under reparametrization, $\bP_0\circ \vec y\mko^{-1}$ is also a weak critical point of $E_{\bc}$ on its domain. 
Thus, it suffices to consider a weak critical point $\bP\in\mathcal I_{h-1,2}(B^n,\R^m)$ in harmonic coordinates, satisfying~\eqref{weaimmconbP}.

Let $\beta\in (0,1)$ be arbitrary. By normalizing a sufficiently small ball, we may assume in addition~\eqref{smaIIggvae1}. By Corollary~\ref{morestHgn} and rescaling, we obtain $\bP\in W^{h+2,\lf(\frac{2h}{h+1},2\rg)}_{\loc}(B^n,\R^m)$ and
\begin{align}\label{morHgHapp}
 \sup_{x\in B_{1/2},\,0<r\le1/4}r^{-\beta}
 \Big( \|\bH\|_{W^{h,\lf(\frac{2h}{h+1},2\rg)}(B_r(x))}
       +\|D^2\bP\|_{W^{h-1,2}(B_r(x))}\Big)<\nf.
\end{align}
Moreover, combining~\eqref{bdN2N1} and~\eqref{morHgHapp} with rescaling yields
\begin{align}\label{eq:PhihigherMorreyV}
\begin{aligned}
    & \sup_{x\in B_{1/2},\,0<r\le 1/8}r^{-\beta}
    \|D^2\bP\|_{W^{h,\lf(\frac{2h}{h+1},2\rg)}(B_{r}(x))}\\
    &\le C(n,\La)\sup_{x\in B_{1/2},\,0<r\le1/8}r^{-\beta}
    \Big( \|\bH\|_{W^{h,\lf(\frac{2h}{h+1},2\rg)}(B_{2r}(x))}
       +\|D^2\bP\|_{W^{h-1,2}(B_{2r}(x))}\Big)\\
       &<\nf.
\end{aligned}
\end{align}
We choose $p_0<2h/(h+1)$ sufficiently close to $2h/(h+1)$. By~\eqref{eq:PhihigherMorreyV} and H\"older's inequality, we obtain
\[
\sup_{x\in B_{1/2},\,0<r\le 1/8}r^{h+1-\frac n{p_0}-\beta}
    \|D^{h+2}\bP\|_{L^{p_0}(B_{r}(x))}
     <\nf.
\]
Applying the Riesz potential estimate~\cite[Thm.~3.1]{Adams75} for $I_1$ then gives
\begin{align}\label{phiW3s0}
 \forall\, 2<q_0<\frac{p_0(h+1-\beta)}{h-\beta},\qquad \bP\in W^{h+1,q_0}(B_{1/4},\R^m).
\end{align}
Fix $q_0$ as in~\eqref{phiW3s0}, and we assume $q_0<2h/(h-1)$. A covering and rescaling then gives $\bP\in W^{h+1,q_0}_{\loc}(B^n,\R^m)$. We now bootstrap the integrability exponent using the Euler--Lagrange equation~\eqref{ELequ}, which we rewrite as
\begin{equation}\label{equicondia}
    d*_gd\vec K
    =-d*_g\sum_{\ga\in I}a_\ga\vec Q_\ga(\bP).
\end{equation}
Here $\vec K=\lap_g^h \bP$, and we recall from Lemma~\ref{lm-critic-W^{1,1}}\ref{expQgabP} that each $\vec Q_\ga(\bP)$ is a partial contraction with one free index of the form
\begin{equation}\label{pconvecQgafin}
    \text{pcontr}\Big(\big(\g^{(\ell_1)}\mko\bII\cdot\g^{(\ell_2)}\mko\bII\big)\ot\cdots\ot\big(\g^{(\ell_{2s-1})}\mko\bII\cdot\g^{(\ell_{2s})}\mko\bII\big)\ot\g^{(\ell_0)}\bP\Big),
\end{equation}
where $s\ge1$ and the non-negative integers $\ell_i$ satisfy
\begin{equation}\label{condskivQfin}
    1\le\ell_0\le n,\qquad
    0\le\ell_i\le n-2\;\;\;(1\le i\le2s),\qquad
    \sum_{i=0}^{2s}(\ell_i+1)=n+2.
\end{equation}
In the following, all the Sobolev spaces are taken over $B^n$. Suppose $\bP\in W^{h+1,q}_{\loc}$ for some $2<q_0\le q\le 2h/(h-1)$, and we choose $p$ such that
\begin{equation}\label{defpave}
    \frac 1p=\frac 1q+\frac 1n+\frac 12\Big(\frac 1q-\frac 12\Big).
\end{equation}
Then we have
\begin{equation}\label{eq:pbootstrapV}
   \frac 1q\le\frac2q-\frac{h-1}{2h}
    <\frac1p<\frac1q+\frac1n<\frac{h+1}{2h}.
\end{equation}
In particular, we have $2h/(h+1)<p<q$. By the inequality~\eqref{eq:multinoex} with $a=0$ and $\si=h$, we have $g_{ij}\in W^{h,q}_{\loc}$. 
Repeatedly applying~\eqref{eq:multinoex} then yields $\det(g_{ij})\in W^{h,q}_{\loc}$. Since $\det(g_{ij})$ admits a uniform positive lower bound, we obtain $(\det g_{ij})^{-1/2}\in W^{h,q}_{\loc}$ and $(g^{ij})_{1\le i,j\le n}\subset W^{h,q}_{\loc}$. 
Let $2\le k\le n+1$ and $1\le i_1,\dots,i_k\le n$. Combining~\eqref{eq:multinoex} with $a=k-1$, $\si=h$, and the expression~\eqref{difcov}--\eqref{congpdif}, we obtain
\begin{equation}\label{itecovWh+1-kq}
   \g_{i_1\dots i_k}\bP\in  W^{h+1-k,q}_{\loc}(B^n,\R^m).
\end{equation}
By~\eqref{eq:multinoex}, it follows that  
\begin{equation}\label{KinW1-hq}
    \vec K\in W^{1-h,q}_{\loc}(B^n,\R^m).
\end{equation} 
To estimate $\vec Q_\ga(\bP)$, we apply~\eqref{eq:multinonexact} with $a=n$ and $\si=h$. By~\eqref{pconvecQgafin}--\eqref{eq:pbootstrapV}, the conditions in Lemma~\ref{lm:multinonexact} are satisfied, and we obtain
\begin{equation}\label{eq:QhigherbootstrapV}
   *_g \sum_{\ga\in I}a_\ga\vec Q_\ga(\bP)
    \in W^{1-h,p}_{\loc}\big(B^n,\R^m\ot\bwe^{n-1}\R^n\big).
\end{equation}
Set $a^{ij}\coloneq(\det g)^{1/2}g^{ij}$. By~\eqref{equicondia} and~\eqref{eq:QhigherbootstrapV}, we have
\begin{equation}\label{lapKinW-hp}
    \p_i(a^{ij}\p_j \vec K)
    \in W^{-h,p}_{\loc}(B^n,\R^m).
\end{equation} 
Since $p>2h/(h+1)$ and $q>n/(n-h)=2$, applying Lemma~\ref{lm:ellcacciolp}~\ref{negmorest} with $k=h$ and $\ell=h-1$ gives
\begin{equation}\label{lapHLs}
    \vec K\in W^{2-h,p}_{\loc}(B^n,\R^m).
\end{equation}

We next argue as in the proofs of~\eqref{HgHlapH} and~\eqref{bdN2N1}, using Lemma~\ref{lm:ellboot} in place of Lemma~\ref{lm:ellcacciolp}, to obtain that $\bP\in W^{h+2,p}_{\loc}$. For $0\le \nu\le h-1$, we denote 
\[
    \ti F_\nu\coloneq \lap_g^\nu \bH.
\]
Then we have 
\begin{equation}\label{tiFh-1inW2-hp}
    \ti F_{h-1}=-\vec K/n\in W^{2-h,p}_{\loc}.
\end{equation}
By~\eqref{itecovWh+1-kq} and~\eqref{eq:multinoex}, for all $0\le \nu\le h-2$, we also obtain
\begin{equation}\label{tiFvinWh-1-2nu}
     \ti F_\nu\in W^{h-1-2\nu,q}_{\loc}(B^n,\R^m).
\end{equation}
From~\eqref{tiFh-1inW2-hp}, we aim to show that $\ti F_\nu\in W^{h-2\nu,p}_{\loc}$ for all $0\le\nu\le h-1$. For $\nu=h-2,\ldots,0$, we successively apply Lemma~\ref{lm:ellboot} to the equation
\[
    \p_i(a^{ij}\p_j\ti F_\nu)=-(\det g)^{1/2}\ti F_{\nu+1}
    \qquad\text{in }\mca D'(B^n,\R^m).
\]
By~\eqref{eq:pbootstrapV}, we have $2h/(h+2)<p<2h/(h-2)$. Since $(\det g)^{1/2}\in L^\nf\cap W^{h,2}(B^n)\hookrightarrow L^\nf\cap W^{h-2,\frac{2h}{h-2}}(B^n)$, Lemma~\ref{lm:proinene} then implies that multiplication by $(\det g)^{1/2}$ is bounded on $W^{h-2\nu-2,p}_{\loc}$ for all $0\le \nu\le h-2$. 
By~\eqref{eq:pbootstrapV} and~\eqref{tiFvinWh-1-2nu}, the hypotheses for applying Lemma~\ref{lm:ellboot} are satisfied for all $0\le \nu\le h-2$. Hence, we obtain by induction that
\[
   \forall\, 0\le \nu\le h-1,\qquad  \ti F_\nu\in W^{h-2\nu,p}_{\loc}(B^n,\R^m).
\]
In particular, it holds that
\begin{equation}\label{HinWhp}
    \bH\in W^{h,p}_{\loc}(B^n,\R^m).
\end{equation}
For $1\le k\le n$, we set $\bv_k\coloneq \p_k\bP$. Under harmonic coordinates, equations~\eqref{eq:DPhidivV}--\eqref{eq:BkV} give
\begin{numcases}{}
 \p_i\big(g^{ij}\p_j\vec v_k\big)=n\mkt\p_k\bH+\vec B_k,\label{finDPhidivV}\\[0.3ex]
 \vec B_k=
       (\p_i g^{ij})\p_j\vec v_k
       -(\p_k g^{ij})\p_i\p_j\bP.
       \label{finBkV}
    \end{numcases} 
Since $Dg^{ij},D^2\bP\in W^{h-1,q}_{\loc}$, the inequality~\eqref{eq:multinonexact} gives $\vec B_k\in W^{h-1,p}_{\loc}$. Applying~\eqref{HinWhp} and Lemma~\ref{lm:ellboot} with $\ell=\si=h$ to the equation~\eqref{finDPhidivV} for $\p_k\bP$ then yields
\begin{equation}\label{phiW4s}
    \bP\in W^{h+2,p}_{\loc}(B^n,\R^m).
\end{equation}
Since $p<q\le 2h/(h-1)<n$, by the Sobolev embedding, we obtain $\bP\in  W^{h+1,\ti q}_{\loc}$ with $1/\ti q=1/p -1/n$. Moreover, since $q\ge q_0>2$, the choice~\eqref{defpave} of $p$ implies
\begin{equation}\label{eq:qbootstrapV}
    \frac 1q-\frac 1{\ti q}=\frac 12 \Big(\frac 12-\frac 1q \Big)\ge \frac 12\Big(\frac 12-\frac 1{q_0} \Big)>0.
\end{equation}
Consequently, starting from~\eqref{phiW3s0} and iterating finitely many times, for some $q_1>2h/(h-1)$, we obtain that 
\begin{equation}\label{phiW3p}
    \bP\in W^{h+1,q_1}_{\loc}(B^n,\R^m).
\end{equation}

Fix such $q_1$. We now prove by induction on $\si$ that $\bP\in W^{\si+1,q_1}_{\loc}$ for all $\si\ge h$. For $\si=h$, this has been proved.
Suppose $\si\ge h$ and $\bP\in W^{\si+1,q_1}_{\loc}$. Since $q_1>2h/(h-1)$, we have 
\begin{equation}\label{relq_1nsi}
    \frac 1{q_1}>\frac 2{q_1}-\frac{\si-1}{n}.
\end{equation}
Using~\eqref{relq_1nsi} in place of~\eqref{eq:pbootstrapV} and arguing as in the derivation of~\eqref{KinW1-hq}--\eqref{eq:QhigherbootstrapV}, we obtain
\begin{equation}\label{lapKinWsi-n}
    \vec K\in W^{\si+1-n,q_1}_{\loc},\qquad *_g
    \sum_{\ga\in I}a_\ga\vec Q_\ga(\bP)
    \in W^{\si+1-n,q_1}_{\loc}.
\end{equation}
Combining~\eqref{equicondia} and~\eqref{relq_1nsi}--\eqref{lapKinWsi-n} with Lemma~\ref{lm:ellboot} then gives $ \vec K\in W^{\si+2-n,q_1}_{\loc}$.
Using Lemma~\ref{lm:multinonexact} in place of Lemma~\ref{lm:proinene} for the product estimates and successively applying Lemma~\ref{lm:ellboot} as in the proof of~\eqref{HinWhp}, we obtain that
\[
    \bH\in W^{\si,q_1}_{\loc}(B^n,\R^m).
\]
Using~\eqref{finDPhidivV}--\eqref{finBkV} together with~\eqref{eq:multinonexact} and Lemma~\ref{lm:ellboot}, it follows that $\bP\in W^{\si+2,q_1}_{\loc}$. This completes the induction, and hence $\bP\in C^\nf_{\loc}(B^n)$.

By~\eqref{pconvecQgafin}--\eqref{condskivQfin}, each $\vec Q_\ga(\bP)$ is an $\R^m\ot \R^n$-valued real-analytic function of $\{D^k\bP\}_{1\le k\le n}$ on the set where $\det g>0$. 
In harmonic coordinates, we have $\lap_g=-g^{ij}\p_i\p_j$ on scalar-valued and $\R^m$-valued functions.
Thus, substituting $\vec K=\lap_g^h\bP$ into equation~\eqref{equicondia}, we obtain an analytic elliptic system of order $n+2$ for $\bP$. Taking the coefficient of $dx^1\we\cdots \we dx^n$, its principal symbol is
\[
    (-1)^h(\det g)^{1/2}(g^{ij}\xi_i\xi_j)^{h+1}\operatorname{Id}_{\R^m}.
\]
By~\cite{Morrey58}, $\bP$ is real-analytic in this harmonic chart, hence the induced metric is also real-analytic.
The transition map to any other $g$-harmonic chart solves a homogeneous elliptic equation with analytic coefficients and is thus real-analytic by~\cite{Morrey58}.
Since the initial chart and its center are arbitrary, $\bP_0$ is real-analytic in any $g_{\bP_0}$-harmonic charts. This completes the proof.
\end{hproof5}


\begin{thebibliography}{99}
\setlength{\itemsep}{0pt} 
\bibitem{Adams75} Adams, David R. \textit{A note on Riesz potentials}. Duke Math. J. {\bf 42} (1975), no.~4, 765--778.

\bibitem{Ahlfors60} Ahlfors, Lars; Bers, Lipman. \textit{Riemann's mapping theorem for variable metrics}. Ann. of Math. (2) {\bf72} (1960), 385–404.

\bibitem{Alexakis12} Alexakis, Spyros. \textit{The decomposition of global conformal invariants}. Annals of Mathematics Studies, Vol. 182. Princeton University Press, Princeton, NJ, 2012. 

\bibitem{AG2024} Allen, Ben F.; Gover, Rod. \textit{Higher Willmore Energies from tractor coupled GJMS operators}. Preprint (2024) \url{https://arxiv.org/abs/2411.01835}.

\bibitem{AmbMan98} Ambrosio, Luigi; Mantegazza, Carlo. \textit{Curvature and distance function from a manifold}. Dedicated to the memory of Fred Almgren. J. Geom. Anal. {\bf8} (1998), no.~5, 723–748.

\bibitem{ArrCapGuv00} Arreaga, G.; Capovilla, R.; Guven, J. \textit{Noether currents for bosonic branes}. Ann. Physics {\bf279} (2000), no.~1, 126–158.

\bibitem{Atiyah73} Atiyah, M.; Bott, R.; Patodi, V. K. \textit{On the heat equation and the index theorem}. Invent. Math. {\bf19} (1973), 279–330.

\bibitem{Bailey94} Bailey, Toby N.; Eastwood, Michael G.; Graham, C. Robin. \textit{Invariant theory for conformal and CR geometry}. Ann. of Math. (2) {\bf139} (1994), no.~3, 491–552.

\bibitem{Behzadan21} Behzadan, A.; Holst, M. \textit{Multiplication in Sobolev spaces, revisited}. Ark. Mat. {\bf59} (2021), no.~2, 275–306.

\bibitem{Bennett88} Bennett, Colin; Sharpley, Robert. \textit{Interpolation of operators}. Pure and Applied Mathematics, Vol. 129. Academic Press, Inc., Boston, MA, 1988.

\bibitem{Ber} Bernard, Yann. \textit{Noether's theorem and the Willmore functional}. Adv. Calc. Var. {\bf 9} (2016), no.~3, 217--234.

\bibitem{Bernard25}Bernard, Yann. \textit{Structural equations for critical points of conformally invariant curvature energies in $4$d}. Preprint (2025), \url{https://arxiv.org/abs/2509.01179}.

\bibitem{BerLanMarRiv26} Bernard, Yann; Lan, Tian; Martino, Dorian; Rivière, Tristan. \textit{The regularity of critical points to scale-invariant curvature energies in dimension 4}. Preprint (2026), \url{https://arxiv.org/abs/2511.01765}.

\bibitem{blitz2023} Blitz, Samuel; Gover, A. Rod; Waldron, Andrew. \textit{Generalized Willmore energies, $Q$-curvatures, extrinsic Paneitz operators, and extrinsic Laplacian powers}, Commun. Contemp. Math. {\bf 26} (2024), no.~5, Paper No.~2350014, 50 pp.

\bibitem{Byun04} Byun, Sun-Sig; Wang, Lihe. \textit{Elliptic equations with BMO coefficients in Reifenberg domains}. Comm. Pure Appl. Math. {\bf57} (2004), no.~10, 1283--1310.

\bibitem{Byun05} Byun, Sun-Sig. \textit{Elliptic equations with BMO coefficients in Lipschitz domains}. Trans. Amer. Math. Soc. {\bf357} (2005), no.~3, 1025--1046.

\bibitem{CafStiViv24} Caffarelli, L.~\'A.; Stinga, P.~R.; Vivas, H.~A. \textit{A PDE approach to the existence and regularity of surfaces of minimum mean curvature variation}. Arch. Ration. Mech. Anal. {\bf 248} (2024), no.~5, Paper No. 70, 17 pp.

\bibitem{CapGuv02}Capovilla, R.; Guven, J. \textit{Stresses in lipid membranes}. J. Phys. A {\bf35} (2002), no.~30, 6233–6247.

\bibitem{Cartan71} Cartan, Henri. \textit{Differential calculus}. Exercises by C. Buttin, F. Rideau and J. L. Verley. Translated from the French. Hermann, Paris; Houghton Mifflin Co., Boston, MA, 1971.

\bibitem{CaseGrahamKuo25} Case, Jeffrey S.; Graham, C. Robin; Kuo, Tzu-Mo. \textit{Extrinsic GJMS operators for submanifolds}. Rev. Mat. Iberoam. {\bf41} (2025), no.~4, 1393--1429.

\bibitem{CaseEtAl26} Case, Jeffrey S.; Khaitan, Ayush; Lin, Yueh-Ju; Tyrrell, Aaron J.; Yuan, Wei. \textit{Local and global conformal invariants of submanifolds}. Preprint (2026), \url{https://arxiv.org/abs/2604.08372}.

\bibitem{Chiarenza93}Chiarenza, Filippo; Frasca, Michele; Longo, Placido. \textit{$W^{2,p}$-solvability of the Dirichlet problem for nondivergence elliptic equations with VMO coefficients}. Trans. Amer. Math. Soc. {\bf336} (1993), no.~2, 841--853.

\bibitem{CLMS93}Coifman, R.; Lions, P.-L.; Meyer, Y.; Semmes, S. \textit{Compensated compactness and Hardy spaces}. J. Math. Pures. Appl. (9) {\bf72} (1993), no.~3, 247--286.

\bibitem{Costa10}Costabel, Martin; McIntosh, Alan. \textit{On Bogovskiĭ and regularized Poincaré integral operators for de Rham complexes on Lipschitz domains}. Math. Z. {\bf265} (2010), no.~2, 297--320.

\bibitem{CruzUribe16}Cruz-Uribe, David; Moen, Kabe; Rodney, Scott. \textit{Regularity results for weak solutions of elliptic PDEs below the natural exponent}. Ann. Mat. Pura Appl. (4) {\bf195} (2016), no.~3, 725–740.

\bibitem{deLongueville19}de Longueville, Frédéric Louis. \textit{Regularität der Lösungen von Systemen ($2m$)-ter Ordnung vom polyharmonischen Typ in kritischer Dimension}. Diss. Universität Duisburg-Essen, 2019.

\bibitem{deLonGas21} de Longueville, Frédéric Louis; Gastel, Andreas. \textit{Conservation laws for even order systems of polyharmonic map type}. Calc. Var. Partial Differential Equations {\bf60} (2021), no.~4, Paper No.~138, 18 pp.

\bibitem{DeserSchwimmer93} Deser, S.; Schwimmer, A. \textit{Geometric classification of conformal anomalies in arbitrary dimensions}. Phys. Lett. B {\bf309} (1993), no.~3--4, 279--284.

\bibitem{dC92} do Carmo, Manfredo Perdigão. \textit{Riemannian geometry}, translated from the second Portuguese edition by Francis Flaherty. Mathematics: Theory \& Applications. Birkh\"auser Boston, Inc., Boston, MA, 1992.

\bibitem{evans} Evans, Lawrence C. \textit{Partial differential equations}, 2nd edition. Graduate Studies in Mathematics, Vol. 19. American Mathematical Society, Providence, RI, 2010.

\bibitem{Federer96} Federer, Herbert. \textit{Geometric measure theory}, reprint of the 1969 edition. Classics in Mathematics, Vol. 153.
Springer-Verlag, Berlin, 1996.

\bibitem{Fefferman85} Fefferman, Charles; Graham, C. Robin. \textit{Conformal invariants}. In: The mathematical heritage of Élie Cartan (Lyon, 1984). Astérisque 1985, Numéro Hors Série, 95–116.

\bibitem{FeffermanGraham12} Fefferman, Charles; Graham, C. Robin. \textit{The Ambient Metric}. Annals of Mathematics Studies, Vol.~178. Princeton University Press, Princeton, NJ, 2012.

\bibitem{Gilkey75} Gilkey, Peter B. \textit{Local invariants of an embedded Riemannian manifold}. Ann. of Math. (2) {\bf102} (1975), no.~2, 187–203.

\bibitem{gover2017} Gover, A. Rod; Waldron, Andrew. \textit{Renormalized volume}. Comm. Math. Phys. {\bf 354} (2017), no.~3, 1205--1244.

\bibitem{gover2020} Gover, A. Rod; Waldron, Andrew. \textit{A calculus for conformal hypersurfaces and new higher Willmore energy functionals}. Adv. Geom. {\bf 20} (2020), no.~1, 29--60.

\bibitem{graham2017} Graham, C. Robin. \textit{Volume renormalization for singular Yamabe metrics}. Proc. Amer. Math. Soc. {\bf 145} (2017), 1781--1792.

\bibitem{grahamkuo26} Graham, C. Robin; Kuo, Tzu-Mo. \textit{Geodesic normal coordinates and natural tensors for pseudo-Riemannian submanifolds}. Proc. Amer. Math. Soc. {\bf154} (2026), no.~1, 339–351.

\bibitem{grahamreichert2020} Graham, C. Robin; Reichert, Nicholas. \textit{Higher-dimensional Willmore energies via minimal submanifold asymptotics}. Asian J. Math. {\bf 24} (2020), no.~4, 571--610.

\bibitem{graham1999} Graham, C. Robin; Witten, Edward. \textit{Conformal anomaly of submanifold observables in AdS/CFT correspondence}. Nuclear Phys. B {\bf 546} (1999), no.~1-2, 52--64.

\bibitem{grisvard11} Grisvard, Pierre. \textit{Elliptic problems in nonsmooth domains}, reprint of the 1985 original. With a foreword by Susanne C. Brenner. Classics in Applied Mathematics, Vol. 69. Society for Industrial and Applied Mathematics (SIAM), Philadelphia, PA, 2011.

\bibitem{Guven05} Guven, Jemal. \textit{Conformally invariant bending energy for hypersurfaces}. J. Phys. A {\bf38} (2005), no.~37, 7943--7955.

\bibitem{H2002} H\'elein, Frédéric. {\it Harmonic maps, conservation laws and moving frames}, translated from the 1996 French original Second edition, Cambridge Tracts in Mathematics, 150, Cambridge Univ. Press, Cambridge, 2002.

\bibitem{Hunt}Hunt, Richard A. \textit{On $L(p,q)$ spaces}.
Enseign. Math. (2) {\bf12} (1966), 249--276.

\bibitem{Jost17} Jost, J\"urgen. \textit{Riemannian geometry and geometric analysis}, 7th edition. Universitext. Springer, Cham, 2017.

\bibitem{KuwLamLi15} Kuwert, Ernst; Lamm, Tobias; Li, Yuxiang. \textit{Two-dimensional curvature functionals with superquadratic growth}. J. Eur. Math. Soc. (JEMS) {\bf17} (2015), no.~12, 3081–3111.

\bibitem{KuwertLi12} Kuwert, Ernst; Li, Yuxiang. \textit{$W^{2,2}$-conformal immersions of a closed Riemann surface into $\R^n$}. Comm. Anal. Geom. {\bf20} (2012), no.~2, 313–340.

\bibitem{laMan20} La Manna, Domenico Angelo; Leone, Chiara; Schiattarella, Roberta. \textit{On the regularity of very weak solutions for linear elliptic equations in divergence form}. NoDEA Nonlinear Differential Equations Appl. {\bf 27} (2020), no.~5, Paper No.~43, 23 pp.

\bibitem{LammRiv08} Lamm, Tobias; Rivi\`ere, Tristan. \textit{Conservation laws for fourth order systems in four dimensions}. Comm. Partial Differential Equations {\bf33} (2008), no.~1--3, 245--262.

\bibitem{LaMaRi26}Lan, Tian; Martino, Dorian; Rivi\`ere, Tristan. \textit{The analysis of Willmore surfaces and its generalizations in higher dimensions}. J. Math. Study {\bf 59} (2026), no.~1, 80--188.

\bibitem{Mantegazza02}Mantegazza, C. \textit{Smooth geometric evolutions of hypersurfaces}. Geom. Funct. Anal. {\bf12} (2002), no.~1, 138–182.

\bibitem{M2024} Martino, Dorian. \textit{A duality theorem for a four dimensional Willmore energy}. Preprint (2024) \url{https://arxiv.org/abs/2308.11433}.

\bibitem{MarRiv2} Martino, Dorian; Rivière, Tristan. \textit{Weak immersions with second fundamental form in a critical Sobolev space}. Preprint (2025), \url{https://arxiv.org/abs/2510.10594}.

\bibitem{MarRiv26}Martino, Dorian; Rivière, Tristan. \textit{Construction of harmonic coordinates for weak immersions}. Int. Math. Res. Not. IMRN {\bf2026}, no.~10, Paper No.~rnag083.

\bibitem{Miranda63} Miranda, Carlo. \textit{Sulle equazioni ellittiche del secondo ordine di tipo non variazionale, a coefficienti discontinui}. Ann. Mat. Pura Appl. (4) {\bf63} (1963), 353–386.

\bibitem{mitrea01}Mitrea, Dorina; Mitrea, Marius; Taylor, Michael. \textit{Layer potentials, the Hodge Laplacian, and global boundary problems in nonsmooth Riemannian manifolds}. Mem. Amer. Math. Soc. {\bf150} (2001), no.~713.

\bibitem{mondino2018} Mondino, Andrea; Nguyen, Huy T. \textit{Global conformal invariants of submanifolds}. Ann. Inst. Fourier (Grenoble) {\bf68} (2018), no.~6, 2663--2695.

\bibitem{MonRiv14} Mondino, Andrea; Rivière, Tristan. \textit{Immersed spheres of finite total curvature into manifolds}. Adv. Calc. Var. {\bf7} (2014), no.~4, 493–538.

\bibitem{Morrey58}Morrey, Charles B., Jr. \textit{On the analyticity of the solutions of analytic non-linear elliptic systems of partial differential equations. I. Analyticity in the interior}. Amer. J. Math.~{\bf80} (1958), 198--218.

\bibitem{MullerSverak95}Müller, S.; Šverák, V. \textit{On surfaces of finite total curvature}. J. Differential Geom. {\bf42} (1995), no.~2, 229–258.

\bibitem{Petersen16}Petersen, Peter. \textit{Riemannian geometry}, 3rd edition. Graduate Texts in Mathematics, Vol. 171. Springer, Cham, 2016.

\bibitem{Riv07} Rivi\`ere, Tristan. \textit{Conservation laws for conformally invariant variational problems}. Invent. Math. {\bf168} (2007), no.~1, 1--22.

\bibitem{Riv08} Rivi\`ere, Tristan. \textit{Analysis aspects of Willmore surfaces}. Invent. Math. {\bf174} (2008), no.~1, 1--45.

\bibitem{Riv14} Rivi\`ere, Tristan. \textit{Variational principles for immersed surfaces with $L^2$-bounded second fundamental form}. J. Reine Angew. Math. {\bf695} (2014), 41--98.

\bibitem{Ri16} Rivi\`ere, Tristan. \textit{Weak immersions of surfaces with $L^2$-bounded second fundamental form}. In: Geometric analysis, 303--384.
IAS/Park City Mathematics Series, Vol. 22. American Mathematical Society, Providence, RI, 2016. 

\bibitem{Ri20} Rivi\`ere, Tristan. \textit{The variations of Yang-Mills Lagrangian}. In: Geometric analysis---in honor of Gang Tian's 60th birthday, 305--379. Progress in Mathematics, Vol. 333. Birkh\"auser/Springer, Cham, 2020.

\bibitem{Rych99}Rychkov, Vyacheslav S. \textit{On restrictions and extensions of the Besov and Triebel--Lizorkin spaces with respect to Lipschitz domains}. J. London Math. Soc. (2) {\bf60} (1999), no.~1, 237–257.

\bibitem{Toro94}Toro, Tatiana. \textit{Surfaces with generalized second fundamental form in $L^2$ are Lipschitz manifolds}. J. Differential Geom. {\bf39} (1994), no.~1, 65–101.

\bibitem{Uhlenbeck82} Uhlenbeck, Karen K. \textit{Connections with $L^p$ bounds on curvature}. Comm. Math. Phys. {\bf83} (1982), no.~1, 31--42.

\bibitem{Wente69} Wente, Henry C. \textit{An existence theorem for surfaces of constant mean curvature}. J. Math. Anal. Appl. {\bf26} (1969), 318--344.

\bibitem{weyl97}Weyl, Hermann. \textit{The classical groups: their invariants and representations}, 15th printing. Princeton Landmarks in Mathematics. Princeton Paperbacks. Princeton University Press, Princeton, NJ, 1997.

\bibitem{zhang} Zhang, Yongbing. \textit{Graham-Witten's conformal invariant for closed four-dimensional submanifolds}. J. Math. Study {\bf 54} (2021), no.~2, 200--226.

\end{thebibliography}
\end{document}